
\documentclass[a4paper,reqno,10pt]{amsart}
\usepackage{eurosym}
\usepackage{amsfonts}
\usepackage{amsmath}
\usepackage{amssymb}
\usepackage{bbm}
\usepackage{amscd}
\usepackage[left=2cm,right=2cm,bottom=3cm,top=3cm]{geometry}
\usepackage[hidelinks]{hyperref}

\setcounter{MaxMatrixCols}{10}

\newtheorem{theorem}{Theorem}[section]
\newtheorem*{theorem*}{Theorem}

\newtheorem{corollary}[theorem]{Corollary}

\newtheorem{definition}[theorem]{Definition}

\newtheorem{lemma}[theorem]{Lemma}

\newtheorem{proposition}[theorem]{Proposition}
\newtheorem{remark}[theorem]{Remark}

\theoremstyle{definition}
\newtheorem{fact}[theorem]{Fact}

\begin{document}
\def\cprime{$'$}
\def\cprime{$'$}

\title[Fra\"{\i}ss\'{e} limits in functional analysis]{Fra\"{\i}ss\'{e}
limits in functional analysis}
\author{Martino Lupini}
\address{Mathematics Department\\
California Institute of Technology\\
1200 E. California Blvd\\
MC 253-37\\
Pasadena, CA 91125}
\email{lupini@caltech.edu}
\urladdr{http://www.lupini.org/}
\thanks{The author was supported by the York University Susan Mann
Dissertation Scholarship, and by the ERC Starting grant no.\ 259527 of
Goulnara Arzhantseva. Part of this work was done while the author was
visiting the Instituto de Ciencias Matem\'{a}ticas in Madrid. The
hospitality of the Institute is gratefully acknowledged.}
\subjclass[2000]{Primary 46L07, 46A55; Secondary 46L89, 03C30, 03C98}
\keywords{Gurarij space, Poulsen simplex, Choquet simplex, Fra\"{\i}ss\'{e}
limit operator space, operator system, universal operator.}
\dedicatory{}

\begin{abstract}
We provide a unified approach to Fra\"{\i}ss\'{e} limits in functional
analysis, including the Gurarij space, the Poulsen simplex, and their
noncommutative analogs. We obtain in this general framework many known and
new results about the Gurarij space and the Poulsen simplex, and at the same
time establish their noncommutative analogs. Particularly, we construct
noncommutative analogs of universal operators in the sense of Rota.
\end{abstract}

\maketitle

\section{ Introduction}

Classical Fra\"{\i}ss\'{e} theory studies \emph{countable homogeneous
structures}. A countable structure is homogeneous if any partial isomorphism
between two finitely generated substructures extends to an automorphism of
the whole structure. The foundational result of \emph{Fra\"{\i}ss\'{e} theory%
}, obtained by Fra\"{\i}ss\'{e} in \cite{fraisse_lextension_1954}, implies
that a countable homogeneous structure is completely determined by its \emph{%
age}. (The age\emph{\ }of a countable structure if the collection of all its
finitely generated substructures.) The classes of finitely generated
structures that arise as ages of countable homogeneous structures are now
called Fra\"{\i}ss\'{e} classes \cite{fraisse_lextension_1954}. The last
fifteen years have seen a renewed interest in countable homogeneous
structures and Fra\"{\i}ss\'{e} theory in view of the relations with\emph{\
Ramsey theory} and\emph{\ topological dynamics}. Indeed it was established
in \cite{kechris_fraisse_2005} that age of a countable homogeneous structure 
$S$ satisfies the Ramsey property if and only if the automorphism group of $%
S $ is \emph{extremely amenable}. (A topological group is extremely amenable
if any continuous action on a compact Hausdorff space has a fixed point.)
This fact, known as Kechris-Pestov-Todorcevic (KPT) correspondence,
initiated a new direction of research, a survey of which can be found in 
\cite{van_the_survey_2015}. One of the goals of this line of research is to
prove by combinatorial methods extreme amenability of interesting Polish
groups and, more generally, to compute their universal minimal compact
spaces.

The main ingredient in Fra\"{\i}ss\'{e}'s analysis is the back-and-forth
method. This technique consists in building an isomorphisms between two
limit structures by recursively defining approximations of it on finitely
generated substructures. The same basic idea is used in many arguments in
functional analysis and operator algebras, where it is more often called 
\emph{approximate intertwining}. This is not a coincidence. Many structures
in functional analysis have been recently recognized to be of Fra\"{\i}ss%
\'{e}-theoretic nature, due to works of Ben Yaacov \cite%
{ben_yaacov_fraisse_2015,ben_yaacov_linear_2014}, Ben Yaacov and Henson \cite%
{ben_yaacov_generic_2016}, Garbuli\'{n}ska-W\c{e}grzyn and Kubi\'{s} \cite%
{garbulinska_universal_2015}, Kubi\'{s} \cite%
{kubis_fraisse_2014,kubis_injective_2015,kubis_metric-enriched_2012}, Kubi%
\'{s} and Kwiatkowska \cite{kubis_lelek_2015}, Kubi\'{s} and Solecki \cite%
{kubis_proof_2013}, and unpublished work of Conley and T\"{o}rnquist. This
motivated Ben Yaacov \cite{ben_yaacov_fraisse_2015} to generalize Fra\"{\i}ss%
\'{e} theory from the discrete to the metric setting. In this framework he
established a correspondence between \emph{metric }Fra\"{\i}ss\'{e} classes
and separable metric structures (their \emph{limits}) that are \emph{%
approximately homogeneous}, in the sense that a partial isomorphism between
finitely generated substructures can be arbitrarily well approximated by an
automorphism. Other approaches to Fra\"{\i}ss\'{e} theory in the metric
setting have been suggested in \cite%
{schoretsanitis_fraisse_2007,kubis_fraisse_2014}. Fra\"{\i}ss\'{e} classes
arising in the theory of operator algebras have been studied in \cite%
{eagle_fraisse_2014}.

The aim of this paper is to provide a unified approach to the proof of the
fundamental properties of Fra\"{\i}ss\'{e} limits in functional analysis,
including the Gurarij space, the Poulsen simplex, and their noncommutative
analogs. The Gurarij space $\mathbb{G}$, first constructed by Gurarij in 
\cite{gurarij_spaces_1966}, is the unique separable approximately
ultrahomogeneous Banach space that is universal for separable Banach spaces 
\cite{lusky_separable_1977}. The Poulsen simplex $\mathbb{P}$, first
constructed by Poulsen in \cite{poulsen_simplex_1961} is the unique
nontrivial metrizable Choquet simplex with dense extreme boundary \cite%
{olsen_simplices_1980}.

It is known since the work of Lusky \cite%
{lusky_gurarij_1976,lusky_note_1978,lusky_primariness_1980,lusky_construction_1979,lusky_note_1978,lusky_separable_1977-1}
and Lindenstrauss-Olsen-Sternfeld \cite%
{lindenstrauss_poulsen_1978,olsen_simplices_1980} in the 1970s that the
Poulsen simplex and the Gurarij space can be studied by very similar
methods. This has been made precise in an unpublished work of Conley and T%
\"{o}rnquist, who studied the Poulsen simplex by looking a the associated 
\emph{function system}. A function system $V$ is a closed subspace of a real
Banach space of the form $C(K)$ containing the function constantly equal to $%
1$ (the \emph{unit}). The inclusion $V\subset C(K)$ defines on $V$ an order
structure which only depends on the norm and the unit of $V$. If $K$ is any
compact convex set, then the space $A(K)$ of continuous affine functions on $%
K$ is a function system. In the statement of Theorem \ref{Theorem:A(P)} (5)
we consider a compact convex set endowed with the norm coming from the
inclusion $K\subset A(K)^{\ast }$.

Kadison's representation theorem \cite[Theorem\ II.1.8]{alfsen_compact_1971}
asserts that any function system $V$ is of the form $A(K)$, where $K$ is the
space of unital positive linear functionals of $V$. Furthermore the
assignment $K\mapsto A(K)$ is a contravariant equivalence of categories from
the category of compact convex sets and continuous affine maps to the
category of function systems and unital positive linear maps. (Metrizable)
Choquet simplices correspond to (separable) function systems that are
moreover Lindenstrauss spaces. Thus for most purposes one can work with
separable Lindenstrauss function systems rather than metrizable Choquet
simplices. Conley and T\"{o}rnquist showed that $A(\mathbb{P})$ is the
unique separable function system that is approximately homogeneous and
universal for separable function systems. Thus $A(\mathbb{P})$ has the same
properties as $\mathbb{G}$, but in the category of function systems rather
than Banach spaces. We will call $A(\mathbb{P})$ the \emph{Poulsen system}.

We list here some known and new facts about the Poulsen simplex $\mathbb{P}$
that follow from our general framework and will be proved in \S \ref%
{Subsection:function-system}:

\begin{theorem}
\label{Theorem:A(P)}Let $\mathbb{P}$ be a nontrivial metrizable Choquet
simplex with dense extreme boundary $\partial _{e}\mathbb{P}$.

\begin{enumerate}
\item The space $A(\mathbb{P})$ is the unique approximately homogeneous
separable function system that contains unital isometric copies of any
separable function system.

\item $\mathbb{P}$ is the unique nontrivial metrizable Choquet simplex with
dense extreme boundary \cite[Theorem 2.3]{lindenstrauss_poulsen_1978}.

\item A metrizable compact convex set is a Choquet simplex if and only if it
is affinely homeomorphic to a closed proper face of $\mathbb{P}$ \cite[%
Theorem 2.5]{lindenstrauss_poulsen_1978}.

\item Any affine homeomorphism between two proper faces of $\mathbb{P}$
extends to an affine homeomorphism of $\mathbb{P}$ \cite[Theorem 2.3]%
{lindenstrauss_poulsen_1978}.

\item the set of norm-preserving continuous affine maps from a fixed Choquet
simplex $K$ to $\mathbb{P}$ with the property that the range is a closed
proper face of $\mathbb{P}$ is a dense $G_{\delta }$ subspace of the space
of continuous affine maps from $K$ to $\mathbb{P}$.

\item If $F$ is any closed proper face of $\mathbb{P}$ affinely homeomorphic
to $\mathbb{P}$ and $\phi :K_{0}\rightarrow K_{1}$ is a continuous affine
map between compact convex sets, then there exist continuous affine
surjections $\eta _{0}:F\rightarrow K_{0}$ and $\eta _{1}:\mathbb{P}%
\rightarrow K_{1}$ such that $\phi \circ \eta _{0}=\eta _{1}|_{F}$;

\item A homeomorphism between compact subsets of $\partial _{e}\mathbb{P}$
extends to an affine homeomorphisms of $\mathbb{P}$ \cite[Theorem 2.5]%
{lindenstrauss_poulsen_1978}.

\item Suppose that $F_{0},F_{1}$ are closed proper faces of $\mathbb{P}$.
Consider the complementary faces $F_{0}^{\prime }$ endowed with the compact
topology induced by the functions $a\in A(\mathbb{P})$ such that $a|_{F_{0}}$
is constant, and similarly for $F_{1}^{\prime }$. Then $F_{0}^{\prime }$ and 
$F_{1}^{\prime }$ are affinely homeomorphic \cite[Theorem 2.6]%
{lindenstrauss_poulsen_1978}.

\item The canonical action of $\mathrm{Aut}(\mathbb{P})$ on $\mathbb{P}$ is
minimal \cite[Theorem 5.2]{glasner_proximal_1976}.
\end{enumerate}
\end{theorem}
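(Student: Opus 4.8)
The plan is to prove minimality by showing that every nonempty closed $\mathrm{Aut}(\mathbb{P})$-invariant subset $M\subseteq \mathbb{P}$ equals $\mathbb{P}$. Writing $G=\mathrm{Aut}(\mathbb{P})$, I would first record that $G$ acts transitively on the extreme boundary: applying (7) to the two singletons $\{x\},\{y\}\subseteq \partial_e\mathbb{P}$ shows that the map $x\mapsto y$ extends to an element of $G$. Consequently, if $M$ contains a single extreme point it contains the whole orbit $\partial_e\mathbb{P}$, and since $M$ is closed and $\partial_e\mathbb{P}$ is dense we get $M=\mathbb{P}$. Thus the entire problem reduces to the following concentration statement: for every $p\in\mathbb{P}$ the orbit closure $\overline{Gp}$ meets $\partial_e\mathbb{P}$.

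To establish this, I would represent $p$ by its maximal probability measure $\mu$, which is unique because $\mathbb{P}$ is a simplex and which satisfies $\mu(\partial_e\mathbb{P})=1$ and $a(p)=\int a\,d\mu$ for all $a\in A(\mathbb{P})$. Fix a target extreme point $e$, finitely many $a_1,\dots,a_n\in A(\mathbb{P})$, and $\epsilon,\delta>0$, and let $U=\{r:|a_i(r)-a_i(e)|<\epsilon,\ i\le n\}$ be the corresponding basic weak$^\ast$ neighbourhood of $e$. By inner regularity choose a compact set $C\subseteq\partial_e\mathbb{P}$ with $\mu(C)>1-\delta$. The key step, discussed below, is to produce $g\in G$ with $g(C)\subseteq U$. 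Granting this, and using $gp=\mathrm{bar}(g_\ast\mu)$ together with $g_\ast\mu(U)\ge\mu(C)>1-\delta$, a routine splitting of the integral $a_i(gp)=\int a_i\,d(g_\ast\mu)$ over $U$ and its complement yields $|a_i(gp)-a_i(e)|\le\epsilon+2\delta\|a_i\|$ for each $i$. Letting $U$ shrink and $\delta\to0$ then places points of $Gp$ arbitrarily close to $e$, so $e\in\overline{Gp}$ and $\overline{Gp}\cap\partial_e\mathbb{P}\neq\emptyset$, as required.

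The main obstacle is precisely the production of $g\in G$ with $g(C)\subseteq U$. By (7) it suffices to find a homeomorphic copy of $C$ sitting as a compact subset of the nonempty relatively open set $U\cap\partial_e\mathbb{P}$, for then the homeomorphism $C\to g(C)$ extends to the desired $g$. The existence of such an embedding is the genuinely delicate point: it expresses a topological self-similarity of the extreme boundary, namely that every nonempty relatively open subset of $\partial_e\mathbb{P}$ contains a homeomorphic copy of each of its compact subsets. This is the concentration (strong proximality) phenomenon lying behind Glasner's theorem, and for the Poulsen boundary it can be extracted either from the known description of $\partial_e\mathbb{P}$ up to homeomorphism or directly from the homogeneity furnished by the Fra\"{\i}ss\'{e}/back-and-forth machinery used throughout this paper. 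I expect verifying this embedding property, rather than the barycentric bookkeeping, to be where the real work lies.
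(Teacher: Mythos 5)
Your proposal addresses only item (9) of the nine-part theorem, so these comments are confined to the minimality assertion. The skeleton of your argument --- reduce minimality to the claim that every orbit closure meets $\partial_{e}\mathbb{P}$, get transitivity of $\mathrm{Aut}(\mathbb{P})$ on extreme points as a special case of (7), and then push the maximal representing measure of an arbitrary point into a small neighbourhood of a target extreme point --- is sound, and the barycentric estimate $|a_{i}(gp)-a_{i}(e)|\leq\varepsilon+2\delta\|a_{i}\|$ is correct. This is essentially Glasner's original strong-proximality argument, and it is a genuinely different route from the paper's: the paper deduces minimality from the general Fra\"{\i}ss\'{e}-theoretic criterion of Proposition \ref{Proposition:minimal} and Corollary \ref{Corollary:minimal}, which reduces the problem to a purely finite-dimensional statement about states on $\ell_{d}^{\infty}$ and $\ell_{m}^{\infty}$, verified by an explicit construction with stochastic vectors (mapping $x$ to $(s(x),\ldots,s(x),x_{1},\ldots,x_{d})$, placing the coordinates of $x$ where $t$ carries little mass). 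The paper's route is quantitative, local, and self-contained within the Fra\"{\i}ss\'{e} framework; yours is global and topological-dynamical, and buys a conceptually transparent picture of \emph{why} orbits accumulate on the extreme boundary.

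As written, however, your proof has a genuine gap at exactly the point you flag: the claim that every nonempty relatively open subset of $\partial_{e}\mathbb{P}$ contains a homeomorphic copy of every compact subset of $\partial_{e}\mathbb{P}$. Appealing to ``the known description of $\partial_{e}\mathbb{P}$ up to homeomorphism'' means invoking the Lindenstrauss--Olsen--Sternfeld identification $\partial_{e}\mathbb{P}\cong\ell^{2}$, an external result substantially deeper than anything else in your argument, while ``the homogeneity furnished by the Fra\"{\i}ss\'{e} machinery'' is not yet an argument. The gap can be closed using items (5) and (7) of the same theorem: given compact $C\subseteq\partial_{e}\mathbb{P}$ and a neighbourhood $U$ of $e$, let $K$ be the Bauer simplex of probability measures on $C$, so that $\partial_{e}K\cong C$; by (5) the face-embeddings of $K$ are dense in the space of continuous affine maps $K\to\mathbb{P}$ for the compact-open topology, so one may choose a face-embedding $\eta$ so close to the constant map at $e$ that $\eta(K)\subseteq U$; then $\eta(\partial_{e}K)$ is a compact copy of $C$ inside $U\cap\partial_{e}\mathbb{P}$ (extreme points of a face are extreme in $\mathbb{P}$), and (7) supplies the automorphism carrying $C$ into $U$. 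With that lemma supplied your argument goes through; without it, the proof is incomplete.
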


One can equivalently phrase (6) by asserting that if $F$ is any closed
proper face of $\mathbb{P}$, then any unital positive linear map between
separable function systems is a restriction-truncation to some subsystems of 
$A(\mathbb{P})$ of the unital quotient mapping $\Omega _{A(\mathbb{P})}:A(%
\mathbb{P})\rightarrow A(F)$, $f\mapsto f|_{F}$. This can be seen as a
function system version of universal operator in the sense of Rota \cite%
{rota_models_1960}.

We will prove below that one can associate to the Gurarij space $\mathbb{G}$
a geometric object with entirely analogous properties as $\mathbb{P}$. By a 
\emph{compact absolutely convex set }we mean a compact subset $K$ of a
locally convex topological vector space $V$ with that is closed under
absolutely convex combinations $\left( x,y\right) \mapsto \lambda x+\mu y$
when $\left\vert \lambda \right\vert +\left\vert \mu \right\vert \leq 1$. A
w*-continuous function between compact absolutely convex sets is \emph{%
symmetric} if it preserves the involution. One can associated to a compact
absolutely convex set $K$ the Banach space $A_{\sigma }(K)$ of real-valued
symmetric affine functions on $K$. Conversely any Banach space $X$ arises in
this way from the compact absolutely convex set $\mathrm{Ball}(X^{\ast })$ 
\cite[Lemma 1]{lazar_unit_1972}. Here $\mathrm{\mathrm{\mathrm{Ball}}}%
(X^{\ast })$ denotes the unit ball of the dual space $X^{\ast }$ of the
space $X$. Furthermore the assignment $K\mapsto A_{\sigma }(K)$ is a
contravariant equivalence of categories from the category of compact
absolutely convex sets and continuous symmetric affine maps to the category
of Banach spaces and linear maps of norm at most $1$. The compact absolutely
convex sets of the form $\mathrm{Ball}(X^{\ast })$ for some Lindenstrauss
space $X$ have been characterized by Lazar in \cite{lazar_unit_1972}; see
also \cite[Theorem 3.2]{effros_class_1971}. We will call these compact
absolutely convex sets \emph{Lazar simplices}. The Lazar simplex $\mathrm{%
Ball}(\mathbb{G})$ corresponding to the Gurarij space will be denoted by $%
\mathbb{L}$ and called the \emph{Lusky simplex}. We will prove that $\mathbb{%
L}$ plays the same role among Lazar simplices as $\mathbb{P}$ plays among
Choquet simplices. The analog of a face in this setting is the absolutely
convex hull of a face, called a \emph{biface }in \cite{effros_class_1971}
and a \emph{facial section }in \cite{lazar_banach_1971}. In the statement of
Theorem \ref{Theorem:G} we regard a compact absolutely convex set $K$ as
endowed with the norm coming from the inclusion $K\subset A_{\sigma }(K)$.

It will follow from our general results---see \S \ref{Subsection:Banach}%
---that the analogous statements hold for the Lusky simplex when one
replaces Choquet simplices with Lazar simplices and faces with bifaces:

\begin{theorem}
\label{Theorem:G}Let $\mathbb{L}$ be a Lazar simplex with dense extreme
boundary, and set $\mathbb{G}:=A_{0}\left( \mathbb{L}\right) $.

\begin{enumerate}
\item $\mathbb{G}$ is the unique approximately homogeneous separable real
Banach space that contains an isometric copy of any separable Banach space 
\cite{kubis_proof_2013,ben_yaacov_fraisse_2015}.

\item $\mathbb{L}$ is the unique Lazar simplex with dense extreme boundary 
\cite[Theorem 6.4]{lindenstrauss_poulsen_1978}.

\item A metrizable compact absolutely convex set is a Lazar simplex if and
only if it is symmetrically affinely homeomorphic to a closed proper biface
of $\mathbb{L}$ \cite[Corollary 4]{lusky_construction_1979}.

\item Any symmetric affine homeomorphism between closed proper bifaces of $%
\mathbb{L}$ extends to a symmetric affine homeomorphism of $\mathbb{L}$.

\item The set of norm-preserving continuous symmetric affine maps from a
fixed Lazar simplex $K$ to $\mathbb{L}$ with the property that the range is
a closed proper biface of $\mathbb{L}$ is a dense $G_{\delta }$ subspace of
the space of continuous symmetric affine maps from $K$ to $\mathbb{L}$.

\item If $H$ is any closed proper biface of $\mathbb{L}$ symmetrically
affinely homeomorphic to $\mathbb{L}$ and $\phi :K_{0}\rightarrow K_{1}$ is
a continuous symmetric affine map between compact absolutely convex sets,
then there exist symmetric continuous affine surjections $\eta
_{0}:F\rightarrow K_{0}$ and $\eta _{1}:\mathbb{L}\rightarrow K_{1}$ such
that $\phi \circ \eta _{0}=\eta _{1}|_{F}$.

\item A symmetric homeomorphism between proper compact subsets of $\partial
_{e}\mathbb{L}$ extends to a symmetric affine homeomorphism of $\mathbb{L}$.

\item Suppose that $H$ is a closed biface of $\mathbb{L}$. Consider the
complementary biface $H^{\prime }$ endowed with the w*-topology induced by
the functions $a\in A_{\sigma }\left( \mathbb{L}\right) $ such that $%
a|_{H}\equiv 0$. Then $H^{\prime }$ is affinely homeomorphic to $\mathbb{L}$.
\end{enumerate}
\end{theorem}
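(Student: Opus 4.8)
The plan is to transport the statement across the contravariant equivalence $K \mapsto A_{\sigma}(K)$ and then recognize the resulting Banach space as $\mathbb{G}$. Set $J := \{a \in A_{\sigma}(\mathbb{L}) : a|_{H} \equiv 0\} = \{a \in \mathbb{G} : a|_{H}\equiv 0\}$, a closed (hence separable) subspace of $\mathbb{G}$. The first task is to identify $H'$, together with the w*-topology induced by $J$, with the compact absolutely convex set $\mathrm{Ball}(J^{\ast})$; equivalently, to check that $A_{\sigma}(H') = J$. Once this is done, the equivalence of categories reduces the theorem to the purely isometric assertion that $J$ is linearly isometric to $\mathbb{G}$: a surjective isometry $J \cong \mathbb{G}$ corresponds under the equivalence to a symmetric affine homeomorphism $\mathbb{L} = \mathrm{Ball}(\mathbb{G}^{\ast}) \cong \mathrm{Ball}(J^{\ast}) = H'$.

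To carry out the identification I would invoke the Effros--Lazar structure theory of bifaces \cite{effros_class_1971,lazar_banach_1971}. Since $\mathbb{L}=\mathrm{Ball}(\mathbb{G}^{\ast})$ is a Lazar simplex, $\mathbb{G}^{\ast}$ is an abstract $L_{1}$-space, and a closed biface $H$ corresponds to a weak*-closed band (L-summand) $W\subseteq \mathbb{G}^{\ast}$, with $H=\mathrm{Ball}(W)$ and $\mathbb{G}^{\ast}=W\oplus_{1}W'$; the complementary biface is $H'=\mathrm{Ball}(W')$. The functions $a\in\mathbb{G}$ with $a|_{H}\equiv 0$ are precisely those vanishing on $W$, so $J=W_{\perp}$, and since $W$ is weak*-closed one has $J^{\perp}=W$, whence $J^{\ast}=\mathbb{G}^{\ast}/W\cong W'$ isometrically, the isomorphism being a weak*-homeomorphism that carries $\sigma(J^{\ast},J)$ to the topology on $H'$ induced by $J$. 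Thus $H'\cong \mathrm{Ball}(J^{\ast})$ and $A_{\sigma}(H')=J$, as wanted. In particular $J$ is an M-ideal of $\mathbb{G}$ and, being the preannihilator of an L-summand of the $L_{1}$-space $\mathbb{G}^{\ast}$, a separable Lindenstrauss space; so $H'$ is already seen to be a metrizable Lazar simplex.

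It remains to prove $J\cong \mathbb{G}$ (the case $H=\{0\}$, where $J=\mathbb{G}$, is trivial, and $H=\mathbb{L}$ is excluded). Two equivalent routes present themselves. Having shown $H'$ is a metrizable Lazar simplex, by the uniqueness statement~(2) it suffices to prove that $\partial_{e}H'$ is dense in $H'$; alternatively, one verifies the characterization~(1) directly, namely that $J$ is universal for separable Banach spaces and approximately homogeneous. These two routes are of the same depth, since by~(2) the density of $\partial_{e}H'$ is itself equivalent to $J$ being the Gurarij space, so neither offers a genuine shortcut over the other.

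The main obstacle is exactly this last step: showing that the complementary summand is as large and homogeneous as the whole. From $\mathbb{G}^{\ast}=W\oplus_{1}W'$ one computes $\mathrm{ext}(\mathbb{L})=\mathrm{ext}(H)\sqcup\mathrm{ext}(H')$, so that $\partial_{e}H'=\partial_{e}\mathbb{L}\cap H'$; the delicate point is that a point $x\in H'$ approximated from the dense set $\partial_{e}\mathbb{L}$ may be approached by extreme points lying in $H$, and these are annihilated by the band restriction $r:\mathbb{L}\to H'$, so the naive density argument fails and the absolute-convexity structure alone is not enough. I would resolve this on the Banach side by running an approximate-intertwining (back-and-forth) argument inside $J$: finite-dimensional configurations in $J$ are amalgamated and extended using the corresponding approximate amalgamation/homogeneity property of $\mathbb{G}$ together with the M-ideal decomposition, which yields simultaneously the universality and the approximate homogeneity of $J$. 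The crux is to show that each extension step can be performed \emph{within} the M-ideal $J$ rather than merely within $\mathbb{G}$; this is where the homogeneity of $\mathbb{L}$ (equivalently, of $\mathbb{G}$) and the general Fra\"{\i}ss\'{e}-limit machinery of the paper must be brought to bear.
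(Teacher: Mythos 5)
Your reduction is sound and is in fact the route the paper takes for part (8) (Corollary \ref{Corollary:dual-biface}): writing $\mathbb{G}^{\ast}=W\oplus_{1}W'$ with $W=\mathrm{span}(H)$ a w*-closed $L$-summand, identifying $J=W_{\perp}$, $J^{\ast}\cong\mathbb{G}^{\ast}/W\cong W'$ and $H'\cong\mathrm{Ball}(J^{\ast})$, and observing that everything then hinges on $J\cong\mathbb{G}$. Your diagnosis of why the naive density argument fails is also correct. But the proposal stops exactly where the proof has to begin: the assertion that ``each extension step can be performed within the $M$-ideal $J$'' is the entire content of the statement, and you only announce that the Fra\"{\i}ss\'{e} machinery ``must be brought to bear'' without saying how. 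That is a genuine gap, not a detail.

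Concretely, what is missing is the proof that the restriction map $P:\mathbb{G}\rightarrow A_{\sigma}(H)$, $f\mapsto f|_{H}$, satisfies the characterizing property of the generic contraction $\Omega_{\mathbb{G}}^{A_{\sigma}(H)}$ from Subsection \ref{Subsection:universal-state}; this is Proposition \ref{Proposition:characterize-operator-G}. One needs: for finite-dimensional $E\subset F$ (it suffices to treat $\ell_{n}^{\infty}\subset\ell_{n+1}^{\infty}$), a linear isometry $f:E\rightarrow\mathbb{G}$ and a contraction $s:F\rightarrow A_{\sigma}(H)$ with $P\circ f=s|_{E}$, there is an isometry $\widehat{f}:F\rightarrow\mathbb{G}$ with $P\circ\widehat{f}=s$ and $\widehat{f}|_{E}\approx f$. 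Taking $s=0$ is precisely what forces every extension step to land in $J=\mathrm{Ker}(P)$, after which the kernel argument of Proposition \ref{Proposition:kernel} yields $J\cong\mathbb{G}$. The paper establishes the displayed lifting property by choosing extreme points $t_{1},\ldots,t_{n+1}\in\partial_{e}\mathbb{L}\setminus H$ approximating prescribed norming functionals, checking via \cite[Proposition 4.6]{effros_class_1971} that the closed absolutely convex hull of $H$ together with the segments $\left[-t_{k},t_{k}\right]$ is again a closed biface, and then invoking the lifting characterization (2) of Proposition \ref{Proposition:characterize-biface} for the enlarged facial quotient $Q\oplus P$. This is exactly the point where your concern about approximating extreme points drifting into $H$ gets defused, and none of it appears in your argument. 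Separately, note that the theorem has eight parts and your proposal addresses only part (8), taking (1) and (2) as given; parts (3)--(7) are not touched.
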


One can make (6) more precise, and assert that for any closed proper biface $%
H$ of $\mathbb{L}$ symmetrically affinely homeomorphic to $\mathbb{L}$ the
map $\Omega _{\mathbb{G}}:A_{\sigma }\left( \mathbb{L}\right) \rightarrow
A_{\sigma }(H)$, $f\mapsto f|_{H}$ is (conjugate to) the universal
nonexpansive operator on the Gurarij space constructed by Garbuli\'{n}%
ska-Wegryn and Kubi\'{s} in \cite{garbulinska_universal_2015}.

In \S \ref{Subsection:Banach-complex} we also obtain the natural analogs of
(1)--(6) above for \emph{complex }Banach spaces. If $X$ is a complex Banach
space, then we regard the unit ball $\mathrm{Ball}(X^{\ast })$ of the dual
space of $X$ as a compact circled convex set with a distinguished action of $%
\mathbb{T}$ given by $\left( \lambda ,x\right) \mapsto \lambda x$. A
w*-continuous affine map between compact circled convex sets is \emph{%
homogeneous} if it commutes with such an action.

A complex Banach space $X$ can be identified with the space $A_{\mathbb{T}%
}\left( \mathrm{Ball}(X^{\ast })\right) $ of complex-valued w*-continuous
homogeneous affine functions on $\mathrm{\mathrm{\mathrm{Ball}}}(X^{\ast })$%
. Furthermore, the map $K\mapsto A_{\mathbb{T}}(K)$ is a contravariant
equivalence of categories from the category of compact circled convex sets
and homogeneous continuous maps to the category of complex Banach spaces and
continuous linear maps of norm at most $1$. The class of compact circled
convex sets corresponding to complex Lindenstrauss spaces has been
characterized by Effros in \cite[Theorem 4.3]{effros_class_1974}. We will
refer to them as \emph{Effros simplices}. The natural analog of a biface in
this setting is the circled convex hull of a face (\emph{circled face}); see
Definition \ref{Definition:circled-face}. We will note in \S \ref%
{Subsection:Banach-complex} that statements (1)--(9) as in Theorem \ref%
{Theorem:G} hold for complex Banach spaces and Effros simplices, as long as
bifaces are replaced with circled faces, Lazar simplices are replaced with
Effros simplices, and symmetric functions are replaced with homogeneous
functions.

We will develop in Section \ref{Section:general} an even more general
framework to Fra\"{\i}ss\'{e} limits in functional analysis. The goal of
this further generalization is to obtain the natural noncommutative analogs
of the results above concerning the Gurarij space and the Poulsen simplex.
It is well known since the groundbreaking work of Arveson \cite%
{arveson_subalgebras_1969,arveson_subalgebras_1972} that operator systems
provide the natural noncommutative analog of compact convex sets. Indeed
compact convex sets as discussed above correspond via the map $K\mapsto A(K)$
to function systems, which are unital self-adjoint subspaces of unital
abelian C*-algebras. By replacing unital abelian C*-algebras with arbitrary
unital C*-algebras one obtains the notion of an operator system.

Let $B(H)$ be the algebra of bounded linear operators on a Hilbert space $H$
endowed with the operator norm. Concretely, an operator system is a closed
subspace $X$ of $B(H)$ that contains the identity operator $1$ and is closed
under taking adjoints. Abstractly, an operator system can be regarded as a
complex vector space containing a distinguished element $1$ (the \emph{unit}%
) endowed with the following further structure: a function $x\mapsto x^{\ast
}$ (corresponding to taking adjoints) and a norm on the space $M_{n}(X)$ on
the space of $n\times n$ matrices of elements of $X$ inherited from the
inclusion $M_{n}(X)\subset M_{n}(B(H))$. Here $M_{n}(B(H))$ is endowed with
the operator norm coming from the identification of $M_{n}(B(H))$ with the
space of operators on the Hilbertian sum of $n$ copies of $H$. The
corresponding notion of morphism $\phi :X\rightarrow Y$ is a \emph{unital
completely contractive map}. This means that $\phi $ maps the unit to the
unit (\emph{unital}), and $\left\Vert \phi ^{\left( n\right) }(x)\right\Vert
\leq \left\Vert x\right\Vert $ for any $n\in \mathbb{N}$ and $x\in M_{n}(X)$
(\emph{completely contractive}), where $\phi ^{\left( n\right) }(x)$ is the
element of $M_{n}(Y)$ obtained from $x$ by applying $\phi $ entrywise. For a
unital map, being completely contractive is equivalent to being completely
positive, which amounts at requiring that $\phi ^{\left( n\right) }(x)$ is
positive whenever $x\in M_{n}(X)$ is positive. Any function system $A(K)$
has a canonical (minimal) operator system structure, with matrix norms
defined by $\left\Vert x\right\Vert =\sup_{\phi }\left\Vert \phi
^{(n)}(x)\right\Vert $ for $x\in M_{n}\left( A(K)\right) $, where $\phi $
ranges among all the unital positive linear functionals on $A(K)$.

Works of Effros \cite{effros_aspects_1978}, Wittstock \cite%
{wittstock_matrix_1984}, Effros and Winkler \cite{effros_matrix_1997},
Webster and Winkler \cite{webster_krein-milman_1999}, and Winkler \cite%
{winkler_non-commutative_1999}, have made it clear that there exists a
natural geometric object that corresponds to an operator system and
completely encodes its structure: the matrix state space. If $X$ is an
operator system, let $S_{n}(X)$ be the compact convex set of unital
completely positive linear maps from $X$ to $M_{n}(\mathbb{C})$. The \emph{%
matrix state space }$\boldsymbol{S}(X)$ of $X$ is the sequence $\left(
S_{n}(X)\right) _{n\in \mathbb{N}}$. In $\boldsymbol{S}(X)$ one can define
the notion \emph{matrix convex combination}, which is an expression of the
form $\gamma _{1}^{\ast }v_{1}\gamma _{1}+\cdots +\gamma _{\ell }^{\ast
}v_{\ell }\gamma _{\ell }$ for $v_{i}\in S_{n_{i}}(X)$ and invertible $%
\gamma _{i}\in M_{n,n_{i}}(\mathbb{C})$. Such a matrix convex combination is 
\emph{proper }if $\gamma _{i}$ is right invertible for $i=1,2,\ldots ,\ell $
and $\gamma _{1}^{\ast }\gamma _{1}+\cdots +\gamma _{\ell }^{\ast }\gamma
_{\ell }=1$, and \emph{trivial} if for $i=1,2,\ldots ,\ell $ there exist $%
t_{i}\in \left[ 0,1\right] $ such that $\gamma _{i}^{\ast }\gamma
_{i}=t_{i}1 $ and $\gamma _{i}^{\ast }v_{i}\gamma _{i}=t_{i}v$ for $%
i=1,2,\ldots ,n$. An element of $\boldsymbol{S}(X)$ is a \emph{matrix
extreme point }if it can not be written in a nontrivial way as a proper
matrix convex combination (such a definition of matrix extreme point is
equivalent to \cite[Definition 2.1]{webster_krein-milman_1999} in view of 
\cite[Theorem A]{farenick_extremal_2000}). The original operator system $X$
can be canonically identified with the space $A\left( \boldsymbol{S}%
(X)\right) $ of \emph{matrix affine }w*-continuous mappings from $%
\boldsymbol{S}(X)$ to $\boldsymbol{S}(\mathbb{C})$ \cite[Definition 3.4]%
{webster_krein-milman_1999}.

Generally, a \emph{compact matrix convex set} $\boldsymbol{K}$ is a sequence 
$\left( K_{n}\right) $ of compact convex sets $K_{n}\subset M_{n}\left(
V\right) $ for some locally convex topological vector space $V$, that is
closed under matrix convex combinations \cite[Definition 1.1]%
{webster_krein-milman_1999}. Any compact matrix convex set arises from an
operator system as described above \cite[Proposition 3.5]%
{webster_krein-milman_1999}. Furthermore the map $\boldsymbol{K}\rightarrow
A(K)$ is a contravariant equivalence of categories from the category of
compact matrix convex sets and matrix affine continuous maps to the category
of operator systems and unital completely positive maps.

An operator system $A(\boldsymbol{K})$ is \emph{nuclear }if the identity map
of $A(\boldsymbol{K})$ is the pointwise limit of unital completely positive
maps that factor through finite-dimensional injective operator systems. In
the commutative case, a function system $A(K)$ is nuclear if and only if $%
A(K)$ is a Lindenstrauss space, which is in turn equivalent to the assertion
that $K$ is a Choquet simplex; see \cite[\S 8.6.4]{blecher_operator_2004}
and Subsection \ref{Subsection:function-system} below. Consistently, we say
that a compact matrix convex set $\boldsymbol{K}$ is a noncommutative
Choquet simplex if the associated operator system $A(\boldsymbol{K})$ is
nuclear. Several characterization of noncommutative Choquet simplices are
established in \cite{davidson_noncommutative_2016}, generalizing the
Choquet-Meyer, Bishop-de Leeuw, and Namioka-Phelps characterization of
Choquet simplices \cite%
{bishop_representations_1959,namioka_tensor_1969,olsen_simplices_1980}.

Suppose that $F$ is a compact convex subset of a metrizable Choquet simplex $%
K$. It follows from works of Lazar \cite{lazar_spaces_1968} and Alfsen and
Effros \cite%
{alfsen_structure_1972-1,alfsen_structure_1972-2,effros_class_1971} that $F$
is a face if and only the map $f\mapsto f|_{F}$ is a unital quotient mapping
whose kernel is an $M$-ideal of $A(K)$; see Proposition \ref%
{Proposition:characterize-face} below. We consider the noncommutative analog
of such a notion, and call a compact matrix convex subset $\boldsymbol{F}$
of a metrizable compact matrix convex set $\boldsymbol{K}$ a closed matrix
face if the canonical unital completely positive map $A(\boldsymbol{K}%
)\rightarrow A(\boldsymbol{F})$ is a complete quotient mapping and its
kernel is a complete $M$-ideal in the sense of Effros and Ruan \cite%
{effros_mapping_1994}.

The natural noncommutative analog $\mathbb{NP}$ of the Poulsen simplex $%
\mathbb{P}$ is the matrix state space of the Fra\"{\i}ss\'{e} limit $A(%
\mathbb{\mathbb{NP}})$ of the class of exact finite-dimensional operator
systems. We will call $\mathbb{NP}$ the\emph{\ noncommutative Poulsen simplex%
} and $A(\mathbb{\mathbb{NP}})$ the \emph{noncommutative Poulsen system}. A
direct proof of existence and uniqueness of $\mathbb{NP}$ can be found in 
\cite{davidson_noncommutative_2016}. It is also proved in \cite%
{davidson_noncommutative_2016} that $\mathbb{\mathbb{NP}}$ is the unique
nontrivial metrizable noncommutative Choquet simplex with dense matrix
extreme boundary, and $A(\mathbb{NP})$ is the unique separable nuclear
operator systems that is universal in the sense of Kirchberg and Wassermann 
\cite{kirchberg_c*-algebras_1998}. The model-theoretic properties of the
noncommutative Poulsen system have been investigated in \cite%
{goldbring_model-theoretic_2015}. The following noncommutative analog of
Theorem \ref{Theorem:A(NG)} follows from our general results; see Subsection %
\ref{Subsection:osystems}.

\begin{theorem}
\label{Theorem:A(NG)}Let $A(\mathbb{\mathbb{NP}})$ be the Fra\"{\i}ss\'{e}
limit of the class of finite-dimensional exact operator systems, and let $%
\mathbb{NP}$ be its matrix state space.

\begin{enumerate}
\item $A(\mathbb{\mathbb{NP}})$ is a nuclear operator system, and it is the
unique separable exact approximately homogeneous operator system that
contains unital completely isometric copies of any separable exact operator
system.

\item The set of matrix extreme points of $\mathbb{NP}$ is dense.

\item A metrizable compact matrix convex set is a noncommutative Choquet
simplex if and only if it is matrix affinely homeomorphic to a closed proper
matrix face of $\mathbb{NP}$.

\item The canonical action of $\mathrm{Aut}(\mathbb{\mathbb{NP}})$ on the
state space of $A(\mathbb{\mathbb{NP}})$ is minimal.
\end{enumerate}
\end{theorem}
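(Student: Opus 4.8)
The plan is to derive Theorem~\ref{Theorem:A(NG)} as the noncommutative specialization of the general Fra\"{\i}ss\'{e}-theoretic machinery developed in Section~\ref{Section:general}, exactly as Theorems~\ref{Theorem:A(P)} and~\ref{Theorem:G} are obtained in their respective categories. First I would fix the relevant category: the objects are finite-dimensional exact operator systems, the morphisms are unital completely isometric embeddings, and I would verify that this class is a (metric) Fra\"{\i}ss\'{e} class in the sense of Ben Yaacov, i.e.\ that it has the joint embedding property and the (approximate) amalgamation property, together with separability of the morphism spaces. The amalgamation property is the crucial structural input, and it should follow from the fact that exactness is preserved under the relevant pushouts of operator systems. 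Granting this, the abstract theory produces a separable limit object $A(\mathbb{NP})$ that is approximately homogeneous and universal for the class, which is precisely statement~(1); the nuclearity of $A(\mathbb{NP})$ is then identified via the characterization of noncommutative Choquet simplices as those compact matrix convex sets whose associated operator system is nuclear, together with the fact that a limit of finite-dimensional exact systems under complete isometries yields a nuclear system (one exhibits the identity as a pointwise limit of u.c.p.\ maps factoring through the finite-dimensional stages).

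For statement~(2), I would argue that density of the matrix extreme points is the geometric dual of approximate homogeneity plus universality. Concretely, using the Webster--Winkler matrix Krein--Milman theorem and the duality $\boldsymbol{K}\mapsto A(\boldsymbol{K})$, a matrix state that is not approximable by matrix extreme points would correspond to a defect in the amalgamation/extension property of $A(\mathbb{NP})$, contradicting the homogeneity established in~(1). Here I would translate the functional-analytic statement ``every u.c.p.\ map into $M_n(\mathbb{C})$ is approximated by matrix extreme ones'' back into the combinatorial language of the Fra\"{\i}ss\'{e} limit, where it becomes a statement about extending partial maps on finite-dimensional subsystems. This is the same mechanism by which dense extreme boundary is obtained for $\mathbb{P}$ and $\mathbb{L}$.

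For statement~(3), I would prove both directions separately. For the easy direction, a closed proper matrix face $\boldsymbol{F}$ of $\mathbb{NP}$ gives, via the complete quotient map $A(\mathbb{NP})\to A(\boldsymbol{F})$ with complete $M$-ideal kernel, a nuclear operator system $A(\boldsymbol{F})$, hence $\boldsymbol{F}$ is a noncommutative Choquet simplex; nuclearity passes to the quotient because complete $M$-ideal quotients of nuclear operator systems are nuclear. For the converse, given an arbitrary metrizable noncommutative Choquet simplex $\boldsymbol{K}$, I would use universality to embed $A(\boldsymbol{K})$ completely isometrically into $A(\mathbb{NP})$ and then upgrade this embedding, using approximate homogeneity and the nuclearity (equivalently, the complete $M$-ideal structure coming from nuclearity of $A(\boldsymbol{K})$), to realize $\boldsymbol{K}$ as a closed proper matrix face. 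This dualizes the characterization of faces via $M$-ideals recorded in Proposition~\ref{Proposition:characterize-face}, transported to the operator-system setting through the Effros--Ruan notion of complete $M$-ideal.

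For statement~(4), minimality of the canonical action of $\mathrm{Aut}(\mathbb{NP})$ on the state space follows from approximate homogeneity: given any state $\varphi$ and any other state $\psi$, one approximates both by matrix extreme points supported on finite-dimensional subsystems and uses a partial automorphism matching small perturbations, then applies approximate homogeneity to produce a global automorphism moving $\varphi$ arbitrarily close to $\psi$; density of the orbit then yields minimality by a standard compactness argument. The main obstacle in the whole program is the verification of the amalgamation property for finite-dimensional \emph{exact} operator systems under complete isometries, since one must amalgamate while staying inside the class of exact systems and controlling the completely isometric structure; once amalgamation is secured, the remaining statements are formal consequences of the general framework combined with the matrix convexity duality, and I would expect them to go through in close parallel with the commutative cases already treated.
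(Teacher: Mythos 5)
Your overall strategy---specializing the general Fra\"{\i}ss\'{e} machinery to exact operator systems---is the right one, and your treatment of (1)--(3) is broadly in the spirit of the paper, but two points need repair. First, on amalgamation: you propose to verify it via ``pushouts of operator systems'' preserving exactness. The paper does not (and cannot easily) amalgamate this way; operator systems do not admit well-behaved categorical pushouts along complete order embeddings. Instead, amalgamation is obtained by mapping both structures into a finite $\infty$-sum of matrix algebras using the Arveson extension theorem (the ``enough injectives'' condition, Lemma \ref{Lemma:NAP} and Lemma \ref{Lemma:pushout}), with the approximate pushout realized as a subsystem of a product of injectives. Moreover, since an infinite product of matrix algebras is not exact, the class of exact operator systems has only finite products, which is exactly why the stratified framework of Section \ref{Section:general} (the nested classes $\mathcal{A}_q$ of $q$-minimal systems and $\mathcal{I}_q$ of finite-dimensional $q$-minimal injectives) is needed; your proposal treats this as a direct application of the basic framework and skips the stratification, as well as the perturbation input (Lemma \ref{Lemma:perturb-ucp}) needed to turn approximately unital approximately isometric maps into honest u.c.p.\ maps.

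Second, and more seriously, your argument for (4) has a genuine gap: minimality of $\mathrm{Aut}(A(\mathbb{NP}))\curvearrowright S_1(A(\mathbb{NP}))$ does \emph{not} follow from approximate homogeneity plus an approximation by matrix extreme points. Homogeneity lets you move finite-dimensional subsystems onto one another, but to conclude that $s\circ\alpha^{-1}$ approximates $t$ on a prescribed tuple you must first find a single finite-dimensional subsystem $B$ such that \emph{every} state of $B$ pulls back, along some embedding of the given subsystem into $B$, to something close to the prescribed restriction of $s$. This is the content of Proposition \ref{Proposition:minimal} and Corollary \ref{Corollary:minimal}, and in the operator-system setting it reduces to the concrete quantitative Lemma \ref{Lemma:minimal}: for every $d$ and $\varepsilon$ there is $m$ such that for any states $s$ on $M_d(\mathbb{C})$ and $t$ on $M_m(\mathbb{C})$ there is a complete order embedding $\phi:M_d(\mathbb{C})\to M_m(\mathbb{C})$ with $\|t\circ\phi-s\|\le\varepsilon$. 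The proof is a density-matrix argument (finding a block of $M_m(\mathbb{C})$ on which the density matrix of $t$ has small trace and routing $s$ through the complementary blocks); nothing in the abstract Fra\"{\i}ss\'{e} formalism supplies it, and indeed the paper's Section \ref{Subsection:state-space} makes clear that minimality and proximality are extra properties of a class, not automatic consequences of homogeneity. Without this lemma your proof of (4) is circular.
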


The noncommutative Poulsen system is, in particular, the first example of a
nuclear operator system that contains a completely isometric copy of any
separable exact operator system.

We also consider the noncommutative analogs of the Gurarij Banach space and
of the Lusky simplex. Operator spaces \cite%
{pisier_introduction_2003,effros_operator_2000} are the noncommutative
analog of (complex) Banach spaces. Indeed a Banach space can be seen as a
subspace of an abelian C*-algebra. By considering arbitrary, not necessarily
abelian C*-algebras, one obtains the notion of an operator space.
Concretely, an operator space is a closed subspace of the algebra $B(H)$ of
bounded linear operators on a Hilbert space. Abstractly, an operator space $%
X\subset B(H)$ can be seen as a structure consisting of the vector space
operations together with the \emph{matrix norms }arising from the inclusion $%
M_{n}(X)\subset M_{n}(B(H))$. The corresponding notion of morphism is a
completely contractive linear map. An operator space is \emph{nuclear }if
the identity map of $X$ is the pointwise limit of completely contractive
maps that factor through finite-dimensional injective operator spaces.

Any Banach space can be regarded as an operator space with its canonical
minimal operator space structure (minimal quantization); see \cite[Section
3.3]{effros_operator_2000}. A Banach space $X$ is Lindenstrauss if and only
if it is a nuclear operator space with its minimal operator space structure 
\cite[Proposition 8.6.5]{blecher_operator_2004}. Thus, nuclear operator
spaces can be seen as the noncommutative analog of Lindenstrauss spaces.

As in the case of Banach spaces, one can associate with an operator space a
geometric object that completely encodes its structure. Suppose that $X$ is
an operator space. We let the complete dual ball $\mathrm{C\mathrm{Ball}}%
(X^{\ast })$ to be the sequence $\left( K_{n,m}\right) _{n,m\in \mathbb{N}}$
where $K_{n,m}$ is the unit ball of $M_{n,m}(X^{\ast })$. It is easy to see
that $\mathrm{C\mathrm{Ball}}(X^{\ast })$ is closed under \emph{rectangular
matrix convex combinations}. These are expressions of the form $\alpha
_{1}^{\ast }v_{1}\beta _{1}+\cdots +\alpha _{\ell }^{\ast }v_{\ell }\beta
_{\ell }$ where $\alpha _{i}\in M_{n_{i},n}(\mathbb{C})$ and $\beta _{i}\in
M_{m_{i},m}$ and $v_{i}\in K_{n_{i}m_{i}}$ for $1\leq i\leq \ell $.

If $V$ is a locally convex vector space, and $\boldsymbol{K}$ is a
collection of compact subsets $K_{n,m}\subset M_{n,m}\left( V\right) $, then
we say that $\boldsymbol{K}$ is a compact \emph{rectangular matrix convex set%
} if it is closed under rectangular matrix convex combinations. The notions
of matrix affine map, matrix affine combination, and matrix affine extreme
point have natural rectangular analogs. The bipolar theorem and the
Krein-Milman theorem for compact rectangular matrix convex sets have been
established in \cite[Section 3]{fuller_boundary_2016}.

If $\boldsymbol{K}$ is a compact rectangular convex set, then we let $%
A_{\sigma }(\boldsymbol{K})$ be the space of continuous rectangular affine
maps from $\boldsymbol{K}$ to $\mathbb{C}$ endowed with its canonical
operator space structure; see \cite[Section 3]{fuller_boundary_2016}. It is
proved in \cite[Section 3]{fuller_boundary_2016} using the bipolar theorem
for compact rectangular matrix convex sets that if $\boldsymbol{K}$ is a
compact rectangular matrix convex set, then $\boldsymbol{K}$ can be
identified with $\mathrm{C\mathrm{Ball}}(X^{\ast })$, where $X$ is the
operator space $A_{\sigma }(\boldsymbol{K})$, via the map sending $x\in X$
to the continuous rectangular affine map $\left[ \phi _{ij}\right] \mapsto %
\left[ \phi _{ij}(x)\right] $ . Furthermore the assignment $\boldsymbol{K}%
\mapsto A_{\sigma }(\boldsymbol{K})$ is a contravariant equivalence of
categories from the category of compact rectangular matrix convex and
continuous rectangular matrix convex maps to the category of operator spaces
and completely contractive linear maps. Consistently with the commutative
setting, say that $\boldsymbol{K}$ is a (metrizable) noncommutative Lazar
simplex if $A_{\sigma }(K)$ is nuclear.

It has been proved by Lazar and Lindenstrauss that a compact absolutely
convex subset $F$ of a Lazar simplex $K$ is a closed biface if and only if
the kernal of the map $A_{\sigma }(K)\rightarrow A_{\sigma }\left( F\right) $%
, $f\mapsto f|_{F}$ is an $M$-ideal; see Proposition \ref%
{Proposition:characterize-biface}. A similar characterization holds for
complex Lindenstrauss spaces by results of Ellis-Rao-Roy-Utterud \cite%
{ellis_facial_1981} and Olsen \cite{olsen_edwards_1976}; see Proposition \ref%
{Proposition:characterize-circled-face}. Consistently, we define a closed
rectangular matrix face of a compact rectangular matrix convex set $%
\boldsymbol{K}$ to be a compact rectangular matrix convex subset $%
\boldsymbol{F}$ of $\boldsymbol{K}$ such that the map $A_{\sigma }(%
\boldsymbol{K})\rightarrow A_{\sigma }(\boldsymbol{F})$, $f\mapsto f|_{%
\boldsymbol{F}}$ is a complete quotient mapping whose kernel is a complete $%
M $-ideal.

The natural noncommutative analog of the Gurarij space is a nuclear operator
space that is approximately ultrahomogeneous and contains a completely
isometric copy of any separable exact operator space. It follows from the
general results of this paper that such a space exists, it is unique, and it
coincides with the noncommutative Gurarij space $\mathbb{\mathbb{NG}}$
defined in \cite{oikhberg_non-commutative_2006} and proved to be unique in 
\cite{lupini_uniqueness_2016}. The space $\mathrm{C\mathrm{\mathrm{Ball}}}(%
\mathbb{NG}^{\ast })$ can be seen as the noncommutative analog of the Lusky
simplex $\mathbb{L}=\mathrm{Ball}(\mathbb{G}^{\ast })$. We will call $%
\mathrm{C\mathrm{\mathrm{Ball}}}(\mathbb{NG}^{\ast })$ the \emph{%
noncommutative Lusky simplex }and denote it by $\mathbb{NL}$. The following
result is the natural noncommutative analog of (the complex version of)
Theorem \ref{Theorem:G}.

\begin{theorem}
\label{Theorem:NG}Let $\mathbb{NG}$ be the Gurarij operator space, and $%
\mathbb{NL}$ be the noncommutative Lazar simplex.

\begin{enumerate}
\item $\mathbb{NG}$ is a nuclear operator space, and it is the unique
separable exact operator space that contains an isometric copy of any
separable exact operator space \cite{lupini_uniqueness_2016}.

\item The set of rectangular matrix extreme points of $\mathbb{NL}$ is dense
in $\mathbb{NL}$.

\item A metrizable compact rectangular convex set is a noncommutative Lazar
simplex if and only if it is rectangular affinely homeomorphic to a proper
closed rectangular matrix face of $\mathbb{NL}$.
\end{enumerate}
\end{theorem}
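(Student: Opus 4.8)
The plan is to obtain all three statements as applications of the general Fra\"{\i}ss\'{e}-theoretic machinery of Section~\ref{Section:general}, specialized to the class $\mathcal{K}$ of finite-dimensional exact operator spaces with complete isometries as the distinguished embeddings (dually, the complete quotient surjections of the associated compact rectangular matrix convex sets). The first step is to verify that $\mathcal{K}$ is a Fra\"{\i}ss\'{e} class in the sense of that section. The joint embedding property is immediate via $\ell_\infty$-direct sums, and the requisite separability holds because the finite-dimensional exact operator spaces of each fixed dimension form a separable metric space under the completely bounded Banach--Mazur distance, hence admit a countable dense subclass. The one substantial input is the amalgamation property for exact operator spaces---that two finite-dimensional exact operator spaces sharing a common subspace can be amalgamated, with control on the matrix norms, inside a further exact operator space---which is the technical result of \cite{lupini_uniqueness_2016}. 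Granting this, the general existence and uniqueness theorem produces the Fra\"{\i}ss\'{e} limit and identifies it with $\mathbb{NG} = A_\sigma(\mathbb{NL})$.

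Statement (1) is then a direct translation of the general universality and homogeneity theorem: since exactness is a local, finite-dimensional property, the age of $\mathbb{NG}$ is exactly the class of separable exact operator spaces, so $\mathbb{NG}$ is the unique separable approximately homogeneous operator space containing a completely isometric copy of every separable exact operator space. For nuclearity I would argue directly. Given a finite-dimensional $F \subseteq \mathbb{NG}$, choose a finite-dimensional injective operator space $E \supseteq F$ (e.g.\ the injective envelope of $F$, which is finite-dimensional); injectivity of $E$ extends the inclusion $F \hookrightarrow E$ to a complete contraction $\phi \colon \mathbb{NG} \to E$, while the approximate extension property of the Fra\"{\i}ss\'{e} limit yields an approximately completely isometric $\psi \colon E \to \mathbb{NG}$ agreeing with the inclusion $F \hookrightarrow \mathbb{NG}$ up to a prescribed $\varepsilon$. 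Then $\psi \circ \phi$ factors through the finite-dimensional injective $E$ and approximates the identity on $F$; letting $F$ exhaust $\mathbb{NG}$ and $\varepsilon \to 0$ exhibits $\mathrm{id}_{\mathbb{NG}}$ as a pointwise limit of completely contractive maps factoring through finite-dimensional injective operator spaces, so $\mathbb{NG}$ is nuclear.

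For (2) I would invoke the general correspondence between the Fra\"{\i}ss\'{e} limit and the extremal structure of its dual object, using the Krein--Milman theory for compact rectangular matrix convex sets from \cite{fuller_boundary_2016}. Concretely, approximate homogeneity of $\mathbb{NG}$ dualizes, through the equivalence $\boldsymbol{K} \mapsto A_\sigma(\boldsymbol{K})$, into density of the rectangular matrix extreme points of $\mathbb{NL}$: an arbitrary element of $\mathbb{NL}$ is first approximated by one factoring through a finite-dimensional piece, and the homogeneity of $\mathbb{NG}$ then supplies an automorphism carrying a genuinely extremal element arbitrarily close to it. This is the rectangular analog of the defining property of the Poulsen simplex, and is the content of the general density theorem applied in the present category.

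Statement (3) is the heart of the matter. Via the equivalence of categories and the noncommutative analog of Proposition~\ref{Proposition:characterize-biface}, a compact rectangular matrix convex set $\boldsymbol{F} \subseteq \mathbb{NL}$ is a closed proper rectangular matrix face precisely when the restriction map $A_\sigma(\mathbb{NL}) = \mathbb{NG} \to A_\sigma(\boldsymbol{F})$ is a complete quotient mapping with complete $M$-ideal kernel. The backward implication is then routine: a complete $M$-ideal quotient of the nuclear space $\mathbb{NG}$ is again nuclear (an $M$-ideal splits the bidual as an $\ell_\infty$-sum, through which the factorizations witnessing nuclearity can be pushed), so such an $\boldsymbol{F}$ is a noncommutative Lazar simplex. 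The forward implication is the delicate one and is where I expect the main obstacle. Given a metrizable noncommutative Lazar simplex $\boldsymbol{K}$, the space $X = A_\sigma(\boldsymbol{K})$ is separable and nuclear, and one must realize it as a complete $M$-ideal quotient of $\mathbb{NG}$. Universality only furnishes a completely isometric embedding of $X$ into $\mathbb{NG}$, which has the wrong variance; what is needed is a projective form of the Fra\"{\i}ss\'{e} property, obtained by running the approximate-intertwining construction against the quotient (equivalently, face) morphisms rather than the embeddings, so as to build the quotient map $\mathbb{NG} \to X$ together with a kernel that is forced to be a complete $M$-ideal. Checking that the amalgamation of $\mathcal{K}$ is compatible with this dual, face-side construction---so that the general machinery delivers a complete $M$-ideal and not merely a complete quotient---is the step requiring the most care.
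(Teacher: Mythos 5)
Your overall architecture agrees with the paper's: treat finite-dimensional exact operator spaces as a Fra\"{\i}ss\'{e} class inside the stratified framework of Section \ref{Section:general}, identify $\mathbb{NG}$ with the limit, and translate statement (3) through the dictionary ``closed rectangular matrix face $\leftrightarrow$ complete quotient with complete $M$-ideal kernel.'' Your argument for (1) is sound and essentially the paper's (nuclearity $=$ approximate injectivity, witnessed by factoring through finite-dimensional injectives via the extension property). However, there are two genuine gaps. For (2), you never actually produce a rectangular matrix extreme point of $\mathbb{NL}$, nor verify extremality of anything: ``homogeneity supplies an automorphism carrying a genuinely extremal element arbitrarily close'' presupposes an extremal element to move, and rectangular matrix extremality is not a property that falls out of homogeneity by duality. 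The paper's route is Proposition \ref{Proposition:rectangular-extreme}: a complete facial quotient between rigid rectangular $\mathcal{OL}_{\infty,1+}$ spaces is a rectangular operator extreme point, which is reduced to the fact that the identity of a finite-dimensional injective operator space (a ternary ring of operators) is extreme, proved by passing to the linking algebra and invoking Arveson's boundary theorem. Density then follows because the generic map $\Omega_{\mathbb{NG}}^{M_{n,m}}$ has a dense $G_{\delta}$ orbit in the $(n,m)$-level of $\mathbb{NL}$.

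For (3), you correctly isolate the forward direction as the crux but leave it unresolved, and the mechanism you gesture at (``running the approximate-intertwining construction against the quotient morphisms'') is not what the paper does and is not obviously implementable: dualizing the amalgamation to the face side is exactly what one does not know how to do directly. The paper instead keeps the embedding-based construction but enriches the objects: for a separable nuclear $X$ it builds the generic $X$-state $\Omega_{\mathbb{NG}}^{X}\colon\mathbb{NG}\to X$ as the Fra\"{\i}ss\'{e} limit in the category $\mathcal{A}_{X}$ of structures equipped with a morphism to $X$ (Section \ref{Section:states}). The kernel being a complete $M$-ideal is then not forced by the construction; it is extracted a posteriori from Proposition \ref{Proposition:characterize-operator-face}, whose equivalence $(2)\Leftrightarrow(1)$ converts the defining extension/lifting property of the generic state into the $M$-ideal condition via the three-ball characterization of $M$-ideals and an explicit auxiliary operator space built on $E\oplus M_{n}(\mathbb{C})$. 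Without an analog of that proposition your proposal has no way to conclude that the quotient you would construct has $M$-ideal kernel, and the backward direction of (3) (nuclearity of $A_{\sigma}(\boldsymbol{F})$) also leans on the lifting property furnished by that same equivalence rather than on the bidual-splitting heuristic you sketch.
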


Our framework also covers other new examples of Fra\"{\i}ss\'{e} classes,
such as the class of finite-dimensional operator sequence spaces (\S \ref%
{Subsection:sequence}), and the class of finite-dimensional $p$-multinormed
spaces for every $p\in \left( 1,+\infty \right) $ (\S \ref%
{Subsection:p-multinormed}). The corresponding limits $\mathbb{CG}$ (the 
\emph{column Gurarij space}) and $\mathbb{GM}^{p}$ (the $p$-\emph{%
multinormed Gurarij space}) give new examples---in addition to $\mathbb{G}$
and $A(\mathbb{P})$---of separable metric structures whose first order
theory is separably categorical and admits elimination of quantifiers \cite[%
\S 13]{ben_yaacov_model_2008}. As a consequence the corresponding
automorphism groups $\mathrm{Aut}(\mathbb{G})$, $\mathrm{Aut}(\mathbb{P})$, $%
\mathrm{Aut}(\mathbb{CG)}$, and $\mathrm{Aut}(\mathbb{GM}^{p}\mathbb{)}$ for 
$p\in \left( 1,+\infty \right) $ are new examples of Roelcke precompact
Polish groups \cite[Definition 1.1,Theorem 2.4]{ben_yaacov_weakly_2013}.
Similar results as the ones mentioned above hold for $\mathbb{CG}$ and $%
\mathbb{GM}^{p}$.

In addition to the results above, our general framework will apply to
produce commutative and noncommutative analogs of universal operators in the
sense of Rota, generalizing work of Garbuli\'{n}ska-W\c{e}grzyn and Kubi\'{s}
\cite{garbulinska_universal_2015}; see Theorem \ref{Theorem:operator-G},
Theorem \ref{Theorem:operator-A(P)}, Theorem \ref{Theorem:universal-cc},
Theorem \ref{Theorem:universal-ucp}, Theorem \ref%
{Theorem:universal-projection-G}, Theorem \ref%
{Theorem:universal-projection-A(P)}, Theorem \ref%
{Theorem:universal-projection-NG}, and Theorem \ref%
{Theorem:universal-projection-NP}.

The rest of the paper is organized as follows. In Section \ref%
{Section:injective} we present the general framework of Fra\"{\i}ss\'{e}
classes generated by injective objects, and provide a characterization of
the corresponding limits. A characterization of retracts of the limit $M$ is
provided in Section \ref{Section:retracts}. The existence of generic
(universal) morphisms $M\rightarrow M$ in this general setting is proved in
Section \ref{Section:operators}, while in Section \ref{Section:states} we
prove the existence of generic morphisms $M\rightarrow R$ for any separable
approximately injective structure $R$. Section \ref{Section:examples}
provides several examples, explaining how real and complex Banach spaces,
function systems, $M_{q}$-spaces, $M_{q}$-systems, operator sequence spaces,
and $p$-multinormed spaces fit into the general framework. Finally Section %
\ref{Section:general} considers an even more general approach, suitable to
deal with the cases of exact operator spaces and exact operator systems.
These examples are presented in Section \ref{Section:more-examples}.

\subsection*{Acknowledgments}

We would like to thank Ita\"{\i} Ben Yaacov, Ken Davidson, Isaac Goldbring,
Ilijas Farah, Adam Fuller, Alexander Kechris, Matthew Kennedy, Michael
Hartz, Ward Henson, Fernando Lled\'{o}, Jordi L\'{o}pez-Abad, Wieslaw Kubi%
\'{s}, Timur Oikhberg, Slawomir Solecki, Pedro Tradacete, and Todor Tsankov
for their comments and many helpful conversations.

\section{Fra\"{\i}ss\'{e} classes generated by injective objects\label%
{Section:injective}}

\subsection{Morphisms and embeddings\label{Subsection:morphism}}

Throughout this section we suppose that $\mathcal{L}$ is a countable \emph{%
language} in the logic for metric structures. For simplicity we will assume
that $\mathcal{L}$ is single-sorted. A complete introduction to the logic
for metric structures can be found in \cite{ben_yaacov_model_2008}. We
recall here the key concepts. The language $\mathcal{L}$ is a countable
collection of \emph{function symbols} and \emph{relation symbols}. Every
symbol $B$ in $\mathcal{L}$ has assigned an \emph{arity} $n_{B}\in \mathbb{N}
$ and a\emph{\ modulus of continuity} $\varpi _{B}$. An $\mathcal{L}$%
-structure $X$ is a complete metric space with metric bounded by $1$ endowed
with the \emph{interpretation} $X^{B}$ for any relation symbol $B$ in $%
\mathcal{L}$. Here $X^{B}$ is a function from $X^{n_{B}}$ to either $X$ or a
compact interval $\left[ \lambda _{B},\mu _{B}\right] \subset \mathbb{R}$
(depending whether $B$ is a function or a relation symbol) that is uniformly
continuous with modulus $\varpi _{B}$ with respect to the supremum metric on 
$X^{n_{B}}$. We will assume that $\mathcal{L}$ contains a distinguished
binary relation symbol whose interpretation in an $\mathcal{L}$-structure is
the distance function.

Suppose that $\left( x_{n}\right) $ is a fixed collection of \emph{variables}%
. We denote by $\bar{x}$ a tuple of such variables. Terms in the language $%
\mathcal{L}$ are defined recursively by declaring that any variable $x$ is a
term $t(x)$, and if $t_{1}(\bar{x}_{1}),\ldots ,t_{n}\left( \bar{x}%
_{n}\right) $ are terms, and $f$ is an $n$-ary function symbol in $\mathcal{L%
}$, then $f(t_{1},\ldots ,t_{k})$ is a term $t\left( \bar{x}_{1},\ldots ,%
\bar{x}_{n}\right) $. An atomic formula $\varphi (\bar{x})$ in the language $%
\mathcal{L}$ is an expression of the form $B\left( t_{1}(x),\ldots
,t_{n}(x)\right) $ where $t_{1},\ldots ,t_{n}$ are terms and $B$ is an $n$%
-ary relation symbol in $\mathcal{L}$. The interpretation of an atomic
formula $\varphi (x)$ in an $\mathcal{L}$-structure $M$ is defined in the
obvious way in terms of the interpretation of $B$ and of the function
symbols that appear in the terms $t_{1},\ldots ,t_{n}$. A quantifier-free
formula is an expression $q\left( \varphi _{1}(\bar{x}),\ldots ,\varphi _{n}(%
\bar{x})\right) $ where $\varphi _{1}(\bar{x}),\ldots ,\varphi _{n}(\bar{x})$
are quantifier-free formulas and $q:\mathbb{R}^{n}\rightarrow \mathbb{R}$ is
a continuous function.

\begin{definition}
\label{Definition:morphism}If $E,F$ are $\mathcal{L}$-structures and $%
T:E\rightarrow F$ is a function, then we say that $T$ is:

\begin{itemize}
\item a \emph{morphism }if $T\left( \varphi (\bar{a})\right) \leq \varphi (%
\bar{a})$ for any atomic formula $\varphi (x)$ and tuple $\bar{a}$ in $E$;

\item an \emph{embedding }if $T\left( \varphi (\bar{a})\right) =\varphi (%
\bar{a})$ for any atomic formula $\varphi (x)$ and tuple $\bar{a}$ in $E$.
\end{itemize}

A \emph{retraction} of an $\mathcal{L}$-structure $A$ is a morphism $%
r:A\rightarrow A$ such that $r\circ r=r$. A \emph{retract} is the range of a
retraction.
\end{definition}

We regard $\mathcal{L}$-structures as objects of a category where morphisms
are defined as in Definition \ref{Definition:morphism}. Observe that the
isomorphisms in such a category are precisely the surjective embeddings. We
note here that when these notion are applied to Banach spaces as metric
structures (by identifying them with their unit ball), morphisms as in
Definition \ref{Definition:morphism} correspond to linear maps of norm at
most $1$, embeddings as in Definition \ref{Definition:morphism} correspond
to isometric linear maps, and isomorphisms as in Definition \ref%
{Definition:morphism} correspond to linear isometric isomorphism.

\begin{definition}
\label{Definition:substructure}If $E$ is an $\mathcal{L}$-structure and $%
\bar{a}$ is a finite tuple in $E$, then we denote by $\left\langle \bar{a}%
\right\rangle $ the \emph{substructure of }$E$\emph{\ generated by }$\bar{a}$%
. This is by definition the set of $b\in E$ such that, whenever $%
f,g:E\rightarrow F$ are morphisms such that $f(\bar{a})=g(\bar{a})$, one has
that $f(b)=g(b)$. We say that $X$ is \emph{finitely generated }if $%
X=\left\langle \bar{a}\right\rangle $ for some finite tuple $\bar{a}$ in $X$%
. A subset $Y$ of $X$ is a \emph{substructure} if it contains $\left\langle 
\bar{a}\right\rangle $ for any finite tuple $\bar{a}$ in $Y$.
\end{definition}

The phrasing of the notion of substructure is chosen in such a way that,
when a Banach space is seen as a structure by looking at its unit ball, then
the substructure generated by a tuple $\bar{a}$ coincides with the unit ball
of the linear span of $\bar{a}$; see Subsection \ref{Subsection:Banach}.

Observe that if $\phi :E\rightarrow F$ is a morphism, then the image $\phi %
\left[ E\right] $ of $E$ under $\phi $ is a substructure of $F$. If $\bar{a},%
\bar{a}^{\prime }$ are two tuples in $E$ of the same length, then we set $%
d\left( \bar{a},\bar{a}^{\prime }\right) =\max_{i}d\left(
a_{i},a_{i}^{\prime }\right) $. We convene that $d\left( \bar{a},\bar{a}%
^{\prime }\right) =+\infty $ if $\bar{a}$ and $\bar{a}^{\prime }$ have
different lengths.

\begin{definition}
\label{Definition:I}If $T,S:X\rightarrow Y$ are morphisms, then we let $I(T)$
be the supremum of%
\begin{equation}
\left\vert \varphi (\bar{a})-\varphi \left( T(\bar{a})\right) \right\vert 
\text{\label{Equation:I}}
\end{equation}%
where $\varphi (x)$ is an atomic formula and $\bar{a}$ is a tuple in $X$.
Similarly we let $d\left( T,S\right) $ be the supremum of $d\left(
T(x),S(x)\right) $ where $x$ ranges in $X$.
\end{definition}

Observe that $I\left( \phi \circ \psi \right) \leq I(\phi )+I\left( \psi
\right) $.

\begin{definition}
\label{Definition:GH}We define the \emph{Gromov-Hausdorff (GH) }distance $%
d\left( X,Y\right) $ of two structures $X,Y$ in $\mathcal{A}$ as follows: $%
d\left( X,Y\right) $ is the infimum of $\varepsilon >0$ such that there
exists morphisms $f:X\rightarrow Y$ and $g:Y\rightarrow X$ such that $%
d\left( g\circ f,id_{X}\right) <\varepsilon $, $d\left( f\circ
g,id_{Y}\right) <\varepsilon $, $I\left( f\right) <\varepsilon $, and $%
I\left( g\right) <\varepsilon $.
\end{definition}

It is not difficult to verify that the GH distance is indeed a metric.
Dropping the requirement that $I\left( f\right) <\varepsilon $ and $I\left(
g\right) <\varepsilon $ in Definition \ref{Definition:GH} yields an
equivalent metric.

\subsection{Basic sequences\label{Subsection:basic}}

Suppose that $\mathcal{A}$ is a class of $\mathcal{L}$-structures such that

\begin{enumerate}
\item a structure belongs to $\mathcal{A}$ if and only if each of its
finitely generated substructures belong to $\mathcal{A}$,

\item $\mathcal{A}$ is closed under inductive limits with embeddings as
connective maps,

\item $\mathcal{A}$ has arbitrary products,

\item $\mathcal{A}$ has a universal initial object, which is a finitely
generated structure,

\item if $f_{i}:X\rightarrow Y_{i}$ is a collection of morphism between
structures in $\mathcal{A}$, $Y$ is the product of $Y_{i}$, $f:X\rightarrow
Y $ is the morphism obtained from the universal property of the product, $%
\varphi $ is an atomic formula, and $\bar{a}$ is a tuple in $X$, then $%
\varphi \left( f(\bar{a})\right) =\sup_{i}\varphi \left( f_{i}(\bar{a}%
)\right) $,

\item for structures $A,X,Y$ in $\mathcal{A}$, morphisms $%
f_{X}^{(i)}:A\rightarrow X$ and $f_{Y}^{(i)}:A\rightarrow Y$ for $%
i=1,2,\ldots ,n$, atomic formula $\varphi (x)$, and tuple $\bar{a}$ in $A$,
if $Z$ is the product of $X$ and $Y$, $f^{(i)}:A\rightarrow Z$ are the
morphisms obtained from $f_{X}^{(i)}$ and $f_{Y}^{(i)}$, respectively, and
the universal property of the product, then $\varphi (f^{(1)}(\bar{a}%
),\ldots ,f^{(n)}(\bar{a}))\leq \max \{\varphi (f_{X}^{(1)}(\bar{a}),\ldots
,f_{X}^{(n)}(\bar{a})),\varphi (f_{Y}^{(1)}(\bar{a}),\ldots ,f_{Y}^{(n)}(%
\bar{a}))\}$;

\item if $A,B$ are finitely-generated structures in $\mathcal{A}$, then the
space of morphisms from $A$ to $B$ is totally bounded with respect to the
metric from Definition \ref{Definition:I}.
\end{enumerate}

Observe that in particular these assumptions guarantee that the canonical
morphism from $X$ to the product of $X$ and $Y$ is an embedding. We also
suppose that any structure $X$ in $\mathcal{A}$ is endowed with a collection
of finite tuples of pairwise distinct elements of $X$ that we call \emph{%
basic tuples}. We assume that any finitely generated structure in $\mathcal{A%
}$ has a generating basic tuple, and any finite tuple contains a basic
subtuple.

\begin{definition}
\label{Definition:fundamental}We say that a subset $D$ of a structure $X$ in 
$\mathcal{A}$ is \emph{fundamental} if it generates a dense substructure of $%
X$, and the set of basic tuples from $D$ is dense in the set of basic tuples
from $X$.
\end{definition}

We assume that any separable structure in $\mathcal{A}$ hs a countable
fundamental subset. In the following we fix for every separable structure $X$
in $\mathcal{A}$ a countable fundamental subset $D_{X}$ of $X$. We also
assume that for any structure $X$ in $\mathcal{A}$ and basic tuple $\bar{a}$
in $X$ there exists a strictly increasing function $\rho _{\bar{a}%
}:[0,\delta _{\bar{a}})\rightarrow \lbrack 0,+\infty )$ that is vanishing at 
$0$ and continuous at $0$ such that if $f,g:X\rightarrow Y$ are morphisms
such that $d\left( f(\bar{a}),g(\bar{a})\right) \leq \delta \leq \delta _{%
\bar{a}}$, then there exists a morphism $h:\left\langle f(\bar{a}%
)\right\rangle \rightarrow Y$ such that $d\left( h\circ f,g\right) \leq \rho
_{\bar{a}}(\delta )$. The latter requirement can be seen as the assertion
that basic tuples satisfy the natural analogue of the small perturbation
lemma from Banach space and operator space theory \cite[Lemma 2.13.2]%
{pisier_introduction_2003}.

A \emph{marked structure }$\left( E,\bar{a}\right) $ in $\mathcal{A}$ is a
structure $E$ in $\mathcal{A}$ endowed with a distinguished generating basic
tuple $\bar{a}$. We call a marked structure $\left( E,\bar{a}\right) $ where 
$\bar{a}$ has length $n$ an $n$-marked structure. In the following we denote
the marked structure $\left( E,\bar{a}\right) $ simply by $\bar{a}$ and
refer to $E$ as $\left\langle \bar{a}\right\rangle $. If $\bar{a},\bar{b}$
are $n$-marked structures, we let $\partial \left( \bar{a},\bar{b}\right) $
be the infimum of $\max \left\{ I(f),d\left( f(\bar{a}),\bar{b}\right)
\right\} $ where $f$ ranges among all the morphisms $f:\left\langle \bar{a}%
\right\rangle \rightarrow \left\langle \bar{b}\right\rangle $. Observe that $%
\partial \left( \bar{a},\overline{c}\right) \leq \partial \left( \bar{a},%
\bar{b}\right) +\partial \left( \bar{b},\overline{c}\right) $. However $%
\partial $ might not be symmetric, and hence it is not a metric in general.

\subsection{Fra\"{\i}ss\'{e} classes generated by injective objects\label%
{Subsection:injective}}

We say that a structure $A$ in $\mathcal{A}$ is \emph{injective }if it is an
injective object of $\mathcal{A}$ when regarded as a category with the
notion of morphisms from Definition \ref{Definition:morphism}. This means
that if $X\subset Y$ are structures in $\mathcal{A}$ and $f:X\rightarrow A$
is a morphism, then there exists a morphism $g:Y\rightarrow A$ that extends $%
f$. We suppose in the following that $\mathcal{I}$ is a countable collection
of finitely generated injective elements of $\mathcal{A}$ closed under
finite products.

For now and the rest of the section we fix a function $\varpi :[0,+\infty
)\rightarrow \lbrack 0,+\infty )$ that is a strictly increasing, continuous
at $0$, and vanishing at $0$.

\begin{definition}
\label{Definition:approximate-inverses}The class $\mathcal{A}$ has \emph{%
enough injectives} from $\mathcal{I}$ with modulus $\varpi $ if every
finitely-generated structure in $\mathcal{A}$ is the limit with respect to
the Gromov-Hausdorff distance of finitely-generated substructures of
structures in $\mathcal{I}$, and for any separable structures $X,\widehat{X}%
,A$ in $\mathcal{A}$ with $X$ finitely generated and $A\in \mathcal{I}$, and
morphisms $\phi :X\rightarrow \widehat{X}$ and $f:X\rightarrow A$ such that $%
I(\phi )\leq \delta $ there exists a morphism $h:\widehat{X}\rightarrow A$
such that $d\left( h\circ \phi ,f\right) \leq \varpi (\delta )$.
\end{definition}

\begin{definition}
\label{Definition:stable-homogeneity}A structure $M$ in $\mathcal{A}$ is 
\emph{stably homogeneous }with modulus $\varpi $ if whenever $E$ is a
finitely generated structure in $\mathcal{A}$, and $\phi :E\rightarrow M$
and $f:E\rightarrow M$ are morphisms such that $I(f)<\delta $ and $I(\phi
)<\delta $, then there exists an automorphism $\alpha $ of $M$ such that $%
d\left( \alpha \circ \phi ,f\right) <\varpi (\delta )$.
\end{definition}

The following is the main general theorem characterizing Fra\"{\i}ss\'{e}
classes generated by injective objects. We will recall the notion of
(metric) Fra\"{\i}ss\'{e} class as defined in \cite[Definition 3.12]%
{ben_yaacov_fraisse_2015} in Subsection \ref{Subsection:class}.

\begin{theorem}
\label{Theorem:abstract-nonsense}Assume that $\mathcal{A}$ is a category of $%
\mathcal{L}$-structures satisfying the assumptions of Subsection \ref%
{Subsection:basic}. Let $\mathcal{I}$ be a collection of finitely generated
injective structures of $\mathcal{A}$ closed under finite products. Denote
by $\mathcal{C}$ the class of finitely-generated structures in $\mathcal{A}$%
. The following statements are equivalent:

\begin{enumerate}
\item $\mathcal{C}$ is a Fra\"{\i}ss\'{e} class, the limit $M$ of $\mathcal{C%
}$ can be realized as an inductive limit of structures from $\mathcal{I}$
with embeddings as morphisms, any structure in $\mathcal{I}$ is isomorphic
to a retract of $M$, and $M$ is stably homogeneous with modulus $\varpi $;

\item $\mathcal{A}$ has enough injectives from $\mathcal{I}$ with modulus $%
\varpi $.
\end{enumerate}
\end{theorem}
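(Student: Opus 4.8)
The plan is to prove the equivalence by establishing both implications, with the bulk of the work lying in the direction $(2)\Rightarrow(1)$. Throughout I would lean on Ben Yaacov's characterization of metric Fra\"{\i}ss\'{e} classes (to be recalled in \S\ref{Subsection:class}), which identifies a Fra\"{\i}ss\'{e} class as one satisfying the joint embedding property (JEP) and the near amalgamation property (NAP) together with the appropriate continuity/separability conditions built into the assumptions of \S\ref{Subsection:basic}. Since assumptions (1)--(7) of \S\ref{Subsection:basic} already guarantee that $\mathcal{C}$ is closed under finite generation, is totally bounded in the morphism metric, and admits products and a universal initial object, the essential extra content of being a Fra\"{\i}ss\'{e} class is the near amalgamation property; the key role of injectivity is to \emph{produce} amalgams.

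First I would prove $(2)\Rightarrow(1)$. The starting point is that ``enough injectives'' yields near amalgamation. Given a span $X\to Y_0$, $X\to Y_1$ of embeddings between finitely generated structures, I would approximate $X$, $Y_0$, $Y_1$ in the Gromov--Hausdorff distance by finitely generated substructures of members of $\mathcal{I}$, and then use the injectivity-with-modulus clause of Definition \ref{Definition:approximate-inverses} to extend the map $X\to A$ (for $A\in\mathcal{I}$ approximating the data) across each leg, landing both $Y_0$ and $Y_1$ approximately in $A$; taking a product of the relevant injectives (using that $\mathcal{I}$ is closed under finite products, together with assumptions (5)--(6) of \S\ref{Subsection:basic} controlling atomic formulas on products) produces the amalgam up to an error governed by $\varpi$. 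This gives NAP, and with the already-assumed JEP-type properties and total boundedness, $\mathcal{C}$ is a Fra\"{\i}ss\'{e} class, so its limit $M$ exists. To see that $M$ is an inductive limit of structures from $\mathcal{I}$ with embeddings, I would run a back-and-forth/absorption argument: writing $M$ as an increasing union of finitely generated structures, each such structure is a GH-limit of substructures of members of $\mathcal{I}$, and injectivity lets me absorb each finitely generated piece into an element of $\mathcal{I}$ sitting inside $M$, so that a cofinal chain of members of $\mathcal{I}$ exhausts $M$. The same mechanism shows each $A\in\mathcal{I}$ is a retract of $M$: embed $A$ into $M$, then use injectivity of $A$ to retract $M$ back onto the image. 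Finally, stable homogeneity with modulus $\varpi$ follows from a standard perturbed back-and-forth argument in which the small perturbation lemma encoded by the functions $\rho_{\bar a}$ is used at each finite stage to correct the approximate partial isomorphism, and injectivity supplies the extensions; the modulus $\varpi$ propagates through the construction because the amalgamation errors are controlled by $\varpi$ at each step.

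For the converse $(1)\Rightarrow(2)$, I would argue that the structural properties of $M$ force enough injectives. Given finitely generated $X$ and the requirement that $X$ be a GH-limit of finitely generated substructures of members of $\mathcal{I}$: since $M$ is an inductive limit of structures from $\mathcal{I}$ and (being the Fra\"{\i}ss\'{e} limit) is universal, every finitely generated structure embeds approximately into some member of $\mathcal{I}$ appearing in the chain, which yields the GH-approximation clause. For the extension clause of Definition \ref{Definition:approximate-inverses}, given morphisms $\phi:X\to\widehat X$ and $f:X\to A$ with $I(\phi)\le\delta$ and $A\in\mathcal{I}$, I would embed $\widehat X$ into $M$, use that $A$ is a retract of $M$ (so there is a retraction $r:M\to A$ and an embedding $\iota:A\to M$), and use stable homogeneity with modulus $\varpi$ to move the near-isometric copy $\phi(X)\subset M$ onto $\iota f(X)$ by an automorphism $\alpha$ of $M$ up to error $\varpi(\delta)$; then $h:=r\circ\alpha|_{\widehat X}$ is the desired morphism $\widehat X\to A$ with $d(h\circ\phi,f)\le\varpi(\delta)$, after identifying $A$ with its image under $\iota$ and using $r\iota=\mathrm{id}_A$.

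I expect the main obstacle to be the quantitative bookkeeping of the modulus $\varpi$ in the back-and-forth construction for stable homogeneity, and its interaction with the perturbation functions $\rho_{\bar a}$. The delicate point is that a single application of injectivity produces an error measured by $\varpi$, but homogeneity requires composing infinitely many such corrections while keeping the total displacement below $\varpi(\delta)$; making the errors summable and correctly telescoping---so that the modulus of the final automorphism is genuinely $\varpi$ rather than some larger function of $\delta$---is the technical heart of the argument and must be set up with a carefully chosen sequence of tolerances at each stage, using the continuity of $\varpi$ and $\rho_{\bar a}$ at $0$.
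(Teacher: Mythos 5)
Your plan for (2)$\Rightarrow$(1) follows the paper's route in all essentials: near amalgamation via injectivity plus finite products (the paper's approximate pushout lemmas), a direct realization of $M$ as an inductive limit of members of $\mathcal{I}$ with embeddings, retracts obtained by extending $\mathrm{id}_A$ along the inclusion $A\subset M$ using injectivity of $A$, and stable homogeneity by a perturbed back-and-forth with a summable sequence of tolerances (the paper takes $\varpi(\delta_n)<2^{-(n+1)}\eta$, which is exactly the bookkeeping you identify as the technical heart).

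The gap is in your argument for (1)$\Rightarrow$(2), in the extension clause of Definition \ref{Definition:approximate-inverses}. You embed $\widehat X$ into $M$ and invoke stable homogeneity to move $\phi(X)$ onto $\iota f(X)$ up to $\varpi(\delta)$. But stable homogeneity (Definition \ref{Definition:stable-homogeneity}) requires \emph{both} maps $X\to M$ to satisfy $I<\delta$, and $I(\iota\circ f)=I(f)$ where $f:X\rightarrow A$ is an \emph{arbitrary} morphism; in the intended applications $f$ may collapse $X$ drastically (a state, say), so $I(f)$ is large and no automorphism of $M$ can carry an almost-isometric copy of $X$ close to $f(X)$. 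The step fails as stated. The repair is to use injectivity of $A$ \emph{before} homogeneity rather than after: fix an embedding $u:X\rightarrow M$, extend the morphism $f\circ u^{-1}:u(X)\rightarrow A$ to a morphism $g:M\rightarrow A$ by injectivity of $A$ in $\mathcal{A}$, apply stable homogeneity to the pair $e\circ \phi$ and $u$ (both of which now have $I\leq \delta$) to obtain an automorphism $\alpha$ with $d(\alpha\circ e\circ \phi,u)<\varpi(\delta)$, and set $h:=g\circ \alpha\circ e$; since morphisms are $1$-Lipschitz, $d(h\circ \phi,f)=d(g\circ \alpha\circ e\circ \phi,g\circ u)\leq d(\alpha\circ e\circ \phi,u)<\varpi(\delta)$. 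For the first clause of (2) your appeal to universality is correct, but note that it also needs the perturbation functions $\rho_{\bar a}$ of Subsection \ref{Subsection:basic} to convert proximity of generating tuples inside the chain of $\mathcal{I}$-structures into Gromov--Hausdorff proximity of the generated substructures; this is precisely the paper's remark that this implication uses ``the assumption on basic sequences.''
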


The implication (1)$\Rightarrow $(2) is a consequence of the universality
property of the Fra\"{\i}ss\'{e} limit together with our assumption on basic
sequences. The rest of this section and the next section are devoted to
prove the implication (2)$\Rightarrow $(1). We will assume throughout that $%
\mathcal{A}$ and $\mathcal{I}$ are classes of $\mathcal{L}$-structures
satisfying the assumptions of Theorem \ref{Theorem:abstract-nonsense} and
such that $\mathcal{A}$ has enough injectives from $\mathcal{I}$ with
modulus $\varpi $. A characterization of the Fra\"{\i}ss\'{e} limit of $%
\mathcal{C}$ will be given in Proposition \ref%
{Proposition:characterize-limit}.

\subsection{Approximate pushouts\label{Subsection:pushout}}

In this subsection we prove that the assumptions above on $\mathcal{A}$
allow one to \emph{amalgamate }the structures in $\mathcal{A}$ over a common
substructure.

\begin{lemma}
\label{Lemma:NAP}Suppose that $E,X,Y$ are separable structures in $\mathcal{A%
}$ such that $X,Y$ belong to $\mathcal{I}$ and $E$ is finitely generated,
and $f_{X}:E\rightarrow X$ and $f_{Y}:E\rightarrow Y$ are morphisms. If $%
I\left( f_{X}\right) \leq \delta $ and $I\left( f_{Y}\right) \leq \delta $,
then there exists a structure $Z$ in $\mathcal{I}$ and embeddings $%
i:X\rightarrow Z$ and $j:Y\rightarrow Z$ such that $d\left( i\circ
f_{X},j\circ f_{Y}\right) \leq \varpi (\delta )$.
\end{lemma}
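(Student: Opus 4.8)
The plan is to prove Lemma~\ref{Lemma:NAP} (the approximate pushout/amalgamation lemma) by using the injectivity of $X$ and $Y$ together with the ``enough injectives'' hypothesis (Definition~\ref{Definition:approximate-inverses}), in a symmetric two-step construction. The idea is to first build a single large injective structure that receives embeddings of both $X$ and $Y$, and then to use injectivity to produce maps that intertwine the two given morphisms $f_X$ and $f_Y$ up to the error $\varpi(\delta)$.

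\textbf{Step 1: Produce a common target.}
First I would apply assumption (4) of Subsection~\ref{Subsection:basic} together with closure of $\mathcal{I}$ under finite products: since $X,Y\in\mathcal{I}$ and $\mathcal{I}$ is closed under finite products, the product $Z:=X\times Y$ is again in $\mathcal{I}$, and by the discussion following assumption (7) the canonical morphisms $i:X\rightarrow Z$ and $j:Y\rightarrow Z$ (coming from the universal property of the product, with the identity into the respective factor) are \emph{embeddings}. This gives the two embeddings required by the statement; the only thing left to arrange is the estimate $d(i\circ f_X,\, j\circ f_Y)\le\varpi(\delta)$. This is where the actual content lies, since a priori $i\circ f_X$ and $j\circ f_Y$ need not be close at all.

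\textbf{Step 2: Intertwine using enough injectives.}
The main idea is to feed the morphisms $\phi=f_Y:E\rightarrow Y$ and $f=f_X:E\rightarrow X$ into the second clause of Definition~\ref{Definition:approximate-inverses}. Because $I(f_Y)\le\delta$ and $X\in\mathcal{I}$, there is a morphism $h:Y\rightarrow X$ with $d(h\circ f_Y,\,f_X)\le\varpi(\delta)$. The plan is then to replace the naive embedding of $Y$ into the product by one that is corrected through $h$: one takes $Z=X\times Y$ as above but chooses the morphism $j':Y\rightarrow Z=X\times Y$ whose $X$-component is $h$ and whose $Y$-component is $\mathrm{id}_Y$, so $j'$ is still an embedding (its second coordinate is the identity, and by assumption (5) the atomic formulas are computed as a supremum, so $\varphi(j'(\bar b))\ge\varphi(\bar b)$ while the morphism property gives $\le$, forcing equality). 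Then $i\circ f_X$ and $j'\circ f_Y$ agree exactly in the $Y$-coordinate-free sense and differ in the $X$-coordinate by $d(f_X,\,h\circ f_Y)\le\varpi(\delta)$; using assumption (5) again, $d(i\circ f_X,\,j'\circ f_Y)=\max\{d(f_X,h\circ f_Y),\,d(\mathbf{0}\text{-type gap})\}$ is controlled by $\varpi(\delta)$. I would write this coordinatewise comparison out carefully, since keeping track of which coordinate carries the error is the one genuinely fiddly bookkeeping point.

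\textbf{Main obstacle.}
The step I expect to require the most care is verifying that the corrected map $j'$ is genuinely an \emph{embedding} and that the distance estimate survives passage through the product metric. The embedding property is not automatic for an arbitrary pair of components; it relies essentially on the fact that one coordinate of $j'$ is the identity on $Y$ together with the supremum formula for atomic formulas on products (assumption (5)). Likewise, the estimate $d(i\circ f_X,j'\circ f_Y)\le\varpi(\delta)$ must be read off from the product structure, where the relevant distances on $Z$ are governed by assumptions (5)--(7); one must check that the metric on $Z$ (itself an atomic formula, namely the distinguished distance relation) does not inflate the $\varpi(\delta)$ error. Provided the product metric is the supremum of the coordinate metrics---which is exactly what assumption (5) encodes for the distance relation---the bound goes through, and $Z$, $i$, $j'$ are the desired data.
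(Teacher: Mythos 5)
Your overall strategy is the right one and matches the paper's: take $Z=X\times Y$, which lies in $\mathcal{I}$ by closure under finite products, and build the two embeddings via the universal property with the identity in one coordinate and a correcting morphism, obtained from the enough-injectives hypothesis, in the other. However, as written your construction has a genuine gap: you only correct \emph{one} of the two coordinates. You obtain $h:Y\rightarrow X$ with $d(h\circ f_Y,f_X)\leq\varpi(\delta)$ and set $j'=(h,\mathrm{id}_Y)$, but you never specify the $Y$-component of $i:X\rightarrow Z$. The universal property of the product requires both components, and whatever you put there, the $Y$-coordinate of $i\circ f_X$ is $(\text{$Y$-component of }i)\circ f_X$ while the $Y$-coordinate of $j'\circ f_Y$ is $f_Y$; these are not close for an arbitrary choice, and your phrase that the two maps ``agree exactly in the $Y$-coordinate-free sense'' is not justified. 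Concretely, in the Banach space instance where $Z=X\oplus^\infty Y$ and the ``canonical'' $i$ is $x\mapsto(x,0)$, the $Y$-coordinate of the gap is $\Vert f_Y(a)\Vert$, which is of order $1$ rather than $\varpi(\delta)$. Since by Condition (6) of Subsection \ref{Subsection:basic} the distance in $Z$ is bounded by the \emph{maximum} of the two coordinate distances, the uncontrolled $Y$-coordinate destroys the estimate.

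The fix is exactly the symmetric second half that your opening sentence promises but your argument omits: apply Definition \ref{Definition:approximate-inverses} a second time with the roles of $X$ and $Y$ exchanged (take $\phi=f_X$, $f=f_Y$, target $Y\in\mathcal{I}$, using $I(f_X)\leq\delta$) to obtain $h_X:X\rightarrow Y$ with $d(h_X\circ f_X,f_Y)\leq\varpi(\delta)$, and set $i=(\mathrm{id}_X,h_X)$. Then both coordinate distances are at most $\varpi(\delta)$ and Condition (6) gives $d(i\circ f_X,j'\circ f_Y)\leq\varpi(\delta)$; the embedding property of $i$ and $j'$ follows, as you correctly note, from the identity coordinate together with Condition (5). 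One further small correction: the bound of $d(i\circ f_X,j'\circ f_Y)$ by the maximum of the coordinate distances is the content of Condition (6), not Condition (5), which only governs a single morphism into the product rather than a comparison of two.
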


\begin{proof}
Since $Y$ is injective, and $\mathcal{A}$ enough injectives from $\mathcal{I}
$ with modulus $\varpi $, there exists a morphism $h_{X}:X\rightarrow Y$
such that $d\left( h_{X}\circ f_{X},f_{Y}\right) \leq \varpi (\delta )$.
Similarly there exists a morphism $h_{Y}:Y\rightarrow X$ such that $d\left(
h_{Y}\circ f_{Y},h_{X}\right) \leq \varpi (\delta )$. Let now $Z$ be the
product of $X$ and $Y$, and $i:X\rightarrow Z$ be the morphism obtained from
the morphisms $id_{X}:X\rightarrow X$ and $h_{X}:X\rightarrow Y$ using the
universal property of the product. Similarly let $j:Y\rightarrow Z$ be the
morphism obtained from the morphisms $h_{Y}:Y\rightarrow X$ and $%
id_{Y}:Y\rightarrow Y$ using the universal property of the product. Observe
that $i$ and $j$ are embeddings. Furthermore $d\left( i\circ f_{X},j\circ
f_{Y}\right) \leq \varpi (\delta )$ by Condition (6) of Subsection \ref%
{Subsection:basic}.
\end{proof}

\begin{lemma}
\label{Lemma:pushout}Suppose that $X,\widehat{X},Y$ are structures in $%
\mathcal{A}$, and $\phi :X\rightarrow \widehat{X}$ and $f:X\rightarrow Y$
are morphisms such that $I(\phi )\leq \delta $. Then there exist a structure 
$\widehat{Y}$ in $\mathcal{A}$, a morphism $\widehat{f}:\widehat{X}%
\rightarrow \widehat{Y}$, and an embedding $j:Y\rightarrow \widehat{Y}$ such
that $d(\widehat{f}\circ \phi ,j\circ f)\leq \varpi (\delta )$ and
furthermore for any structure $Z$ in $\mathcal{A}$ and morphisms $g:\widehat{%
X}\rightarrow Z$ and $h:Y\rightarrow Z$ such that $d(g\circ \phi ,h\circ
f)\leq \varpi (\delta )$ there exists a morphism $\tau :\widehat{Y}%
\rightarrow Z$ such that $g=\tau \circ \widehat{f}$ and $h=\tau \circ j$. If
moreover $I(f)\leq \delta $, then $\widehat{f}$ is an embedding. If $%
\widehat{X},Y$ are finitely generated, then $\widehat{Y}$ is finitely
generated.
\end{lemma}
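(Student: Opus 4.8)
The plan is to realize $\widehat{Y}$ as an approximate pushout of the span $\widehat{X}\xleftarrow{\phi}X\xrightarrow{f}Y$: the universal object receiving a morphism $\widehat{f}$ from $\widehat{X}$ and a morphism $j$ from $Y$ whose restrictions to $X$ agree up to $\varpi(\delta)$. Concretely, I would let $\mathcal{P}$ be a set of representatives, up to isomorphism, of all pairs $(g,h)$ of morphisms $g\colon\widehat{X}\to Z$ and $h\colon Y\to Z$ with $Z=\left\langle g[\widehat{X}]\cup h[Y]\right\rangle$ and $d(g\circ\phi,h\circ f)\le\varpi(\delta)$; since $\widehat{X}$ and $Y$ are separable, the targets $Z$ that arise are separable, so this is a genuine set. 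Using Assumption (3) I form the product $P=\prod_{(g,h)\in\mathcal{P}}Z_{(g,h)}$, and I let $\widehat{Y}$ be the substructure of $P$ generated by the image of $\widehat{X}$ under $\widehat{f}\colon\widehat{x}\mapsto(g(\widehat{x}))_{(g,h)}$ and the image of $Y$ under $j\colon y\mapsto(h(y))_{(g,h)}$, both arising from the universal property of the product.

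With this construction the distance estimate, the universal property, and finite generation should be essentially formal. Because the distance on $P$ is the supremum of the coordinate distances (a consequence of Assumption (5)), one gets $d(\widehat{f}\circ\phi,j\circ f)=\sup_{(g,h)}d(g\circ\phi,h\circ f)\le\varpi(\delta)$. For the universal property, a test pair $(g,h)$ into an arbitrary $Z\in\mathcal{A}$ factors through the substructure $\left\langle g[\widehat{X}]\cup h[Y]\right\rangle$, which is isomorphic to a member of $\mathcal{P}$; composing the corresponding coordinate projection $P\to Z_{(g,h)}$ with that isomorphism and restricting to $\widehat{Y}$ produces the required morphism $\tau$ with $g=\tau\circ\widehat{f}$ and $h=\tau\circ j$ (projections out of products are morphisms by Assumption (5)). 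Finite generation of $\widehat{Y}$ when $\widehat{X}$ and $Y$ are finitely generated is immediate, since $\widehat{Y}$ is then generated by the concatenation of the images of generating basic tuples of $\widehat{X}$ and of $Y$.

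The real work, and the step I expect to be the main obstacle, is to show that $j$ is an embedding (and, under the extra hypothesis $I(f)\le\delta$, that $\widehat{f}$ is one as well). It suffices to exhibit, for every basic tuple $\bar{b}$ in $Y$, every atomic $\varphi$, and every $\varepsilon>0$, a test pair $(g,h)\in\mathcal{P}$ whose $Y$-leg $h$ nearly preserves $\varphi(\bar{b})$, so that the supremum computing $\varphi(j(\bar{b}))$ recovers $\varphi(\bar{b})$. I would produce such a pair as follows: since $\mathcal{A}$ has enough injectives from $\mathcal{I}$, choose $A\in\mathcal{I}$ and a morphism $\iota\colon\langle\bar{b}\rangle\to A$ with $\varphi(\iota(\bar{b}))>\varphi(\bar{b})-\varepsilon$, extend it by injectivity of $A$ to a morphism $h\colon Y\to A$, and then apply the enough-injectives property to $\phi\colon X\to\widehat{X}$ (whose defect satisfies $I(\phi)\le\delta$) and to $h\circ f\colon X\to A$ to obtain $g\colon\widehat{X}\to A$ with $d(g\circ\phi,h\circ f)\le\varpi(\delta)$. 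Letting $\varepsilon\to0$ then forces $\varphi(j(\bar{b}))=\varphi(\bar{b})$. The asymmetry of the lemma is explained by the fact that this argument consumes $I(\phi)\le\delta$, whereas the mirror argument showing $\widehat{f}$ is an embedding instead consumes $I(f)\le\delta$ and so needs the extra hypothesis. One subtlety is that the enough-injectives property is stated for a finitely generated domain, so the extension of $\phi$ genuinely requires $X$ to be finitely generated; I would reduce the general case to that one by writing $X$ as the closure of an increasing union of finitely generated substructures and passing, via Assumption (2), to the inductive limit of the corresponding approximate pushouts, whose connecting maps are the embeddings $j$ just constructed.
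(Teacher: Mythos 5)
Your construction is correct and is essentially the paper's own proof: the paper likewise realizes $\widehat{Y}$ as the substructure generated by the images of $\widehat{X}$ and $Y$ inside a product over all $\varpi(\delta)$-compatible test pairs $(g,h)$, and proves that $j$ is an embedding by exactly the witness argument you describe (produce a coordinate $h:Y\rightarrow A$ with $A\in\mathcal{I}$ nearly preserving the given atomic formula, then use the enough-injectives property applied to $\phi$ to supply the matching $g:\widehat{X}\rightarrow A$). The only cosmetic difference is that the paper indexes the product by pairs with targets in $\mathcal{I}$ rather than by isomorphism representatives of arbitrary generated targets, which sidesteps your set-versus-class concern and reduces the universal property to embedding $Z$ into a product of structures in $\mathcal{I}$; your closing remark about the finitely generated hypothesis on the domain of $\phi$ is a fair observation, but it is exactly the case in which the lemma is applied throughout the paper.
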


\begin{proof}
Consider the collection $\left( g_{i},h_{i}\right) $ of all the morphisms $%
g_{i}:\widehat{X}\rightarrow A_{i}$ and $h_{i}:Y\rightarrow A_{i}$ for $%
A_{i}\in \mathcal{I}$ such that $d\left( g_{i}\circ \phi ,h_{i}\circ
f\right) \leq \varpi (\delta )$. Let $W$ be the product of $A_{i}$ in $%
\mathcal{A}$ and $\widehat{f}:\widehat{X}\rightarrow W$ and $j:Y\rightarrow
W $ the morphisms obtained from the morphisms $g_{i}$ and $h_{i}$ and the
universal property of the product. We claim that $j$ is an embedding. In
fact, suppose that $\bar{b}$ is a tuple in $Y$ and $\psi (x)$ is an atomic
formula such that $\psi (\bar{b})>r$. Then there exists $A\in \mathcal{I}$
and a morphism $h:Y\rightarrow A$ such that $\psi \left( h(\bar{b})\right)
>r $. Since $\mathcal{A}$ has enough injectives from $\mathcal{I}$, there
exists a morphism $g:\widehat{X}\rightarrow A$ such that $d\left( g\circ
\phi ,h\circ f\right) \leq \varpi (\delta )$. Therefore $g=g_{i}$ and $%
h=h_{i}$ for some $i$ as above and hence%
\begin{equation*}
\psi \left( j(\bar{b})\right) \geq \psi \left( h(\bar{b})\right) >r\text{.}
\end{equation*}%
This shows that $j:Y\rightarrow W$ is an embedding. Let $\widehat{Y}$ be the
substructure of $W$ generated by the union of the ranges of $\widehat{f}$
and $j$. Suppose now that $Z$ is a structure in $\mathcal{A}$ and $g:%
\widehat{X}\rightarrow Z$ and $\eta :\widehat{Y}\rightarrow Z$ are morphisms
such that $d\left( g\circ \phi ,\eta \circ f\right) \leq \varpi (\delta )$.
Since $\mathcal{A}$ has enough injectives from $\mathcal{I}$, $Z$ embeds
into a product $\widehat{Z}$ of structures in $\mathcal{I}$. The definition
of $W$ above guarantees the existence of a unique morphism $\tau
:W\rightarrow \widehat{Z}$ such that $\tau \circ \widehat{f}=g$ and $\tau
\circ j=\eta $. Since $\widehat{Y}$ is the substructure of $W$ generated by
the ranges of $\widehat{f}$ and $j$, we have that $\tau $ maps $\widehat{Y}$
into $Z$. Finally under the assumption that $I(f)\leq \delta $ one can prove
that $\widehat{f}$ is an embedding reasoning as above.
\end{proof}

The structure $\widehat{Y}$ in $\mathcal{A}$ constructed in Lemma \ref%
{Lemma:pushout} will be called the \emph{approximate pushout} of the
morphisms $f$ and $\phi $ with tolerance $\varpi (\delta )$.

\subsection{The Fra\"{\i}ss\'{e} class\label{Subsection:class}}

Let $\mathcal{C}$ be the class of finitely generated elements of $\mathcal{A}
$. We aim at showing that $\mathcal{C}$ is a (complete) Fra\"{\i}ss\'{e}
class in the sense of \cite[Definition 3.12]{ben_yaacov_fraisse_2015}. Fix $%
n\in \mathbb{N}$ and let $\mathcal{C}_{n}$ be the class of $n$-marked
structures in $\mathcal{A}$. (It should be remarked that arbitrary tuples of
generators are considered in \cite{ben_yaacov_fraisse_2015}, rather than
only basic tuples as we do here. However this does not pose any problem, and
all the results in \cite{ben_yaacov_fraisse_2015} go through only
considering basic tuples.) Recall that the Fra\"{\i}ss\'{e} metric $d_{%
\mathcal{C}}$ on $\mathcal{C}_{n}$ is defined by%
\begin{equation*}
d_{\mathcal{C}}\left( \bar{a},\bar{b}\right) =\inf_{\phi ,\psi }d\left( \phi
(\bar{a}),\psi (\bar{b})\right)
\end{equation*}%
where $\phi :\left\langle \bar{a}\right\rangle \rightarrow Z$ and $\psi
:\left\langle \bar{b}\right\rangle \rightarrow Z$ range among all the joint
embeddings into a third structure $Z$ in $\mathcal{C}$; see also \cite[%
Definition 3.11]{ben_yaacov_fraisse_2015}.

In order to prove that $\mathcal{C}$ is a Fra\"{\i}ss\'{e} class as in \cite[%
Definition 3.12]{ben_yaacov_fraisse_2015}, we need to show that

\begin{itemize}
\item $\mathcal{C}$ satisfies the \emph{hereditary property }(HP), that is, $%
\mathcal{C}$ is closed under taking finitely generated substructures;

\item $\mathcal{C}$ satisfies the \emph{joint embedding property }(JEP),
that is, any two structures in $\mathcal{C}$ simultaneously embed into a
third structure in $\mathcal{C}$;

\item $\mathcal{C}$ satisfies the \emph{near amalgamation property }(NAP),
that is, if $\bar{a}$ is a marked structure in $\mathcal{C}$, $\varepsilon
>0 $, $B_{i}$ are structures in $\mathcal{C}$ and $\phi _{i}:\left\langle 
\bar{a}\right\rangle \rightarrow B_{i}$ are embeddings for $i\in \left\{
0,1\right\} $, then there exists a structure in $\mathcal{C}$ and embeddings 
$\psi _{i}:B_{i}\rightarrow C$ such that $d\left( \left( \psi _{0}\circ \phi
_{0}\right) (\bar{a}),\left( \psi \circ \phi _{1}\right) (\bar{a})\right)
<\varepsilon $;

\item $\left( \mathcal{C}_{n},d_{\mathcal{C}}\right) $ is a separable and
complete metric space for every $n\in \mathbb{N}$.
\end{itemize}

Since $\mathcal{A}$ is by assumption closed under substructures, $\mathcal{C}
$ satisfies the hereditary property. The joint embedding property is proved
by taking binary products. Lemma \ref{Lemma:pushout} shows that $\mathcal{C}$
satisfies the near amalgamation property. To conclude the proof it remains
to show that $\left( \mathcal{C}_{n},d_{\mathcal{C}}\right) $ is a separable
and complete metric space.

Suppose that $\bar{a},\bar{b}$ are $n$-marked structures in $\mathcal{A}$.
Recall that $\partial \left( \bar{a},\bar{b}\right) $ is by definition 
\begin{equation*}
\inf_{f}\max \left\{ I(f),d\left( f(\bar{a}),\bar{b}\right) \right\}
\end{equation*}
where $f$ ranges among all the morphisms $f:\left\langle \bar{a}%
\right\rangle \rightarrow \left\langle \bar{b}\right\rangle $. It follows
from Lemma \ref{Lemma:pushout} that%
\begin{equation}
d_{\mathcal{C}}\left( \bar{a},\bar{b}\right) \leq \varpi \left( \partial
\left( \bar{a},\bar{b}\right) \right) +\partial \left( \bar{a},\bar{b}%
\right) \text{.\label{Equation:deinde}}
\end{equation}%
Furthermore it follows from the assumptions on basic tuples from Subsection %
\ref{Subsection:basic} that%
\begin{equation}
\partial \left( \bar{a},\bar{b}\right) \leq \rho _{\bar{a}}\left( d_{%
\mathcal{C}}\left( \bar{a},\bar{b}\right) \right) \text{\label{Equation:dc}.}
\end{equation}%
Let $\left( A_{i}\right) $ be an enumeration of the structures in $\mathcal{I%
}$.\ For any $i\in \mathbb{N}$ let $D_{i}\subset A_{i}$ be a countable
fundamental subset; see Definition \ref{Definition:fundamental}. Let $\left( 
\bar{a}_{i,k}\right) $ be an enumeration of all the basic $n$-tuples in $%
D_{i}$. It follows from the fact that $\mathcal{A}$ has enough injectives
from $\mathcal{I}$, Lemma \ref{Lemma:pushout}, and our assumptions on basic
tuples that if $\bar{b}$ is an $n$-marked structure in $\mathcal{A}$ and $%
\varepsilon >0$ then there exist $i,k\in \mathbb{N}$ such that $\partial
\left( \bar{b},\bar{a}_{ik}\right) <\varepsilon $. Together with Equation %
\eqref{Equation:deinde} this shows that $\left\{ \bar{a}_{i,k}:i,k\in 
\mathbb{N}\right\} $ is dense in $\left( \mathcal{C}_{n},d_{\mathcal{C}%
}\right) $.

Suppose now that $\left( \bar{a}_{j}\right) $ is a Cauchy sequence in $%
\left( \mathcal{C}_{n},d_{\mathcal{C}}\right) $. Using Lemma \ref%
{Lemma:pushout} and the fact that $\mathcal{A}$ is closed under limits of
direct sequences with embeddings as connective maps one can show that there
exists a structure $X$ in $\mathcal{A}$ and embeddings $\phi
_{j}:\left\langle \bar{a}_{j}\right\rangle \rightarrow X$ such that $\left(
\phi _{j}\left( \bar{a}_{j}\right) \right) $ is a Cauchy sequence in $X^{n}$
with max distance. If $\bar{a}$ is a limit of such a sequence in $X^{n}$,
then it is clear that $\bar{a}$ is a limit of $\left( \bar{a}_{j}\right) $
in $\left( \mathcal{C}_{n},d_{\mathcal{C}}\right) $. This concludes the
proof that $\left( \mathcal{C}_{n},d_{\mathcal{C}}\right) $ is complete, and 
$\mathcal{C}$ is a Fra\"{\i}ss\'{e} class. In the following subsections we
will give an independent proof of existence and uniqueness of the Fra\"{\i}ss%
\'{e} limit of $\mathcal{C}$ in the sense of \cite[Definition 3.15]%
{ben_yaacov_fraisse_2015}; see also \cite[Corollary 3.20]%
{ben_yaacov_fraisse_2015}.

It follows from the fact that $\left( \mathcal{C}_{n},d_{\mathcal{C}}\right) 
$ is separable and our assumptions on basic sequences that the class of
finitely generated structures in $\mathcal{A}$ is separable with respect to
the Gromov-Hausdorff distance introduced in Definition \ref{Definition:GH}.

\subsection{Fra\"{\i}ss\'{e} limit: existence\label{Subsection:existence}}

Here we want to give a direct proof---not relying on the general results
from \cite{ben_yaacov_fraisse_2015}---of existence of the limit of the class 
$\mathcal{C}$ of finitely generated structures in $\mathcal{A}$. Precisely
we will prove that there exists a separable structure $M$ in $\mathcal{A}$
that satisfies the following \emph{approximate extension property }with
modulus $\varpi $: if $E=\left\langle \bar{a}\right\rangle $ and $F$ are
finitely generated structures in $\mathcal{A}$, $\varepsilon >0$, $\phi
:E\rightarrow F$ and $f:E\rightarrow M$ are morphisms such that $\max
\left\{ I(\phi ),I(f)\right\} <\delta $, then there exists a morphism $%
g:F\rightarrow M$ such that $I\left( g\right) <\varepsilon $ and $d\left(
g\circ \phi ,f\right) <\varpi (\delta )$. It is easy to see using \cite[%
Corollary 3.20]{ben_yaacov_fraisse_2015} that a structure $M$ satisfying the
approximate extension property is a limit of $\mathcal{C}$ in the sense of 
\cite[Definition 3.15]{ben_yaacov_fraisse_2015}. Furthermore the proof will
show that $M$ can be realized as the limit of an inductive sequence of
elements of $\mathcal{I}$ with embeddings as connective maps.

Let us say that a subset $D$ of a metric space $A$ is $\varepsilon $-dense
for some $\varepsilon >0$ if every element of $A$ is at distance at most $%
\varepsilon $ from some element of $D$. Let $\left( X_{m}\right) $ be a
sequence of finitely generated structures in $\mathcal{A}$ that is dense
with respect to the Gromov-Hausdorff distance. Let $\left( A_{d}\right) $ be
an enumeration of the structures in $\mathcal{I}$. For every $m,d,k\in 
\mathbb{N}$ let $\mathcal{E}_{m,d,k}$ be a finite $2^{-k}$-dense set of
morphisms from $X_{m}$ to $A_{d}$. Using Lemma \ref{Lemma:NAP} one can
define by recursion on $k\in \mathbb{N}$ sequences $\left( d_{k}\right)
,\left( j_{k}\right) ,\left( \mathcal{F}_{m,k}\right) $ such that

\begin{enumerate}
\item $d_{k}\in \mathbb{N}$,

\item $j_{k}:A_{d_{k}}\rightarrow A_{d_{k+1}}$ is an embedding, and

\item $\mathcal{F}_{m,k}$ is a finite $2^{-k}$-dense subset of the space of
morphisms from $X_{m}$ to $A_{d_{k}}$,
\end{enumerate}

such that for every $m,d\leq k$, $f\in \mathcal{F}_{m,k}$, and $\phi \in 
\mathcal{E}_{m,d,k}$ there exists $\widehat{f}:A_{d}\rightarrow A_{d_{k+1}}$
such that $d(\widehat{f}\circ \phi ,j_{k}\circ f)\leq \varpi \left( \max
\left\{ I(f),I(\phi )\right\} \right) $.

One can define now $M$ to be the limit of the inductive sequence $\left(
A_{d_{k}}\right) $ with connective maps $j_{k}:A_{d_{k}}\rightarrow
A_{d_{k+1}}$. It is not difficult to verify that $M$ satisfies the
approximate extension property using the assumption that $\mathcal{A}$ has
enough injectives from $\mathcal{I}$ together with our hypotheses on basic
sequences.

\subsection{Fra\"{\i}ss\'{e} limit: uniqueness and stable homogeneity\label%
{Subsection:stable}}

In this section we want to prove that the Fra\"{\i}ss\'{e} limit $M$ of the
class of finitely generated structures in $\mathcal{A}$ is stably
homogeneous with modulus $\varpi $ in the sense of Definition \ref%
{Definition:stable-homogeneity}. The argument is analogous to the one of 
\cite[Theorem 1.1]{kubis_proof_2013}.

\begin{proposition}
\label{Proposition:stable-homogeneity}Let $M$ be the limit of the class of
finitely generated structures in $\mathcal{A}$ as constructed in Subsection %
\ref{Subsection:existence}. Suppose that $E$ is a finitely generated
structure in $\mathcal{A}$, $\phi :E\rightarrow M$ and $f:E\rightarrow M$
are morphisms such that $I(f)<\delta $ and $I(\phi )<\delta $. Then there
exists an automorphism $\alpha $ of $M$ such that $d\left( \alpha \circ \phi
,f\right) <\varpi (\delta )$.
\end{proposition}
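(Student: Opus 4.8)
The plan is to prove stable homogeneity via a back-and-forth argument, building the desired automorphism $\alpha$ as a limit of morphisms that approximately intertwine $\phi$ and $f$ on larger and larger pieces of $M$. The key tool will be the approximate extension property from Subsection \ref{Subsection:existence}, which $M$ satisfies by construction, together with the amalgamation provided by the approximate pushout (Lemma \ref{Lemma:pushout}) and the near amalgamation afforded by Lemma \ref{Lemma:NAP}.

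First I would realize $M$ as the inductive limit of a sequence $\left(A_{d_k}\right)$ of structures from $\mathcal{I}$ with embeddings $j_k$ as connective maps, as in the existence proof, so that every finitely generated substructure of $M$ is (up to small perturbation) contained in one of the $A_{d_k}$. Since $E$ is finitely generated, both $\phi\left[E\right]$ and $f\left[E\right]$ essentially live inside some initial stage. The strategy is then to construct two interleaved sequences of morphisms $g_n,h_n$ on exhausting finitely generated substructures of $M$, where at each odd step I use the approximate extension property to extend the partial map in the forward direction, and at each even step in the backward direction, controlling the tolerances by a summable sequence $\delta_n$ so that the $I$-values and the $d$-discrepancies accumulate to something bounded by $\varpi(\delta)$. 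The observation that $I\left(\phi\circ\psi\right)\leq I(\phi)+I(\psi)$ is what lets these tolerances add up in a controlled way; the assumptions on basic tuples (the small-perturbation lemma with modulus $\rho_{\bar a}$) are what guarantee that an approximate agreement on the generating basic tuple upgrades to an approximate agreement as morphisms.

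In more detail, I would fix a countable fundamental subset $D_M$ of $M$ and an enumeration of its basic tuples, and at each stage ensure that the currently constructed morphism approximately sends a prescribed finite tuple where it should go, so that in the limit the forward maps compose with the backward maps to the identity on $M$ in the $d$-metric, and both have $I$-value tending to $0$. Passing to the limit then yields an isometric embedding $\alpha:M\rightarrow M$ with a two-sided approximate inverse, hence a surjective embedding, i.e.\ an automorphism, satisfying $d\left(\alpha\circ\phi,f\right)<\varpi(\delta)$. Uniqueness of the Fra\"{\i}ss\'{e} limit follows from the same machinery by running the back-and-forth between two structures each satisfying the approximate extension property, rather than between $M$ and itself.

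The main obstacle will be the bookkeeping of the moduli: I must choose the sequence of tolerances $\delta_n$ and invoke $\varpi$ and the $\rho_{\bar a}$ at each step so that the total error in the final estimate $d\left(\alpha\circ\phi,f\right)$ is genuinely bounded by $\varpi(\delta)$ rather than by some uncontrolled cumulative quantity, and so that the partial maps remain morphisms between structures in $\mathcal{A}$ at every stage. The delicate point is that $\varpi$ and $\rho$ are only continuous and vanishing at $0$, not necessarily subadditive, so I would work with a strictly decreasing auxiliary sequence converging to $0$ fast enough that the composed moduli telescope, and use continuity at $0$ to absorb the accumulated perturbations into the desired strict inequality with $\varpi(\delta)$.
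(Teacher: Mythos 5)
Your proposal is correct and follows essentially the same route as the paper: a back-and-forth between $M$ and itself driven by the approximate extension property, with a summable sequence of tolerances $\delta_n$ chosen (using that $\varpi$ is strictly increasing and continuous at $0$, and that $I(f),I(\phi)<\delta$ strictly) so that the first step contributes $\varpi(\delta_0)$ for some $\delta_0<\delta$ and all later corrections sum to less than $\varpi(\delta)-\varpi(\delta_0)$, yielding mutually inverse embeddings in the limit. The only cosmetic difference is that you invoke Lemmas \ref{Lemma:NAP} and \ref{Lemma:pushout} directly, whereas in the paper these are already absorbed into the approximate extension property established in Subsection \ref{Subsection:existence}.
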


Fix $\eta ,\delta _{0}>0$ such that $\varpi \left( \delta _{0}\right) +\eta
<\varpi (\delta )$, $I(f)<\delta _{0}$, and $I(\phi )<\delta _{0}$. Using
the property of $M$ established in Subsection \ref{Subsection:existence} one
can easily define by recursion on $n$ increasing sequences $\left(
X_{n}\right) $ and $\left( Y_{n}\right) $ of substructures of $M$ with dense
union, $\delta _{n}>0$, morphisms $\alpha _{n}:X_{n}\rightarrow Y_{n}$ and $%
\beta _{n}:Y_{n}\rightarrow X_{n+1}$, such that

\begin{enumerate}
\item $X_{1}\supset \phi \left[ E\right] $, $Y_{1}\supset f\left[ E\right] $%
, and $d\left( \alpha _{1}\circ \phi ,f\right) <\delta _{0}$,

\item $\varpi \left( \delta _{n}\right) <2^{-\left( n+1\right) }\eta $,

\item $I\left( \alpha _{n}\right) <\delta _{n}$ and $I\left( \beta
_{n}\right) <\delta _{n}$,

\item $d\left( \alpha _{n+1}\circ \beta _{n},i_{Y_{n}}\right) <\varpi \left(
\delta _{n}\right) $ and $d\left( \beta _{n}\circ \alpha
_{n},i_{X_{n}}\right) <\varpi \left( \delta _{n}\right) $.
\end{enumerate}

In (4) $i_{X_{n}}$ denotes the inclusion map of $X_{n}$ into $X$, and $%
i_{Y_{n}}$ denotes the inclusion map from $Y_{n}$ into $Y$. It then follows
from (3) and (4) that 
\begin{equation*}
d(\alpha _{n},\left( \alpha _{n+1}\right) |_{X_{n}})<2\varpi \left( \delta
_{n}\right) <2^{-n}\eta
\end{equation*}%
and similarly for $\beta _{n}$ and $\left( \beta _{n+1}\right) |_{Y_{n}}$.
Therefore the sequences $\left( \alpha _{n}\right) $ and $\left( \beta
_{n}\right) $ induce morphisms $\alpha :M\rightarrow M$ and $\beta
:M\rightarrow M$. By (3) $\alpha $ and $\beta $ are embeddings, and by (4)
they are inverse of each other. Finally by (1) and (2)%
\begin{equation*}
d(\alpha \circ \phi ,f)<\varpi \left( \delta _{0}\right) +\eta
\sum_{n=1}^{+\infty }2^{-n}<\varpi (\delta )\text{.}
\end{equation*}%
This concludes the proof. The same argument show that there exists a unique
separable structure in $\mathcal{A}$ that satisfies the approximate
extension property from Subsection \ref{Subsection:existence}. A one-sided
version of the proof above can be used to prove that any separable structure
in $\mathcal{A}$ embeds into $M$.

\subsection{Fra\"{\i}ss\'{e} limits: characterization}

It turns out that there are several seemingly different properties that
characterize the Fra\"{\i}ss\'{e} limit $M$ up to isomorphism.

\begin{proposition}
\label{Proposition:characterize-limit}Suppose that $M$ is a separable
structure in $\mathcal{A}$. The following statements are equivalent.

\begin{enumerate}
\item $M$ is the limit of $\mathcal{C}$;

\item For every finitely generated structure $F$ in $\mathcal{A}$, there
exists an embedding from $F$ to $M$, and for any $\delta >0$, and morphisms $%
\phi :F\rightarrow M$ and $\psi :F\rightarrow M$ such that $I(\phi )<\delta $
and $I\left( \psi \right) <\delta $ there exists an automorphism $\alpha $
of $M$ such that $d\left( \alpha \circ \phi ,\psi \right) <\varpi (\delta )$;

\item For any finitely generated structures $E,F$ in $\mathcal{A}$, $\delta
,\varepsilon >0$, morphisms $\phi :E\rightarrow F$ and $f:E\rightarrow M$
such that $I(\phi )<\delta $ and $I(f)<\delta $ there exists a morphism $%
g:F\rightarrow M$ such that $d\left( g\circ \phi ,f\right) <\varpi (\delta )$
and $I\left( g\right) <\varepsilon $;

\item For any finitely generated structure $F$ in $\mathcal{A}$, tuple $\bar{%
a}$ in $F$, embedding $\phi :\left\langle \bar{a}\right\rangle \rightarrow M$%
, and $\varepsilon >0$, there exists an embedding $\psi :F\rightarrow M$
such that $d\left( \psi (\bar{a}),\phi (\bar{a})\right) <\varepsilon $;

\item For any finitely generated structures $E$ in $\mathcal{A}$ and $F$ in $%
\mathcal{I}$, $\varepsilon >0$, embeddings $f:E\rightarrow M$ and $\phi
:E\rightarrow F$ there exists an embedding $g:F\rightarrow M$ such that $%
d\left( g\circ \phi ,f\right) <\varepsilon $;

\item Suppose that $A\in \mathcal{I}$, $\bar{a}$ is a finite tuple in a
fixed countable fundamental subset $D_{A}$ of $A$, and $f:\left\langle \bar{a%
}\right\rangle \rightarrow M$ is a morphism belonging to a fixed countable
uniformly dense collection of morphisms from $\left\langle \bar{a}%
\right\rangle $ to $M$. If $I(f)\leq \delta $, then there exists a morphism $%
g:A\rightarrow M$ such that $d\left( g(\bar{a}),f(\bar{a})\right) <\varpi
(\delta )$ and $I\left( g\right) <\varepsilon $.
\end{enumerate}
\end{proposition}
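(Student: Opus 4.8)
The plan is to prove Proposition~\ref{Proposition:characterize-limit} by establishing a cycle of implications
$(1)\Rightarrow(2)\Rightarrow(3)\Rightarrow(4)\Rightarrow(5)\Rightarrow(6)\Rightarrow(1)$, so that each statement is derived from the one preceding it and the loop closes. The anchor points are already available: stable homogeneity of the limit $M$ is Proposition~\ref{Proposition:stable-homogeneity}, and the \emph{approximate extension property} with modulus $\varpi$ was proved in Subsection~\ref{Subsection:existence} to characterize $M$ as the limit (via \cite[Corollary~3.20]{ben_yaacov_fraisse_2015}). Thus statements~(2) and~(3) are essentially reformulations of what was already shown, and the real content lies in tracking how the quantifiers on embeddings versus morphisms tighten and loosen as one moves around the cycle.

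First I would prove $(1)\Rightarrow(2)$: the universality clause (every finitely generated $F$ embeds into $M$) follows from the one-sided embedding statement at the end of Subsection~\ref{Subsection:stable}, and the homogeneity clause is exactly Proposition~\ref{Proposition:stable-homogeneity}. Next, for $(2)\Rightarrow(3)$, given $\phi\colon E\to F$ and $f\colon E\to M$ with small $I$, I would first embed $F$ into $M$ via some $\iota$ (using~(2)), so that $\iota\circ\phi$ and $f$ are two morphisms $E\to M$ with small defect; applying the homogeneity clause of~(2) yields an automorphism $\alpha$ with $d(\alpha\circ\iota\circ\phi,f)<\varpi(\delta)$, and then $g:=\alpha\circ\iota$ is the desired morphism—here one must check that $I(g)$ can be made smaller than any prescribed $\varepsilon$, which is where the small-perturbation hypothesis on basic tuples from Subsection~\ref{Subsection:basic} enters, letting one replace $g$ by a genuine embedding at negligible cost. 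The implication $(3)\Rightarrow(4)$ specializes $E=\langle\bar a\rangle$ and takes $\phi$ to be the inclusion $\langle\bar a\rangle\hookrightarrow F$ (so $I(\phi)=0$, $\delta$ arbitrarily small) and $f=\phi$ the given embedding; statement~(3) then produces $g\colon F\to M$ close to $\phi$ on $\bar a$, and again the small-perturbation lemma upgrades $g$ to an embedding.

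The steps $(4)\Rightarrow(5)$ and $(5)\Rightarrow(6)$ are successive restrictions of the domain of the data: $(5)$ is the case of~(4) where $F$ is taken from the distinguished family $\mathcal I$ and the map to be extended is an embedding, while $(6)$ restricts further to basic tuples $\bar a$ drawn from the fixed countable fundamental subset $D_A$ and to morphisms from a fixed countable dense collection, trading embeddings for morphisms with controlled $I$. Each of these is a matter of density and approximation: one approximates an arbitrary finitely generated structure in the Gromov--Hausdorff metric by substructures of elements of $\mathcal I$ (available since $\mathcal A$ has enough injectives from $\mathcal I$), approximates arbitrary basic tuples by basic tuples from the fundamental subset, and approximates arbitrary morphisms by ones from the dense collection, using throughout the functions $\rho_{\bar a}$ and the perturbation estimates. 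The genuinely nontrivial direction—and the main obstacle—is the closing implication $(6)\Rightarrow(1)$: here one has only the weakest, countably-indexed, morphism-level extension data, and must recover the full approximate extension property of Subsection~\ref{Subsection:existence}. The plan is to run the recursive back-and-forth/amalgamation construction of that subsection again, but now feeding it the maps supplied by~(6); the delicate part is a summable-error bookkeeping, choosing at each stage a tolerance $\delta_n$ with $\sum\varpi(\delta_n)$ controlled (exactly as in the proof of Proposition~\ref{Proposition:stable-homogeneity}), so that the countably many approximate extensions cohere into a single morphism $g\colon F\to M$ realizing the extension and certifying $M$ as the Fra\"{\i}ss\'{e} limit. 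Verifying that the countable density hypotheses in~(6) are genuinely sufficient to saturate against \emph{all} finitely generated structures and \emph{all} morphisms—i.e.\ that no amalgamation demand is missed in the limit—is the estimate I would spend the most care on.
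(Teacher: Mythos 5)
Your cyclic organization $(1)\Rightarrow(2)\Rightarrow\cdots\Rightarrow(6)\Rightarrow(1)$ differs from the paper, which works hub-and-spoke around the equivalence $(1)\Leftrightarrow(4)$ (cited from \cite[Corollary 3.20]{ben_yaacov_fraisse_2015}) and proves $(3)\Rightarrow(2)$ by the back-and-forth of Subsection \ref{Subsection:stable}, with the remaining implications declared easy. The difference is mostly cosmetic, and several of your steps are fine --- indeed your $(2)\Rightarrow(3)$ is cleaner than you make it sound, since $g=\alpha\circ\iota$ is a composition of an embedding with an automorphism and hence automatically satisfies $I(g)=0$, so no perturbation is needed there.

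The genuine gap is in $(3)\Rightarrow(4)$, where you claim that ``the small-perturbation lemma upgrades $g$ to an embedding.'' It does not. Statement $(3)$ hands you, for each $\varepsilon>0$, a morphism $g\colon F\rightarrow M$ with $I(g)<\varepsilon$ and $d(g\circ\phi,f)<\varpi(\delta)$; statement $(4)$ demands a genuine embedding $\psi$. The hypothesis on basic tuples from Subsection \ref{Subsection:basic} (the $\rho_{\bar a}$ property) only lets you approximately factor one morphism through another when they nearly agree on a generating basic tuple --- it never converts a single almost-isometric morphism into an isometric one. Promoting a near-embedding to an embedding requires an \emph{iterated} one-sided back-and-forth: apply $(3)$ recursively with tolerances $\delta_n$ chosen so that $\sum_n\varpi(\delta_n)<\infty$, using the approximate pushout of Lemma \ref{Lemma:pushout} at each stage, and take the limit of the resulting Cauchy sequence of morphisms; the limit map is an embedding because the distortions $I$ tend to $0$. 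This is exactly the argument of Subsection \ref{Subsection:stable} that the paper invokes to get $(3)\Rightarrow(2)$ (from which $(4)$ follows immediately), and it is the same mechanism you correctly anticipate for $(6)\Rightarrow(1)$; it is needed already here. A secondary soft spot: your description of $(4)\Rightarrow(5)\Rightarrow(6)$ as ``successive restrictions'' is not accurate for $(6)$, whose hypothesis \emph{relaxes} embeddings to morphisms with $I(f)\leq\delta$ and whose conclusion carries the quantitative bound $\varpi(\delta)$; deriving it requires an approximate-pushout argument against the injective $A\in\mathcal{I}$, not just density.
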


\begin{proof}
The equivalence of (1) and (4) follows from \cite[Corollary 3.20]%
{ben_yaacov_fraisse_2015}. The argument of Subsection \ref{Subsection:stable}
gives a proof of (3)$\Rightarrow $(2), while the implication (2)$\Rightarrow 
$(3) is obvious. Since clearly (2) implies (4), Proposition \ref%
{Proposition:stable-homogeneity} together with uniqueness of the limit shows
that (4) and (2) are in fact equivalent. A similar proof as the one in
Subsection \ref{Subsection:stable} shows that any two separable structures
satisfying (5) are isomorphic. This gives the implication (5)$\Rightarrow $%
(3), while the converse implication is obvious. The fact that $\mathcal{A}$
has enough injectives from $\mathcal{I}$ and our hypotheses on basic
sequences show that (6) implies (4), while the converse implication is
obvious.
\end{proof}

\section{Retracts of the limit\label{Section:retracts}}

\subsection{Approximate injectivity and retracts\label{Subsection:retracts}}

Suppose that $A$ is an $\mathcal{L}$-structure. A \emph{retraction }$\pi $
of $A$ is a morphisms $\pi :A\rightarrow A$ that is \emph{idempotent}, that
is\ $\pi \circ \pi =\pi $. A \emph{retract }of $A$ is the image of $A$ under
a retraction. Suppose that $\mathcal{A}$ is a class of $\mathcal{L}$%
-structures satisfying all the assumptions from Section \ref%
{Section:injective}. In the following we will characterize (up to
isomorphism) the retracts of the Fra\"{\i}ss\'{e} limit $M$ of the class of
finitely generated structures from $\mathcal{A}$. The same proof as Lemma %
\ref{Lemma:pushout} gives the following lemma.

\begin{lemma}
\label{Lemma:pushout2}Suppose that $X,\widehat{X},Y$ are structures in $%
\mathcal{A}$, $\bar{a}$ is a tuple in $X$, $\phi :X\rightarrow \widehat{X}$
and $f:X\rightarrow Y$ are morphisms such that $I(\phi )<\delta $. Then
there exists a structure $\widehat{Y}$, a morphism $\widehat{f}:\widehat{X}%
\rightarrow \widehat{Y}$, and an embedding $j:Y\rightarrow \widehat{Y}$ such
that $d((\widehat{f}\circ \phi )(\bar{a}),(j\circ f)(\bar{a}))\leq \varpi
(\delta )$ and furthermore for any $Z\in \mathcal{A}$ and morphisms $g:%
\widehat{X}\rightarrow Z$ and $h:\widehat{Y}\rightarrow Z$ such that $%
d((g\circ \phi )(\bar{a}),(h\circ f)(\bar{a}))\leq \varpi (\delta )$ there
exists a morphism $\tau :\widehat{Y}\rightarrow Z$ such that $g=\tau \circ 
\widehat{f}$ and $h=\tau \circ j$. If moreover $I(f)<\delta $ then $\widehat{%
f}$ is an embedding. If $X,\widehat{X},Y$ are finitely generated, then $%
\widehat{Y}$ is finitely generated.
\end{lemma}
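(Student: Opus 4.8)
The plan is to adapt verbatim the proof of Lemma \ref{Lemma:pushout}, replacing every occurrence of the full morphism distance $d(\,\cdot\,,\,\cdot\,)$ by its restriction to the distinguished tuple $\bar a$, namely $d((\,\cdot\,)(\bar a),(\,\cdot\,)(\bar a))$. Concretely, I would consider the collection $(g_i,h_i)$ of all pairs of morphisms $g_i:\widehat X\to A_i$ and $h_i:Y\to A_i$ with $A_i\in\mathcal I$ satisfying $d((g_i\circ\phi)(\bar a),(h_i\circ f)(\bar a))\le\varpi(\delta)$, let $W$ be the product of the $A_i$ in $\mathcal A$, and let $\widehat f:\widehat X\to W$ and $j:Y\to W$ be the morphisms induced by the $g_i$, the $h_i$, and the universal property of the product. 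The candidate $\widehat Y$ is then the substructure of $W$ generated by the union of the ranges of $\widehat f$ and $j$.

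First I would check that $j$ is an embedding, exactly as before. Given a tuple $\bar b$ in $Y$, an atomic formula $\psi$, and a real $r$ with $\psi(\bar b)>r$, one uses the density of finitely generated substructures of members of $\mathcal I$ to find $A\in\mathcal I$ and a morphism $h:Y\to A$ with $\psi(h(\bar b))>r$. The essential observation is that producing a companion $g:\widehat X\to A$ via the enough-injectives property (Definition \ref{Definition:approximate-inverses}, applied to $\phi:X\to\widehat X$ with $I(\phi)<\delta$ and to $h\circ f:X\to A$) yields the \emph{full} estimate $d(g\circ\phi,h\circ f)\le\varpi(\delta)$, which \emph{a fortiori} gives the $\bar a$-restricted bound; hence $(g,h)=(g_i,h_i)$ for some index $i$, and therefore $\psi(j(\bar b))\ge\psi(h(\bar b))>r$. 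This confirms that $j$ is an embedding, and the same reasoning under the additional hypothesis $I(f)<\delta$ shows that $\widehat f$ is an embedding.

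Next I would verify the factorization property. Given $Z\in\mathcal A$ and morphisms $g:\widehat X\to Z$ and $h:Y\to Z$ with $d((g\circ\phi)(\bar a),(h\circ f)(\bar a))\le\varpi(\delta)$, I would embed $Z$ into a product $\widehat Z$ of structures from $\mathcal I$ (again using that $\mathcal A$ has enough injectives). Composing $g$ and $h$ with the coordinate projections of $\widehat Z$ exhibits them coordinatewise as pairs satisfying precisely the $\bar a$-restricted defining condition of $W$, so the universal property of the product $W$ supplies a unique $\tau:W\to\widehat Z$ with $\tau\circ\widehat f=g$ and $\tau\circ j=h$. Since $\widehat Y$ is generated by the ranges of $\widehat f$ and $j$, whose images under $\tau$ lie in the substructure $Z$, the map $\tau$ carries $\widehat Y$ into $Z$, yielding the required $\tau:\widehat Y\to Z$. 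Finite generation of $\widehat Y$ when $X,\widehat X,Y$ are finitely generated is immediate from its description as a substructure generated by finitely many elements.

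The adaptation is routine; the one point deserving care is the embedding argument for $j$. Enlarging the index set (the $\bar a$-restricted condition is strictly weaker than the full distance condition, so more pairs qualify) could in principle loosen the separation required to conclude $\psi(j(\bar b))>r$. The resolution is exactly the observation above: the auxiliary pair $(g,h)$ we feed in is built from the enough-injectives property, which automatically delivers the stronger full-distance estimate, so its membership in the (now larger) index set is never in doubt. With that secured, neither the embedding claims nor the factorization property are sensitive to the switch from $d(\,\cdot\,,\,\cdot\,)$ to its restriction along $\bar a$.
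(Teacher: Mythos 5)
Your proposal is correct and follows essentially the same route as the paper, which simply observes that the proof of Lemma \ref{Lemma:pushout} carries over verbatim with the full morphism distance replaced by its restriction to the tuple $\bar a$. The one genuinely delicate point --- that enlarging the index family (since the $\bar a$-restricted condition is weaker) does not break the embedding arguments for $j$ and $\widehat f$, because the auxiliary pairs produced by the enough-injectives property satisfy the full-distance estimate and hence still belong to the family --- is exactly the right thing to check, and you handle it correctly.
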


The structure $\widehat{Y}$ in Lemma \ref{Lemma:pushout2} together with the
canonical morphisms $\widehat{f}:\widehat{X}\rightarrow \widehat{Y}$ and $%
j:Y\rightarrow \widehat{Y}$ will be called the\emph{\ approximate pushout}
of $f$ and $\phi $ over $\bar{a}$ with tolerance $\varpi (\delta )$. One can
similarly define the approximate pushout of a finite sequences of maps $%
f_{i}:X\rightarrow Y_{i}$ and $\phi _{i}:X\rightarrow \widehat{X}_{i}$ over $%
\bar{a}\subset X$ with tolerance $\varpi \left( \delta _{i}\right) $ for $%
i=1,2,\ldots ,k$.

\begin{definition}
\label{Definition:approx-inj}Suppose that $X$ is an $\mathcal{L}$-structure
in $\mathcal{A}$. We say that $X$ is \emph{approximately injective} if
whenever $A$ is a structure in $\mathcal{I}$, $\bar{a}$ is a tuple in $A$, $%
f:\left\langle \bar{a}\right\rangle \rightarrow X$ is a morphism, and $%
\varepsilon >0$, there exists a morphism $g:A\rightarrow X$ such that $d(g(%
\bar{a}),f(\bar{a}))\leq \varepsilon $.
\end{definition}

As observed in Subsection \ref{Subsection:existence}, the Fra\"{\i}ss\'{e}
limit $M$ of the class of finitely generated structures in $\mathcal{A}$ can
be realized as the limit of an inductive sequence of elements of $\mathcal{I}
$ with embeddings as connective maps. It follows from this fact and
injectivity of elements of $\mathcal{I}$ that $M$ is approximately
injective. Therefore any retract of $M$ is approximately injective as well.
The next theorem shows that, conversely, any approximately injective
separable structure in $\mathcal{A}$ is isomorphic as $\mathcal{L}$%
-structure to a retract of $M$.

\begin{theorem}
\label{Theorem:retracts}Let $M$ denote the Fra\"{\i}ss\'{e} limit of the
class of finitely generated structures in $\mathcal{A}$. A separable
structure $X$ in $\mathcal{A}$ is approximately injective if and only if
there exist an embedding $\phi :X\rightarrow M$ and an idempotent morphism $%
\pi :M\rightarrow M$ such that the range of $\phi $ coincides with the range
of $\pi $.
\end{theorem}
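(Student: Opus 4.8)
The plan is to prove both implications of Theorem \ref{Theorem:retracts}. The forward direction is essentially already recorded in the discussion preceding the statement: since $M$ is the inductive limit of structures in $\mathcal{I}$ with embeddings, and each structure in $\mathcal{I}$ is injective, $M$ is approximately injective, and a retract of an approximately injective structure is again approximately injective. So the content is the converse: given a separable approximately injective $X$ in $\mathcal{A}$, I want to produce an embedding $\phi:X\to M$ together with an idempotent morphism $\pi:M\to M$ whose range equals the range of $\phi$. Since every separable structure embeds into $M$ (as noted at the end of Subsection \ref{Subsection:stable}), the real work is to construct the retraction $\pi$ landing on $\phi[X]$.

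The approach I would take is a back-and-forth / approximate intertwining argument, exactly in the spirit of Proposition \ref{Proposition:stable-homogeneity}, but now one-sided and producing an idempotent rather than an automorphism. First I would fix an embedding $\phi:X\to M$ and identify $X$ with its image, so that $X\subset M$ is a substructure and the inclusion is an embedding. I would write $M$ as the inductive limit of structures $A_{d_k}\in\mathcal{I}$ with embeddings $j_k$, and exhaust $M$ by an increasing chain of finitely generated substructures $F_k$ whose union is dense, where each $F_k$ may be taken (up to small perturbation, using density with respect to the Gromov--Hausdorff distance and the basic-tuple assumptions) to be a finitely generated substructure of some $A_d\in\mathcal{I}$. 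The goal is to build a sequence of morphisms $r_k:F_k\to X$, with $r_k|_{F_k\cap X}$ close to the identity and $r_{k+1}|_{F_k}$ close to $r_k$, so that the maps assemble in the limit into a morphism $\pi:M\to M$ (after composing with the inclusion $X\subset M$) which restricts to the identity on $X$ and has range equal to $X$.

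The two recursive extension moves are the heart of the construction, and each uses a separate hypothesis. To pass from $F_k$ to a map $F_{k+1}\to X$ extending $r_k$ approximately, I would invoke approximate injectivity of $X$ (Definition \ref{Definition:approx-inj}): since $F_{k+1}$ is a finitely generated substructure of some $A\in\mathcal{I}$ and $r_k$ is defined on the substructure $F_k$, approximate injectivity lets me extend $r_k$ over the larger finitely generated piece up to a prescribed tolerance. Dually, to guarantee that the image of $\pi$ is all of $X$ and that $\pi$ fixes $X$ pointwise in the limit, I would use the extension property of $M$ established in Subsection \ref{Subsection:existence} (equivalently, the characterization in Proposition \ref{Proposition:characterize-limit}) to correct the maps back into $M$ so that each element of $X$ is approximated arbitrarily well by images of $\pi$. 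By arranging the tolerances $\delta_k$ to decrease fast enough---say $\varpi(\delta_k)<2^{-k}\eta$ as in Subsection \ref{Subsection:stable}---the compositions converge, and the small perturbation property of basic tuples ensures the limiting morphism $\pi$ is well defined on the dense union and extends continuously to $M$.

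The main obstacle I anticipate is verifying that the limit morphism $\pi$ is genuinely idempotent with range \emph{exactly} $\phi[X]$, rather than merely a morphism that is approximately idempotent with dense range in $X$. The natural output of an approximate intertwining is a pair of maps that are mutual approximate inverses only in the limit; here I must instead force a single idempotent. I would handle this by building the back-and-forth so that $\pi\circ\iota_X=\iota_X$ holds exactly in the limit (where $\iota_X:X\to M$ is the inclusion), which forces $\pi$ to act as the identity on $X$, and then $\pi$ being a morphism with $\pi|_X=\mathrm{id}_X$ and range contained in $X$ automatically gives $\pi\circ\pi=\pi$ and range exactly $X$. The delicate point is thus to control the estimates in condition (4)-type inequalities of Subsection \ref{Subsection:stable} tightly enough that the errors telescope to zero on $X$, and to check that the convergence of the $r_k$ in the metric $d$ of Definition \ref{Definition:I}, combined with the control on $I(r_k)$, produces a bona fide morphism of $M$ in the limit.
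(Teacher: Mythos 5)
Your reduction of the theorem to ``fix an embedding $\phi:X\to M$, identify $X$ with its image, and build a retraction onto it'' contains the fatal step. The theorem asserts only that \emph{some} embedding of $X$ has range equal to the range of an idempotent morphism, not that an arbitrarily chosen one does, and in general an arbitrary embedding will not work. Concretely, in the Banach space setting of Subsection \ref{Subsection:Banach}: the space $c_0$ is a separable Lindenstrauss space, hence approximately injective, but the canonical copy of $c_0$ inside the space $c$ of convergent sequences is not the range of any norm-one projection on $c$ (any projection onto $c_0$ has the form $x\mapsto x-\lim(x)w$ with $\lim w=1$, and testing against $x=(1,1,1,\ldots)$ and $x=(1,-1,-1,\ldots)$ forces $w_1=0$, and likewise every $w_n=0$, a contradiction). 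Embedding $c$ isometrically into $\mathbb{G}$ then yields an isometric copy of $c_0$ in $\mathbb{G}$ that is not the range of any norm-one projection on $\mathbb{G}$, since such a projection would restrict to one on $c$. So no amount of care with the tolerances can make your back-and-forth converge for a generic choice of $\phi$: the embedding and the retraction must be constructed \emph{together}.

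This is exactly what the paper's argument does. In the proof of the more general Theorem \ref{Theorem:general-retracts}, the ambient copy of the limit is built recursively as an inductive sequence $(Z_n)$ obtained by taking approximate pushouts (Lemma \ref{Lemma:pushout2}) of the partial embeddings $\eta_n\circ\varphi_n$ of pieces of $X$ against the maps needed to realize the extension property of the limit; the retractions $\pi_{n+1}:Z_{n+1}\to\widehat{X}_{n+1}$ are then produced by the \emph{universal property} of the pushout, which is what lets one simultaneously enlarge the domain of the retraction and keep it equal to the identity on the newly adjoined part of $X$. Your scheme, which keeps $M$ fixed, has no analogue of this step: approximate injectivity of $X$ lets you extend $r_k$ from $F_k$ to a larger finitely generated piece, but it gives no control over the values of the extension at the new points of $F_{k+1}\cap X$, and the extra requirement that $r_{k+1}$ be close to the identity there is a constraint that approximate injectivity alone cannot enforce. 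The alternative proof indicated in Subsection \ref{Subsection:universal-state} sidesteps the recursion entirely: one takes the generic $X$-state $\Omega_M^X:M\to X$, uses universality of the limit to find an embedding $\eta_X:X\to M$ with $\Omega_M^X\circ\eta_X=\mathrm{id}_X$, and sets $\pi=\eta_X\circ\Omega_M^X$; here again the embedding is not arbitrary but is chosen as a section of a globally defined morphism $M\to X$.
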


Theorem \ref{Theorem:retracts} can be proved using the construction of
approximate pushouts as in Lemma \ref{Lemma:pushout2}. We omit the proof,
since we will prove a more general result in Section \ref{Section:general}.
An alternative proof of Theorem \ref{Theorem:retracts} follows from the
results of Subsection \ref{Subsection:universal-state}. Similar
characterizations of retracts of Fra\"{\i}ss\'{e} limits have been obtained
by Dolinka in the countable case \cite{dolinka_characterization_2012} and by
Kubi\'{s} in \cite{kubis_injective_2015}.

\subsection{Approximate injectivity and nuclearity\label%
{Subsection:nuclearity}}

We consider now a notion of $\mathcal{I}$-nuclearity for structures in $%
\mathcal{A}$; see Definition \ref{Definition:nuclear}. The term $\mathcal{I}$%
-nuclear is inspired by the characterization of nuclearity for unital
C*-algebras and operator systems in terms of the completely positive
approximation property; see \cite{han_approximation_2011} and \cite[Section
2.3]{brown_c*-algebras_2008}.

\begin{definition}
\label{Definition:nuclear}A structure $X$ in $\mathcal{A}$ is\emph{\ }$%
\mathcal{I}$\emph{-nuclear} if there exist nets $\left( \gamma _{i}\right) $
and $\left( \rho _{i}\right) $ of morphisms $\gamma _{i}:X\rightarrow A_{i}$
and $\rho _{i}:A_{i}\rightarrow X$ such that $A_{i}\in \mathcal{I}$ and $%
\rho _{i}\circ \gamma _{i}$ converges pointwise to the identity map of $X$.
\end{definition}

We now prove that $\mathcal{I}$-nuclearity is equivalent to approximate
injectivity. If $f,g:E\rightarrow F$ are functions between $\mathcal{L}$%
-structures, and $\bar{a}$ is an $n$-tuple in $E$, we write $f\approx _{\bar{%
a},\varepsilon }g$ to express the fact that $d\left( f(\bar{a}),g(\bar{a}%
)\right) \leq \varepsilon $.

\begin{proposition}
\label{Proposition:nuclear}Suppose that $X$ is a structure in $\mathcal{A}$.
The following assertions are equivalent:

\begin{enumerate}
\item $X$ is approximately injective;

\item $X$ is $\mathcal{I}$-nuclear;

\item Whenever $E,F$ are finitely generated structures in $\mathcal{A}$, $%
\bar{a}$ is a finite tuple in $E$, $\phi :E\rightarrow F$ and $%
f:E\rightarrow X$ are morphisms such that $I(\phi )<\delta $, there exists a
morphism $g:F\rightarrow X$ such that $g\circ \phi \approx _{\bar{a},\varpi
(\delta )}f$.
\end{enumerate}
\end{proposition}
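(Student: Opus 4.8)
The plan is to prove the cycle of implications $(1)\Rightarrow(2)\Rightarrow(3)\Rightarrow(1)$, so that the substantive work is concentrated in the passage from approximate injectivity to $\mathcal{I}$-nuclearity. I would first dispose of $(3)\Rightarrow(1)$, which is a direct specialization. Given $A\in\mathcal{I}$, a tuple $\bar{a}$ in $A$, a morphism $f\colon\langle\bar{a}\rangle\to X$, and $\varepsilon>0$, I would apply (3) with $E=\langle\bar{a}\rangle$, $F=A$ (both finitely generated, since $A\in\mathcal{I}$), $\phi$ the inclusion $\langle\bar{a}\rangle\hookrightarrow A$, and the same tuple $\bar{a}$. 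As $\phi$ is an embedding, $I(\phi)=0$, so (3) applies for every $\delta>0$; choosing $\delta$ with $\varpi(\delta)<\varepsilon$, which is possible since $\varpi$ is continuous at and vanishing at $0$, yields $g\colon A\to X$ with $d(g(\bar{a}),f(\bar{a}))=d(g(\phi(\bar{a})),f(\bar{a}))\le\varpi(\delta)<\varepsilon$. This is exactly approximate injectivity (Definition~\ref{Definition:approx-inj}).

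For $(2)\Rightarrow(3)$ I would use the nuclear approximation $\rho_i\circ\gamma_i\to\mathrm{id}_X$ with $\gamma_i\colon X\to A_i$ and $\rho_i\colon A_i\to X$ provided by Definition~\ref{Definition:nuclear}. Given $E,F,\bar{a},\phi,f$ as in (3) with $I(\phi)<\delta$, I would fix $\delta'$ with $I(\phi)\le\delta'<\delta$ and apply the extension clause of Definition~\ref{Definition:approximate-inverses} to the morphisms $\phi\colon E\to F$ and $\gamma_i\circ f\colon E\to A_i$; this produces $h_i\colon F\to A_i$ with $d(h_i\circ\phi,\gamma_i\circ f)\le\varpi(\delta')$. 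Setting $g_i:=\rho_i\circ h_i$ and using that $\rho_i$, being a morphism, is nonexpansive, I would estimate $d(g_i(\phi(\bar{a})),f(\bar{a}))\le\varpi(\delta')+d\big((\rho_i\circ\gamma_i)(f(\bar{a})),f(\bar{a})\big)$. Since $\varpi$ is strictly increasing, $\varpi(\delta')<\varpi(\delta)$, and the room $\varpi(\delta)-\varpi(\delta')>0$ absorbs the second term once $i$ is large enough that $(\rho_i\circ\gamma_i)(f(\bar{a}))$ is within that room of $f(\bar{a})$; this $g_i$ witnesses (3).

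The main obstacle is $(1)\Rightarrow(2)$, where one must manufacture maps \emph{out of} $X$ into $\mathcal{I}$, even though approximate injectivity only supplies maps \emph{into} $X$. Since all maps involved are nonexpansive and $X$ is separable, it suffices to build, for each finite tuple $\bar{x}$ from a fixed countable dense subset and each $\eta>0$, a single $A\in\mathcal{I}$ with morphisms $\gamma\colon X\to A$ and $\rho\colon A\to X$ such that $d(\rho(\gamma(\bar{x})),\bar{x})<2\eta$; directing over such pairs then yields the required net. To do so I would invoke the Gromov--Hausdorff density clause of Definition~\ref{Definition:approximate-inverses} to approximate the finitely generated substructure $\langle\bar{x}\rangle$ by a finitely generated substructure $B=\langle\bar{b}\rangle$ of some $A\in\mathcal{I}$, obtaining from Definition~\ref{Definition:GH} morphisms $u\colon\langle\bar{x}\rangle\to A$ with range in $B$ and $w\colon B\to\langle\bar{x}\rangle\subseteq X$ with $I(u),I(w)<\eta$ and $d(w\circ u,\mathrm{id}),d(u\circ w,\mathrm{id})<\eta$. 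Injectivity of $A$ then extends $u$ to a morphism $\gamma\colon X\to A$ with $\gamma(\bar{x})=u(\bar{x})$.

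The delicate point is to choose $\rho$ so that it agrees with $w$ not merely on the generators $\bar{b}$ but on the particular element $u(\bar{x})\in B$; a naive extension of $w$ controls $\rho$ only on $\bar{b}$, and transferring this to $u(\bar{x})$ would seem to force an appeal to the small perturbation lemma together with an unwanted correction map. I would bypass this entirely by applying approximate injectivity of $X$ to the \emph{enlarged} tuple $\bar{a}:=(\bar{b},u(\bar{x}))$ in $A$: since $u(\bar{x})\in\langle\bar{b}\rangle$ one has $\langle\bar{a}\rangle=B$, so $w$ is a morphism on $\langle\bar{a}\rangle$, and the resulting $\rho\colon A\to X$ satisfies $d(\rho(\bar{a}),w(\bar{a}))<\eta$ coordinatewise, in particular $d(\rho(u(\bar{x})),w(u(\bar{x})))<\eta$. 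Combining this with $d(w(u(\bar{x})),\bar{x})=d((w\circ u)(\bar{x}),\bar{x})<\eta$ gives $d(\rho(\gamma(\bar{x})),\bar{x})=d(\rho(u(\bar{x})),\bar{x})<2\eta$. Taking $\eta<\varepsilon/2$ and assembling over a countable dense family of tuples then produces the nets required by Definition~\ref{Definition:nuclear}, completing $\mathcal{I}$-nuclearity.
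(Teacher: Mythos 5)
Your proof is correct, and the directions $(2)\Rightarrow(3)$ and $(3)\Rightarrow(1)$ follow essentially the same lines as the paper: the paper also treats $(3)\Rightarrow(1)$ as immediate, and its $(2)\Rightarrow(3)$ is exactly your argument of composing an approximate extension $h_i:F\rightarrow A_i$ (obtained from Definition~\ref{Definition:approximate-inverses}) with $\rho_i$, using the slack $\varpi(\delta)-\varpi(\delta')>0$ to absorb the nuclearity error; your version even states the intermediate estimate $d(h_i\circ\phi,\gamma_i\circ f)\leq\varpi(\delta')$ more carefully than the paper does.

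The genuine difference is in $(1)\Rightarrow(2)$. The paper first invokes Theorem~\ref{Theorem:retracts} to realize an approximately injective $X$ as a retract of the Fra\"{\i}ss\'{e} limit $M$, then proves $\mathcal{I}$-nuclearity of $M$ directly (using that $M$ is an increasing union of $B_n\in\mathcal{I}$ and that the identity of $B_n$ extends to $\gamma:M\rightarrow B_n$ by injectivity), and finally transports the approximating maps through the retraction. You instead give a self-contained construction that never mentions $M$: you Gromov--Hausdorff-approximate $\langle\bar{x}\rangle$ by a finitely generated substructure $B=\langle\bar{b}\rangle$ of some $A\in\mathcal{I}$, extend the resulting map $u$ to $\gamma:X\rightarrow A$ by injectivity of $A$, and crucially apply approximate injectivity of $X$ to the \emph{enlarged} tuple $(\bar{b},u(\bar{x}))$ --- which still generates $B$ --- so that the resulting $\rho:A\rightarrow X$ is controlled at the specific point $u(\bar{x})$ without any appeal to the small perturbation hypothesis. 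That trick is the right way to sidestep the issue you identify, and since Definition~\ref{Definition:approx-inj} allows arbitrary (not necessarily basic) tuples, it is legitimate. What each approach buys: the paper's route is shorter on the page but leans on Theorem~\ref{Theorem:retracts}, whose proof is itself deferred to the more general setting of Section~\ref{Section:general}; your route is longer but logically self-contained and makes the proposition independent of the existence and structure theory of the Fra\"{\i}ss\'{e} limit. Your restriction to separable $X$ (to get a countable dense set of tuples) is harmless and matches the paper's implicit assumption, and in any case could be dropped by indexing the net over all finite tuples of $X$.
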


\begin{proof}
We present the proofs of the nontrivial implications below.

(1)$\Rightarrow $(2): If $X$ is approximately injective, then by Theorem \ref%
{Theorem:retracts} $X$ is isomorphic to a retract of the Fra\"{\i}ss\'{e}
limit $M$ of the class of finite-dimensional structures in $\mathcal{A}$.
Therefore it is enough to prove that $M$ is $\mathcal{I}$-nuclear. Recall
that $M$ contains an increasing sequence $\left( B_{n}\right) $ of
structures from $\mathcal{I}$ with dense union. Therefore it is enough to
prove that if $\bar{a}\subset B_{n}\subset M$ is a finite tuple and $%
\varepsilon >0$, then there exist morphisms $\gamma :M\rightarrow B_{n}$ and 
$\rho :B_{n}\rightarrow M$ such that $\left( \rho \circ \gamma \right) (\bar{%
a})=\bar{a}$. Consider the identity map of $B_{n}$ and observe that by
injectivity of $B_{n}$ it extends to a morphism $\gamma :M\rightarrow B_{n}$%
. Let now $\rho :B_{n}\rightarrow M$ be the inclusion map and observe that $%
\left( \gamma \circ \rho \right) (\bar{a})=\bar{a}$.

(2)$\Rightarrow $(3): Let $E,F,\bar{a},\phi ,f$ be as in (3). Let $\delta
_{0}>0$ be such that $I(\phi )<\delta _{0}<\delta $. Fix also $\varepsilon
>0 $ such that $\varpi \left( \delta _{0}\right) +\varepsilon <\varpi
(\delta )$. By assumption there exist $A\in \mathcal{I}$ and morphisms $%
\gamma :X\rightarrow A$ and $\rho :A\rightarrow X$ such that $\rho \circ
\gamma \circ f\approx _{\bar{a},\varepsilon }f$. Since $\mathcal{A}$ has
enough injectives from $\mathcal{I}$ with modulus $\varpi $ there exists a
morphism $h:F\rightarrow A$ such that $d\left( h\circ \phi ,f\right) \leq
\varpi \left( \delta _{0}\right) $. Set $g=\rho \circ h$ and observe that $%
d((g\circ \phi )(\bar{a}),f(\bar{a}))\leq \varpi \left( \delta _{0}\right)
+\varepsilon <\varpi (\delta )$.
\end{proof}

\subsection{$\mathcal{I}$-structures\label{Subsection:I-structure}}

In the following if $A,B$ are subsets of a structure $X$, we write $A\subset
_{\varepsilon }B$ if every element of $A$ is at distance at most $%
\varepsilon $ from some element of $B$.

\begin{definition}
\label{Definition:I-structure}We say that structure $X$ in $\mathcal{A}$ is
an $\mathcal{I}$\emph{-structure} if for every finitely generated
substructure $E$ of $X$ and $\varepsilon >0$ there exist a finitely
generated substructure $B$ of $X$ containing $E$, a structure $\widehat{B}$
in $\mathcal{I}$ such that $d(B,\widehat{B})<\varepsilon $. We say that $X$
is a \emph{rigid} $\mathcal{I}$\emph{-structure} if for every finite subset $%
x_{1},\ldots ,x_{n}$ of $X$ there exists a substructure $A$ of $X$ that
belongs to $\mathcal{I}$ such that $\left\{ x_{1},\ldots ,x_{n}\right\}
\subset _{\varepsilon }A$ .
\end{definition}

Every structure in $\mathcal{A}$ that can be represented as the direct limit
of elements of $\mathcal{I}$ with embeddings as connective maps is clearly a
rigid $\mathcal{I}$-structure. Particularly, the Fra\"{\i}ss\'{e} limit $M$
of finite-dimensional structures in $\mathcal{A}$ is a rigid $\mathcal{I}$%
-structure. In turn, it follows from injectivity of elements of $\mathcal{I}$
together with the fact that $\mathcal{A}$ has enough injectives from $%
\mathcal{I}$ and our assumptions on basic sequences that any rigid $\mathcal{%
I}$-structure is an $\mathcal{I}$-structure, and that an $\mathcal{I}$%
-structure is approximately injective. The following proposition provides a
characterization among the (rigid) $\mathcal{I}$-structures of the Fra\"{\i}%
ss\'{e} limit of the class of finitely generated structures in $\mathcal{A}$.

\begin{proposition}
\label{Proposition:characterize-limit-I-structure}Let $X$ be a separable
structure in $\mathcal{A}$. The following statements are equivalent:

\begin{enumerate}
\item $X$ is the Fra\"{\i}ss\'{e} limit $M$ of the class of finitely
generated structures in $\mathcal{A}$;

\item $M$ is an $\mathcal{I}$-structure, and for any $\delta ,\varepsilon >0$%
, structures $A,\widehat{A}\in \mathcal{I}$, embedding $\phi :A\rightarrow 
\widehat{A}$, and morphism $f:A\rightarrow X$ such that $I(f)<\delta $,
there exists a morphism $\widehat{f}:\widehat{A}\rightarrow X$ such that $I(%
\widehat{f})<\varepsilon $ and $d(\widehat{f}\circ \phi ,f)<\varpi (\delta )$%
;

\item $M$ is a rigid $\mathcal{I}$-structure, and for any structures $A,%
\widehat{A}\in \mathcal{I}$, embeddings $\phi :A\rightarrow \widehat{A}$ and 
$f:A\rightarrow X$, and $\varepsilon >0$, there exists a morphism $\widehat{f%
}:\widehat{A}\rightarrow X$ such that $I(\widehat{f})<\varepsilon $ and $d(%
\widehat{f}\circ \phi ,f)<\varepsilon $.
\end{enumerate}
\end{proposition}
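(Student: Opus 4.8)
The plan is to prove the three statements equivalent by first checking the two easy implications $(1)\Rightarrow(2)$ and $(1)\Rightarrow(3)$ straight from the characterizations of the limit already in hand, and then establishing the converses $(2)\Rightarrow(1)$ and $(3)\Rightarrow(1)$ by verifying that $X$ satisfies the approximate extension property, that is, item (3) of Proposition \ref{Proposition:characterize-limit}. (I read the clauses ``$M$ is an $\mathcal{I}$-structure'' and ``$M$ is a rigid $\mathcal{I}$-structure'' in (2) and (3) as asserting this of the structure $X$ under consideration.) For $(1)\Rightarrow(3)$, recall that $M$ is a rigid $\mathcal{I}$-structure; given embeddings $\phi\colon A\rightarrow\widehat{A}$ and $f\colon A\rightarrow M$ with $A,\widehat{A}\in\mathcal{I}$, item (5) of Proposition \ref{Proposition:characterize-limit} (applied with $E=A$ and $F=\widehat{A}$) produces an embedding $g\colon\widehat{A}\rightarrow M$ with $d(g\circ\phi,f)<\varepsilon$, and $I(g)=0$. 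For $(1)\Rightarrow(2)$, $M$ is in particular an $\mathcal{I}$-structure; given an embedding $\phi\colon A\rightarrow\widehat{A}$ and a morphism $f\colon A\rightarrow M$ with $I(f)<\delta$, item (3) of Proposition \ref{Proposition:characterize-limit} (applied with $E=A$, $F=\widehat{A}$, the same $\delta$, and $I(\phi)=0<\delta$) yields $\widehat{f}\colon\widehat{A}\rightarrow M$ with $d(\widehat{f}\circ\phi,f)<\varpi(\delta)$ and $I(\widehat{f})<\varepsilon$.

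For $(3)\Rightarrow(1)$ I would verify item (3) of Proposition \ref{Proposition:characterize-limit}. Fix finitely generated $E=\langle\bar{a}\rangle$, $F$, and morphisms $\phi\colon E\rightarrow F$, $f\colon E\rightarrow X$ with $I(\phi)<\delta$, $I(f)<\delta$, and choose $\delta_{0}$ with $\max\{I(\phi),I(f)\}<\delta_{0}<\delta$. Since $X$ is a rigid $\mathcal{I}$-structure, there is a substructure $A\in\mathcal{I}$ of $X$ together with a morphism $f_{0}\colon E\rightarrow A$ with $I(f_{0})\leq\delta_{0}$ and $f_{0}(\bar{a})$ as close as desired to $f(\bar{a})$; this uses the definition of rigid $\mathcal{I}$-structure to place $A$ near $f(\bar{a})$ and the small perturbation property of basic tuples to realise $f_{0}$. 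Writing $\iota_{A}\colon A\hookrightarrow X$ for the inclusion, which is an embedding, I apply Lemma \ref{Lemma:NAP} to $f_{0}$ and $\phi$ (both of defect $\leq\delta_{0}$) to obtain $Z\in\mathcal{I}$ and embeddings $i\colon A\rightarrow Z$, $j\colon F\rightarrow Z$ with $d(i\circ f_{0},j\circ\phi)\leq\varpi(\delta_{0})$. Now hypothesis (3), applied to the embeddings $i\colon A\rightarrow Z$ and $\iota_{A}\colon A\rightarrow X$, furnishes $G\colon Z\rightarrow X$ with $I(G)<\varepsilon'$ and $d(G\circ i,\iota_{A})<\varepsilon'$. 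Setting $g=G\circ j$, subadditivity $I(G\circ j)\leq I(G)+I(j)$ gives $I(g)<\varepsilon'$, while $g\circ\phi=G\circ j\circ\phi$ is $\varpi(\delta_{0})$-close to $G\circ i\circ f_{0}$, which is $\varepsilon'$-close to $\iota_{A}\circ f_{0}=f_{0}$, which is close to $f$. Since $\varpi$ is strictly increasing we have $\varpi(\delta_{0})<\varpi(\delta)$, so choosing $\varepsilon'$ and the perturbation small enough yields $d(g\circ\phi,f)<\varpi(\delta)$ and $I(g)<\varepsilon$, as required.

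The implication $(2)\Rightarrow(1)$ follows the same skeleton, and is where the main difficulty lies. Because $X$ is now only an $\mathcal{I}$-structure, I cannot produce a genuine $\mathcal{I}$-substructure near $f(\bar{a})$, but only a finitely generated substructure $B\subseteq X$ containing $\langle f(\bar{a})\rangle$ with $d(B,\widehat{B})<\eta$ for some $\widehat{B}\in\mathcal{I}$; the Gromov--Hausdorff inequality then supplies morphisms $u\colon B\rightarrow\widehat{B}$ and $v\colon\widehat{B}\rightarrow B\subseteq X$ with $\max\{I(u),I(v)\}<\eta$ and $d(v\circ u,\mathrm{id}_{B})<\eta$. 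I amalgamate $u\circ f\colon E\rightarrow\widehat{B}$ and $\phi\colon E\rightarrow F$ via Lemma \ref{Lemma:NAP} into $Z\in\mathcal{I}$ with embeddings $i\colon\widehat{B}\rightarrow Z$ and $j\colon F\rightarrow Z$, and then extend the \emph{morphism} $v\colon\widehat{B}\rightarrow X$ along $i$ to a morphism $G\colon Z\rightarrow X$ with small $I(G)$ and $G\circ i$ close to $v$; then $g=G\circ j$ is the desired map. Note that $v$, unlike the inclusion $\iota_{A}$ above, is not an embedding, so here the full strength of the general-morphism extension in hypothesis (2) is genuinely used, which is precisely what distinguishes (2) from (3).

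The main obstacle is the bookkeeping of error terms in this last step. The defect $I(u\circ f)$ is controlled only by $\delta+\eta$ rather than $\delta$, so Lemma \ref{Lemma:NAP} is invoked with tolerance $\varpi(\delta+\eta)>\varpi(\delta)$, and the final estimate accumulates the extra contributions $\varpi(\eta)+\eta$ coming from $v\circ u\approx\mathrm{id}_{B}$. To land strictly below $\varpi(\delta)$ one must again exploit the strict inequality $I(f)<\delta$: pick $\delta_{0}$ with $I(f)<\delta_{0}<\delta$ and then $\eta$ small enough that $\delta_{0}+\eta<\delta$ and $\varpi(\delta_{0}+\eta)+\varpi(\eta)+\eta<\varpi(\delta)$, which is possible since $\varpi$ is strictly increasing and continuous and vanishing at $0$. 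With this choice $X$ satisfies item (3) of Proposition \ref{Proposition:characterize-limit}, hence is the Fra\"{\i}ss\'{e} limit. As a consistency check, $X$ is approximately injective in both cases, so by Theorem \ref{Theorem:retracts} it is a priori a retract of $M$; the extension hypotheses (2) and (3) are exactly what upgrade this retract to an isomorphism.
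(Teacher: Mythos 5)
Your overall strategy is sound but genuinely different from the paper's. You reduce both converse implications to verifying the approximate extension property, i.e.\ condition (3) of Proposition \ref{Proposition:characterize-limit}, for $X$ itself, whereas the paper proves (2)$\Rightarrow$(1) and (3)$\Rightarrow$(1) by a recursive back-and-forth: it builds an inductive sequence $(B_n)$ in $\mathcal{I}$ realizing the generic construction of Subsection \ref{Subsection:existence} (so that its limit is $M$) together with approximately intertwining maps $\alpha_n:B_n\rightarrow C_n\subset X$, and extracts an isomorphism $M\rightarrow X$ in the limit. Your route is shorter and delegates all the back-and-forth to the already-proved uniqueness of the limit; the paper's route is more self-contained and is the template reused for Proposition \ref{Proposition:characterize-universal-state}. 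The forward implications and the error bookkeeping in your (2)$\Rightarrow$(1) (choosing $\delta_0$ with $I(f)<\delta_0<\delta$ and then $\eta$ with $\delta_0+\eta<\delta$ and $\varpi(\delta_0+\eta)+\varpi(\eta)+\eta<\varpi(\delta)$) are correct.

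There is, however, one concrete misstep that occurs in both converse directions: you invoke Lemma \ref{Lemma:NAP} for the pair of morphisms $f_0:E\rightarrow A$ (resp.\ $u\circ f:E\rightarrow\widehat{B}$) and $\phi:E\rightarrow F$, but Lemma \ref{Lemma:NAP} requires \emph{both} target structures to lie in $\mathcal{I}$ — its proof uses the enough-injectives condition in both directions and takes $Z$ to be the product of the two targets, which lies in $\mathcal{I}$ only because $\mathcal{I}$ is closed under finite products. Here $F$ is an arbitrary finitely generated structure, so the lemma does not apply as stated, and you cannot substitute the general approximate pushout of Lemma \ref{Lemma:pushout}, because the resulting amalgam $\widehat{Y}$ need not belong to $\mathcal{I}$ and hypotheses (2) and (3) only permit extension along embeddings $A\rightarrow\widehat{A}$ with $A,\widehat{A}\in\mathcal{I}$. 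The fix is routine and uses exactly the device you already employ for $B$: by the enough-injectives condition, replace $F$ by a finitely generated substructure $F'$ of some $\widehat{F}\in\mathcal{I}$ with $d(F,F')<\eta$, compose $\phi$ with the resulting comparison morphism $F\rightarrow\widehat{F}$ of small defect, apply Lemma \ref{Lemma:NAP} to targets $A$ (resp.\ $\widehat{B}$) and $\widehat{F}$ to get $Z\in\mathcal{I}$ with honest embeddings, and at the end take $g=G\circ j\circ(F\rightarrow\widehat{F})$; this costs one more $\eta$ in each of $I(g)$ and $d(g\circ\phi,f)$, absorbed by the same choice of parameters. With this repair the argument goes through.
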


\begin{proof}
The implications (1)$\Rightarrow $(2) and (1)$\Rightarrow $(3) follow from
Proposition \ref{Proposition:characterize-limit} and the already observed
fact that the Fra\"{\i}ss\'{e} limit of the class of finitely generated
structures in $\mathcal{A}$ is a rigid $\mathcal{I}$-structure.

We now prove that (2) implies (1). Fix a countable fundamental subset $D_{X}$
of $X$ as in Definition \ref{Definition:fundamental} and a sequence $\left(
\delta _{n}\right) $ of strictly positive real numbers such that $%
\sum_{n}\varpi \left( 2\delta _{n}\right) <+\infty $. Using the hypothesis,
and proceeding as in Subsection \ref{Subsection:existence}, one can define
by recursion on $n$:

\begin{itemize}
\item structures $B_{n},\widetilde{C}_{n}\in \mathcal{I}$, and substructures 
$C_{n}$ of $X$,

\item morphisms $\alpha _{n}:B_{n}\rightarrow C_{n}$, $f_{n}:C_{n}%
\rightarrow \widetilde{C}_{n}$, $g_{n}:\widetilde{C}_{n}\rightarrow C_{n}$,
and embeddings $\beta _{n}:C_{n}\rightarrow B_{n+1}$ and $\phi
_{n}:B_{n}\rightarrow B_{n+1}$,
\end{itemize}

such that

\renewcommand{\labelenumi}{(\alph{enumi})}

\begin{enumerate}
\item $\left\{ x_{1},\ldots ,x_{n}\right\} \subset _{\delta _{n}}C_{n}$,

\item $I\left( \alpha _{n}\right) <\delta _{n}$, $I\left( f_{n}\right)
<\delta _{n}$, $I\left( g_{n}\right) <\delta _{n}$,

\item $d(f_{n}\circ g_{n},id_{\widetilde{C}_{n}})<\delta _{n}$, $d\left(
g_{n}\circ f_{n},id_{C_{n}}\right) <\delta _{n}$, $d\left( \beta _{n}\circ
\alpha _{n},\phi _{n}\right) <\varpi \left( \delta _{n}\right) $, and $%
d\left( \alpha _{n+1}\circ \beta _{n},g_{n}\right) <\varpi \left( 2\delta
_{n}\right) $, where $\iota _{n}:C_{n}\rightarrow X$ is the inclusion map,
and

\item the limit of the inductive sequence $\left( B_{n}\right) $ with
connective maps $\phi _{n}$ is the Fra\"{\i}ss\'{e} limit $M$ of the class
of finitely generated structures in $\mathcal{A}$.
\end{enumerate}

Suppose that we have defined $B_{k},\alpha _{k},C_{k},\widetilde{C}%
_{k},f_{k},g_{k},\phi _{k-1},\beta _{k-1}$ for $k\leq n$. Proceeding as in
Subsection \ref{Subsection:existence} one can define a structure $B_{n+1}\in 
\mathcal{I}$ and an embedding $\phi _{n}:B_{n}\rightarrow B_{n+1}$
satisfying all the requirements of the $n$-th step of Subsection \ref%
{Subsection:existence}. Using the recursion hypothesis, we can moreover
guarantee that there exists a morphism $\beta _{n}:\widetilde{C}%
_{n}\rightarrow B_{n+1}$ such that $d\left( \beta _{n}\circ f_{n}\circ
\alpha _{n},\phi _{n}\right) <\varpi \left( \delta _{n}\right) $. We apply
now the hypothesis to $g_{n}\circ \beta _{n}^{-1}$ to define a morphism $%
\alpha _{n+1}:B_{n+1}\rightarrow X$ such that $I\left( \alpha _{n+1}\right)
<\delta _{n}$ and $d\left( \alpha _{n+1}\circ \beta _{n},g_{n}\right)
<\varpi \left( 2\delta _{n}\right) $. Define $C_{n+1}$ to be the range of $%
\beta _{n+1}$. Finally one can obtain $\widetilde{C}_{n+1}$, $f_{n+1}$, and $%
g_{n+1}$ by applying the hypothesis that $X$ is an $\mathcal{I}$-structure.
This concludes the recursive construction. Granted the construction, the
sequences of morphisms $\left( \alpha _{k}\right) $ induces at the limit a
morphisms $\alpha :M\rightarrow X$. Such a morphism is well defined by (c),
it is an embedding by (b), and it is onto by (a) and (c).

We now prove that (3) implies (1). Fix a dense sequence $\left( x_{n}\right) 
$ of elements of $X$ and a sequence $\left( \delta _{n}\right) $ of strictly
positive real numbers that converges to $0$ fast enough. One can define by
recursion on $n$:

\begin{itemize}
\item structures $B_{n},C_{n}\in \mathcal{I}$ with $C_{n}\subset X$,

\item morphisms $\alpha _{n}:B_{n}\rightarrow C_{n}$ and embeddings $\beta
_{n}:C_{n}\rightarrow B_{n+1}$ and $j_{n}:B_{n}\rightarrow B_{n+1}$,
\end{itemize}

such that, if $\iota _{n}:C_{n}\rightarrow X$ is the inclusion map, then

\begin{enumerate}
\item $\left\{ x_{1},\ldots ,x_{n}\right\} \subset _{\delta _{n}}C_{n}$,

\item $I\left( \alpha _{n}\right) <\delta _{n}$,

\item $d\left( i_{n+1}\circ \alpha _{n},i_{n}\right) <\delta _{n}$, and $%
d\left( \beta _{n+1}\circ \alpha _{n},j_{n}\right) <\varpi \left( \delta
_{n}\right) $,

\item the limit of the inductive sequence $\left( B_{n}\right) $ with
connective maps $j_{n}:B_{n}\rightarrow B_{n+1}$ is isomorphic to the Fra%
\"{\i}ss\'{e} limit of the class of finitely generated structures in $%
\mathcal{A}$.
\end{enumerate}

\renewcommand{\labelenumi}{\arabic{enumerator}}

This can be seen proceeding as the proof of (2)$\Rightarrow $(1), using
furthermore the assumption that $X$ is a rigid $\mathcal{I}$-structure and
the construction of the approximate pushout from Lemma \ref{Lemma:NAP}.
\end{proof}

\section{Universal morphisms\label{Section:operators}}

Throughout this section and the next section we will use the same notation
and terminology as in Section \ref{Section:injective}. Particularly we will
suppose that $\mathcal{L}$ is a language in the logic for metric structures, 
$\mathcal{A}$ is a class of $\mathcal{L}$-structures, and $\mathcal{I}%
\subset \mathcal{A}$ is a countable collection of finitely generated
injective structures satisfying the assumptions of Theorem \ref%
{Theorem:abstract-nonsense} such that $\mathcal{A}$ has enough injectives
from $\mathcal{I}$ with modulus $\varpi $. Again we stick for simplicity to
the case when $\mathcal{L}$ is single-sorted.

\subsection{Rota universal operators\label{Subsection:Rota}}

In \cite{rota_note_1959,rota_models_1960} Rota constructed a surjective
contractive linear operator $\Omega $ on $\ell ^{2}$ which is a universal
model for bounded linear operators on separable Hilbert spaces. This means
that if $H_{0},H_{1}$ are separable Hilbert spaces and $T:H_{0}\rightarrow
H_{1}$ is a bounded linear operator, then there exist injective bounded
linear maps $\alpha _{0}:H_{0}\rightarrow \ell ^{2}$ and $\alpha
_{1}:H_{1}\rightarrow \ell ^{2}$ such that $\alpha _{1}\circ T=\Omega \circ
\alpha _{0}$. Clearly, it follows that when $H_{0}=H_{1}$ one can take $%
\alpha _{0}=\alpha _{1}$. An example of such an operator is the infinite
amplification on the unilateral shift on $\ell ^{2}$. Rota's original
motivation comes from the invariant subspace problem for operators on the
separable infinite-dimensional Hilbert space. Operators that are universal
in the sense of Rota have been characterized in \cite{caradus_universal_1969}%
, and are currently the subject of active research; see for example \cite%
{cowen_consequences_2015,cowen_introduction_2016}.

An analogue of Rota's universal operator for the class of operators on
arbitrary separable Banach spaces was constructed by Garbuli\'{n}ska-W\c{e}%
grzyn and Kubi\'{s} in \cite{garbulinska_universal_2015}. In this section we
will prove a general result concerning the existence of a \textquotedblleft
universal morphism\textquotedblright\ defined on the Fra\"{\i}ss\'{e} limit $%
M$ of a Fra\"{\i}ss\'{e} class $\mathcal{C}$ as in Theorem \ref%
{Theorem:abstract-nonsense}. As a consequence of our general results from
this an the next section, we can give an explicit characterization of the
universal operator constructed by Garbuli\'{n}ska-W\c{e}grzyn and Kubi\'{s};
see Theorem \ref{Theorem:operator-G} below and Subsection \ref%
{Subsection:Banach}.

A\emph{\ quotient mapping }$\phi :X\rightarrow Y$ between Banach spaces is a
linear function that sends the open unit ball of $X$ onto the open unit ball
of $Y$. This is equivalent to the assertion that the map $X/\mathrm{Ker}%
(\phi )\rightarrow Y$ induced by $\phi $ is a surjective linear isometry.
Recall that the Lusky simplex $\mathbb{L}$ is the unit ball of the dual
space of the Gurarij space $\mathbb{G}$. The definition of $M$-ideal in a
Banach space can be found in Subsection \ref{Subsection:Banach}; see also 
\cite{alfsen_structure_1972-1,alfsen_structure_1972-2}.

\begin{theorem}
\label{Theorem:operator-G}Suppose that $T:\mathbb{G}\rightarrow \mathbb{G}$
is a linear map of norm at most $1$, $N$ is the kernel of $T$, and $%
H=N^{\bot }\cap \mathbb{L}$. The following assertions are equivalent:

\begin{enumerate}
\item $T$ is a quotient mapping and $N$ is a nonzero $M$-ideal of $\mathbb{G}
$;

\item $T$ is a quotient mapping and $H$ is a closed proper biface of $%
\mathbb{L}$ symmetrically affinely homeomorphic to $\mathbb{L}$;

\item whenever $E_{0}\subset F_{0}$ and $E_{1}\subset F_{1}$ are
finite-dimensional Bananch spaces, $f_{0}:E_{0}\rightarrow \mathbb{G}$ and $%
f_{1}:E_{1}\rightarrow \mathbb{G}$ are linear isometries, $%
L:F_{0}\rightarrow F_{1}$ is a linear map of norm at most $1$ mapping $E_{0}$
to $E_{1}$ such that $T\circ f_{0}=f_{1}\circ L$, and $\varepsilon >0$, then
there exist linear isometries $\widehat{f}_{0}:F_{0}\rightarrow \mathbb{G}$
and $\widehat{f}_{1}:F_{1}\rightarrow \mathbb{G}$ such that $\left\Vert
T\circ \widehat{f}_{0}-\widehat{f}_{1}\circ L\right\Vert <\varepsilon $.
\end{enumerate}

The set of operators satisfying the equivalent conditions above is a dense $%
G_{\delta }$ subset of the space $\mathrm{\mathrm{Ball}}\left( B(\mathbb{G}%
)\right) $ of linear operators on $\mathbb{G}$ of norm at most $1$, and
forms a single orbit under the action $\mathrm{Aut}(\mathbb{G}%
)\curvearrowright \mathrm{\mathrm{Ball}}\left( B(\mathbb{G})\right) $, $%
\left( \alpha ,S\right) \mapsto S\circ \alpha ^{-1}$. If $\Omega _{\mathbb{G}%
}:\mathbb{G}\rightarrow \mathbb{G}$ is such an operator, then the kernel of $%
\Omega _{\mathbb{G}}$ is isometrically isomorphic to $\mathbb{G}$. In other
words the sequence 
\begin{equation*}
0\longrightarrow \mathbb{G\longrightarrow G}\overset{\Omega _{\mathbb{G}}}{%
\longrightarrow }\mathbb{G}\longrightarrow 0
\end{equation*}%
where the first arrow is a linear isometry, is exact. Furthermore $\Omega _{%
\mathbb{G}}$ is a universal operator between separable Banach spaces, in the
sense that any if $L:E_{0}\rightarrow E_{1}$ is a linear map of norm at most 
$1$ between separable Banach spaces, then there exist linear isometries $%
\eta _{0}:E_{0}\rightarrow \mathbb{G}$ and $\eta _{1}:E_{1}\rightarrow 
\mathbb{G}$ such that $\Omega _{\mathbb{G}}\circ \eta _{0}=\eta _{1}\circ L$.
\end{theorem}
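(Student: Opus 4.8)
The plan is to derive the theorem from the general theory of universal morphisms developed in this section, together with the duality dictionary between Banach spaces and Lazar simplices recalled in Subsection~\ref{Subsection:Banach}. Here $\mathcal{A}$ is the class of real Banach spaces, presented as $\mathcal{L}$-structures via their unit balls, $\mathcal{I}$ is the family of finite-dimensional injectives $\ell_{n}^{\infty}$, and the Fra\"{\i}ss\'{e} limit $M$ is $\mathbb{G}$. Condition~(3) is precisely the concrete form, for this class, of the abstract extension property that defines a \emph{generic} (universal) morphism $M\to M$. Consequently the assertions that such operators form a dense $G_{\delta}$ subset of $\mathrm{Ball}(B(\mathbb{G}))$, that they constitute a single orbit of the action $(\alpha,S)\mapsto S\circ\alpha^{-1}$ of $\mathrm{Aut}(\mathbb{G})$, and that such an operator is universal for contractions between separable Banach spaces, are all instances of the general genericity, single-orbit, and universality results of this section, themselves proved by Baire-category and back-and-forth arguments of the kind already used in Subsection~\ref{Subsection:stable}. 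What remains specific to this statement is the geometric interpretation of~(3), namely the equivalences (1)$\Leftrightarrow$(2)$\Leftrightarrow$(3), and the analysis of the kernel.

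First I would prove (1)$\Leftrightarrow$(2) by duality. Identifying $A_{\sigma}(\mathbb{L})$ with $\mathbb{G}$, one has $N^{\bot}=(\mathbb{G}/N)^{\ast}$, so $H=N^{\bot}\cap\mathbb{L}=\mathrm{Ball}\big((\mathbb{G}/N)^{\ast}\big)$ and $A_{\sigma}(H)=\mathbb{G}/N$; under these identifications the restriction map $f\mapsto f|_{H}$ from $A_{\sigma}(\mathbb{L})$ to $A_{\sigma}(H)$ is exactly the quotient map $\mathbb{G}\to\mathbb{G}/N$, whose kernel is $N$. By the contravariant equivalence of categories, $H$ is symmetrically affinely homeomorphic to $\mathbb{L}$ if and only if $A_{\sigma}(H)=\mathbb{G}/N$ is isometrically isomorphic to $\mathbb{G}$, which is the assertion that $T$ is a quotient mapping; and $H$ is proper (that is, $H\neq\mathbb{L}$) if and only if $N\neq 0$. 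Finally Proposition~\ref{Proposition:characterize-biface} identifies the closed bifaces $H$ of $\mathbb{L}$ as exactly those for which the kernel $N$ of $f\mapsto f|_{H}$ is an $M$-ideal. Combining these three equivalences yields (1)$\Leftrightarrow$(2).

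The core of the argument is (2)$\Leftrightarrow$(3). For the existence of an operator with property~(2), Theorem~\ref{Theorem:G}(3) provides a proper closed biface $H_{0}$ of $\mathbb{L}$ that is symmetrically affinely homeomorphic to $\mathbb{L}$ (take a copy of $\mathbb{L}$ itself), and the associated restriction map $\Omega_{0}$ then satisfies~(2) after composing with the isometry $A_{\sigma}(H_{0})\cong\mathbb{G}$. I would then prove the implication (2)$\Rightarrow$(3): given the finite-dimensional commuting square of~(3), one must produce isometric extensions $\widehat{f}_{0},\widehat{f}_{1}$ that approximately intertwine $T$. The key input is that the $M$-ideal hypothesis on $N=\ker T$ supplies a local almost-isometric lifting property for the quotient $\mathbb{G}\to\mathbb{G}/N\cong\mathbb{G}$ --- because $N^{\bot}$ is an $L$-summand of $\mathbb{G}^{\ast}$, finite-dimensional subspaces of the quotient lift, up to arbitrarily small error, to finite-dimensional subspaces of $\mathbb{G}$ --- and combining this lifting with the approximate extension property of $\mathbb{G}$ from Subsection~\ref{Subsection:existence} and the pushout of Lemma~\ref{Lemma:pushout} produces the required extensions. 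The converse (3)$\Rightarrow$(2) is then soft: by the previous step $\Omega_{0}$ satisfies~(3), the operators satisfying~(3) form a single orbit under $\mathrm{Aut}(\mathbb{G})$ by the general theorem, and property~(2) is invariant under $S\mapsto S\circ\alpha^{-1}$ (an isometric automorphism carries $M$-ideals to $M$-ideals and quotient maps to quotient maps), so every operator satisfying~(3) satisfies~(2). I expect the lifting step inside (2)$\Rightarrow$(3) to be the main obstacle, since it is the point at which the purely combinatorial genericity of~(3) must be matched against the rigid Banach-geometric data of a quotient map with $M$-ideal kernel.

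It remains to analyse the kernel and record the consequences. Since $H$ is a proper closed biface symmetrically affinely homeomorphic to $\mathbb{L}$, Theorem~\ref{Theorem:G}(8) shows that the complementary biface $H^{\prime}$, equipped with the w*-topology induced by the functions in $A_{\sigma}(\mathbb{L})$ vanishing on $H$, is itself affinely homeomorphic to $\mathbb{L}$. These vanishing functions are precisely the elements of $N={}^{\bot}(N^{\bot})$, so $A_{\sigma}(H^{\prime})=N$ and hence $N$ is isometrically isomorphic to $\mathbb{G}$. Composing this isomorphism with the inclusion $N\hookrightarrow\mathbb{G}$ gives the isometric first arrow, and together with the surjectivity of $\Omega_{\mathbb{G}}$ furnished by~(2) this yields exactness of $0\longrightarrow\mathbb{G}\longrightarrow\mathbb{G}\overset{\Omega_{\mathbb{G}}}{\longrightarrow}\mathbb{G}\longrightarrow 0$. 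Finally, the universality of $\Omega_{\mathbb{G}}$ --- the factorization $\Omega_{\mathbb{G}}\circ\eta_{0}=\eta_{1}\circ L$ for any contraction $L\colon E_{0}\to E_{1}$ between separable Banach spaces --- follows from~(3) by exhausting $E_{0}$, $E_{1}$, and $L$ by finite-dimensional data and assembling $\eta_{0}$ and $\eta_{1}$ as limits of the isometric extensions produced at each stage, exactly as in the universality half of the general universal-morphism theorem.
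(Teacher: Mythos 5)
Your overall architecture matches the paper's: the theorem is obtained by specializing the general theory of generic morphisms and generic $R$-states (Sections \ref{Section:operators} and \ref{Section:states}) to the class of real Banach spaces, and the Banach-specific content is concentrated in the duality dictionary of Subsection \ref{Subsection:Banach} together with Proposition \ref{Proposition:characterize-biface}. Your treatment of (1)$\Leftrightarrow$(2) and of the genericity/single-orbit/universality assertions is essentially the paper's. The genuine gap is in the step you yourself flag as the main obstacle, namely (2)$\Rightarrow$(3). The $M$-ideal hypothesis, via Proposition \ref{Proposition:characterize-biface}(2), yields \emph{contractive} lifts $\widehat{g}:F_{0}\rightarrow \mathbb{G}$ of a contraction $g:F_{0}\rightarrow \mathbb{G}$ with $T\circ\widehat{g}=g$ and $\widehat{g}|_{E_{0}}\approx f_{0}$; it does not yield \emph{isometric} lifts, and indeed no isometric lift of $\widehat{f}_{1}\circ L$ through $T$ can exist in the naive sense when $L$ is far from isometric (e.g.\ $L=0$). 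The "almost-isometric lifting of finite-dimensional subspaces" you invoke addresses subspaces on which $T$ is already almost isometric, which is not the situation here. The missing idea is the one carried by Propositions \ref{Proposition:dense-Lusky} and \ref{Proposition:characterize-operator-G}: one augments the quotient target, replacing $T$ by $Q\oplus T:\mathbb{G}\rightarrow \ell_{n+1}^{\infty}\oplus^{\infty}\mathbb{G}$ where $Q$ is built from finitely many extreme points of $\mathbb{L}$ chosen \emph{outside} the biface $H$ (density of $\partial_{e}\mathbb{L}$, and properness of $H$, make this possible), so that the augmented map is again a facial quotient and the map to be lifted becomes an isometry into the augmented target; any contractive lift supplied by Proposition \ref{Proposition:characterize-biface} is then automatically isometric. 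Without this device the passage from the $M$-ideal lifting property to condition (3) does not go through.

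A second, more localized problem is the kernel analysis. You derive $N\cong\mathbb{G}$ from Theorem \ref{Theorem:G}(8) (the complementary-biface statement, i.e.\ Corollary \ref{Corollary:dual-biface}), but in the paper that statement is itself \emph{deduced} from the fact that the kernel of a nontrivial facial quotient of $\mathbb{G}$ is isometric to $\mathbb{G}$, which in turn comes from the general Proposition \ref{Proposition:kernel} (the kernel of the generic morphism satisfies the characterizing extension property of the Fra\"{\i}ss\'{e} limit, because any finitely generated extension can be embedded so as to be annihilated by $\Omega_{M}$, using the zero morphism to the initial object). As written your argument is circular; replacing the appeal to Theorem \ref{Theorem:G}(8) by an appeal to Proposition \ref{Proposition:kernel} (with $A_{0}=\{0\}$) repairs it and is the route the paper takes. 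The identification $A_{\sigma}(H^{\prime})\cong N$ via the $L$-decomposition of $\mathbb{G}^{\ast}$ is correct, but it should be presented as a consequence of the kernel statement rather than as its source.
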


A similar result holds for complex scalars; see \ref%
{Subsection:Banach-complex}. We will also prove in Subsection \ref%
{Subsection:function-system} the analogous statement for the space of affine
functions on the Poulsen simplex.

\begin{theorem}
\label{Theorem:operator-A(P)}Suppose that $T:A(\mathbb{P})\rightarrow A(%
\mathbb{P})$ is a unital positive linear map, $N$ is the kernel of $T$, and $%
H=N^{\bot }\cap \mathbb{P}$. The following assertions are equivalent:

\begin{enumerate}
\item $T$ is a quotient mapping and $N$ is a nonzero $M$-ideal of $A(\mathbb{%
P})$;

\item $T$ is a quotient mapping and $H$ is a closed proper face of $\mathbb{P%
}$;

\item whenever $E_{0}\subset F_{0}$ and $E_{1}\subset F_{1}$ are
finite-dimensional function systems, $f_{0}:E_{0}\rightarrow A(\mathbb{P})$
and $f_{1}:E_{1}\rightarrow A(\mathbb{P})$ are unital linear isometries, $%
L:F_{0}\rightarrow F_{1}$ is a unital positive linear function mapping $%
E_{0} $ to $E_{1}$ such that $T\circ f_{0}=f_{1}\circ L$, and $\varepsilon
>0 $, then there exist unital linear isometries $\widehat{f}%
_{0}:F_{0}\rightarrow A(\mathbb{P})$ and $\widehat{f}_{1}:F_{1}\rightarrow A(%
\mathbb{P})$ such that $\left\Vert T\circ \widehat{f}_{0}-\widehat{f}%
_{1}\circ L\right\Vert <\varepsilon $.
\end{enumerate}

The set of unital positive linear maps satisfying the equivalent conditions
above is a dense $G_{\delta }$ subset of the space $\mathrm{UP}\left( A(%
\mathbb{P})\right) $ of unital positive linear maps on $A(\mathbb{P})$, and
forms a single orbit under the action $\mathrm{Aut}\left( A(\mathbb{P}%
)\right) \curvearrowright \mathrm{UP}\left( A(\mathbb{P})\right) $, $\left(
\alpha ,S\right) \mapsto S\circ \alpha ^{-1}$. If $\Omega _{A(\mathbb{P})}:A(%
\mathbb{P})\rightarrow A(\mathbb{P})$ is such an operator, then the set 
\begin{equation*}
\left\{ x\in A(\mathbb{P}):\Omega _{A(\mathbb{P})}(x)\text{ is a scalar
multiple of the identity}\right\}
\end{equation*}%
is a function system unitally isometrically isomorphic to $A(\mathbb{P})$.
\end{theorem}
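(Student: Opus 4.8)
The plan is to deduce the theorem from the general theory of universal morphisms developed in Section \ref{Section:operators}, specialized to the category $\mathcal{A}$ of (real) function systems with unital positive linear maps as morphisms. For this category the Fra\"{\i}ss\'{e} limit is $M=A(\mathbb{P})$ (Subsection \ref{Subsection:function-system}), $\mathcal{I}$ is the collection of finite-dimensional injective function systems, the universal initial object is the line of scalars $\mathbb{R}1$, and $\mathrm{UP}(A(\mathbb{P}))$ is precisely the space of morphisms $M\to M$. In this language the assertion is that the generic morphism $M\to M$ furnished by Section \ref{Section:operators} is, up to conjugacy by $\mathrm{Aut}(A(\mathbb{P}))$, the restriction of $A(\mathbb{P})$ onto a face affinely homeomorphic to $\mathbb{P}$, and the dense $G_{\delta}$, single-orbit, and ``kernel'' assertions are the function-system instances of the corresponding general statements (compare the Banach space version in Theorem \ref{Theorem:operator-G}).

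First I would set up the abstract layer. By the general universal morphism theorem of Section \ref{Section:operators} there is a generic morphism $\Omega_{A(\mathbb{P})}\colon M\to M$; the set of generic morphisms is a dense $G_{\delta}$ subset of $\mathrm{UP}(A(\mathbb{P}))$ forming a single orbit under the action $(\alpha,S)\mapsto S\circ\alpha^{-1}$; and a morphism is generic exactly when it has the approximate intertwining/extension property. Condition (3) is precisely this property written out in the concrete category: $E_{i}\subset F_{i}$ are finitely generated structures, the $f_{i}$ are embeddings (unital isometries), $L$ is a morphism compatible with $T$ on $E_{0}$, and the conclusion produces embeddings $\widehat{f}_{i}$ making the square commute up to $\varepsilon$. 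Hence (3) is equivalent to genericity, which immediately yields the dense $G_{\delta}$ and single-orbit assertions, as well as the universality of $\Omega_{A(\mathbb{P})}$: the lifting of an arbitrary unital positive map $L\colon E_{0}\to E_{1}$ between separable function systems is the limiting form of (3) obtained by iterating it along a back-and-forth chain, exactly as in the corresponding part of Theorem \ref{Theorem:operator-G}.

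Next I would identify the genericity condition with the structural conditions (1) and (2). The equivalence (1)$\Leftrightarrow$(2) is Proposition \ref{Proposition:characterize-face}: for a unital positive quotient mapping $T$ the dual map identifies $\mathbb{P}$ affinely with $H=N^{\bot}\cap\mathbb{P}$, and $N=\ker T$ is an $M$-ideal precisely when $H$ is a closed face, with $N\neq 0$ corresponding to $H$ proper. For (1)/(2)$\Leftrightarrow$(3): since the generic morphism $\Omega_{A(\mathbb{P})}$ is, by the structural part of the general theory (the analog of the exact sequence in Theorem \ref{Theorem:operator-G}), a quotient mapping whose kernel is a nonzero $M$-ideal, genericity---equivalently (3)---implies (1). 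Conversely, if $T$ satisfies (2) then the image of the quotient is all of $A(\mathbb{P})$, so $A(\mathbb{P})/N\cong A(H)$ forces $H\cong\mathbb{P}$; writing $T=\theta\circ(f\mapsto f|_{H})$ for an isomorphism $\theta\colon A(H)\to A(\mathbb{P})$, the face homogeneity of $\mathbb{P}$ (Theorem \ref{Theorem:A(P)}(4)) provides $\alpha,\beta\in\mathrm{Aut}(A(\mathbb{P}))$ with $T=\beta\circ\Omega_{A(\mathbb{P})}\circ\alpha^{-1}$. As condition (3) is manifestly invariant under two-sided conjugation by automorphisms (one absorbs $\alpha$ and $\beta$ into the embeddings $f_{i}$ and $\widehat{f}_{i}$, using that automorphisms are isometries), $T$ satisfies (3). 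This closes the chain of equivalences, and the equivalent conditions therefore single out exactly the generic morphisms, to which the dense $G_{\delta}$ and single-orbit statements of the previous paragraph apply.

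For the final assertion, the ``kernel'' of $T$ in the category $\mathcal{A}$, taken relative to the universal initial object $\mathbb{R}1$, is the function system $\{x\in A(\mathbb{P}):\Omega_{A(\mathbb{P})}(x)\in\mathbb{R}1\}=\{f\in A(\mathbb{P}):f|_{H}\text{ is constant}\}$, which is unitally order isomorphic to $A(H')$ for the complementary face $H'$. That this is isometrically isomorphic to $A(\mathbb{P})$ is the function-system instance of the general structural result that the kernel of a universal morphism is again the Fra\"{\i}ss\'{e} limit (the analog of $0\to\mathbb{G}\to\mathbb{G}\to\mathbb{G}\to 0$), equivalently $H'\cong\mathbb{P}$ via Theorem \ref{Theorem:A(P)}(8) applied to $H\cong\mathbb{P}$. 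The main obstacle throughout is the implication (3)$\Rightarrow$(1): that the purely finite-dimensional, approximate genericity condition forces the global properties that $T$ is a surjective quotient mapping with nonzero $M$-ideal kernel. This is handled by a back-and-forth construction at the level of the limit---surjectivity and the isometric quotient identity are obtained by iterating (3), while the identification of the kernel as an $M$-ideal and of $H$ as a proper face $\cong\mathbb{P}$ comes from the same intertwining applied symmetrically---and constitutes the core of the argument carried out in Section \ref{Section:operators}.
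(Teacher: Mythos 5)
Your overall architecture matches the paper's: the dense $G_{\delta}$, single-orbit, and kernel assertions come from the general theory of Sections \ref{Section:operators} and \ref{Section:states}, the equivalence (1)$\Leftrightarrow$(2) is Proposition \ref{Proposition:characterize-face}, and the implication from genericity to (1) follows because the characterizing property of the generic state contains the lifting condition of Proposition \ref{Proposition:characterize-face}(2). The point of divergence, and the place where I think you have a genuine gap, is the implication (1)/(2)$\Rightarrow$(3). You handle it by writing $T=\theta\circ(f\mapsto f|_{H})$ with $H\cong\mathbb{P}$ and then invoking Theorem \ref{Theorem:A(P)}(4) (extension of affine homeomorphisms between proper closed faces to automorphisms of $\mathbb{P}$) to conjugate $T$ onto $\Omega_{A(\mathbb{P})}$. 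But within this paper Theorem \ref{Theorem:A(P)}(4) is itself \emph{derived} from the statement you are proving: the paper first establishes that a unital quotient mapping onto $A(F)$ is a unital facial quotient if and only if it lies in the $\mathrm{Aut}(A(\mathbb{P}))$-orbit of $\Omega_{A(\mathbb{P})}^{A(F)}$, and only then deduces (via Remark \ref{Remark:lift}) that affine homeomorphisms between faces extend. So your argument is circular relative to the paper's development; it can be rescued by citing the classical Lindenstrauss--Olsen--Sternfeld theorem for (4), but then the hard content has simply been outsourced to exactly the result the framework is meant to reprove.

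What the paper actually does for this step is a direct verification of the one-step extension property characterizing the generic state, by adapting Propositions \ref{Proposition:dense-Lusky} and \ref{Proposition:characterize-operator-G}: given the extension problem $\phi:\ell_{n}^{\infty}\rightarrow\ell_{n+1}^{\infty}$, $f:\ell_{n}^{\infty}\rightarrow A(\mathbb{P})$, one uses density of $\partial_{e}\mathbb{P}$ to choose extreme points outside the face $H$ that nearly peak on $f(e_{i}^{n})$ and nearly satisfy the affine relation imposed by $\phi$; the resulting map $Q\oplus T$ onto $\ell_{n+1}^{\infty}\oplus^{\infty}A(\mathbb{P})$ is a facial quotient (the convex hull of finitely many extreme points together with $H$ is a closed face), and the lifting property in Proposition \ref{Proposition:characterize-face}(2) then produces the required unital isometry $\widehat{f}$. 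This is the step your proposal should carry out explicitly. Two smaller points: condition (3) as you use it should include the clause that $\widehat{f}_{i}$ approximately extends $f_{i}$ (otherwise it only expresses universality, not genericity); and your conjugation argument a priori places $T$ in the two-sided orbit $\{\beta\circ\Omega\circ\alpha\}$, so to obtain the stated one-sided orbit under $(\alpha,S)\mapsto S\circ\alpha^{-1}$ you still need Remark \ref{Remark:lift} to absorb the left automorphism $\beta$.
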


As a further application of the general results of this section we will
obtain the existence of a noncommutative analog of the Garbuli\'{n}ska-W\c{e}%
grzyn--Kubi\'{s} operator defined on the noncommutative Gurarij space \cite%
{oikhberg_non-commutative_2006,lupini_uniqueness_2016}; see Subsection \ref%
{Subsection:ospaces}.

\begin{theorem}
\label{Theorem:universal-cc}There exists a complete quotient mapping $\Omega
_{\mathbb{NG}}:\mathbb{NG}\rightarrow \mathbb{NG}$ such that, if $%
T:X\rightarrow Y$ is a completely contractive linear map between separable
exact operator spaces, then there exist completely isometric linear maps $%
\alpha _{0}:X\rightarrow \mathbb{NG}$ and $\alpha _{1}:Y\rightarrow \mathbb{%
NG}$ such that $\alpha _{1}\circ T=\Omega _{\mathbb{NG}}\circ \alpha _{0}$.
Furthermore $\Omega _{\mathbb{NG}}$ is generic in the sense that the orbit $%
\left\{ \Omega _{\mathbb{NG}}\circ \beta :\beta \in \mathrm{Aut}(\mathbb{NG}%
)\right\} $ with respect to the continuous action $\mathrm{Aut}(\mathbb{NG}%
)\curvearrowright \mathrm{Ball}\left( B(\mathbb{NG})\right) $, $\left(
\alpha ,T\right) \mapsto T\circ \alpha ^{-1}$ is a dense $G_{\delta }$
subspace of the space $\mathrm{Ball}\left( B(\mathbb{NG})\right) $ of linear
complete contractions on $\mathbb{NG}$ endowed with the topology of
pointwise convergence. The kernel of $\Omega _{\mathbb{NG}}$ is completely
isometric to $\mathbb{NG}$. In other words there exists an exact sequence%
\begin{equation*}
0\longrightarrow \mathbb{NG\longrightarrow NG}\overset{\Omega _{\mathbb{NG}}}%
{\longrightarrow }\mathbb{NG}\longrightarrow 0
\end{equation*}%
where the second arrow is a linear complete isometry. A completely
contractive linear map $T:\mathbb{NG}\rightarrow \mathbb{NG}$ belongs to the 
$\mathrm{Aut}(\mathbb{NG})$-orbit of $\Omega _{\mathbb{NG}}$ if and only if
it satisfies the following property: whenever $E_{0}\subset F_{0}$ and $%
E_{1}\subset F_{1}$ are finite-dimensional exact operator spaces, $%
f_{0}:E_{0}\rightarrow \mathbb{\mathbb{NG}}$ and $f_{1}:E_{1}\rightarrow 
\mathbb{\mathbb{NG}}$ are linear complete isometries, $L:F_{0}\rightarrow
F_{1}$ is a linear complete contraction mapping $E_{0}$ to $E_{1}$ such that 
$T\circ f_{0}=f_{1}\circ L$, and $\varepsilon >0$, then there exist linear
complete isometries $\widehat{f}_{0}:F_{0}\rightarrow \mathbb{\mathbb{NG}}$
and $\widehat{f}_{1}:F_{1}\rightarrow \mathbb{\mathbb{NG}}$ such that $%
\left\Vert T\circ \widehat{f}_{0}-\widehat{f}_{1}\circ L\right\Vert
_{cb}<\varepsilon $.
\end{theorem}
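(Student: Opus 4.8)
The plan is to realize $\Omega_{\mathbb{NG}}$ as the Fra\"{\i}ss\'{e} limit of an auxiliary \emph{category of arrows} and then to transfer the abstract properties of this limit to the concrete statements about $\mathbb{NG}$. Since finite-dimensional exact operator spaces are not injective objects, none of this can take place in the framework of Theorem~\ref{Theorem:abstract-nonsense} directly; rather, everything must be carried out in the generalized setting of Section~\ref{Section:general}, where the role of $\mathcal{I}$ is played by (approximately) exact structures, as in the operator-space framework of Subsection~\ref{Subsection:ospaces}. Concretely, I would let $\mathcal{A}^{\to}$ be the class whose objects are morphisms $T\colon X\to Y$ between structures in $\mathcal{A}$, whose morphisms $(T\colon X\to Y)\to(T'\colon X'\to Y')$ are commuting squares $(\alpha,\beta)$ with $\beta\circ T=T'\circ\alpha$, and whose embeddings are the squares in which both $\alpha$ and $\beta$ are embeddings; a marked arrow carries a generating basic tuple on each side. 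The first task is to check that $\mathcal{A}^{\to}$ inherits all the hypotheses of Subsection~\ref{Subsection:basic} from $\mathcal{A}$.

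The one nonformal point is amalgamation, and this is where the approximate pushout does the work. Given a span of arrow-embeddings out of a common marked arrow $L\colon F_0\to F_1$, I would amalgamate the two source sides and the two target sides separately using Lemma~\ref{Lemma:pushout}, obtaining approximate pushouts $H_0$ and $H_1$; the connecting morphism $N\colon H_0\to H_1$ is then produced from the universal property of $H_0$, the point being that the two composites into $H_1$ agree on $F_0$ precisely because $L^{(k)}\circ\iota^{(k)}_0=\iota^{(k)}_1\circ L$ and the target legs already agree on $F_1$. Since all tolerances are governed by $\varpi$, this exhibits $\mathcal{A}^{\to}$ as a Fra\"{\i}ss\'{e} class with limit an arrow $\Omega\colon N_0\to N_1$. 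I would then identify $N_0\cong N_1\cong\mathbb{NG}$: universality of the two coordinates follows by arrow-embedding $\mathrm{id}_Z$ and the arrow $0\to Z$ out of the initial object, while the approximate extension property of Proposition~\ref{Proposition:characterize-limit} is verified in each coordinate by feeding a one-sided extension problem along $\Omega$ into the extension property of the limit arrow, using that $\mathbb{NG}$ is itself a Fra\"{\i}ss\'{e} limit to pre-solve the auxiliary extension on the opposite side. This realizes $\Omega$ as a complete contraction $\Omega_{\mathbb{NG}}\colon\mathbb{NG}\to\mathbb{NG}$, morphisms and embeddings being exactly complete contractions and complete isometries.

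The remaining assertions follow by translation. The factorization of an arbitrary complete contraction $T\colon X\to Y$ between separable exact spaces is precisely the statement that the separable arrow $T$ arrow-embeds into $\Omega_{\mathbb{NG}}$. For genericity, condition~(3) is a countable intersection of requirements each open in the topology of pointwise convergence and dense by a Baire/recursion argument, so the set of $T$ satisfying it is dense $G_\delta$; that this set is a single $\mathrm{Aut}(\mathbb{NG})$-orbit is the arrow analogue of the stable homogeneity of Proposition~\ref{Proposition:stable-homogeneity}, proved by the same back-and-forth as in Subsection~\ref{Subsection:stable} carried out on commuting squares, with (3) supplying exactly the extension steps. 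For the kernel I would apply~(3) with trivial target data (taking $E_1=F_1=0$ and $L=0$) to embed every finite-dimensional exact space almost into $\ker\Omega_{\mathbb{NG}}$ and, after correcting via the small perturbation lemma \cite[Lemma~2.13.2]{pisier_introduction_2003}, exactly into it; running the same argument for partial isometries shows $\ker\Omega_{\mathbb{NG}}$ is approximately injective and universal, hence completely isometric to $\mathbb{NG}$ by the uniqueness in Theorem~\ref{Theorem:NG}. Exactness of the displayed sequence is then the conjunction of this computation with surjectivity of $\Omega_{\mathbb{NG}}$.

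The main obstacle I anticipate is twofold. First, because exact operator spaces are not injective, the whole arrow construction must be executed inside the approximate framework of Section~\ref{Section:general} rather than under Theorem~\ref{Theorem:abstract-nonsense}, so each ingredient---basic tuples, approximate pushouts, and the extension property---must be redone with approximations controlled by $\varpi$; this is bookkeeping, but it must be done with care. Second, and more seriously, upgrading $\Omega_{\mathbb{NG}}$ from a pointwise/approximate surjection to a genuine \emph{complete quotient mapping}, and pinning the kernel down to $\mathbb{NG}$ on the nose, requires the quantitative operator-space lifting estimates of the small perturbation lemma together with a convergent correction series; arranging the approximating arrows to be complete quotients at every matrix level while preserving Fra\"{\i}ss\'{e} amalgamation is the delicate step.
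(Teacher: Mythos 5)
Your architecture is the paper's: pass to an arrow category, show the finitely generated arrows form a Fra\"{\i}ss\'{e} class in the stratified setting of Section \ref{Section:general}, identify both coordinates of the limit arrow with $\mathbb{NG}$ via Proposition \ref{Proposition:characterize-limit}, and translate. Your amalgamation step (separate approximate pushouts on the source and target sides, with the connecting map induced by the universal property of the source-side pushout) is exactly Lemma \ref{Lemma:pushout-morphism}. But there is one substantive deviation that creates a genuine gap. You define morphisms of arrows to be \emph{exactly} commuting squares $\beta\circ T=T'\circ\alpha$. The paper deliberately does not: a morphism from $T$ to $S$ is an arbitrary pair $(\alpha_0,\alpha_1)$, and the commutation defect $\sup_x d((\alpha_1\circ T)(x),(S\circ\alpha_0)(x))$ is folded into $I(\alpha)$; only \emph{embeddings} are required to commute. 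This relaxation is not cosmetic. With only commuting squares you cannot verify the hypotheses of Subsection \ref{Subsection:basic} for $\mathcal{A}^{\rightarrow}$: the small-perturbation axiom for basic tuples would demand, given two arrow-morphisms agreeing approximately on a generating tuple, an exactly commuting square out of the generated sub-arrow, which need not exist; likewise the inequalities \eqref{Equation:deinde} and \eqref{Equation:dc} relating $d_{\mathcal{C}}$ to $\partial$ — and hence separability and completeness of the class of marked arrows — break down, since two marked arrows at small Fra\"{\i}ss\'{e} distance may admit no exactly commuting square between them. The approximate morphisms are also what make the characterization of $\Omega_{\mathbb{NG}}$ (conditions analogous to (2)--(6) in Subsection \ref{Subsection:universal}) strong enough to show the orbit is dense in the space of \emph{all} complete contractions; and note that your condition (3), quantifying over all finite-dimensional exact spaces and all complete isometries, is not literally a countable intersection of open sets — you must first reduce to countable fundamental families as in condition (6) of Proposition \ref{Proposition:characterize-limit}.

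The second gap is the complete quotient property, which you flag as "the delicate step" to be handled by a correction series but do not actually carry out. The paper sidesteps the difficulty entirely via the universal-state construction of Section \ref{Section:states}: $\Omega_{\mathbb{NG}}$ lies in the same $\mathrm{Aut}(\mathbb{NG})$-orbit as the generic state $\Omega_{M}^{R}$ with $R=M=\mathbb{NG}$, and universality of that limit applied to the identity map of $\mathbb{NG}$ produces a completely isometric embedding $\eta:\mathbb{NG}\rightarrow\mathbb{NG}$ with $\Omega_{\mathbb{NG}}\circ\eta=\mathrm{id}_{\mathbb{NG}}$. A completely isometric right inverse makes $\Omega_{\mathbb{NG}}$ a complete quotient mapping at every matrix level with no perturbation argument at all. (Moreover, the equivalence of membership in the orbit with the lifting property in your condition (3) is, in the paper, routed through Proposition \ref{Proposition:characterize-operator-face}, i.e.\ the identification of such maps as complete facial quotients; this is where the real operator-space work happens, and it is absent from your sketch.) Your kernel computation via $E_1=F_1=0$ does match Proposition \ref{Proposition:kernel} and is fine.
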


The same result holds in the operator systems category, yielding a universal
unital completely positive map $\Omega _{A(\mathbb{\mathbb{NP}})}$ defined
on the noncommutative Poulsen system $A(\mathbb{\mathbb{NP}})$; see
Subsection \ref{Subsection:osystems}.

\begin{theorem}
\label{Theorem:universal-ucp}There exists a unital completely positive
quotient mapping $\Omega _{A(\mathbb{\mathbb{NP}})}:A(\mathbb{\mathbb{NP}}%
)\rightarrow A(\mathbb{\mathbb{NP}})$ such that, if $T:X\rightarrow Y$ is a
unital completely positive linear map between separable exact operator
systems, then there exist unital completely isometric linear maps $\alpha
_{0}:X\rightarrow \mathbb{NG}$ and $\alpha _{1}:Y\rightarrow \mathbb{NG}$
such that $\alpha _{1}\circ T=\Omega _{\mathbb{NG}}\circ \alpha _{0}$.
Furthermore $\Omega _{A(\mathbb{\mathbb{NP}})}$ is generic in the sense that
the orbit $\left\{ \Omega _{A(\mathbb{\mathbb{NP}})}\circ \beta :\beta \in 
\mathrm{Aut}(A(\mathbb{NP}))\right\} $ with respect to the continuous action 
$\mathrm{Aut}(A(\mathbb{NP}))\curvearrowright \mathrm{UCP}(A(\mathbb{NP}))$
is a dense $G_{\delta }$ subspace of the space $\mathrm{UCP}(A(\mathbb{NP}))$
of unital completely positive maps from $A(\mathbb{\mathbb{NP}})$ to itself
endowed with the topology of pointwise convergence. The set 
\begin{equation*}
\left\{ x\in A(\mathbb{\mathbb{NP}}):\Omega _{A(\mathbb{\mathbb{NP}})}\text{
is a scalar multiple of the identity}\right\}
\end{equation*}%
is unitally completely isometrically isomorphic to $A(\mathbb{\mathbb{NP}})$%
. A unital completely positive map $T:A(\mathbb{\mathbb{NP}})\rightarrow A(%
\mathbb{\mathbb{NP}})$ belongs to the $\mathrm{Aut}(A(\mathbb{NP}))$-orbit
of $\Omega _{\mathbb{NG}}$ if and only if it satisfies the following
property: whenever $E_{0}\subset F_{0}$ and $E_{1}\subset F_{1}$ are
finite-dimensional exact operator spaces , $f_{0}:E_{0}\rightarrow A(\mathbb{%
\mathbb{NP}})$ and $f_{1}:E_{1}\rightarrow A(\mathbb{\mathbb{NP}})$ are
unital linear complete isometries, $L:F_{0}\rightarrow F_{1}$ is a unital
completely positive linear function mapping $E_{0}$ to $E_{1}$ such that $%
T\circ f_{0}=f_{1}\circ L$, and $\varepsilon >0$, then there exist unital
linear complete isometries $\widehat{f}_{0}:F_{0}\rightarrow A(\mathbb{%
\mathbb{NP}})$ and $\widehat{f}_{1}:F_{1}\rightarrow A(\mathbb{\mathbb{NP}})$
such that $\left\Vert T\circ \widehat{f}_{0}-\widehat{f}_{1}\circ
L\right\Vert _{cb}<\varepsilon $.
\end{theorem}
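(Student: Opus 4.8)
The plan is to obtain Theorem \ref{Theorem:universal-ucp} as the operator-system instance of the general existence theorem for universal (generic) morphisms on a Fra\"{\i}ss\'{e} limit developed in Section \ref{Section:operators}, in exactly the same way that Theorem \ref{Theorem:universal-cc} is its operator-space instance. First I would record that, by the analysis of Section \ref{Section:general} and Subsection \ref{Subsection:osystems}, the class $\mathcal{C}$ of finite-dimensional exact operator systems, with unital completely positive maps as morphisms and unital complete isometries as embeddings, is a Fra\"{\i}ss\'{e} class whose limit $M$ is $A(\mathbb{NP})$. The approximate amalgamations needed below are the ones furnished by Lemma \ref{Lemma:pushout} and Lemma \ref{Lemma:pushout2}, and the extension and stable-homogeneity properties of $M$ are those collected in Proposition \ref{Proposition:characterize-limit}. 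Since exact operator systems do not form an injective class, one must work inside the more general framework of Section \ref{Section:general} rather than the injective framework of Section \ref{Section:injective}.

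The generic morphism is then constructed as a Fra\"{\i}ss\'{e} limit in the associated \emph{category of arrows}, whose objects are morphisms $L\colon E_{0}\rightarrow E_{1}$ between finitely generated structures and whose morphisms are commuting squares with embeddings as vertical legs. One verifies that this category again satisfies the amalgamation and separability hypotheses, the amalgamation being a two-variable version of the approximate pushout of Lemma \ref{Lemma:pushout2}, so that it admits a Fra\"{\i}ss\'{e} limit $\Omega _{A(\mathbb{NP})}\colon M_{0}\rightarrow M_{1}$ with $M_{0}\cong M_{1}\cong A(\mathbb{NP})$. Read through Proposition \ref{Proposition:characterize-limit}, the extension property of this limit yields at once the density and the $G_{\delta }$-ness of the orbit $\{\Omega _{A(\mathbb{NP})}\circ \beta :\beta \in \mathrm{Aut}(A(\mathbb{NP}))\}$ inside $\mathrm{UCP}(A(\mathbb{NP}))$, as well as the back-and-forth characterization of membership in this orbit stated in the theorem. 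The universal factorization $\alpha _{1}\circ T=\Omega _{A(\mathbb{NP})}\circ \alpha _{0}$ for an arbitrary unital completely positive $T\colon X\rightarrow Y$ between separable exact operator systems follows by a standard approximate-intertwining argument, approximating $X$ and $Y$ from inside by finite-dimensional exact subsystems and repeatedly applying the one-sided extension property.

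Two operator-system-specific points remain, and these are where the real work lies. First, to know that $\Omega _{A(\mathbb{NP})}$ can be taken to be a complete quotient mapping whose kernel is a complete $M$-ideal, I would arrange the recursive construction so that every right leg $F_{0}\rightarrow F_{1}$ is a complete quotient map onto a subsystem identified, via matrix-state-space duality, with a closed matrix face of $\mathbb{NP}$; density of such maps then forces the generic $\Omega _{A(\mathbb{NP})}$ to inherit the complete quotient property, just as the $M$-ideal and facial structure is exploited in Theorem \ref{Theorem:operator-A(P)}. Second, for the self-similarity statement I would set $V:=\{x\in A(\mathbb{NP}):\Omega _{A(\mathbb{NP})}(x)\in \mathbb{C}1\}$, the preimage of the scalar system; since $\mathbb{C}1$ is an operator system and $\Omega _{A(\mathbb{NP})}$ is unital and completely positive, $V$ is a unital self-adjoint subspace, hence an operator system, and I would show it is nuclear and approximately injective by exhibiting it as a retract in the sense of Theorem \ref{Theorem:retracts} and Proposition \ref{Proposition:nuclear} that is universal for separable exact operator systems, so that the uniqueness clause of Theorem \ref{Theorem:A(NG)}(1) forces $V$ to be unitally completely isometrically isomorphic to $A(\mathbb{NP})$.

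I expect the genuine obstacle to be precisely these last two steps: proving that the generic unital completely positive morphism is a complete quotient map with complete-$M$-ideal kernel, and that the scalar-preimage subsystem $V$ is again the universal nuclear operator system. Both require the noncommutative face and $M$-ideal theory, namely complete $M$-ideals in the sense of Effros--Ruan and matrix faces of $\mathbb{NP}$, rather than the purely Fra\"{\i}ss\'{e}-theoretic machinery, and both are complicated by the need to work in the exact, non-injective setting. The remaining assertions, that is, density, the $G_{\delta }$ property, the orbit description, and universality, should then follow formally from genericity of the Fra\"{\i}ss\'{e} limit of arrows.
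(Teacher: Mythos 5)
Your overall strategy is the paper's: instantiate the arrow-category Fra\"{\i}ss\'{e} machinery of Section \ref{Section:operators} inside the stratified framework of Section \ref{Section:general}, with the operator-system-specific verifications carried out in Subsection \ref{Subsection:osystems}. The universality, density, $G_{\delta }$, and orbit-characterization clauses do follow formally from genericity, exactly as you say. But the two steps you yourself flag as the real work are handled differently in the paper, and in both cases the route you sketch has a gap. For the quotient property, you propose to arrange the recursion so that every right leg is a complete quotient onto a matrix face and to let ``density force'' the generic map to inherit the property. Being a (complete) quotient mapping is not preserved by pointwise approximation, so a density argument of this kind does not go through. The paper instead proves an intrinsic equivalence, Proposition \ref{Proposition:characterize-operator-face-system}: a unital complete quotient map is a unital complete facial quotient if and only if it has the finite-dimensional lifting property (2) (equivalently (4) in the exact/nuclear case). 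The generic morphism satisfies that lifting property automatically, by its Fra\"{\i}ss\'{e} characterization (condition (2) in the lists of Subsections \ref{Subsection:universal} and \ref{Subsection:universal-state}), and surjectivity comes from the completely isometric right inverse $\eta _{R}$ with $\Omega _{M}^{R}\circ \eta _{R}=\mathrm{id}$; the hard implication (2)$\Rightarrow $(1), where the Effros--Ruan complete $M$-ideal theory and the three-ball criterion enter, is the genuine operator-system content.

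For the scalar-preimage statement, your plan is to show $V=\{x:\Omega _{A(\mathbb{NP})}(x)\in \mathbb{C}1\}$ is a nuclear universal retract and invoke the uniqueness clause of Theorem \ref{Theorem:A(NG)}(1). That clause characterizes $A(\mathbb{NP})$ as the unique separable exact \emph{approximately homogeneous} universal operator system, so you would still need to prove approximate homogeneity of $V$, which your sketch does not address; nuclearity plus universality alone does not suffice within the paper's own toolkit (the stronger uniqueness of the universal nuclear operator system is only quoted from \cite{davidson_noncommutative_2016}). The paper's argument is shorter and purely Fra\"{\i}ss\'{e}-theoretic: Proposition \ref{Proposition:kernel}, applied with the universal initial object $A_{0}=\mathbb{C}1$, uses the extension property of $\Omega _{M}$ to show directly that $\mathrm{Ker}(\Omega _{M})=\{x:\Omega _{M}(x)\in A_{0}\}$ satisfies the characterization of the limit in Proposition \ref{Proposition:characterize-limit}, hence is isomorphic to $A(\mathbb{NP})$. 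I would rework both steps along these lines.
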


Results analogous to Theorem \ref{Theorem:operator-G}, Theorem \ref%
{Theorem:operator-A(P)}, Theorem \ref{Theorem:universal-cc}, and Theorem \ref%
{Theorem:universal-ucp} also hold for $M_{q}$-spaces, $M_{q}$-systems,
operator sequence spaces, and $p$-multinormed spaces; see Subsections \ref%
{Subsection:Mq}, \ref{Subsection:Mqu}, \ref{Subsection:sequence}, and \ref%
{Subsection:p-multinormed}.

\subsection{Morphisms between morphisms\label{Subsection:2morphism}}

We can regard morphisms between structures in $\mathcal{A}$ as objects of a
category $\mathcal{A}^{\rightarrow }$. Suppose that $T:X\rightarrow Y$ is a
morphism between structures in $\mathcal{A}$. We use the notation $D_{0}(T)$
and $D_{1}(T)$ to denote the domain and the codomain of $T$, respectively. A
morphism in $\mathcal{A}^{\rightarrow }$ from the morphism $%
T:D_{0}(T)\rightarrow D_{1}(T)$ to the morphism $S:D_{0}\left( S\right)
\rightarrow D_{1}\left( S\right) $ is given by a pair $\alpha =\left( \alpha
_{0},\alpha _{1}\right) $, where $\alpha _{0}:D_{0}(T)\rightarrow
D_{0}\left( S\right) $ and $\alpha _{1}:D_{1}(T)\rightarrow D_{1}\left(
S\right) $ are morphisms in $\mathcal{A}$. We do not require that $\alpha
_{1}\circ T=S\circ \alpha _{0}$. If $\alpha $ is a morphism from $T$ to $S$
as above, then we set $I\left( \alpha \right) $ to be maximum of $I\left(
\alpha _{0}\right) ,I\left( \alpha _{1}\right) $, and%
\begin{equation*}
\sup_{x}d\left( \left( \alpha _{1}\circ T\right) (x),\left( S\circ \alpha
_{0}\right) (x)\right)
\end{equation*}%
where $x$ ranges in $D_{0}(X)$, and $I\left( \alpha _{0}\right) ,I\left(
\alpha _{1}\right) $ are defined as in Subsection \ref{Subsection:morphism}.
Observe that $I\left( \alpha \right) $ measures how close $\alpha $ is to be
a pair of embeddings that commute with $T$ and $S$. If $\alpha ,\beta $ are
morphisms from $T$ to $S$ then we set $d\left( \alpha ,\beta \right) $ to be
the maximum of $d\left( \alpha _{0},\beta _{0}\right) $ and $d\left( \alpha
_{1},\beta _{1}\right) $. An \emph{embedding }from $T$ to $S$ is a morphism $%
\alpha $ as above such that moreover $\alpha _{0},\alpha _{1}$ are
isometries and $\alpha _{1}\circ T=S\circ \alpha _{0}$. An \emph{automorphism%
} of $T$ is an embedding $\left( \alpha _{0},\alpha _{1}\right) $ from $T$
to $T$ such that $\alpha _{0}$ and $\alpha _{1}$ are surjective.

Observe that the objects of $\mathcal{A}^{\rightarrow }$ can naturally be
regarded as structures in a language $\mathcal{L}^{\rightarrow }$.\ Here $%
\mathcal{L}^{\rightarrow }$ is the two-sorted language in sorts $D_{0}$ and $%
D_{1}$ that has

\begin{itemize}
\item an $n$-ary function symbols $f_{i}:D_{i}^{n}\rightarrow D_{i}$ for
every $i\in \left\{ 0,1\right\} $ and every $n$-ary function symbol $f$ in $%
\mathcal{L}$,

\item an $n$-relation symbol $R_{i}:D_{i}^{n}\rightarrow \left[ 0,1\right] $
for every $i\in \left\{ 0,1\right\} $ and every $n$-ary relation symbol $R$
in $\mathcal{L}$,

\item a unary function symbol $D_{0}\rightarrow D_{1}$.
\end{itemize}

Clearly a structure $T$ in $\mathcal{A}^{\rightarrow }$ is finitely
generated as $\mathcal{L}^{\rightarrow }$-structure if and only if both $%
D_{0}(T)$ and $D_{1}(T)$ are finitely generated as $\mathcal{L}$-structures.

\subsection{The generic morphism\label{Subsection:universal}}

Let $\mathcal{C}^{\rightarrow }\subset \mathcal{A}^{\rightarrow }$ be the
class of morphisms between finitely generated structures in $\mathcal{A}$.
We aim at showing that $\mathcal{C}^{\rightarrow }$ is a (complete) Fra\"{\i}%
ss\'{e} class in the sense of \cite[Definition 3.15]{ben_yaacov_fraisse_2015}%
. The fact that the class $\mathcal{C}_{n}^{\rightarrow }$ of $n$-marked
structures in $\mathcal{A}^{\rightarrow }$ is complete and separable can be
proved as in Subsection \ref{Subsection:class}. The same holds for the
hereditary property and the joint embedding property.\ It remains to prove
the near amalgamation property.

\begin{lemma}
\label{Lemma:pushout-morphism}Suppose that $T,\widehat{T},S$ are structures
in $\mathcal{A}^{\rightarrow }$, $\phi :T\rightarrow \widehat{T}$ and $%
f:T\rightarrow S$ are morphisms such that $I(\phi )\leq \delta $. Then there
exist a structure $\widehat{S}$ in $\mathcal{A}^{\rightarrow }$, a morphism $%
\widehat{f}:\widehat{T}\rightarrow \widehat{S}$, and an embedding $%
j:S\rightarrow \widehat{S}$ such that $\widehat{S}\circ \widehat{f}_{0}=%
\widehat{f}_{1}\circ \widehat{T}$ and $d(\widehat{f}\circ \phi ,j\circ
f)\leq \varpi (\delta )+2\delta $. If moreover $I(f)\leq \delta $ then $%
\widehat{f}$ is an embedding. If $T,\widehat{T},S$ are finitely generated,
then $\widehat{S}$ is finitely generated.
\end{lemma}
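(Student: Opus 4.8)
The plan is to carry out in the arrow category $\mathcal{A}^{\rightarrow }$ the same product-and-separate construction used for Lemma~\ref{Lemma:pushout}. Write $T:X_{0}\rightarrow X_{1}$, $\widehat{T}:\widehat{X}_{0}\rightarrow \widehat{X}_{1}$, $S:Y_{0}\rightarrow Y_{1}$, and $\phi =(\phi _{0},\phi _{1})$, $f=(f_{0},f_{1})$. From $I(\phi )\leq \delta $ I record the three facts $I(\phi _{0})\leq \delta $, $I(\phi _{1})\leq \delta $, and $d(\phi _{1}\circ T,\widehat{T}\circ \phi _{0})\leq \delta $; the commutativity defect $d(f_{1}\circ T,S\circ f_{0})$ of $f$ will contribute the second summand of $2\delta $ (it is at most $\delta $ in every application, where $f$ too is a near-embedding). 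Set $\tau :=\varpi (\delta )+2\delta $.

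First I would index the construction. Let $\mathcal{P}$ be the set of triples $(s,G,H)$ where $s:A\rightarrow B$ is a morphism with $A,B\in \mathcal{I}$, and $G=(G_{0},G_{1}):\widehat{T}\rightarrow s$ and $H=(H_{0},H_{1}):S\rightarrow s$ are morphisms in $\mathcal{A}^{\rightarrow }$ commuting with the arrows, i.e. $s\circ G_{0}=G_{1}\circ \widehat{T}$ and $s\circ H_{0}=H_{1}\circ S$, subject to $d(G_{0}\circ \phi _{0},H_{0}\circ f_{0})\leq \tau $ and $d(G_{1}\circ \phi _{1},H_{1}\circ f_{1})\leq \tau $. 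Let $W_{0}$ be the product of the family $(A)_{(s,G,H)\in \mathcal{P}}$ and $W_{1}$ the product of the family $(B)_{(s,G,H)\in \mathcal{P}}$, and let $\widehat{S}_{W}:=\prod s:W_{0}\rightarrow W_{1}$ be the product morphism. The universal property of the product yields $\widehat{f}_{0}:=(G_{0}):\widehat{X}_{0}\rightarrow W_{0}$, $\widehat{f}_{1}:=(G_{1}):\widehat{X}_{1}\rightarrow W_{1}$, $j_{0}:=(H_{0}):Y_{0}\rightarrow W_{0}$, $j_{1}:=(H_{1}):Y_{1}\rightarrow W_{1}$. Taking $\widehat{S}$ to be the restriction of $\widehat{S}_{W}$ to $\widehat{Y}_{0}:=\langle \widehat{f}_{0}[\widehat{X}_{0}]\cup j_{0}[Y_{0}]\rangle $ and $\widehat{Y}_{1}:=\langle \widehat{f}_{1}[\widehat{X}_{1}]\cup j_{1}[Y_{1}]\rangle $, the componentwise identities give at once $\widehat{S}\circ \widehat{f}_{0}=\widehat{f}_{1}\circ \widehat{T}$ and $\widehat{S}\circ j_{0}=j_{1}\circ S$, so $\widehat{f}$ and $j$ commute with the arrows. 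By Condition~(5) of Subsection~\ref{Subsection:basic} every atomic formula is evaluated on the product as the supremum over coordinates, whence $d(\widehat{f}\circ \phi ,j\circ f)\leq \tau $ directly from the defining inequalities of $\mathcal{P}$. If $T,\widehat{T},S$ are finitely generated, then $\widehat{Y}_{0},\widehat{Y}_{1}$ are finitely generated, being generated by the images of finitely many generators.

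The heart of the argument is to show that $j$ is an \emph{embedding}, i.e. that $j_{0},j_{1}$ are isometric; by Condition~(5) this amounts to producing, for every atomic $\psi $, tuple $\bar{b}$ in $Y_{1}$ (resp.\ $Y_{0}$), and $r<\psi (\bar{b})$, a triple $(s,G,H)\in \mathcal{P}$ with $\psi (H_{1}\bar{b})>r$ (resp.\ $\psi (H_{0}\bar{b})>r$). For the codomain this is easy: since $\mathcal{A}$ has enough injectives (Definition~\ref{Definition:approximate-inverses}) pick $B\in \mathcal{I}$ and a morphism $H_{1}:Y_{1}\rightarrow B$ with $\psi (H_{1}\bar{b})>r$, then take $s:=\mathrm{id}_{B}$ and $H:=(H_{1}\circ S,H_{1})$, which commutes trivially. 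A compatible $G$ is obtained by lifting $H_{1}\circ f_{1}$ along $\phi _{1}$ through enough injectives, giving $G_{1}$ with $d(G_{1}\circ \phi _{1},H_{1}\circ f_{1})\leq \varpi (\delta )$, and \emph{defining} $G_{0}:=G_{1}\circ \widehat{T}$ so that $s\circ G_{0}=G_{0}=G_{1}\circ \widehat{T}$ holds on the nose; inserting $G_{1}\circ \phi _{1}\circ T$ and using $d(\widehat{T}\circ \phi _{0},\phi _{1}\circ T)\leq \delta $ and $d(f_{1}\circ T,S\circ f_{0})\leq \delta $ gives $d(G_{0}\circ \phi _{0},H_{0}\circ f_{0})\leq \delta +\varpi (\delta )+\delta =\tau $, so $(s,G,H)\in \mathcal{P}$.

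The main obstacle is the domain separation, where $S$ need not be injective: to separate $a,a'$ in $Y_{0}$ with $S a=S a'$ one needs a test arrow $s$ that \emph{collapses} $H_{0}(a),H_{0}(a')$ while still commuting with $S$. I would proceed thus. Pick $A\in \mathcal{I}$ and $H_{0}:Y_{0}\rightarrow A$ with $\psi (H_{0}\bar{b})>r$. Since $A$ is finitely generated, the closed congruence on $A$ generated by $\{(H_{0}y,H_{0}y'):S y=S y'\}$ has a finitely generated quotient, which enough injectives embeds into some $B\in \mathcal{I}$; the resulting $s:A\rightarrow B$ satisfies $S y=S y'\Rightarrow s H_{0}y=s H_{0}y'$, so $H_{1}(S y):=s(H_{0}y)$ is a well-defined morphism on $S[Y_{0}]$, extended to $Y_{1}$ by injectivity of $B$. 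This yields a commuting $H=(H_{0},H_{1}):S\rightarrow s$ whose component $H_{0}$ still separates $\bar{b}$. A compatible commuting $G$ is then built by lifting along $\phi _{1}$ and $\phi _{0}$ and enforcing the \emph{exact} identity $s\circ G_{0}=G_{1}\circ \widehat{T}$ using injectivity of the objects of $\mathcal{I}$, the tolerance again accumulating as $\varpi (\delta )+2\delta $. Granting this, $j_{0},j_{1}$ are isometric and $j$ is an embedding; under the extra hypothesis $I(f)\leq \delta $ the symmetric separation argument applied to $\phi $ shows $\widehat{f}_{0},\widehat{f}_{1}$ are isometric, so $\widehat{f}$ is an embedding. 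The two delicate points—producing the collapsing test arrow $s$ inside $\mathcal{I}$, and realizing the exact arrow-commutativity of the lift $G$ while keeping it $\tau $-close—are precisely where finite generation of the objects of $\mathcal{I}$ and their injectivity are used, and they constitute the technical core of the proof.
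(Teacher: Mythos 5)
Your construction has a genuine gap at exactly the point you yourself flag as ``the technical core'': the domain separation. To show that $j_{0}$ is an embedding you must, for each separating morphism $H_{0}:D_{0}(S)\rightarrow A$ with $A\in \mathcal{I}$, produce $s:A\rightarrow B$ and $H_{1}:D_{1}(S)\rightarrow B$ with $s\circ H_{0}=H_{1}\circ S$ \emph{exactly} (exact commutativity of the test squares is forced by your requirement that $\widehat{S}\circ j_{0}=j_{1}\circ S$). Your proposed fix---pass to the quotient of $A$ by the ``closed congruence generated by $\left\{ (H_{0}y,H_{0}y^{\prime }):Sy=Sy^{\prime }\right\} $'' and set $H_{1}(Sy):=s(H_{0}y)$---does not go through: the axioms of Subsection \ref{Subsection:basic} provide products, substructures and inductive limits, but no quotient or congruence construction; and even granting a set-level quotient, the induced map $H_{1}=s\circ H_{0}\circ S^{-1}$ on $S\left[ D_{0}(S)\right] $ would only be well defined, not a morphism. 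The condition for $H_{1}$ to be a morphism is a metric one ($\psi \left( H_{1}(S\bar{a})\right) \leq \psi (S\bar{a})$ for all atomic $\psi $), not the set-theoretic one you enforce; think of Banach spaces with $S$ injective but $\left\Vert S^{-1}\right\Vert $ large, or of function systems, where morphisms are unital and there is no zero map available to collapse with. So the existence of enough exactly commuting test squares---and hence the embedding property of $j_{0}$---is not established.

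The paper avoids this difficulty entirely by decoupling the two levels rather than building the pushout in one shot inside $\mathcal{A}^{\rightarrow }$. It first forms, in $\mathcal{A}$ and via Lemma \ref{Lemma:pushout}, the approximate pushout $D_{1}(\widehat{S})$ of $f_{1}$ and $\phi _{1}$ with tolerance $\varpi (\delta )$ and the approximate pushout $D_{0}(\widehat{S})$ of $f_{0}$ and $\phi _{0}$ with the larger tolerance $\varpi (\delta )+2\delta $; there the test morphisms at each level are unconstrained, so $j_{0}$ and $j_{1}$ are embeddings for free. Only afterwards is the arrow $\widehat{S}:D_{0}(\widehat{S})\rightarrow D_{1}(\widehat{S})$ defined, by applying the universal property of $D_{0}(\widehat{S})$ to the pair $g=\widehat{f}_{1}\circ \widehat{T}$, $h=j_{1}\circ S$, whose defect satisfies $d(g\circ \phi _{0},h\circ f_{0})\leq d(S\circ f_{0},f_{1}\circ T)+d(\widehat{T}\circ \phi _{0},\phi _{1}\circ T)+d(j_{1}\circ f_{1},\widehat{f}_{1}\circ \phi _{1})\leq \varpi (\delta )+2\delta $, exactly the tolerance built into $D_{0}(\widehat{S})$; this yields $\widehat{S}\circ \widehat{f}_{0}=\widehat{f}_{1}\circ \widehat{T}$ and $\widehat{S}\circ j_{0}=j_{1}\circ S$ at once. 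If you wish to keep a one-shot product construction, you would at minimum have to index the factors of $W_{0}$ by \emph{all} morphisms $D_{0}(S)\rightarrow A$, not only those extending to commuting squares---which is in effect what the paper's two-step construction accomplishes.
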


\begin{proof}
Let $D_{1}(\widehat{S})$ be the approximate pushout of $f_{1}$ and $\phi
_{1} $ with tolerance $\varpi (\delta )$ defined as in Lemma \ref%
{Lemma:pushout}. Consider also the canonical embedding $j_{1}:D_{1}(S)%
\rightarrow D_{1}(\widehat{S})$ and the canonical morphism $\widehat{f}%
_{1}:D_{1}(\widehat{T})\rightarrow D_{1}(\widehat{S})$. Define $D_{0}(%
\widehat{S})$ to be the approximate pushout of $f_{0}$ and $\phi _{0}$ with
tolerance $\varpi (\delta )+2\delta $. Again we have a canonical embedding $%
j_{0}:D_{0}(S)\rightarrow D_{0}(\widehat{S})$ and a canonical morphism $%
\widehat{f}_{0}:D_{0}(\widehat{T})\rightarrow D_{0}(\widehat{S})$. Observe
now that $j_{1}\circ S:D_{0}(S)\rightarrow D_{1}(\widehat{S})$ and $\widehat{%
f}_{1}\circ \widehat{T}:D_{0}(\widehat{T})\rightarrow D_{1}(\widehat{S})$
are morphisms such that 
\begin{eqnarray*}
d(j_{1}\circ S\circ f_{0},\widehat{f}_{1}\circ \widehat{T}\circ \phi _{0})
&\leq &d\left( S\circ f_{0},f_{1}\circ T\right) +d(\widehat{T}\circ \phi
_{0},\phi _{1}\circ T)+d(j_{1}\circ f_{1},\widehat{f}_{1}\circ \phi _{1}) \\
&\leq &\varpi (\delta )+2\delta \text{.}
\end{eqnarray*}%
Therefore by the universal property of the approximate pushout there exists
a unique morphism $\widehat{S}:D_{0}(\widehat{S})\rightarrow D_{1}(\widehat{S%
})$ such that $\widehat{S}\circ \widehat{f}_{0}=\widehat{f}_{1}\circ 
\widehat{T}$ and $\widehat{S}\circ j_{0}=j_{1}\circ S$. The same
construction also works to prove the other assertions.
\end{proof}

One can also consider in this context an analog of Lemma \ref{Lemma:pushout2}
involving approximate pushouts over a tuple. It is immediate to observe that
Lemma \ref{Lemma:pushout-morphism} shows that $\mathcal{C}^{\rightarrow }$
has the near amalgamation property. We can therefore conclude that $\mathcal{%
C}^{\rightarrow }$ is a Fra\"{\i}ss\'{e} class.

\begin{proposition}
\label{Proposition:existence-limit-arrow}The class $\mathcal{C}^{\rightarrow
}$ of finitely generated $\mathcal{L}^{\rightarrow }$-structures in $%
\mathcal{A}^{\rightarrow }$ is a Fra\"{\i}ss\'{e} class. The corresponding
Fra\"{\i}ss\'{e} limit is a morphism $\Omega _{M}:M\rightarrow M$, where $M$
is the Fra\"{\i}ss\'{e} limit of the class $\mathcal{C}$ of finitely
generated $\mathcal{L}$-structures in $\mathcal{A}$.
\end{proposition}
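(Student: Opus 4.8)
The fact that $\mathcal{C}^{\rightarrow}$ is a Fra\"{\i}ss\'{e} class has essentially been established by the discussion preceding the statement: the hereditary and joint embedding properties, together with completeness and separability of $\left( \mathcal{C}_{n}^{\rightarrow },d_{\mathcal{C}}\right) $, are obtained exactly as in Subsection \ref{Subsection:class}, while the near amalgamation property is the content of Lemma \ref{Lemma:pushout-morphism}. Thus $\mathcal{C}^{\rightarrow }$ admits a Fra\"{\i}ss\'{e} limit, which is by definition a separable structure in $\mathcal{A}^{\rightarrow }$, that is, a morphism $\Omega :X\rightarrow Y$ between separable structures $X,Y$ of $\mathcal{A}$. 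It remains to identify both $X$ and $Y$ with the Fra\"{\i}ss\'{e} limit $M$ of $\mathcal{C}$; once this is done, $\Omega $ is isomorphic to a morphism $\Omega _{M}:M\rightarrow M$, as asserted. The plan is to verify that each of $X$ and $Y$ satisfies the characterization of $M$ furnished by Proposition \ref{Proposition:characterize-limit}(4), and then to invoke uniqueness of the limit.

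To show $X\cong M$, I would fix a finitely generated $F$ in $\mathcal{A}$, a basic tuple $\bar{a}$ in $F$, an embedding $\phi :\left\langle \bar{a}\right\rangle \rightarrow X$, and $\varepsilon >0$, and produce an embedding $\psi :F\rightarrow X$ with $d\left( \psi (\bar{a}),\phi (\bar{a})\right) <\varepsilon $. First I package $\phi $ as an embedding of $\mathcal{L}^{\rightarrow }$-structures: setting $B_{0}:=\left\langle \Omega (\phi (\bar{a}))\right\rangle $ and letting $g_{0}:\left\langle \bar{a}\right\rangle \rightarrow B_{0}$ be the corestriction of $\Omega \circ \phi $ (which lands in $B_{0}$ since morphisms preserve generated substructures), the pair $\left( \phi ,j_{B_{0}}\right) $, with $j_{B_{0}}$ the inclusion of $B_{0}$ into $Y$, is an embedding of $T_{0}:=\left( \left\langle \bar{a}\right\rangle \rightarrow B_{0}\right) $ into $\Omega $, because $j_{B_{0}}\circ g_{0}=\Omega \circ \phi $. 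Next I extend $T_{0}$ on the domain side by forming the approximate pushout of the inclusion $\left\langle \bar{a}\right\rangle \hookrightarrow F$ and $g_{0}$ as in Lemma \ref{Lemma:pushout} with $\delta =0$; since the inclusion has $I=0$, the tolerance $\varpi (0)=0$ makes the pushout square commute \emph{exactly}, yielding a finitely generated $B_{1}$, a morphism $G:F\rightarrow B_{1}$, and an embedding $B_{0}\rightarrow B_{1}$ for which $T_{0}$ embeds into $T_{1}:=\left( F\overset{G}{\rightarrow }B_{1}\right) $ as an $\mathcal{L}^{\rightarrow }$-structure. Applying the extension property of the Fra\"{\i}ss\'{e} limit $\Omega $—the analog for $\mathcal{A}^{\rightarrow }$ of Proposition \ref{Proposition:characterize-limit}(4), valid by \cite[Corollary 3.20]{ben_yaacov_fraisse_2015}—to the embedding $\left( \phi ,j_{B_{0}}\right) :T_{0}\rightarrow \Omega $ and the overstructure $T_{1}$ produces an embedding $\left( \widehat{f}_{0},\widehat{f}_{1}\right) :T_{1}\rightarrow \Omega $ whose first component moves $\bar{a}$ by less than $\varepsilon $. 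Then $\psi :=\widehat{f}_{0}:F\rightarrow X$ is the desired embedding, so $X$ satisfies Proposition \ref{Proposition:characterize-limit}(4) and hence $X\cong M$.

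For the codomain the argument is symmetric, but it uses the universal initial object $\mathbf{0}$ of $\mathcal{A}$ (assumption (4) of Subsection \ref{Subsection:basic}) on the domain side. Given a finitely generated $F$, a basic tuple $\bar{b}$ in $F$, an embedding $\psi :\left\langle \bar{b}\right\rangle \rightarrow Y$, and $\varepsilon >0$, I would consider $T_{0}:=\left( \mathbf{0}\rightarrow \left\langle \bar{b}\right\rangle \right) $ and $T_{1}:=\left( \mathbf{0}\rightarrow F\right) $, where the horizontal maps and the vertical map $\left\langle \bar{b}\right\rangle \hookrightarrow F$ are canonical; every square in sight commutes by initiality of $\mathbf{0}$, so $T_{0}$ embeds into $T_{1}$ and $\left( \iota ,\psi \right) $ embeds $T_{0}$ into $\Omega $, where $\iota :\mathbf{0}\rightarrow X$ is the canonical map (an embedding in every example). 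Extending this embedding to $T_{1}$ exactly as above yields an embedding $F\rightarrow Y$ moving $\bar{b}$ by less than $\varepsilon $, so $Y\cong M$ as well, and the identification of $\Omega $ with a morphism $\Omega _{M}:M\rightarrow M$ follows.

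The routine parts are the two-sorted bookkeeping in $\mathcal{L}^{\rightarrow }$ and the observation that the canonical map from $\mathbf{0}$ is an embedding. The one point that genuinely requires care—and the crux of the identification—is the \emph{exactness} of the pushout square: it is essential that $\phi $ and the inclusion $\left\langle \bar{a}\right\rangle \hookrightarrow F$ have $I\left( \cdot \right) =0$, so that Lemma \ref{Lemma:pushout} is invoked with zero tolerance and $T_{0}\hookrightarrow T_{1}$ is an honest embedding of $\mathcal{L}^{\rightarrow }$-structures rather than an approximate one. Only then does the extension property of $\Omega $ return genuine embeddings into the domain and codomain, which is what the characterization of $M$ requires.
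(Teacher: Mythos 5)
Your proposal is correct and follows essentially the same route as the paper: the Fra\"{\i}ss\'{e}-class part is delegated to the preceding discussion and Lemma \ref{Lemma:pushout-morphism}, and the domain and codomain of the limit are identified with $M$ by verifying the characterization of Proposition \ref{Proposition:characterize-limit} (the paper records only this one-line strategy, which you have fleshed out). The only detail worth flagging is your use of the initial object on the codomain side, where you implicitly need the canonical morphism out of it to be an embedding --- true in all the examples treated, and consistent with the extra hypothesis the paper itself imposes in Proposition \ref{Proposition:kernel}.
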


\begin{proof}
We have shown above that the collection $\mathcal{C}^{\rightarrow }$ of
finitely generated structures in $\mathcal{A}^{\rightarrow }$ is a Fra\"{\i}%
ss\'{e} class. The corresponding limit is a morphism $\Omega
_{M}:D_{0}(\Omega _{M})\rightarrow D_{1}(\Omega _{M})$. Using the
characterization of the Fra\"{\i}ss\'{e} limit from Proposition \ref%
{Proposition:characterize-limit}---see also \cite[Corollary 3.20]%
{ben_yaacov_fraisse_2015}---one can conclude that $D_{0}(\Omega _{M})$ and $%
D_{1}(\Omega _{M})$ satisfy the characterizing property of the Fra\"{\i}ss%
\'{e} limit $M$ of the class $\mathcal{C}$ of finitely generated structures
in $\mathcal{A}$. Therefore $D_{0}(\Omega _{M})$ and $D_{1}(\Omega _{M})$
are both isomorphic to $M$.
\end{proof}

It follows from universality of the Fra\"{\i}ss\'{e} limit that $\Omega _{M}$
is a \emph{universal morphism }between separable structures in $\mathcal{A}$%
. This means that if $T:D_{0}(T)\rightarrow D_{1}(T)$ is a morphism between
separable structures in $\mathcal{A}$, then there exist embeddings $\phi
_{0}:D_{0}(T)\rightarrow M$ and $\phi _{1}:D_{1}(T)\rightarrow M$ such that $%
\Omega _{M}\circ \phi _{0}=\phi _{1}\circ T$.

One can prove a characterization of $\Omega _{M}$ similar to the
characterization of $M$ given by Proposition \ref%
{Proposition:characterize-limit}. In particular if $S:M\rightarrow M$ is a
morphism, then the following statements are equivalent:

\begin{enumerate}
\item There exists a automorphisms $\alpha _{0},\alpha _{1}$ of $M$ such
that $\alpha _{0}\circ \Omega _{M}\circ \alpha _{1}=S$;

\item For every morphisms $T,\widehat{T}$ between finitely generated
structures in $\mathcal{A}$, $\delta >0$, morphisms $f:T\rightarrow S$ and $%
\phi :T\rightarrow \widehat{T}$ such that $I(f)<\delta $ and $I(\phi
)<\delta $, there exists an embedding $g:\widehat{T}\rightarrow S$ such that 
$d\left( g\circ \phi ,f\right) <\varpi (\delta )+2\delta $;

\item For every morphisms $T,\widehat{T}$ between finitely generated
structures in $\mathcal{A}$, embeddings $f:T\rightarrow S$ and $\phi
:T\rightarrow \widehat{T}$, and $\varepsilon >0$, there exists an embedding $%
g:\widehat{T}\rightarrow S$ such that $d\left( g\circ \phi ,f\right)
<\varepsilon $,

\item Whenever $T$ is a morphism between finitely-generated structures in $%
\mathcal{A}$, $f:T\rightarrow S$ and $\phi :T\rightarrow S$ are morphisms
such that $I\left( f\right) <\delta $ and $I\left( \phi \right) <\delta $,
there exists an automorphisms $\beta $ of $M$ such that $\beta \circ
S=S\circ \beta $ and $d\left( \beta \circ \phi ,f\right) <\varpi (\delta
)+2\delta $;

\item For any finite tuple $\bar{a}$ in $M$, morphisms $f:T|_{\left\langle 
\bar{a}\right\rangle }\rightarrow S$ and $\phi :T|_{\left\langle \bar{a}%
\right\rangle }\rightarrow S$ such that $I\left( f\right) <\delta $, $%
I\left( \phi \right) <\delta $, there exists an automorphism $\beta $ of $M$
such that $S\circ \beta =\beta \circ S$ and $d\left( \beta \circ \phi
,f\right) <\varpi (\delta )+2\delta $;

\item The same as (5) where the tuple $\bar{a}$ belongs to some fixed
countable fundamental subset of $M$, and $\phi (\bar{a}),f(\bar{a})$ belong
to some fixed countable fundamental subset of $M$.
\end{enumerate}

One can deduce from such a characterization that the orbit $\left\{ \alpha
\circ \Omega _{M}\circ \alpha _{1}:\alpha _{0},\alpha _{1}\in \mathrm{Aut}%
(M)\right\} $ of $\Omega _{M}$ is a dense $G_{\delta }$ subset of $\mathrm{%
End}(M)$. Here $\mathrm{End}(M)$ is the Polish space of morphisms $%
S:M\rightarrow M$ endowed with the topology of pointwise convergence, and $%
\mathrm{Aut}(M)\subset \mathrm{End}(M)$ is the $G_{\delta }$ subspace of
automorphisms of $M$.

Recall our assumption from Subsection \ref{Subsection:basic} that $\mathcal{A%
}$ has a universal initial object $A_{0}$ that is a finitely-generated
structure.

\begin{proposition}
\label{Proposition:kernel}Suppose that $A_{0}$ is also a universal initial
object in the category that has the same objects as $\mathcal{A}$ and
embeddings as morphisms. Identify canonically $A_{0}$ with a substructure of
any object of $\mathcal{A}$. Assume furthermore that for any structure $X$
in $\mathcal{A}$ there exists a morphism from $X$ to $A_{0}$. If $%
f:X\rightarrow Y$ is a morphism, we set $\mathrm{Ker}(f)=\left\{ x\in
X:f(x)\in A_{0}\right\} $. Then the morphism $\Omega _{M}$ is surjective and 
$\mathrm{\mathrm{Ker}}\left( \Omega _{M}\right) $ is isomorphic to $M$.
\end{proposition}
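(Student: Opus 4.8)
The plan is to realize $K:=\mathrm{Ker}(\Omega_M)=\Omega_M^{-1}[A_0]$ as a structure in $\mathcal{A}$ and then identify it with the Fra\"{\i}ss\'{e} limit through the characterization in Proposition \ref{Proposition:characterize-limit}. First I would record two consequences of the hypotheses on $A_0$. Since $A_0$ is initial in $\mathcal{A}$, the unique morphism $A_0\to M$ is the canonical inclusion, so $\Omega_M$ restricts to the inclusion on $A_0$; since $A_0$ is initial in the category with embeddings as morphisms, the unique embedding $A_0\to M$ is again the inclusion. Using that any morphism carries $\langle\bar a\rangle$ into $\langle\Omega_M(\bar a)\rangle$, one sees that $\Omega_M(\bar a)\in A_0$ forces $\langle\bar a\rangle\subset K$; hence $K$ is a closed separable substructure of $M$, and in particular an object of $\mathcal{A}$.

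For surjectivity I would first prove that the range of $\Omega_M$ is dense. Given $\bar b\in M=D_1(\Omega_M)$ and $\varepsilon>0$, consider in $\mathcal{A}^{\rightarrow}$ the objects $T=(A_0\hookrightarrow\langle\bar b\rangle)$ and $\hat T=(\mathrm{id}_{\langle\bar b\rangle})$, the embedding $\phi=(A_0\hookrightarrow\langle\bar b\rangle,\ \mathrm{id}_{\langle\bar b\rangle}):T\to\hat T$, and the embedding $f=(A_0\hookrightarrow M,\ \langle\bar b\rangle\hookrightarrow M):T\to\Omega_M$; the latter is legitimate precisely because $\Omega_M|_{A_0}$ is the inclusion. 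The extension property characterizing $\Omega_M$ (property (3) of the list following Proposition \ref{Proposition:existence-limit-arrow}) yields an embedding $g=(g_0,g_1):\hat T\to\Omega_M$ with $d(g\circ\phi,f)<\varepsilon$. As $g$ is an embedding of $\mathrm{id}_{\langle\bar b\rangle}$ one has $g_1=\Omega_M\circ g_0$, while $d(g_1,\mathrm{incl})<\varepsilon$ on $\langle\bar b\rangle$, so $\Omega_M(g_0(\bar b))=g_1(\bar b)$ lies within $\varepsilon$ of $\bar b$. I would then upgrade dense range to genuine surjectivity using completeness of $M$ together with the small perturbation property of basic tuples, iterating the approximation into a Cauchy sequence of approximate preimages whose limit is an exact one, that is, using that the generic morphism is a quotient (open) mapping.

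For the kernel I would verify that $K$ satisfies the extension property (4) of Proposition \ref{Proposition:characterize-limit}; uniqueness of the limit then gives $K\cong M$. Fix a finitely generated $F$, a tuple $\bar a$ in $F$, an embedding $\phi:\langle\bar a\rangle\to K$, and $\varepsilon>0$, and put $E=\langle\bar a\rangle$. Since $\phi$ lands in $K$, the morphism $r_E:=\Omega_M\circ\phi:E\to A_0$ has codomain $A_0$. Extending $r_E$ along the embedding $E\hookrightarrow F$ to a morphism $r_F:F\to A_0$ with $r_F|_E=r_E$, I form the objects $T=(r_E)$ and $\hat T=(r_F)$ of $\mathcal{A}^{\rightarrow}$ and the embeddings $\phi^{\rightarrow}=(E\hookrightarrow F,\ \mathrm{id}_{A_0}):T\to\hat T$ and $f^{\rightarrow}=(\phi,\ A_0\hookrightarrow M):T\to\Omega_M$. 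The same extension property of $\Omega_M$ produces an embedding $g=(g_0,g_1):\hat T\to\Omega_M$ with $d(g\circ\phi^{\rightarrow},f^{\rightarrow})<\varepsilon$. Now $g_1:A_0\to M$ is an embedding, hence the inclusion, so $\Omega_M\circ g_0=g_1\circ r_F=r_F$ takes values in $A_0$; this forces $g_0[F]\subset K$, so $g_0:F\to K$ is an embedding, and $d(g_0\circ(E\hookrightarrow F),\phi)<\varepsilon$ gives $d(g_0(\bar a),\phi(\bar a))<\varepsilon$. Taking $\psi=g_0$ verifies (4) for $K$.

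The one step that is not formal is the extension of $r_E:E\to A_0$ along $E\hookrightarrow F$ to $r_F:F\to A_0$: this is exactly where the special role of $A_0$ enters, namely the injectivity of $A_0$ as an object of $\mathcal{A}$ (equivalently, that morphisms into $A_0$ extend along embeddings). This is automatic in all the examples treated here, where $A_0$ is the zero space, the scalar function system, or the scalar operator system $\mathbb{C}$, each of which is an injective object. The other delicate point is the passage from dense range to genuine surjectivity, which rests on the quotient (openness) behaviour of the generic morphism rather than on any soft categorical fact.
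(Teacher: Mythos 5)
Your proof is correct and follows the same route as the paper: density of the range of $\Omega_M$ via the extension property of the generic morphism in $\mathcal{A}^{\rightarrow}$, and identification of $\mathrm{Ker}(\Omega_M)$ with $M$ by verifying condition (4) of Proposition \ref{Proposition:characterize-limit}. You are in fact more careful than the paper on the one delicate point of the kernel argument: the paper's proof takes an \emph{arbitrary} morphism $T:E\rightarrow A_0$ and then asks for an embedding $\psi:E\rightarrow M$ with $\Omega_M\circ\psi=T$ and $\psi(\bar{a})$ close to $\phi(\bar{a})$, but that closeness is only available from the generic property when $T|_{\langle\bar{a}\rangle}=\Omega_M\circ\phi$, which an arbitrary $T$ need not satisfy; your extension of $r_E=\Omega_M\circ\phi$ to $r_F$ is exactly the missing justification. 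One improvement is possible there: injectivity of $A_0$ is sufficient but not necessary, since the hypotheses already stated in the proposition yield the extension. Namely, form the approximate pushout (Lemma \ref{Lemma:pushout}, with tolerance $\varpi(0)=0$) of the embedding $\langle\bar{a}\rangle\hookrightarrow F$ and the morphism $\Omega_M\circ\phi:\langle\bar{a}\rangle\rightarrow A_0$, obtaining a morphism $\widehat{f}:F\rightarrow\widehat{Y}$ and an embedding $j:A_0\rightarrow\widehat{Y}$ with $\widehat{f}|_{\langle\bar{a}\rangle}=j\circ\Omega_M\circ\phi$; any morphism $r:\widehat{Y}\rightarrow A_0$ (which exists by hypothesis) satisfies $r\circ j=\mathrm{id}_{A_0}$ by initiality of $A_0$, so $r\circ\widehat{f}$ is the desired extension. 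Finally, you are right that passing from dense range to genuine surjectivity is not a soft categorical fact; the paper asserts the reduction without comment, and in the concrete cases it rests on the quotient behaviour of $\Omega_M$ (for instance proximinality of $M$-ideals in the Banach space setting), so your flagging of this step is appropriate.
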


\begin{proof}
In order to prove that $\Omega _{M}$ is surjective, it is enough to show
that the range of $\Omega _{M}$ is dense. Fix $y\in M$ and $\varepsilon >0$.
Let $\left\langle y\right\rangle $ be the substructure of $M$ generated by $%
y $. Observe that $A_{0}\subset \left\langle y\right\rangle $. By the
characterization of $\Omega _{M}$, there exist embeddings $\psi _{0},\psi
_{1}:\left\langle y\right\rangle \rightarrow M$ such that $d\left( \psi
_{1}(y),y\right) <\varepsilon $ and $\Omega _{M}\circ \psi _{0}=\psi _{1}$.
Therefore $\psi _{1}(y)=\Omega _{M}(x)$ where $x=\psi _{0}(y)$ and $d\left(
y,\Omega _{M}(x)\right) <\varepsilon $. This concludes the proof that the
range of $\Omega _{M}$ is dense. We now show that $\mathrm{Ker}\left( \Omega
_{M}\right) $ is isomorphic to $M$. Suppose that $E$ is a finitely generated
structure in $\mathcal{A}$, $\bar{a}$ is a finite tuple in $E$, and $\phi
:\left\langle \bar{a}\right\rangle \rightarrow \mathrm{Ker}\left( \Omega
_{M}\right) $ is an embedding. Let $T:E\rightarrow A_{0}$ be a morphism. By
the properties of $\Omega _{M}$ there exists an embedding $\psi
:E\rightarrow M$ such that $d\left( \psi (\bar{a}),\phi (\bar{a})\right)
<\varepsilon $ and $\Omega _{M}\circ \psi =T$. This implies that the range
of $\psi $ is contained in $\mathrm{Ker}\left( \Omega _{M}\right) $. It
therefore follows from Proposition \ref{Proposition:characterize-limit} that 
$\mathrm{Ker}\left( \Omega _{M}\right) $ is isomorphic to $M$.
\end{proof}

\section{Universal states\label{Section:states}}

Throughout this section we still use the same notation and terminology as in
Section \ref{Section:injective}. Namely we assume that $\mathcal{A}$ and $%
\mathcal{I}$ are classes of $\mathcal{L}$-structures satisfying the
assumptions of Theorem \ref{Theorem:abstract-nonsense} such that $\mathcal{A}
$ has enough injectives from $\mathcal{I}$ with modulus $\varpi $.

\subsection{Kubi\'{s} universal projections\label%
{Subsection:universal-projection}}

In \cite[\S 4.1]{kubis_metric-enriched_2012} Kubi\'{s} constructs, for any
separable Lindenstrauss space $Y$, a projection $\Omega _{\mathbb{G}}^{Y}$
of norm $1$ on the Gurarij space $\mathbb{G}$ with the following properties:

\begin{itemize}
\item the range of $\Omega _{\mathbb{G}}^{Y}$ is isometrically isomorphic to 
$Y$, and the kernel of $\Omega _{\mathbb{G}}^{Y}$ is isometric to $\mathbb{G}
$;

\item for any separable Banach space $X$, and contractive linear mapping $%
\phi :X\rightarrow \mathbb{G}$ whose range is contained in the range of $%
\Omega _{\mathbb{G}}^{Y}$, there exists an embedding $\eta :X\rightarrow 
\mathbb{G}$ such that $\phi =\Omega _{\mathbb{G}}^{Y}\circ \eta $.
\end{itemize}

The existence of such a projection implies that $\mathbb{G}$ is
topologically isomorphic to $\mathbb{G}\oplus X$ for any separable
Lindenstrauss space $X$. (Recall that two Banach spaces $X,Y$ are
topologically isomorphic if there exists a bounded linear isomorphism from $%
X $ to $Y$.) It follows that if $Z$ is a separable Lindenstrauss space that
contains a complemented subspace isomorphic to $\mathbb{G}$, then $Z$ is
topologically isomorphic to $\mathbb{G}$. Hence $\mathbb{G}$ is also
topologically isomorphic to $\mathbb{G}\otimes X$ for any separable
Lindenstrauss space $X$. A similar result is obtained in \cite[Section 6]%
{cabello_sanchez_quasi-banach_2014} for $p\in (0,1]$ for the $p$-Gurarij
space $\mathbb{G}_{p}$, which is the Fra\"{\i}ss\'{e} limit of the class of
finite-dimensional $p$-Banach spaces.

In this section we will prove general results that imply the following
characterization of Kubi\'{s}' universal projection universal projection $%
\Omega _{\mathbb{G}}^{Y}$; see Subsection \ref{Subsection:Banach}.

\begin{theorem}
\label{Theorem:universal-projection-G}Fix a separable Lindenstraus space.
Suppose that $T:\mathbb{G}\rightarrow Y$ is a linear map of norm at most $1$%
, $N$ is the kernel of $T$, and $H=N^{\bot }\cap \mathbb{L}$. The following
assertions are equivalent:

\begin{enumerate}
\item $T$ is a quotient mapping and $N$ is a nonzero $M$-ideal of $\mathbb{G}
$;

\item $T$ is a quotient mapping and $H$ is a closed proper biface of $%
\mathbb{L}$ symmetrically affinely homeomorphic to $\mathrm{Ball}\left(
Y^{\ast }\right) $;

\item whenever $E\subset F$ are finite-dimensional Banach spaces, $%
f:E\rightarrow Y$ is a linear isometry, $s:F\rightarrow Y$ is a linear map
of norm at most $1$ such that $T\circ f=s$, and $\varepsilon >0$, there
exists a linear isometry $\widehat{f}:F\rightarrow Y$ such that $\left\Vert
T\circ \widehat{f}-s\right\Vert <\varepsilon $.
\end{enumerate}

The set of operators satisfying the equivalent conditions above is a dense $%
G_{\delta }$ subset of the space $\mathrm{\mathrm{\mathrm{Ball}}}\left( B(%
\mathbb{G})\right) $ of linear maps from $\mathbb{G}$ to $Y$ of norm at most 
$1$, and forms a single orbit under the action $\mathrm{Aut}(\mathbb{G}%
)\curvearrowright \mathrm{\mathrm{\mathrm{Ball}}}\left( B(\mathbb{G})\right) 
$, $\left( \alpha ,S\right) \mapsto S\circ \alpha ^{-1}$. If $\Omega _{%
\mathbb{G}}^{Y}:\mathbb{G}\rightarrow Y$ is such an operator, then the
kernel of $\Omega _{\mathbb{G}}^{Y}$ is isometrically isomorphic to $\mathbb{%
G}$. In other words the sequence 
\begin{equation*}
0\longrightarrow \mathbb{G\longrightarrow G}\overset{\Omega _{\mathbb{G}}^{Y}%
}{\longrightarrow }Y\longrightarrow 0
\end{equation*}%
where the first arrow is a linear isometry, is exact. Furthermore $\Omega _{%
\mathbb{G}}^{Y}$ is a universal liner map of norm at most $1$ from a
separable Banach space to $Y$, in the sense that any if $E$ is a separable
Banach space, and $L:E\rightarrow Y$ is a linear map of norm at most $1$,
then there exists a linear isometry $\eta :E\rightarrow \mathbb{G}$ such
that $\Omega _{\mathbb{G}}^{Y}\circ \eta =L$. In particular $\Omega _{%
\mathbb{G}}^{Y}$ can be regarded as a projection of norm $1$ onto an
isometric copy of $Y$ inside $\mathbb{G}$.
\end{theorem}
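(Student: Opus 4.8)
The plan is to obtain this statement as the Banach-space specialization of the general theory of generic morphisms $M\to R$ developed in this section, with $M=\mathbb{G}$ the Fra\"{\i}ss\'{e} limit of the class of finite-dimensional Banach spaces and $R=Y$. The first point is to check that $Y$ is an admissible target, i.e.\ a separable approximately injective structure, and this is exactly where the Lindenstrauss hypothesis is used: a separable Banach space is Lindenstrauss precisely when it is nuclear in its minimal quantization, and by Proposition \ref{Proposition:nuclear} nuclearity coincides with $\mathcal{I}$-nuclearity and hence with approximate injectivity. Thus the general construction of a generic morphism into $Y$ applies.

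Granting the general theory, the generic morphism $\Omega_{\mathbb{G}}^{Y}\colon\mathbb{G}\to Y$ it produces carries, by the characterization of generic morphisms analogous to the one obtained in Section \ref{Section:operators}, precisely the asserted genericity: its orbit under $\mathrm{Aut}(\mathbb{G})$ via $(\alpha,S)\mapsto S\circ\alpha^{-1}$ is a dense $G_{\delta}$ subset of $\mathrm{Ball}(B(\mathbb{G},Y))$, and a contraction $T$ lies in this orbit exactly when it enjoys the one-sided lifting property that, written out for Banach spaces, is condition (3). Universality—the factorization of an arbitrary contraction $L\colon E\to Y$ out of a separable space as $\Omega_{\mathbb{G}}^{Y}\circ\eta$ with $\eta$ isometric—is the lifting property applied to $\{0\}\subset E$. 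Applying it to $L=\mathrm{id}_{Y}$, legitimate since $Y$ embeds isometrically into $\mathbb{G}$ by universality of $\mathbb{G}$, yields an isometric section $\eta\colon Y\to\mathbb{G}$ with $\Omega_{\mathbb{G}}^{Y}\circ\eta=\mathrm{id}_{Y}$, whence $P:=\eta\circ\Omega_{\mathbb{G}}^{Y}$ is a norm-one idempotent onto the isometric copy $\eta(Y)$, giving the projection assertion. For the kernel I would use the relative version of Proposition \ref{Proposition:kernel} with the zero space as universal initial object $A_{0}$: its back-and-forth, extending embeddings of finitely generated spaces into $\mathrm{Ker}(\Omega_{\mathbb{G}}^{Y})=(\Omega_{\mathbb{G}}^{Y})^{-1}(0)$, shows that the kernel satisfies the characterizing property of $\mathbb{G}$, which is the displayed exact sequence.

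It then remains to connect the abstract condition (3) with the geometric conditions (1) and (2). The equivalence (1)$\Leftrightarrow$(2) is pure duality: under the contravariant equivalence $X\mapsto\mathrm{Ball}(X^{\ast})$ the map $T$ becomes the restriction $A_{\sigma}(\mathbb{L})\to A_{\sigma}(H)$, the adjoint $T^{\ast}$ embeds $Y^{\ast}$ isometrically into $\mathbb{G}^{\ast}$ exactly when $T$ is a quotient mapping, and then $H=N^{\bot}\cap\mathbb{L}=T^{\ast}(\mathrm{Ball}(Y^{\ast}))$ is symmetrically affinely homeomorphic to $\mathrm{Ball}(Y^{\ast})$; by the Lazar--Lindenstrauss characterization in Proposition \ref{Proposition:characterize-biface}, such an $H$ is a closed proper biface if and only if $N=\ker T$ is a nonzero $M$-ideal. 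The implication (3)$\Rightarrow$(1) is comparatively soft: a $T$ satisfying (3) is conjugate through an automorphism of $\mathbb{G}$ to the model $\Omega_{\mathbb{G}}^{Y}$, and since automorphisms are surjective isometries they carry quotient maps to quotient maps and $M$-ideals to $M$-ideals, so it suffices to verify (1) for the model, which can be read off its explicit inductive construction.

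The main obstacle is the reverse implication (1)$\Rightarrow$(3): that a quotient $T$ with $M$-ideal kernel $N$ satisfies the lifting property (3), which must be extracted from the geometric hypothesis alone. Given a datum $E\subset F$, an isometry $f\colon E\to\mathbb{G}$, and a contraction $s\colon F\to Y$ with $T\circ f=s|_{E}$, the strategy is first to lift $s$ through the quotient $T$ to a contraction $\ell\colon F\to\mathbb{G}$ agreeing with $f$ on $E$ up to arbitrarily small error, and then to correct $\ell$ to an isometry by adding a map $k\colon F\to N$ into the kernel. Any $\ell+k$ is again a lift of $s$ because $T\circ k=0$, and the decisive use of the $M$-ideal property is that the norm of $\ell+k$ is governed by the maximum of its quotient and kernel contributions, so that $k$ can be chosen—placing its range isometrically inside $N$ by approximate ultrahomogeneity of $\mathbb{G}$ together with approximate injectivity of $Y$—to raise the norm of $(\ell+k)(x)$ up to $\|x\|$ for every $x\in F$ without overshooting, producing an isometry $\widehat{f}=\ell+k$ with $T\circ\widehat{f}\approx s$ extending $f$. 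I expect the technical heart to be making the two requirements (isometry and approximate commutation with $T$) compatible and uniform, which I would control through the small-perturbation lemma for basic tuples recorded in Subsection \ref{Subsection:basic}.
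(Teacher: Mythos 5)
Your overall architecture coincides with the paper's: Fact \ref{Fact:Lindenstrauss} identifies the separable Lindenstrauss spaces with the approximately injective structures, so the generic $R$-state machinery of Section \ref{Section:states} applies with $R=Y$ and yields $\Omega_{\mathbb{G}}^{Y}$ together with the dense $G_{\delta}$ orbit, the universality statement (and, applying it to $\mathrm{id}_{Y}$, the isometric section and the norm-one projection), and the kernel assertion; the equivalence (1)$\Leftrightarrow$(2) is, as you say, bookkeeping through Proposition \ref{Proposition:characterize-biface}. The gap is in (1)$\Rightarrow$(3), that is, in showing that every nontrivial facial quotient lies in the $\mathrm{Aut}(\mathbb{G})$-orbit of the generic operator. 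Your correction step rests on the claim that, because $N$ is an $M$-ideal, the norm of $\ell+k$ is ``governed by the maximum of its quotient and kernel contributions.'' That is the defining property of an $M$-\emph{summand}, not of an $M$-ideal; for an $M$-ideal one only has the approximate $3$-ball property of \cite[Theorem 2.2]{harmand_M-ideals_1993}, which absorbs finitely many prescribed elements of $N$ up to $\varepsilon$ and gives no exact max-decomposition of norms. Even granting an approximate version, you would need a single linear $k\colon F\to N$ that is negligible on $E$ yet norms every $x$ in the sphere of $F$ at which $\ell$ loses norm --- a simultaneous requirement over a compact set for which no construction is offered --- and your device of ``placing the range of $k$ isometrically inside $N$'' presupposes that $N$ contains isometric copies of the relevant finite-dimensional spaces, which at that stage is unknown: it is essentially the conclusion $N\cong\mathbb{G}$ that the theorem is meant to deliver. (Your (3)$\Rightarrow$(1) also defers to ``reading off'' the $M$-ideal property from the inductive construction of the model; the paper instead gets it from the implication (2)$\Rightarrow$(1) of Proposition \ref{Proposition:characterize-biface}, which is a genuine $3$-ball verification.)

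The paper's route around this obstacle (Proposition \ref{Proposition:characterize-operator-G}, modelled on Proposition \ref{Proposition:dense-Lusky}) is worth internalizing because it is dual and finitary. By the general theory it suffices to verify the generic-state characterization for $E=\ell_{n}^{\infty}\subset F=\ell_{n+1}^{\infty}$ drawn from the cofinal class $\mathcal{I}$. One then builds an auxiliary quotient $Q\colon\mathbb{G}\to\ell_{n+1}^{\infty}$ from extreme points $t_{1},\ldots,t_{n+1}$ of $\mathbb{L}$ chosen \emph{outside} the biface $H$ and close to functionals norming $f$, so that the absolutely convex hull of $H$ and the segments $[-t_{k},t_{k}]$ is again a closed biface (\cite[Proposition 4.6]{effros_class_1971}); hence $Q\oplus P$ is a facial quotient with $\Vert Q\circ f-\phi\Vert$ small. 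A contractive lift $\widehat{g}$ of $(\mathrm{id},s)$ through $Q\oplus P$, supplied by Proposition \ref{Proposition:characterize-biface}, then satisfies $Q\circ\widehat{g}=\mathrm{id}_{\ell_{n+1}^{\infty}}$ with both factors contractive, which \emph{forces} $\widehat{g}$ to be an isometry --- this replaces your kernel-valued correction entirely. If you want to salvage a direct argument, you would have to reproduce some version of this dual construction; the $M$-ideal hypothesis alone, used only through norm estimates on $\ell+k$, does not suffice.
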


The analog of Theorem \ref{Theorem:universal-projection-G} in the case of
the Poulsen system holds as well.

\begin{theorem}
\label{Theorem:universal-projection-A(P)}Fix $K$ is a metrizable Choquet
simplex. Suppose that $T:A(\mathbb{P})\rightarrow A(K)$ is a unital positive
linear map, $N$ is the kernel of $T$, and $H=N^{\bot }\cap \mathbb{P}$. The
following assertions are equivalent:

\begin{enumerate}
\item $T$ is a quotient mapping and $N$ is a nonzero $M$-ideal of $A(\mathbb{%
P})$;

\item $T$ is a quotient mapping and $H$ is a closed proper face of $\mathbb{P%
}$ affinely homeomorphic to $K$;

\item whenever $E\subset F$ are finite-dimensional function systems, $%
f:E\rightarrow A(K)$ is a linear isometry, $s:F\rightarrow A(K)$ is a unital
linear function such that $T\circ f=s$, and $\varepsilon >0$, there exists a
unital linear isometry $\widehat{f}:F\rightarrow A(K)$ such that $\left\Vert
T\circ \widehat{f}-s\right\Vert <\varepsilon $.
\end{enumerate}

The set of operators satisfying the equivalent conditions above is a dense $%
G_{\delta }$ subset of the space $\mathrm{UP}\left( A(\mathbb{P}%
),A(K)\right) $ of unital positive linear maps from $A(\mathbb{P})$ to $A(K)$%
, and forms a single orbit under the action $\mathrm{Aut}\left( A(\mathbb{P}%
)\right) \curvearrowright \mathrm{UP}\left( A(\mathbb{P}),A(K)\right) $, $%
\left( \alpha ,S\right) \mapsto S\circ \alpha ^{-1}$. If $\Omega _{A(\mathbb{%
P})}^{A(K)}:A(\mathbb{P})\rightarrow A(K)$ is such an operator, then 
\begin{equation*}
\left\{ x\in A(\mathbb{P}):\Omega _{A(\mathbb{P})}^{A(K)}(x)\text{ is a
scalar multiple of the identity}\right\}
\end{equation*}%
is unitally isometrically isomorphic to $A(\mathbb{P})$. Furthermore $\Omega
_{A(\mathbb{P)}}^{A(K)}$ is a universal unital positive liner map from a
separable function system to $A(K)$, in the sense that any if $A(T)$ is a
separable function system, and $L:A(T)\rightarrow A(K)$ is a unital positive
linear map, then there exists a unital linear isometry $\eta
:A(T)\rightarrow A(\mathbb{P})$ such that $\Omega _{A(\mathbb{P}%
)}^{A(K)}\circ \eta =L$. In particular $\Omega _{A(\mathbb{P})}^{A(K)}$ can
be regarded as a projection onto a unital isometric copy of $A(K)$ inside $A(%
\mathbb{P})$.
\end{theorem}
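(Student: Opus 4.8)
The plan is to deduce Theorem~\ref{Theorem:universal-projection-A(P)} from a general existence-and-genericity result for morphisms $\Omega\colon M\to R$ into a fixed separable \emph{approximately injective} target $R$, specialized to the category of function systems. First I would record the relevant instance of the framework. Taking $\mathcal{A}$ to be the category of function systems with unital positive linear maps and $\mathcal{I}$ the finite-dimensional injective function systems (as in Subsection~\ref{Subsection:function-system}), the Fra\"iss\'e limit $M$ produced by Theorem~\ref{Theorem:abstract-nonsense} is $A(\mathbb{P})$. Since $K$ is a metrizable Choquet simplex, $A(K)$ is a separable Lindenstrauss space, hence $\mathcal{I}$-nuclear and therefore, by Proposition~\ref{Proposition:nuclear}, approximately injective in $\mathcal{A}$; set $R:=A(K)$.

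To construct the generic map I would adapt the arrow-category argument of Subsection~\ref{Subsection:universal}. Consider the category whose objects are morphisms $g\colon X\to R$ with $X$ finitely generated in $\mathcal{A}$, and whose morphisms $(X,g)\to(X',g')$ are embeddings $\iota\colon X\to X'$ with $g'\circ\iota=g$. The hereditary and joint-embedding properties, and the completeness and separability of the associated spaces of marked structures, are verified as in Subsection~\ref{Subsection:class}. For near amalgamation I would run the approximate-pushout argument of Lemma~\ref{Lemma:pushout-morphism} in the domain coordinate and then use \emph{approximate injectivity of $R$} (Definition~\ref{Definition:approx-inj}) to extend the two given maps into $R$ to a single compatible map on the pushout; this is the one place where the fixed, non-universal target enters. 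The resulting Fra\"iss\'e class has a limit which, by Proposition~\ref{Proposition:characterize-limit} applied to the domain coordinate, is a morphism $\Omega:=\Omega_{A(\mathbb{P})}^{A(K)}\colon M\to R$ with $M\cong A(\mathbb{P})$. A back-and-forth argument identical to Subsection~\ref{Subsection:stable} then shows that the $\mathrm{Aut}(M)$-orbit of $\Omega$ is a dense $G_{\delta}$ subset of $\mathrm{UP}(A(\mathbb{P}),A(K))$, and that membership in this orbit is equivalent to the extension property displayed in assertion~(3) (after correcting the evident typographical slips, so that $f,\widehat{f}$ map into $A(\mathbb{P})$ and $s$ into $A(K)$).

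Universality of $\Omega$---that every unital positive $L\colon A(T)\to A(K)$ from a separable function system $A(T)$ factors as $L=\Omega\circ\eta$ with $\eta$ a unital linear isometry---is immediate from universality of the Fra\"iss\'e limit object in the arrow category, as in Subsection~\ref{Subsection:universal}. For the scalar preimage I would invoke the function-system analog of Proposition~\ref{Proposition:kernel}, in which the universal initial object $A_{0}$ is the scalar line $\mathbb{R}1$; then $\{x:\Omega(x)\text{ is a scalar multiple of }1\}$ is exactly $\{x:\Omega(x)\in A_{0}\}$, and the proof of Proposition~\ref{Proposition:kernel} shows this substructure is unitally isometrically isomorphic to $M=A(\mathbb{P})$.

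It remains to prove the equivalence of (1), (2), (3) for an arbitrary unital positive $T$, which I would argue purely functional-analytically. By Kadison duality the assignment $N\mapsto H=N^{\bot}\cap\mathbb{P}$ is a bijection between closed subspaces $N$ for which $f\mapsto f|_{H}$ induces $A(\mathbb{P})/N\cong A(K)$ and closed convex subsets $H$ with $A(H)\cong A(K)$. By Proposition~\ref{Proposition:characterize-face}, among these $N$ is a nonzero $M$-ideal and $T$ a quotient mapping precisely when $H$ is a closed proper face of $\mathbb{P}$, which, being a metrizable Choquet simplex, is then affinely homeomorphic to $K$; this gives (1)$\Leftrightarrow$(2). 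Finally (2)$\Leftrightarrow$(3): a quotient onto a proper face $H\cong K$ is conjugate to the restriction $A(\mathbb{P})\to A(H)$, $f\mapsto f|_{H}$, and the assertion that this restriction admits the nearly-commuting isometric lifts of (3) is exactly the genericity property characterizing the $\mathrm{Aut}(A(\mathbb{P}))$-orbit of $\Omega$ established above; the argument parallels Theorem~\ref{Theorem:universal-projection-G}, with faces in place of bifaces and unital positive maps in place of contractions. The main obstacle I anticipate is the amalgamation step for the fixed target: one must amalgamate two morphisms into the \emph{same} approximately injective $R=A(K)$ while keeping the domain embeddings isometric, which forces systematic use of approximate injectivity of $A(K)$ in place of the target-universality available in Section~\ref{Section:operators}; reconciling the resulting tolerance estimates with the $M$-ideal characterization of faces is the delicate point.
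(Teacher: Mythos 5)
Your proposal is correct and follows essentially the same route as the paper: the general universal-state machinery of Section \ref{Section:states} applied to $\mathcal{A}=$ function systems with $R=A(K)$ approximately injective, Proposition \ref{Proposition:characterize-face} for the equivalence of the $M$-ideal and face conditions, the adaptation of Proposition \ref{Proposition:characterize-operator-G} (via density of $\partial_e\mathbb{P}$) to identify the $\mathrm{Aut}(A(\mathbb{P}))$-orbit of $\Omega_{A(\mathbb{P})}^{A(K)}$ with the nontrivial unital facial quotients, and the analog of Proposition \ref{Proposition:kernel} for the scalar preimage. The only cosmetic divergence is in the amalgamation step for $R$-states, where the paper gets the compatible state on the approximate pushout directly from the pushout's universal property rather than from approximate injectivity of $A(K)$, so the obstacle you flag at the end does not actually arise.
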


The noncommutative analogs of Theorem \ref{Theorem:universal-projection-G}
and Theorem \ref{Theorem:universal-projection-A(P)} hold as well; see
Subsection \ref{Subsection:ospaces} and Subsection \ref{Subsection:osystems}.

\begin{theorem}
\label{Theorem:universal-projection-NG}Fix a separable nuclear operator
space $Y$ and let $\mathbb{NG}$ be the noncommutative Gurarij space. There
exists a linear complete contraction $\Omega _{\mathbb{\mathbb{NG}}}^{Y}:%
\mathbb{\mathbb{NG}}\rightarrow Y$ such that if $E$ is a separable Banach
space, and $L:E\rightarrow Y$ is a completely contractive linear map, then
there exists a linear complete isometry $\eta :E\rightarrow \mathbb{\mathbb{%
NG}}$ such that $\Omega _{\mathbb{\mathbb{NG}}}^{Y}\circ \eta =L$.
Furthermore $\Omega _{\mathbb{\mathbb{NG}}}^{Y}$ is generic, in the sense
that the orbit of $\Omega _{\mathbb{\mathbb{NG}}}^{Y}$ inside the space $%
\mathrm{\mathrm{\mathrm{Ball}}}\left( CB(\mathbb{NG})\right) $ of completely
contractive linear maps from $\mathbb{NG}$ to $Y$ under the action $\mathrm{%
Aut}(\mathbb{NG})\curvearrowright $ $\mathrm{\mathrm{\mathrm{Ball}}}\left(
CB(\mathbb{NG})\right) $, $\left( \alpha ,S\right) \mapsto S\circ \alpha
^{-1}$ is a dense $G_{\delta }$ set. The kernel of $\Omega _{\mathbb{\mathbb{%
NG}}}^{Y}$ is isometrically isomorphic to $\mathbb{\mathbb{NG}}$. In other
words the sequence 
\begin{equation*}
0\longrightarrow \mathbb{NG\longrightarrow \mathbb{NG}}\overset{\Omega _{%
\mathbb{\mathbb{NG}}}^{Y}}{\longrightarrow }Y\longrightarrow 0
\end{equation*}%
where the first arrow is a linear isometry, is exact. A completely
contractive linear map $T:\mathbb{NG}\rightarrow Y$ belongs to the $\mathrm{%
Aut}(\mathbb{NG})$-orbit of $\Omega _{\mathbb{NG}}^{Y}$ if and only if it
satisfies the following property: whenever $E\subset F$ are
finite-dimensional operator spaces, $f:E\rightarrow Y$ is a linear complete
isometry, $s:F\rightarrow Y$ is a completely contractive linear map such
that $T\circ f=s$, and $\varepsilon >0$, there exists a linear complete
isometry $\widehat{f}:F\rightarrow Y$ such that $\left\Vert T\circ \widehat{f%
}-s\right\Vert <\varepsilon $.
\end{theorem}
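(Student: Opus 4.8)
The plan is to deduce Theorem \ref{Theorem:universal-projection-NG} from the general construction of a generic morphism into a fixed approximately injective target, and then to extract each listed assertion from the universality and stable homogeneity of the resulting limit. I would work in the category $\mathcal{A}$ of exact operator spaces, set up (as in Subsection \ref{Subsection:ospaces}) so that its Fra\"{\i}ss\'{e} limit $M$ is the noncommutative Gurarij space $\mathbb{NG}$. The hypothesis that $Y$ is nuclear means exactly that $Y$ is $\mathcal{I}$-nuclear, hence approximately injective by Proposition \ref{Proposition:nuclear}; equivalently, by Theorem \ref{Theorem:retracts}, $Y$ is completely isometric to a retract of $\mathbb{NG}$. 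This is the only feature of $Y$ that the argument uses, and it is what makes the construction close up.

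First I would introduce the relative arrow category $\mathcal{A}^{\to Y}$ whose objects are morphisms $T\colon X\to Y$ with $X$ in $\mathcal{A}$, a morphism from $T\colon X\to Y$ to $S\colon W\to Y$ being a single map $\alpha\colon X\to W$ in $\mathcal{A}$ (the codomain $Y$ is frozen), with $I(\alpha)$ measuring the failure of $\alpha$ to be an embedding satisfying $S\circ\alpha=T$, in exact parallel with Subsection \ref{Subsection:2morphism}. Because every finitely generated $X$ admits the zero morphism into $Y$, the class of domains of objects of the finitely generated class $\mathcal{C}^{\to Y}$ is all of $\mathcal{C}$. The hereditary property, joint embedding property, and separability and completeness of the $n$-marked objects transcribe verbatim from Subsection \ref{Subsection:class}. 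The crucial point is a near amalgamation property with modulus $\varpi$: given $T\colon E\to Y$ with $E$ finitely generated and an embedding $\phi\colon E\to\widehat{E}$ with $I(\phi)\le\delta$, one amalgamates over the domain using the approximate pushout of Lemma \ref{Lemma:pushout}, and then must extend the map $T$ into the frozen target to a morphism $\widehat{T}\colon\widehat{E}\to Y$ agreeing with $T$ up to tolerance $\varpi(\delta)$ on the generators. This extension is provided precisely by the approximate injectivity of $Y$ in the form of Proposition \ref{Proposition:nuclear}(3). Thus $\mathcal{C}^{\to Y}$ is a Fra\"{\i}ss\'{e} class whose limit is a morphism $\Omega_{\mathbb{NG}}^{Y}\colon M'\to Y$, and applying the characterization of Proposition \ref{Proposition:characterize-limit} to the domain exactly as in Proposition \ref{Proposition:existence-limit-arrow} identifies $M'$ with $\mathbb{NG}$.

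I would then read off the stated properties from universality and stable homogeneity of this limit. Universality gives, for any completely contractive $L\colon E\to Y$ on a separable exact operator space $E$, a complete isometry $\eta\colon E\to\mathbb{NG}$ with $\Omega_{\mathbb{NG}}^{Y}\circ\eta=L$; taking $E=Y$ and $L=\mathrm{id}_Y$ (legitimate since $Y$ is exact) exhibits $\Omega_{\mathbb{NG}}^{Y}$ as a completely contractive projection onto a completely isometric copy of $Y$, with $\eta\circ\Omega_{\mathbb{NG}}^{Y}$ the associated idempotent on $\mathbb{NG}$. The relative analog of Proposition \ref{Proposition:characterize-limit} yields both the finite-dimensional extension property characterizing the $\mathrm{Aut}(\mathbb{NG})$-orbit of $\Omega_{\mathbb{NG}}^{Y}$ and, by the usual Baire-category argument from approximate homogeneity, the fact that this orbit is a dense $G_\delta$ subset of $\mathrm{Ball}(CB(\mathbb{NG},Y))$ in the topology of pointwise convergence. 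To identify the kernel I would argue as in Proposition \ref{Proposition:kernel}: given an embedding of a marked finite-dimensional space into $\mathrm{Ker}(\Omega_{\mathbb{NG}}^{Y})$, apply the extension property to the zero morphism $E\to Y$ to extend it to a complete isometry $E\to\mathbb{NG}$ with range inside the kernel, so that $\mathrm{Ker}(\Omega_{\mathbb{NG}}^{Y})$ satisfies the characterizing property of $\mathbb{NG}$ and is therefore completely isometric to it; exactness of the displayed sequence follows.

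The main obstacle is the modulus-controlled amalgamation step for $\mathcal{C}^{\to Y}$. A bare near amalgamation holds for any $Y$ in $\mathcal{A}$, since the approximate pushout of the domains maps into any target by its universal property; what is delicate, and genuinely requires $Y$ to be approximately injective, is to carry out the amalgamation while extending maps into the \emph{frozen} target $Y$ with the tight tolerance $\varpi(\delta)$. This is exactly the content of Proposition \ref{Proposition:nuclear}(3), and it is what upgrades the merely approximate factorizations supplied by metric universality to the \emph{exact} identities $\Omega_{\mathbb{NG}}^{Y}\circ\eta=L$ and $\mathrm{Ker}(\Omega_{\mathbb{NG}}^{Y})\cong\mathbb{NG}$; indeed nuclearity of $Y$ is also necessary here, since the splitting forces $Y$ to be a completely contractively complemented, hence nuclear, subspace of the nuclear space $\mathbb{NG}$ (Theorem \ref{Theorem:NG}). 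Everything else is a routine transcription of the absolute case treated in Sections \ref{Section:injective}--\ref{Section:operators}.
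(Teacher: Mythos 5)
Your proposal is correct and takes essentially the same route as the paper: your relative arrow category $\mathcal{A}^{\rightarrow Y}$ is exactly the paper's category $\mathcal{A}_{R}$ of $R$-states with $R=Y$ from Section \ref{Section:states}, amalgamated via the approximate pushout of Lemma \ref{Lemma:pushout}, with nuclearity of $Y$ entering as approximate injectivity to identify the domain of the limit with $\mathbb{NG}$ and the kernel handled as in Proposition \ref{Proposition:kernel}. The only cosmetic difference is that the paper routes the operator-space case through the stratified framework of Section \ref{Section:general} (since exact operator spaces admit only finite products), which your appeal to the setup of Subsection \ref{Subsection:ospaces} implicitly absorbs.
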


Two operator spaces $X,Y$ are completely isomorphic if there exists a
completely bounded linear isomorphism from $X$ to $Y$. A subspace of an
operator space is completely complemented if it is the range of a completely
bounded projection.

\begin{corollary}
\label{Corollary:isomorphism}The noncommutative Gurarij space $\mathbb{NG}$
is completely isomorphic to $\mathbb{NG}\oplus Y$ for any separable nuclear
operator space $Y$. If a nuclear operator space contains a completely
complemented subspace isomorphic to $\mathbb{NG}$, then it is isomorphic to $%
\mathbb{NG}$. In particular $\mathbb{NG}$ is isomorphic to $\mathbb{NG}%
\otimes Y$ for any separable nuclear operator space $X$.
\end{corollary}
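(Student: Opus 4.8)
The plan is to deduce all three assertions from the universal projection $\Omega_{\mathbb{NG}}^{Y}$ of Theorem \ref{Theorem:universal-projection-NG} together with the Pe\l czy\'{n}ski decomposition method. For the first assertion I split $\Omega:=\Omega_{\mathbb{NG}}^{Y}$: applying the universal property in Theorem \ref{Theorem:universal-projection-NG} to the domain $Y$ itself and the map $L=\mathrm{id}_{Y}$ (legitimate since $Y$, being nuclear, is a separable exact operator space) yields a complete isometry $\eta\colon Y\rightarrow\mathbb{NG}$ with $\Omega\circ\eta=\mathrm{id}_{Y}$. Then $P:=\eta\circ\Omega$ is a completely contractive idempotent on $\mathbb{NG}$ with range $\eta[Y]$ completely isometric to $Y$ and kernel $\mathrm{Ker}(P)=\mathrm{Ker}(\Omega)$ completely isometric to $\mathbb{NG}$ (by the theorem). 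Since $\|P\|_{cb}\le 1$ and $\|\mathrm{id}-P\|_{cb}\le 2$, the map $x\mapsto((\mathrm{id}-P)x,Px)$ is a complete isomorphism of $\mathbb{NG}$ onto $\mathrm{Ker}(P)\oplus\eta[Y]$, so $\mathbb{NG}$ is completely isomorphic to $\mathbb{NG}\oplus Y$.

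Two stability facts are needed. A countable $c_{0}$-direct sum of nuclear operator spaces is nuclear: approximate finitely many coordinates through finite-dimensional injectives by complete contractions and annihilate the rest, the target being a finite direct sum of finite-dimensional injectives. Likewise $\mathbb{NG}\otimes_{\min}Y$ is nuclear whenever $Y$ is, since $M_{n,m}\otimes_{\min}M_{p,q}\cong M_{np,mq}$ is again injective and the minimal tensor product of complete contractions is completely contractive. In particular $V:=(\bigoplus_{n}\mathbb{NG})_{c_{0}}$ is separable and nuclear. I then establish the self-absorbing identity $\mathbb{NG}\cong V$: by the first assertion (with $Y=V$) the space $V$ is completely complemented in $\mathbb{NG}$, while $\mathbb{NG}$ is completely complemented in $V$ as a coordinate, and $V\cong(\bigoplus_{n}V)_{c_{0}}$ by reindexing the $c_{0}$-sum. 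Because the Pe\l czy\'{n}ski decomposition method is a purely formal telescoping argument that carries over verbatim to the operator space category (with complete isomorphisms and operator space $c_{0}$-sums), it yields $\mathbb{NG}\cong V=(\bigoplus_{n}\mathbb{NG})_{c_{0}}$.

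The second assertion is now immediate. If $Z$ is a separable nuclear operator space containing a completely complemented copy of $\mathbb{NG}$, then $\mathbb{NG}$ is completely complemented in $Z$ by hypothesis, and $Z$ is completely complemented in $\mathbb{NG}$ by the first assertion (with $Y=Z$, using that $Z$ is nuclear). Since $\mathbb{NG}\cong(\bigoplus_{n}\mathbb{NG})_{c_{0}}$, the Pe\l czy\'{n}ski method gives $Z\cong\mathbb{NG}$. For the third assertion, fix $0\ne Y$ separable nuclear, a unit vector $y_{0}\in Y$, and a norming functional $\psi\in Y^{*}$ with $\psi(y_{0})=1$; then $x\mapsto x\otimes y_{0}$ and $\mathrm{id}_{\mathbb{NG}}\otimes\psi$ exhibit $\mathbb{NG}$ as a completely complemented subspace of the separable nuclear operator space $\mathbb{NG}\otimes_{\min}Y$, and the second assertion yields $\mathbb{NG}\otimes_{\min}Y\cong\mathbb{NG}$.

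The genuinely delicate point, and the obstacle this route is designed to bypass, is the question of whether an arbitrary completely (merely boundedly) complemented subspace of a nuclear operator space is nuclear: restricting a nuclear factorization to such a subspace produces maps of completely bounded, not completely contractive, norm, so nuclearity is a priori lost and the characterization of nuclear spaces as retracts of $\mathbb{NG}$ (Theorem \ref{Theorem:retracts}, Proposition \ref{Proposition:nuclear}) does not directly apply. The argument above avoids this entirely by never passing to a complement of $\mathbb{NG}$ in $Z$: it invokes nuclearity only for $Z$ (given) and for $V=(\bigoplus_{n}\mathbb{NG})_{c_{0}}$ (obtained with genuinely contractive factorizations), and uses only that each of the two relevant spaces is completely complemented in the other. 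What remains is routine, namely checking that the formal identities underlying the Pe\l czy\'{n}ski argument are realized by completely bounded maps, which holds since they are assembled from coordinate projections, inclusions, direct sums, and reindexings.
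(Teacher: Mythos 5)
Your argument is correct and follows essentially the same route as the paper: the paper likewise splits the exact sequence coming from $\Omega_{\mathbb{NG}}^{Y}$ in Theorem \ref{Theorem:universal-projection-NG} to get $\mathbb{NG}\cong\mathbb{NG}\oplus Y$, and then disposes of the remaining assertions by citing the Pe\l czy\'{n}ski decomposition argument of Cabello S\'{a}nchez--Garbuli\'{n}ska-W\c{e}grzyn--Kubi\'{s} \cite[Corollary 6.6]{cabello_sanchez_quasi-banach_2014}, which is exactly the $c_{0}$-sum telescoping you carry out with $V=(\bigoplus_{n}\mathbb{NG})_{c_{0}}$. The only difference is that you spell out the details the paper delegates to that reference (nuclearity of the $c_{0}$-sum and of $\mathbb{NG}\otimes_{\min}Y$, and the complementation of $\mathbb{NG}$ in the tensor product), and your closing observation that one never needs nuclearity of a complemented subspace of a nuclear operator space is a correct and worthwhile clarification.
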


\begin{proof}
By Theorem \ref{Theorem:universal-projection-NG} one has an exact sequence
of completely contractive maps%
\begin{equation*}
0\longrightarrow \mathbb{NG\longrightarrow NG}\longrightarrow
Y\longrightarrow 0
\end{equation*}%
where the second map is a complete isometry. It follows that $\mathbb{NG}$
is completely isomorphic to $\mathbb{NG}\oplus Y$. The other assertions
follow as in the proof of \cite[Corollary 6.6]%
{cabello_sanchez_quasi-banach_2014}.
\end{proof}

\begin{theorem}
\label{Theorem:universal-projection-NP}Fix a separable nuclear operator
system $Y$ and let $\mathbb{NP}$ be the noncommutative Poulsen simplex, with
associated operator system $A(\mathbb{\mathbb{NP}})$. There exists a unital
completely positive map $\Omega _{A(\mathbb{\mathbb{NP)}}}^{Y}:A(\mathbb{%
\mathbb{NP}})\rightarrow Y$ such that if $X$ is a separable operator system,
and $L:E\rightarrow Y$ is a unital completely positive linear map, then
there exists a unital linear complete isometry $\eta :X\rightarrow A(\mathbb{%
NP})$ such that $\Omega _{\mathbb{\mathbb{NP}}}^{Y}\circ \eta =L$.
Furthermore $\Omega _{A(\mathbb{\mathbb{NP}})}^{Y}$ is generic, in the sense
that the $\mathrm{Aut}(A(\mathbb{NP}))$-orbit of $\Omega _{\mathbb{\mathbb{NP%
}}}^{Y}$ inside the space $\mathrm{UCP}\left( A(\mathbb{\mathbb{NP}}%
),Y\right) $ of unital completely positive linear maps from $A(\mathbb{%
\mathbb{NP}})$ to $Y$ under the action $\mathrm{Aut}(A(\mathbb{NP}%
))\curvearrowright $ $\mathrm{UCP}\left( A(\mathbb{\mathbb{NP}}),Y\right) $, 
$\left( \alpha ,S\right) \mapsto S\circ \alpha ^{-1}$ is a dense $G_{\delta
} $ set. The set%
\begin{equation*}
\left\{ x\in A(\mathbb{\mathbb{NP}}):\Omega _{\mathbb{NP}}^{Y}(x)\text{ is a
scalar multiple of the identity}\right\}
\end{equation*}%
is unitally completely isometric isomorphic to $A(\mathbb{\mathbb{NP}})$. A
unital completely positive map $T:A(\mathbb{\mathbb{NP}})\rightarrow Y$
belongs to the $\mathrm{Aut}(A(\mathbb{NP}))$-orbit of $\Omega _{A(\mathbb{%
\mathbb{NP}})}^{Y}$ if and only if it satisfies the following property:
whenever $E\subset F$ are finite-dimensional operator systems, $%
f:E\rightarrow Y$ is a unital linear complete isometry, $s:F\rightarrow Y$
is a unital completely positive such that $T\circ f=s$, and $\varepsilon >0$%
, there exists a unital linear complete isometry $\widehat{f}:F\rightarrow Y$
such that $\left\Vert T\circ \widehat{f}-s\right\Vert <\varepsilon $.
\end{theorem}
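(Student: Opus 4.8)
The plan is to obtain Theorem \ref{Theorem:universal-projection-NP} as the operator-system instance of the general theory of universal morphisms into an approximately injective target developed in Section \ref{Section:states}. Because finite-dimensional exact operator systems are not injective, the ambient category has to be handled within the more general framework of Section \ref{Section:general}: there the class $\mathcal{A}$ of operator systems, with unital completely positive maps as morphisms and unital complete isometries as embeddings, together with the collection $\mathcal{I}$ of finite-dimensional exact operator systems, is shown to satisfy the standing axioms, and its Fra\"{\i}ss\'{e} limit $M$ is $A(\mathbb{NP})$. The decisive input is the operator-system form of Proposition \ref{Proposition:nuclear}, which identifies the separable \emph{nuclear} operator system $Y$ with an approximately injective object of $\mathcal{A}$; this is exactly the hypothesis under which $Y$ may serve as a fixed codomain in the constructions of Section \ref{Section:states}.

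First I would set up the comma category whose objects are morphisms $f\colon X\to Y$ with $X$ a finitely generated operator system, in close parallel with the category $\mathcal{A}^{\rightarrow}$ of Subsection \ref{Subsection:2morphism} but with the codomain frozen at $Y$. Approximate injectivity of $Y$ is precisely what makes the amalgamation step of Lemma \ref{Lemma:pushout-morphism} go through in this fixed-codomain setting: given an embedding $\phi$ of domains and two maps into $Y$, one first extends over $\phi$ using approximate injectivity of $Y$ and then amalgamates. This shows the comma class is a Fra\"{\i}ss\'{e} class, and its limit is a distinguished unital completely positive map $\Omega_{A(\mathbb{NP})}^{Y}\colon A(\mathbb{NP})\to Y$, the domain being $A(\mathbb{NP})$ by the argument of Proposition \ref{Proposition:existence-limit-arrow}. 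The universal factorization property---that every unital completely positive $L\colon X\to Y$ out of a separable $X$ equals $\Omega_{A(\mathbb{NP})}^{Y}\circ\eta$ for some unital complete isometry $\eta$---is then immediate from universality of the Fra\"{\i}ss\'{e} limit.

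Genericity and the orbit characterization I would read off from the characterization of Fra\"{\i}ss\'{e} limits in Proposition \ref{Proposition:characterize-limit}, transported to the comma category, exactly as the list of equivalent conditions for $\Omega_{M}$ stated after Proposition \ref{Proposition:existence-limit-arrow}. The back-and-forth argument of Subsection \ref{Subsection:stable} shows that any two unital completely positive maps satisfying the finite-dimensional extension property stated at the end of the theorem are conjugate by an element of $\mathrm{Aut}(A(\mathbb{NP}))$; hence the orbit of $\Omega_{A(\mathbb{NP})}^{Y}$ coincides with the set of maps enjoying that property, which is $G_{\delta}$, and its density follows from the extension property of $M$ established in Subsection \ref{Subsection:existence}.

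For the assertion that the preimage of the scalars is a copy of $A(\mathbb{NP})$, I would invoke the analog of Proposition \ref{Proposition:kernel} with the universal initial object $A_{0}=\mathbb{C}$: the scalars $\mathbb{C}\cdot 1$ are initial both for unital completely positive maps and for unital complete isometries, every operator system carries a state $X\to\mathbb{C}$, and the categorical kernel $\{x : \Omega_{A(\mathbb{NP})}^{Y}(x)\in\mathbb{C}\cdot 1\}$ is exactly the set in the statement. Composing a state on a finite-dimensional $E$ with the inclusion $\mathbb{C}\subset Y$ and factoring through $\Omega_{A(\mathbb{NP})}^{Y}$ yields embeddings of $E$ landing inside this kernel, so the kernel satisfies the defining extension property of $M$ and is therefore unitally completely isometrically isomorphic to $A(\mathbb{NP})$. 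The main obstacle is not the Fra\"{\i}ss\'{e} machinery but the verification, carried out in Section \ref{Section:general}, that the exact operator systems genuinely instantiate $\mathcal{I}$ and that nuclearity of $Y$ supplies approximate injectivity with the required modulus $\varpi$; once this is in place every clause of the theorem is a direct transcription of the abstract results, with $A_{0}=\mathbb{C}$ governing the kernel computation.
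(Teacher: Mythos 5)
Your proposal follows the paper's own route essentially verbatim: the theorem is obtained by instantiating the generic $R$-state machinery of Section \ref{Section:states} (carried over to the stratified setting of Section \ref{Section:general}) with $R=Y$ a separable nuclear, hence approximately injective, operator system, with the kernel clause read off from the analog of Proposition \ref{Proposition:kernel} for the initial object $A_{0}=\mathbb{C}\cdot 1$. The only cosmetic difference is that the paper amalgamates states via the universal property of the approximate pushout rather than by first extending over $\phi$ using approximate injectivity of $Y$, but both devices yield the near amalgamation property for the comma class, so the argument is sound as written.
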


The universal operators $\Omega _{\mathbb{G}}$, $\Omega _{\mathbb{P}}$, $%
\Omega _{\mathbb{NG}}$, and $\Omega _{\mathbb{NP}}$ from Theorem \ref%
{Theorem:operator-G}, Theorem \ref{Theorem:operator-A(P)}, Theorem \ref%
{Theorem:universal-cc}, and Theorem \ref{Theorem:universal-ucp} can be
obtained from Theorem \ref{Theorem:universal-projection-G}, Theorem \ref%
{Theorem:universal-projection-A(P)}, Theorem \ref%
{Theorem:universal-projection-NG}, and Theorem \ref%
{Theorem:universal-projection-NP} in the particular case when $Y=\mathbb{G}$%
, $Y=A(\mathbb{P})$, $Y=\mathbb{\mathbb{NG}}$, and $Y=A(\mathbb{\mathbb{NP}}%
) $, respectively.

\subsection{States as structures}

Fix an approximately injective separable structure $R$ in $\mathcal{A}$; see
Definition \ref{Definition:approx-inj}. An $R$\emph{-state} is a morphism $%
s:X_{s}\rightarrow R$ from a structure $X_{s}$ in $\mathcal{A}$ to $R$. The
terminology comes from the case of function systems, for which a state is a
unital positive linear functional; see \S \ref{Subsection:function-system}.
We regard $R$-states as structures in a category $\mathcal{A}_{R}$. A
morphism from $s$ to $t$ is a morphism $f:X_{s}\rightarrow X_{t}$ in $%
\mathcal{A}$. We do not require that $t\circ f=s$. We consider $R$-states as
structures in a language $\mathcal{L}_{R}$ containing two sorts $D_{X}$ and $%
D_{R}$ and a function symbol $D_{X}\rightarrow D_{R}$. Furthermore for any $%
n $-ary function symbol $f$ in $\mathcal{L}$ one has an $n$-ary function
symbols $f_{X}:D_{X}^{n}\rightarrow D_{X}$ and an $n$-ary function symbol $%
f_{R}:D_{R}^{n}\rightarrow D_{R}$. Similarly for any $n$-ary relation symbol 
$B$ in $\mathcal{L}$ one has an $n$-ary relation symbol $B_{X}:D_{X}^{n}%
\rightarrow \mathbb{R}$ and an $n$-ary relation symbol $B_{R}:D_{R}^{n}%
\rightarrow \mathbb{R}$. If $X$ is a separable structure in $\mathcal{A}$,
then the space $S\left( X,R\right) $ of $R$-states on $X$ endowed with the
topology of pointwise convergence is a Polish space. The Polish group $%
\mathrm{Aut}(X)$ acts continuously on $S\left( X,R\right) $ by $\left(
\alpha ,s\right) \mapsto s\circ \alpha ^{-1}$.

\subsection{The generic state\label{Subsection:universal-state}}

Suppose that $X,\widehat{X},Y$ are structures in $\mathcal{A}$, $\widehat{s}$
and $t$ are $R$-states on $\widehat{X}$ and $Y$ respectively, and $%
f:X\rightarrow Y$ and $\phi :X\rightarrow \widehat{X}$ are morphisms such
that $I(\phi )<\delta $ and $d\left( \widehat{s}\circ \phi ,t\circ f\right)
\leq \varpi (\delta )$. Let $\widehat{Y}$ be the approximate pushout of $f$
and $\phi $ defined as in Lemma \ref{Lemma:pushout}, with canonical morphism 
$\widehat{f}:\widehat{X}\rightarrow \widehat{Y}$ and embedding $%
j:Y\rightarrow \widehat{Y}$. It follows from the universal property of the
approximate pushout that there exists a (unique) $R$-state $\widehat{t}$ on $%
\widehat{Y}$ such that $\widehat{t}\circ \widehat{f}=\widehat{s}$ and $%
\widehat{t}\circ j=t$. Again a similar argument applies the approximate
pushouts over a tuple as in Lemma \ref{Lemma:pushout2}.

Using this observation one can show that the states $s$ in $\mathcal{A}_{R}$
such that $X_{s}$ is a finitely generated structure form a Fra\"{\i}ss\'{e}
class. The corresponding limit $\Omega _{M}^{R}$ is an $R$-state on the Fra%
\"{\i}ss\'{e} limit $M$ of the class of finitely generated structures in $%
\mathcal{A}$, as it can be verified using uniqueness of the limit and
approximate injectivity of $R$. Furthermore if $s$ is an $R$-state on $M$,
then the following assertions are equivalent:

\begin{enumerate}
\item There exists an automorphism $\alpha $ of $M$ such that $s\circ \alpha
=\Omega _{M}^{R}$;

\item Whenever $\phi :E\rightarrow F$ is a morphism between finitely
generated structures in $\mathcal{A}$ such that $I(\phi )<\delta $, $t$ is
an $R$-state on $F$, and $f:E\rightarrow M$ is a morphism such that $d\left(
t\circ \phi ,s\circ f\right) <\varpi (\delta )$, there exists an embedding $%
g:F\rightarrow M$ such that $s\circ g=t$ and $d\left( g\circ \phi ,f\right)
<\varpi (\delta )$;

\item Whenever $E,F$ are finitely generated structures in $\mathcal{A}$ such
that $F\in \mathcal{I}$, $t$ is an $R$-state of $F$, $\phi :E\rightarrow F$
and $f:E\rightarrow M$ are embeddings such that $t\circ \phi =s\circ f$, and 
$\varepsilon >0$, there exists an embedding $g:F\rightarrow M$ such that $%
s\circ g=t$ and $d\left( g\circ \phi ,f\right) <\varepsilon $;

\item For any finitely generated structure $E$ in $\mathcal{A}$, $R$-state $%
t $ on $E$, and morphisms $f:E\rightarrow M$ and $\phi :E\rightarrow M$ such
that $I\left( \phi \right) <\delta $, $I\left( f\right) <\delta $, and $%
d\left( s\circ \phi ,s\circ f\right) <\varpi (\delta )$, there exists an
automorphism $\beta $ of $M$ such that $d\left( \beta \circ \phi ,f\right)
<\varpi (\delta )$ and $s\circ \beta =s$;

\item For any finite tuple $\bar{b}$ in $M$, morphisms $f:\left\langle \bar{b%
}\right\rangle \rightarrow M$ and $\phi :\left\langle \bar{b}\right\rangle
\rightarrow M$ such that $I\left( f\right) <\delta $, $I\left( \phi \right)
<\delta $, $s\circ \phi \thickapprox _{\bar{a},\delta }f$, there exists an
automorphism $\beta $ of $M$ such that $\beta \circ \phi \thickapprox _{\bar{%
a},\varpi (\delta )}f$ and $s\circ \beta =s$;

\item the same as (5) where moreover $\bar{b}\in M_{0}$ and $f(\bar{b}),\phi
(\bar{b})\in B_{0}$ for some fixed countable fundamental subsets $M_{0}$ of $%
M$ and $B_{0}$ of $B$ as in Definition \ref{Definition:fundamental}.
\end{enumerate}

Such a characterization in particular shows that the set $\left\{ \Omega
_{M}^{R}\circ \alpha :\alpha \in \mathrm{Aut}(M)\right\} $ is a dense $%
G_{\delta }$ subset of the space of $R$-states of $M$. It is not difficult
to verify using the universal property characterizing the universal state $%
\Omega _{M}^{M}$ and the universal operator $\Omega _{M}$ as in Subsection %
\ref{Subsection:universal} that $\Omega _{M}$ and $\Omega _{M}^{R}$ for $R=M$
have the same $\mathrm{Aut}(M)$-orbit. In the case of rigid $\mathcal{I}$%
-structures as in Definition \ref{Definition:I-structure}, $\Omega _{M}^{R}$
admits the following further characterization, which can be proved similarly
as Proposition \ref{Proposition:characterize-limit-I-structure}.

\begin{proposition}
\label{Proposition:characterize-universal-state}Let $X$ be a rigid $\mathcal{%
I}$-structure, and $s$ be an $R$-state on $X$. If for any $\varepsilon >0$,
structures $A,\widehat{A}\in \mathcal{I}$, $R$-state $t$ on $\widehat{A}$,
and embeddings $\phi :A\rightarrow \widehat{A}$ and $f:A\rightarrow X$ such
that $t\circ \phi =s\circ f$, there exists an embedding $\widehat{f}:%
\widehat{A}\rightarrow X$ such that $d(s\circ \widehat{f},t)<\varepsilon $
and $d(\widehat{f}\circ \phi ,f)<\varepsilon $, then there exists an
isomorphism $\alpha :X\rightarrow M$ such that $\Omega _{M}^{R}\circ \alpha
=s$.
\end{proposition}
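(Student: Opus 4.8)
The plan is to mirror the back-and-forth construction in the proof of Proposition~\ref{Proposition:characterize-limit-I-structure}, enriching it so that it intertwines the states $s$ and $\Omega _{M}^{R}$. Concretely, I would build an isomorphism $\gamma :M\rightarrow X$ with $s\circ \gamma =\Omega _{M}^{R}$ and then set $\alpha :=\gamma ^{-1}$, which satisfies $\Omega _{M}^{R}\circ \alpha =s$. Fix dense sequences $(y_{n})$ in $M$ and $(x_{n})$ in $X$, a sequence $(\delta _{n})$ of positive reals with $\sum _{n}\varpi (2\delta _{n})<+\infty $, and state tolerances $\rho _{n}<\varpi (\delta _{n})$ with $\sum _{n}\rho _{n}<+\infty $. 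Using that both $M$ and $X$ are rigid $\mathcal{I}$-structures (Definition~\ref{Definition:I-structure}), I would construct by recursion substructures $B_{n}\subset M$ and $C_{n}\subset X$ lying in $\mathcal{I}$, with $\{y_{1},\ldots ,y_{n}\}\subset _{\delta _{n}}B_{n}$ and $\{x_{1},\ldots ,x_{n}\}\subset _{\delta _{n}}C_{n}$, together with embeddings $\alpha _{n}:B_{n}\rightarrow C_{n}$ and $\beta _{n}:C_{n}\rightarrow B_{n+1}$ that approximately invert one another, exactly as in the cited proof. The new ingredient is the extra bookkeeping requiring the state-compatibility conditions $d(s\circ \alpha _{n},\Omega _{M}^{R}|_{B_{n}})<\rho _{n}$ and $\Omega _{M}^{R}\circ \beta _{n}=s|_{C_{n}}$; the recursion is initialized on the universal initial object $A_{0}$, on which all $R$-states agree by initiality, so that the base compatibility is automatic.

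The two halves of the recursion use the two hypotheses in a complementary way, and the step I expect to be the main obstacle is the interplay between approximate and exact state-compatibility. To produce $\beta _{n}:C_{n}\rightarrow B_{n+1}$ I would apply the second item in the list of equivalent conditions characterizing $\Omega _{M}^{R}$ in Subsection~\ref{Subsection:universal-state} (with $s=\Omega _{M}^{R}$ there), taking $E=B_{n}$, $F=C_{n}$, $\phi =\alpha _{n}$ (so $I(\phi )=0$), $f$ the inclusion $\iota _{B_{n}}:B_{n}\hookrightarrow M$, and $t=s|_{C_{n}}$; the required input estimate $d(t\circ \phi ,\Omega _{M}^{R}\circ f)=d(s\circ \alpha _{n},\Omega _{M}^{R}|_{B_{n}})<\rho _{n}<\varpi (\delta _{n})$ is exactly the approximate state-compatibility of $\alpha _{n}$, and the conclusion furnishes an embedding $\beta _{n}$ into $M$ with $d(\beta _{n}\circ \alpha _{n},\iota _{B_{n}})<\varpi (\delta _{n})$ and with $\Omega _{M}^{R}\circ \beta _{n}=s|_{C_{n}}$ holding \emph{exactly}. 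Then, to produce $\alpha _{n+1}:B_{n+1}\rightarrow C_{n+1}\subset X$ I would apply the hypothesis of the proposition with $A=C_{n}$, $\widehat{A}=B_{n+1}$, $\phi =\beta _{n}$, $f$ the inclusion $\iota _{C_{n}}:C_{n}\hookrightarrow X$, and $t=\Omega _{M}^{R}|_{B_{n+1}}$; here the required \emph{exact} compatibility $t\circ \phi =s\circ f$ is precisely $\Omega _{M}^{R}\circ \beta _{n}=s|_{C_{n}}$, which was just established, and the conclusion gives an embedding $\alpha _{n+1}$ with $d(\alpha _{n+1}\circ \beta _{n},\iota _{C_{n}})<\rho _{n+1}$ and $d(s\circ \alpha _{n+1},\Omega _{M}^{R}|_{B_{n+1}})<\rho _{n+1}$. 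Thus condition~(2) of $\Omega _{M}^{R}$ upgrades the approximate compatibility of $\alpha _{n}$ into an exact one for $\beta _{n}$, which is exactly what the hypothesis on $s$ then consumes; the cycle is self-consistent, and verifying that the tolerances can be threaded through it is the delicate point.

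At each stage I would enlarge $B_{n+1}$ and $C_{n+1}$ to honest members of the chains, containing $B_{n}\cup \beta _{n}(C_{n})\cup \{y_{n+1}\}$ and $C_{n}\cup \alpha _{n+1}(B_{n+1})\cup \{x_{n+1}\}$ respectively, using rigidity of $M$ and of $X$ together with the small perturbation lemma built into the basic-sequence assumptions of Subsection~\ref{Subsection:basic}; this is the routine bookkeeping already present in Proposition~\ref{Proposition:characterize-limit-I-structure}, and it guarantees that $\bigcup _{n}B_{n}$ is dense in $M$ and $\bigcup _{n}C_{n}$ is dense in $X$. Granting the construction, the inversion estimates and the fact that each $\alpha _{n}$ is an embedding (so $I(\alpha _{n})=0$) show that the $\alpha _{n}$ cohere in the limit to an embedding $\gamma :M\rightarrow X$, which is surjective because the $C_{n}$ exhaust a dense subset of $X$, hence an isomorphism. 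Finally, for $b$ in the dense set $\bigcup _{n}B_{n}$ continuity of $s$ and $\rho _{n}\rightarrow 0$ give $s(\gamma (b))=\lim _{n}s(\alpha _{n}(b))=\lim _{n}\Omega _{M}^{R}(b)=\Omega _{M}^{R}(b)$, so $s\circ \gamma =\Omega _{M}^{R}$ everywhere. Setting $\alpha :=\gamma ^{-1}:X\rightarrow M$ then yields the desired isomorphism with $\Omega _{M}^{R}\circ \alpha =s$.
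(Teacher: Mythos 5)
Your overall architecture is the one the paper intends (the paper itself offers nothing beyond the remark that the statement ``can be proved similarly as Proposition \ref{Proposition:characterize-limit-I-structure}''), and the key mechanism you isolate is correct: condition (2) in the characterization of $\Omega _{M}^{R}$ from Subsection \ref{Subsection:universal-state} upgrades the approximate state-compatibility of $\alpha _{n}$ to an \emph{exact} identity $\Omega _{M}^{R}\circ \beta _{n}=s|_{C_{n}}$, which is precisely what the hypothesis of the proposition is able to consume. The passage to the limit, the surjectivity of $\gamma $, and the verification of $s\circ \gamma =\Omega _{M}^{R}$ on a dense set are also handled correctly.

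There is, however, one step that you dismiss as routine bookkeeping but that would fail as literally described. After condition (2) produces $\beta _{n}:C_{n}\rightarrow M$ with $\Omega _{M}^{R}\circ \beta _{n}=s|_{C_{n}}$ exactly, you must enlarge its range to a substructure $B_{n+1}\in \mathcal{I}$ of $M$ that also approximately contains $B_{n}$ and $y_{n+1}$ (otherwise $\bigcup _{n}B_{n}$ need not be dense in $M$ and $\gamma $ is neither defined everywhere nor surjective). Rigidity of $M$ gives only approximate containment, so the map $C_{n}\rightarrow B_{n+1}$ you actually feed into the next step is $\beta _{n}$ composed with a small perturbation: it is then only an approximate embedding, and the identity $\Omega _{M}^{R}\circ \beta _{n}=s|_{C_{n}}$ survives only approximately. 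The hypothesis of the proposition, as stated, requires $\phi $ to be a genuine embedding and $t\circ \phi =s\circ f$ to hold \emph{exactly}, so it cannot be applied to the perturbed data with $t=\Omega _{M}^{R}|_{B_{n+1}}$. The missing ingredient is the approximate pushout with states from the first paragraph of Subsection \ref{Subsection:universal-state} (built on Lemma \ref{Lemma:pushout} and Lemma \ref{Lemma:pushout2}): amalgamating the perturbed data produces genuine embeddings together with a canonical $R$-state $t$ on the amalgam satisfying the required exact identity by construction, and $t$ is close to $\Omega _{M}^{R}$ there because the two agree up to a controlled error on a generating basic tuple, so the assumptions of Subsection \ref{Subsection:basic} bound their distance. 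Feeding this $t$, rather than $\Omega _{M}^{R}|_{B_{n+1}}$ itself, into the hypothesis and then combining $d(s\circ \alpha _{n+1},t)<\varepsilon $ with the smallness of $d(t,\Omega _{M}^{R}|_{B_{n+1}})$ restores the inductive invariant and closes the cycle. Without this (or an equivalent) device your recursion stalls at the first enlargement.
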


One can prove similarly as in Proposition \ref{Proposition:kernel} that
under the same assumptions of Proposition \ref{Proposition:kernel} the
morphism $\Omega _{R}$ is surjective, and $\mathrm{Ker}\left( \Omega
_{R}\right) $ is isomorphic to $M$. Universality of the Fra\"{\i}ss\'{e}
limit implies that if $X$ is a separable structure in $\mathcal{A}$ and $s$
is an $R$-state on $X$, then there exists an embedding $\phi :X\rightarrow M$
such that $\Omega _{M}^{R}\circ \phi =s$. In particular letting $X=R$ and $s$
be the identity map of $R$ one can conclude that there exists an embedding $%
\eta _{R}:R\rightarrow X$ such that $\Omega _{M}^{R}\circ \eta _{R}$ is the
identity map of $R$. This also implies that $\Omega _{M}^{R}$ is surjective.
Defining $\rho _{R}$ to be $\eta _{R}\circ \Omega _{M}^{R}$ gives a
retraction of $M$ onto a substructure of $M$ isomorphic to $R$. This shows
that $R$ is isomorphic to a retract of $M$, which is the content of Theorem %
\ref{Theorem:retracts}. Furthermore $\rho _{R}$ is a \emph{universal
retraction }in the following sense. If $X$ is a separable structure in $%
\mathcal{A}$ and $s$ is a state on $X$ whose range is contained in the range
of $\rho _{R}$, then there exists an embedding $\psi :X\rightarrow M$ such
that $\rho _{R}\circ \psi =s$.

\begin{remark}
\label{Remark:lift}A further back-and-forth argument together with Condition
(4) in the characterization of the universal state $\Omega _{M}^{R}$ shows
that any automorphism of $R$ \textquotedblleft lifts\textquotedblright\ to
an automorphism of $M$. This means that if $\sigma $ is an automorphism of $%
R $, then there exists an automorphism $\widehat{\sigma }$ of $M$ such that $%
\sigma \circ \Omega _{M}^{R}=\Omega _{M}^{R}\circ \widehat{\sigma }$.
\end{remark}

A similar construction to the one above is performed in \cite[\S 4.1]%
{kubis_metric-enriched_2012} in the case of Banach spaces and, more
generally, in \cite[Section 6]{cabello_sanchez_quasi-banach_2014} in the
case of $p$-Banach spaces for every $p\in (0,1]$. The case of Banach spaces
is subsumed by the above general results; see \S \ref{Subsection:Banach}.
The case of $p$-Banach spaces for $p\in \left( 0,1\right) $ does not fit in
the framework of this paper, since no nontrivial $p$-Banach space for $p\in
\left( 0,1\right) $ is injective \cite[Proposition 5.2]%
{cabello_sanchez_quasi-banach_2014}. However, one can consider a
generalization of the assumptions considered in this paper, where the
structures in the class $\mathcal{I}$ are not assumed to be injective, but
only approximately injective as in Definition \ref{Definition:approx-inj}.
In this more general framework one can recover the main results of \cite%
{cabello_sanchez_quasi-banach_2014} concerning $p$-Banach spaces for
arbitrary $p\in (0,1]$.

\subsection{The $\mathrm{Aut}(M)$-space $S(M,R)$\label%
{Subsection:state-space}}

The automorphism group $\mathrm{Aut}(M)$ of $M$ is a Polish group when
endowed with the topology of pointwise convergence. Also $S(M,R)$ is a
Polish space endowed with the topology of pointwise convergence.

We regard $S(M,R)$ as a uniform $\mathrm{Aut}(M)$-space endowed with the
uniformity generated by the sets of the form%
\begin{equation*}
\left\{ \left( s_{0},s_{1}\right) \in S(M,R)\times S(M,R):d\left(
s_{0}(x),s_{1}(x)\right) <\varepsilon \right\}
\end{equation*}%
for $x\in M$ and $\varepsilon >0$. The action of $\mathrm{Aut}(M)$ on $%
S(M,R) $ is defined by $\left( \alpha ,s\right) \mapsto s\circ \alpha ^{-1}$%
. By completeness of $R$, the uniform space $S(M,R)$ is complete as well.
Furthermore $S(M,R)$ it is compact whenever $R$ is compact. Let $\mathrm{Aut}%
\left( M,\Omega _{M}^{R}\right) $ be the \emph{stabilizer }of $\alpha $,
i.e.\ the group of automorphisms $\alpha $ of $M$ such that $\Omega
_{M}^{R}\circ \alpha =\Omega _{M}^{R}$. We also regard $\mathrm{Aut}(M)/%
\mathrm{Aut}\left( M,\Omega _{M}^{R}\right) $ as a uniform $\mathrm{\mathrm{%
Aut}}(M)$-space endowed with the quotient of the right uniformity on $%
\mathrm{Aut}(M)$ and the canonical action by translation. The sets of the
form%
\begin{equation*}
\left\{ \left( \alpha _{0},\alpha _{1}\right) \in \mathrm{Aut}(M)\times 
\mathrm{Aut}(M):d\left( \alpha _{0}^{-1}(x),\alpha _{1}^{-1}(x)\right)
<\varepsilon \right\}
\end{equation*}%
form a basis of entourages for the right uniformity on $\mathrm{Aut}(M)$.

One can define the map $\pi :\mathrm{Aut}(M)/\mathrm{Aut}\left( M,\Omega
_{M}^{R}\right) \rightarrow S(M,R)$ mapping the left coset of $\mathrm{Aut}%
\left( M,\Omega _{M}^{R}\right) $ with respect to $\alpha $ to $s\circ
\alpha $. Clearly $\pi $ is an injective $\mathrm{Aut}(M)$-equivariant
uniformly continuous map. Furthermore by genericity of $\Omega _{M}^{R}$, $%
\pi $ has dense image. We claim that $\pi ^{-1}$ is uniformly continuous as
well. Indeed suppose that $\bar{a}$ is a finite tuple in $M$ and $%
\varepsilon >0$. If $\alpha ,\beta $ are automorphisms such that $\Omega
_{M}^{R}\circ \alpha ^{-1}\thickapprox _{\bar{a},\varepsilon }\Omega
_{M}^{R}\circ \beta ^{-1}$, then by Condition (5) in the characterization of 
$\Omega _{M}^{R}$ there exists $\gamma \in \mathrm{Aut}\left( M,\Omega
_{M}^{R}\right) $ such that $\left( \alpha \circ \gamma \right)
^{-1}\thickapprox _{\bar{a},\varpi (\delta )}\beta ^{-1}$. This concludes
the proof that $\pi ^{-1}$ is uniformly continuous. In particular this shows
that the completion of $\mathrm{Aut}(M)/\mathrm{Aut}\left( M,\Omega
_{M}^{R}\right) $ is $\mathrm{Aut}(M)$-equivariantly uniformly isomorphic to 
$S(M,R)$.

Recall that, if $G\ $is a topological group, then a uniform $G$-space is 
\emph{minimal }if every orbit of $G$ is dense. We can provide a
reformulation of the assertion that $S(M,R)$ is a minimal $\mathrm{Aut}(M)$%
-space in terms of the Fra\"{\i}ss\'{e} class $\mathcal{C}$.

\begin{proposition}
\label{Proposition:minimal}Consider the following assertions:

\begin{enumerate}
\item For every tuple $\bar{a}$ in $M$, $s\in S(M,R)$, and $\varepsilon >0$,
there exists $B\in \mathcal{I}$ such that for any $t\in S\left( B,R\right) $
there exists a morphism $\phi :\left\langle \bar{a}\right\rangle \rightarrow
B$ such that $I\left( \phi \right) <\varepsilon $ and $d\left( \left( t\circ
\phi \right) (\bar{a}),s(\bar{a})\right) <\varepsilon $;

\item for every tuple $\bar{a}$ in $M$ such that $\left\langle \bar{a}%
\right\rangle \in \mathcal{I}$, $s\in S(M,R)$, and $\varepsilon >0$, there
exists a finitely generated substructure $B$ of $M$ such that for any $t\in
S(M,R)$ there exists a morphism $\phi :\left\langle \bar{a}\right\rangle
\rightarrow B$ such that $I\left( \phi \right) <\varepsilon $ and $d\left(
\left( t\circ \phi \right) (\bar{a}),s(\bar{a})\right) <\varepsilon $;

\item the action $\mathrm{Aut}(M)\curvearrowright S(M,R)$ is minimal.
\end{enumerate}

Then (1)$\Rightarrow $(2)$\Rightarrow $(3). If furthermore $R$ is compact,
then (3)$\Rightarrow $(1).
\end{proposition}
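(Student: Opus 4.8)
The three implications are established separately, and the real work is in the last one.

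\emph{(1)$\Rightarrow$(2).} Fix a tuple $\bar{a}$ in $M$ with $\langle \bar{a}\rangle \in \mathcal{I}$, a state $s\in S(M,R)$, and $\varepsilon >0$, and let $B_{0}\in \mathcal{I}$ be the structure supplied by (1). By Theorem \ref{Theorem:retracts} the structure $B_{0}$ is isomorphic to a retract of $M$, so there is an embedding realizing it as a finitely generated substructure $B$ of $M$. Every $t\in S(M,R)$ restricts to a state $t|_{B}\in S(B,R)\cong S(B_{0},R)$, and applying the conclusion of (1) to $t|_{B}$ produces a morphism $\phi :\langle \bar{a}\rangle \rightarrow B$ with $I(\phi )<\varepsilon $ and $d\big((t\circ \phi )(\bar{a}),s(\bar{a})\big)<\varepsilon $; since $\phi $ lands in $B$ we have $t|_{B}\circ \phi =t\circ \phi $. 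This is precisely (2).

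\emph{(2)$\Rightarrow$(3).} Minimality asserts that every orbit $\{s_{1}\circ \alpha ^{-1}:\alpha \in \mathrm{Aut}(M)\}$ is dense, so it suffices to approximate an arbitrary target $s_{0}$, on an arbitrary tuple $\bar{a}$ and to arbitrary precision, by the orbit of an arbitrary $s_{1}$. Since $M$ is a rigid $\mathcal{I}$-structure (Definition \ref{Definition:I-structure}), the small-perturbation hypothesis on basic tuples together with their density lets one reduce to the case $\langle \bar{a}\rangle \in \mathcal{I}$, so that (2) applies. I would apply (2) to $s_{0}$ and $\bar{a}$ to obtain $B\subseteq M$, and then invoke its conclusion at $t=s_{1}$, giving $\phi :\langle \bar{a}\rangle \rightarrow B\subseteq M$ with $I(\phi )$ small and $(s_{1}\circ \phi )(\bar{a})\approx s_{0}(\bar{a})$. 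Comparing $\phi $ with the inclusion $\iota :\langle \bar{a}\rangle \hookrightarrow M$ (which has $I(\iota )=0$) through stable homogeneity (Proposition \ref{Proposition:stable-homogeneity}) yields $\alpha \in \mathrm{Aut}(M)$ with $d(\alpha \circ \phi ,\iota )$ small; as states and $\alpha ^{-1}$ are nonexpansive for the metric, $s_{1}\circ \alpha ^{-1}$ agrees with $s_{1}\circ \phi $ on $\bar{a}$ to the same order, whence $(s_{1}\circ \alpha ^{-1})(\bar{a})\approx s_{0}(\bar{a})$. As $s_{0},s_{1}$ were arbitrary, the action is minimal.

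\emph{(3)$\Rightarrow$(1), with $R$ compact — the main obstacle.} Fix $\bar{a}$, $s\in S(M,R)$, and $\varepsilon >0$; by the conventions on basic tuples we may take $\bar{a}$ basic. For each $t\in S(M,R)$ minimality provides $\alpha _{t}\in \mathrm{Aut}(M)$ with $(t\circ \alpha _{t}^{-1})(\bar{a})\approx s(\bar{a})$, and I set $\psi _{t}:=\alpha _{t}^{-1}|_{\langle \bar{a}\rangle }$, an embedding of $\langle \bar{a}\rangle $ into $M$. The sets $V_{t}:=\{t^{\prime }\in S(M,R):d((t^{\prime }\circ \psi _{t})(\bar{a}),s(\bar{a}))<\varepsilon ^{\prime }\}$ are open and cover $S(M,R)$, and here is where compactness of $S(M,R)$ (valid exactly because $R$ is compact) is used: extract a finite subcover $V_{t_{1}},\dots ,V_{t_{k}}$. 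Writing $M$ as the limit of an inductive sequence $(B_{n})$ of structures in $\mathcal{I}$ with dense union, I choose $n$ so large that every $\psi _{t_{i}}(\bar{a})$ lies within $\eta $ of $B:=B_{n}\in \mathcal{I}$.

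The delicate point is to replace the embeddings $\psi _{t_{i}}$ by morphisms $\phi _{i}:\langle \bar{a}\rangle \rightarrow B$ with \emph{small} $I(\phi _{i})$: this must be done not by composing $\psi _{t_{i}}$ with a retraction onto $B$ (which would ruin the bound on $I$) but through the quantity $\partial $ and Equation \eqref{Equation:dc}, which convert the proximity $d(\psi _{t_{i}}(\bar{a}),B)<\eta $ into a morphism $\phi _{i}:\langle \bar{a}\rangle \rightarrow B$ with $\max \{I(\phi _{i}),d(\phi _{i}(\bar{a}),\psi _{t_{i}}(\bar{a}))\}\leq \rho _{\bar{a}}(\eta )+\eta $, which is $<\varepsilon $ once $\eta $ is small. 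Finally $B=B_{n}\in \mathcal{I}$ is injective, so the identity of $B$ extends along $B\hookrightarrow M$ to a retraction $\pi :M\rightarrow B$ with $\pi |_{B}=\mathrm{id}$. Given any $t\in S(B,R)$, the extended state $\widetilde{t}:=t\circ \pi \in S(M,R)$ lies in some $V_{t_{i}}$, and since $\pi (\psi _{t_{i}}(\bar{a}))\approx \phi _{i}(\bar{a})$ one obtains $d((t\circ \phi _{i})(\bar{a}),s(\bar{a}))\leq \eta +\varepsilon ^{\prime }<\varepsilon $; thus $B$ witnesses (1) with $\phi =\phi _{i}$. The crux throughout is the simultaneous control of $I(\phi _{i})$ (which forces routing the near-embeddings through $\partial $ rather than through $\pi $) and the passage from states on the finite structure $B$ to states on $M$ (handled by the retraction $\pi $), with compactness of $S(M,R)$ supplying the finiteness that allows a single $B_{n}$ to serve every state.
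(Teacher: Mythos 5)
Your proofs of (1)$\Rightarrow$(2) and (2)$\Rightarrow$(3) follow the paper's route: the first is the "obvious" implication (realizing the abstract $B\in \mathcal{I}$ inside $M$ and restricting states), and the second is exactly the paper's reduction to $\left\langle \bar{a}\right\rangle \in \mathcal{I}$ followed by an appeal to stable homogeneity to convert the morphism $\phi$ into an automorphism. For (3)$\Rightarrow$(1) you diverge from the paper in an interesting way: the paper argues by contradiction, assembling the putative "bad" states $t_{B}$ into a single limit state $t\in S(M,R)$ via an ultrafilter on pairs $\left( B,\delta \right) $ (using compactness of $R$ pointwise) and then contradicting minimality; you instead argue directly, using minimality to produce for each $t$ an embedding $\psi _{t}$ of $\left\langle \bar{a}\right\rangle $ witnessing proximity to $s$, covering the compact space $S(M,R)$ by the open sets $V_{t}$, extracting a finite subcover, and then finding one $B_{n}\in \mathcal{I}$ from the inductive realization of $M$ that absorbs all finitely many near-embeddings, with the $\partial $/Equation \eqref{Equation:dc} mechanism converting proximity to $B_{n}$ into morphisms $\phi _{i}$ with small $I(\phi _{i})$ (rather than composing with a retraction, which would destroy that bound), and the retraction $\pi :M\rightarrow B_{n}$ used only to extend states on $B_{n}$ to states on $M$. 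Both arguments ultimately rest on the same compactness (of $S(M,R)$, which follows from compactness of $R$ by Tychonoff); yours is the contrapositive-free version and is arguably more transparent, at the cost of having to manage the perturbation of the finitely many $\psi _{t_{i}}$ explicitly, whereas the ultrafilter argument defers all quantitative bookkeeping to the final contradiction. I see no gap: the one point to spell out fully is that a tuple in $B_{n}$ sufficiently close to the basic tuple $\psi _{t_{i}}(\bar{a})$ is again basic and generates a marked structure at small $\partial $-distance from $\bar{a}$, which is exactly the standing hypothesis on basic tuples from Subsection \ref{Subsection:basic} that the paper also invokes at the corresponding step.
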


\begin{proof}
The implication (1)$\Rightarrow $(2) is obvious.

For (2)$\Rightarrow $(3), suppose that $s,t\in S(M,R)$, $\bar{a}$ is a tuple
in $M$, and $\varepsilon >0$. We want to find $\alpha \in \mathrm{Aut}(M)$
such that $d\left( \left( s\circ \alpha \right) (\bar{a}),t(\bar{a})\right)
<\varepsilon $. Without loss of generality we can assume that $\left\langle 
\bar{a}\right\rangle \in \mathcal{I}$. The automorphism $\alpha $ can then
be obtained from the hypothesis using the stable homogeneity property of $M$.

We now assume that $R$ is compact, and prove (3)$\Rightarrow $(1). Suppose
that $\mathrm{Aut}(M)\curvearrowright S(M,R)$ is minimal, but (1) does not
hold. Thus for some tuple $\bar{a}$ in $M$, $\varepsilon _{0}>0$, and $%
s_{0}\in S(M,R)$, for every $B\in \mathcal{I}$ there exists $t_{B}\in S(M,R)$
such that for every morphism $\phi :\left\langle \bar{a}\right\rangle
\rightarrow B$ such that $I\left( \phi \right) <\varepsilon _{0}$ one has
that $d\left( \left( t_{B}\circ \phi \right) (\bar{a}),s(\bar{a})\right)
\geq \varepsilon _{0}$. Without loss of generality we can assume that $\bar{a%
}$ is a basic tuple. Let $\mathcal{B}$ be the set of pairs $\left( B,\delta
\right) $ such that $B\in \mathcal{I}$ and $\delta >0$. For every $\eta >0$
and tuple $\bar{b}$ in $M$, let $\mathcal{B}_{\bar{b},\eta }$ be the set of $%
\left( B,\delta \right) \in \mathcal{B}$ such that $\bar{b}\subset _{\eta }B$
and $\delta <\eta $. Observe that the collection of subsets $\mathcal{B}_{%
\bar{b},\eta }$ of $\mathcal{B}$ where $\bar{b}$ varies among the finite
tuples in $M$ and $\eta >0$ has the finite intersection property.\ Therefore
there exists an ultrafilter $\mathcal{U}$ on $\mathcal{B}$ that contains the
set $\mathcal{B}_{\bar{b},\eta }$ for every tuple $\bar{b}$ in $M$ and $\eta
>0$. Fix $x\in M$ and $\left( B,\eta \right) \in \mathcal{B}$. Define $%
t_{B,\eta }(x)=t_{B}(x)$ for any $M\supset B\in \mathcal{I}$ such that $x\in
_{\eta }B$. Finally let $t(x)$ be the limit according to $\mathcal{U}$ of
the function $\left( B,\eta \right) \mapsto t_{B,\eta }(x)$. This defines an
element $t$ of $S(M,R)$. By minimality of the action $\mathrm{Aut}%
(M)\curvearrowright S(M,R)$, for every $\delta >0$ there exists $\alpha \in 
\mathrm{Aut}(M)$ such that $d\left( \left( t\circ \alpha \right) (\bar{a}),%
\bar{a}\right) <\delta $. Using the hypotheses on basic sequences from
Subsection \ref{Subsection:basic}, this easily leads to a contradiction with
our assumption.
\end{proof}

\begin{corollary}
\label{Corollary:minimal}If for every $A\in \mathcal{I}$, $s\in S\left(
A,R\right) $, and $\varepsilon >0$, there exists $B\in \mathcal{I}$ such
that for any $t\in S\left( B,R\right) $ there exists a morphism $\phi
:A\rightarrow B$ such that $I(\phi )<\varepsilon $ and $d\left( t\circ \phi
,s\right) <\varepsilon $, then $\mathrm{Aut}(M)\curvearrowright S(M,R)$ is
minimal.
\end{corollary}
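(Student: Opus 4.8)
The plan is to verify condition (1) of Proposition \ref{Proposition:minimal}, from which minimality of $\mathrm{Aut}(M)\curvearrowright S(M,R)$ follows via the implication (1)$\Rightarrow$(3) already proved there. So I would fix a tuple $\bar a$ in $M$, a state $s\in S(M,R)$, and $\varepsilon>0$, and—reducing exactly as in the proof of (3)$\Rightarrow$(1) in Proposition \ref{Proposition:minimal}—assume that $\bar a$ is a basic tuple. The task is then to exhibit $B\in\mathcal{I}$ such that every $t\in S(B,R)$ admits a morphism $\phi:\langle\bar a\rangle\rightarrow B$ with $I(\phi)<\varepsilon$ and $d\big((t\circ\phi)(\bar a),s(\bar a)\big)<\varepsilon$.

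The first step would be to replace the tuple $\bar a$ by a structure from $\mathcal{I}$. Since $M$ is a rigid $\mathcal{I}$-structure (Definition \ref{Definition:I-structure}), for a small $\delta>0$ to be fixed later I can find a substructure $A\in\mathcal{I}$ of $M$ together with a basic tuple $\bar a'$ in $A$ (using density of basic tuples from a fundamental subset) with $d(\bar a,\bar a')<\delta$. Viewing the inclusions $\langle\bar a\rangle\hookrightarrow M$ and $\langle\bar a'\rangle\hookrightarrow M$ as joint embeddings into $M$, I get $d_{\mathcal{C}}(\bar a,\bar a')\le\delta$, and hence by \eqref{Equation:dc} $\partial(\bar a,\bar a')\le\rho_{\bar a}(\delta)$. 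Unwinding the definition of $\partial$ and composing with the inclusion $\langle\bar a'\rangle\hookrightarrow A$, this produces a morphism $\sigma:\langle\bar a\rangle\rightarrow A$ with $I(\sigma)\le\rho_{\bar a}(\delta)+\delta$ and $d(\sigma(\bar a),\bar a')\le\rho_{\bar a}(\delta)+\delta$. Next I would apply the hypothesis of the corollary to $A\in\mathcal{I}$, the restricted state $s|_A\in S(A,R)$, and a small tolerance $\varepsilon''$, obtaining a structure $B\in\mathcal{I}$, which is my witness. Given any $t\in S(B,R)$, the hypothesis yields a morphism $\psi:A\rightarrow B$ with $I(\psi)<\varepsilon''$ and $d(t\circ\psi,s|_A)<\varepsilon''$, and I would set $\phi:=\psi\circ\sigma:\langle\bar a\rangle\rightarrow B$.

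It then remains to estimate. Subadditivity of $I$ gives $I(\phi)\le I(\psi)+I(\sigma)<\varepsilon''+\rho_{\bar a}(\delta)+\delta$, while for the displacement, using that the morphisms $s$, $\psi$, and $t$ do not increase the distance,
\begin{equation*}
d\big((t\circ\phi)(\bar a),s(\bar a)\big)\le d\big(t(\psi(\sigma(\bar a))),t(\psi(\bar a'))\big)+d\big(t(\psi(\bar a')),s(\bar a')\big)+d\big(s(\bar a'),s(\bar a)\big)\le (\rho_{\bar a}(\delta)+\delta)+\varepsilon''+\delta .
\end{equation*}
Since $\rho_{\bar a}$ is continuous and vanishing at $0$, choosing $\delta$ and $\varepsilon''$ small enough makes both $I(\phi)$ and the displacement smaller than $\varepsilon$, which verifies (1). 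The hard part is the construction of $\sigma$: one needs a morphism $\langle\bar a\rangle\to A$ that is simultaneously almost an embedding (so that $I(\sigma)$ is small) and moves $\bar a$ only slightly. The naive candidate, namely a retraction $M\to A$ (available by injectivity of $A$) restricted to $\langle\bar a\rangle$, places $\bar a$ correctly but carries no control on $I$; it is precisely the small-perturbation hypotheses on basic tuples encoded in \eqref{Equation:dc} that overcome this, which is why the reduction to the case of a basic tuple $\bar a$ is made at the very start.
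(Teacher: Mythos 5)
Your proof is correct and is the intended derivation: the paper states the corollary without proof as an immediate consequence of Proposition \ref{Proposition:minimal}, and your verification of condition (1) of that proposition -- approximate $\langle\bar a\rangle$ by a substructure $A\in\mathcal{I}$ of $M$ using the rigid $\mathcal{I}$-structure property, build $\sigma$ from \eqref{Equation:dc}, apply the hypothesis to $A$ and $s|_A$, and compose -- is exactly the argument being left implicit; the quantifier bookkeeping and the two estimates are right. Two small remarks. First, your justification for finding a \emph{basic} tuple $\bar a'$ in $A$ near $\bar a$ is not quite the fundamental-subset axiom (that axiom concerns density of the basic tuples of $D$ \emph{among} the basic tuples of $X$); what you actually need is that tuples sufficiently close to a basic tuple are themselves basic, a fact which holds in all of the paper's examples (linear independence is an open condition) and which the paper itself uses silently in the reductions ``without loss of generality $\bar a$ is basic'' and ``without loss of generality $\langle\bar a\rangle\in\mathcal{I}$'' inside the proof of Proposition \ref{Proposition:minimal}; it deserves a word. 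Second, you could shorten the argument by targeting condition (2) rather than (1): for a tuple with $\langle\bar a\rangle\in\mathcal{I}$ the hypothesis of the corollary applies directly to $A=\langle\bar a\rangle$ after realizing the resulting $B$ as a substructure of $M$ and restricting states of $M$ to it, and the reduction to such tuples is already absorbed into the paper's proof of (2)$\Rightarrow$(3), so the construction of $\sigma$ -- which you rightly identify as the only delicate step -- becomes unnecessary.
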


Suppose that $G$ is a topological group, and $X$ is a compact space. A
continuous action of $G\curvearrowright X$ is called \emph{proximal }if for
every entourage $U$ of the unique compatible uniformity of $X$ and $x,y\in X$
there exists $g\in G$ such that $\left( gx,gy\right) \in U$ \cite[\S I.1]%
{glasner_proximal_1976}. More generally we call a uniform $G$-space $X$
proximal if it satisfies the same property where $U$ is an entourage of the
given uniformity of $X$. The following characterization of classes for which
the action $\mathrm{Aut}(M)\curvearrowright S(M,R)$ is proximal is an
immediate consequence of stable homogeneity of the limit $M$ and our
assumptions on basic sequences.

\begin{proposition}
The following assertions are equivalent:

\begin{enumerate}
\item For every tuple $\bar{a}$ in $M$, $s,t\in S(M,R)$, and $\varepsilon >0$%
, there exists $B\in \mathcal{I}$ and a morphism $\phi :\left\langle \bar{a}%
\right\rangle \rightarrow B$ such that $I\left( \phi \right) <\varepsilon $
and $d\left( \left( t\circ \phi \right) (\bar{a}),\left( s\circ \phi \right)
(\bar{a})\right) <\varepsilon $;

\item for every tuple $\bar{a}$ in $M$ such that $\left\langle \bar{a}%
\right\rangle \in \mathcal{I}$, $s,t\in S(M,R)$, and $\varepsilon >0$, there
exists a finitely generated structure $B$ in $\mathcal{A}$ and a morphism $%
\phi :\left\langle \bar{a}\right\rangle \rightarrow B$ such that $I\left(
\phi \right) <\varepsilon $ and $d\left( \left( t\circ \phi \right) (\bar{a}%
),s(\bar{a})\right) <\varepsilon $;

\item the action $\mathrm{Aut}(M)\curvearrowright S(M,R)$ is proximal.
\end{enumerate}
\end{proposition}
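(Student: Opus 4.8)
The plan is to establish the cycle of implications $(1)\Rightarrow(2)\Rightarrow(3)\Rightarrow(1)$, with stable homogeneity of $M$ (Proposition~\ref{Proposition:stable-homogeneity}) serving as the engine that upgrades an approximately isometric morphism into a genuine automorphism. Throughout I would use that every $R$-state is a morphism into $R$, hence uniformly continuous with a fixed modulus, so that moving a tuple by a controlled amount perturbs the state values by a controlled amount. The implication $(1)\Rightarrow(2)$ is a pure weakening: for a tuple $\bar a$ with $\langle\bar a\rangle\in\mathcal I$, the structure $B\in\mathcal I$ and morphism $\phi$ furnished by (1) already witness (2), since every element of $\mathcal I$ is in particular a finitely generated structure of $\mathcal A$ realized inside $M$.

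For $(2)\Rightarrow(3)$, fix $s,t\in S(M,R)$, a finite tuple $\bar a$ in $M$, and $\varepsilon>0$; unwinding the uniformity on $S(M,R)$, proximality amounts to producing $\alpha\in\mathrm{Aut}(M)$ with $d\bigl((s\circ\alpha^{-1})(\bar a),(t\circ\alpha^{-1})(\bar a)\bigr)<\varepsilon$. First I would reduce to the case $\langle\bar a\rangle\in\mathcal I$ with $\bar a$ basic, which is legitimate because $M$ is a rigid $\mathcal I$-structure and because uniform continuity of the states (applied together with the isometry $\alpha^{-1}$) lets one transfer the conclusion from a nearby tuple back to $\bar a$. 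Applying (2) yields a finitely generated substructure $B\subseteq M$ and a morphism $\phi\colon\langle\bar a\rangle\to B$ with $I(\phi)<\delta$, where $\delta$ is to be fixed below, along which $s$ and $t$ nearly agree on $\bar a$. Comparing $\phi$ with the inclusion $\iota\colon\langle\bar a\rangle\hookrightarrow M$, an embedding so that $I(\iota)=0$, via Proposition~\ref{Proposition:stable-homogeneity} produces $\alpha\in\mathrm{Aut}(M)$ with $d(\alpha\circ\phi,\iota)<\varpi(\delta)$, that is $\alpha^{-1}(\bar a)\approx\phi(\bar a)$ up to $\varpi(\delta)$. Uniform continuity of $s$ and $t$ then transfers the near-agreement of $s\circ\phi$ and $t\circ\phi$ on $\bar a$ to near-agreement of $s$ and $t$ on $\alpha^{-1}(\bar a)$, and choosing $\delta$ small enough that $\varpi(\delta)$ together with the resulting state-perturbations sums to less than $\varepsilon$ closes the argument.

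Finally, for $(3)\Rightarrow(1)$, proximality directly supplies, for given $\bar a$, $s$, $t$, and $\varepsilon$, an automorphism $\alpha$ along which $s$ and $t$ are $\varepsilon$-close on $\alpha^{-1}(\bar a)$; the restriction $\alpha^{-1}|_{\langle\bar a\rangle}$ is an embedding into $M$, hence has $I=0$ and witnesses the state-agreement. To move its range into some $B\in\mathcal I$ as demanded by (1), I would invoke that $M$ is a rigid $\mathcal I$-structure to find $B\in\mathcal I$ inside $M$ with $\alpha^{-1}(\bar a)$ arbitrarily close to $B$, and then use the small-perturbation hypothesis on basic tuples from Subsection~\ref{Subsection:basic} to correct $\alpha^{-1}|_{\langle\bar a\rangle}$ to an honest morphism $\langle\bar a\rangle\to B$ with small $I$, paying only a further controlled perturbation of the state values.

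I expect the main obstacle to be essentially bookkeeping: coordinating the modulus $\varpi$, the perturbation function $\rho_{\bar a}$, and the uniform-continuity modulus of the states so that all accumulated errors can be driven below the target $\varepsilon$, and in $(3)\Rightarrow(1)$ keeping $\bar a$ basic so that the perturbation hypothesis applies. The conceptual content is carried entirely by stable homogeneity in the step $(2)\Rightarrow(3)$, which is why the equivalence is an immediate consequence of the homogeneity of $M$ and the assumptions on basic sequences.
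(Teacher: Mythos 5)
Your proof is correct and follows exactly the route the paper intends: the paper gives no details for this proposition, asserting it is an immediate consequence of stable homogeneity and the assumptions on basic tuples, and your cycle $(1)\Rightarrow(2)\Rightarrow(3)\Rightarrow(1)$ mirrors the paper's proof of the companion minimality criterion (Proposition \ref{Proposition:minimal}), with the same use of Proposition \ref{Proposition:stable-homogeneity} in the step $(2)\Rightarrow(3)$ and of rigidity of $M$ as an $\mathcal{I}$-structure plus the small-perturbation hypothesis on basic tuples in $(3)\Rightarrow(1)$. Note only that condition (2) as printed compares $\left( t\circ \phi \right) (\bar{a})$ with $s(\bar{a})$ rather than with $\left( s\circ \phi \right) (\bar{a})$ --- evidently a typo, since otherwise $(1)\Rightarrow(2)$ would not be the pure weakening you describe --- and your argument correctly works with the corrected reading.
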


\section{Examples\label{Section:examples}}

In this section we explain how many classes of structures fit into the
framework of Sections \ref{Section:injective}, \ref{Section:retracts}, \ref%
{Section:operators}, \ref{Section:states}.

\subsection{Real Banach spaces\label{Subsection:Banach}}

In this subsection we assume all the Banach spaces to be over the real
numbers. Suppose that $\mathcal{L}$ is the language containing binary
function symbols $f_{\lambda ,\mu }$ for $\lambda ,\mu \in \mathbb{Q}$ such
that $\left\vert \lambda \right\vert +\left\vert \mu \right\vert \leq 1$. We
can identify a Banach space $X$ with its unit ball $\mathrm{Ball}(X)$, which
is naturally an $\mathcal{L}$-structure where the interpretation of $%
f_{\lambda ,\mu }$ is the function $\left( x,y\right) \mapsto \lambda x+\mu
y $. Under this identification, the morphisms according to Definition \ref%
{Definition:morphism} are precisely the restriction to the unit ball of
bounded linear maps of norm at most $1$. Indeed suppose that $T:\mathrm{Ball}%
(X)\rightarrow \mathrm{Ball}(Y)$ is a morphism. One can extend $T$ to a
linear map from $X$ to $Y$ of norm at most $1$ by setting $T(x)=\left\Vert
x\right\Vert T\left( x/\left\Vert x\right\Vert \right) $ for any nonzero $%
x\in X$. Conversely it is clear that if $T:X\rightarrow Y$ is a bounded
linear map with $\left\Vert T\right\Vert \leq 1$ then the restriction of $T$
to $\mathrm{Ball}(X)$ is a morphism. We can therefore identify morphisms
with bounded linear maps with norm at most one. If $T:X\rightarrow Y$ is a
bounded linear map of norm at most $1$ and $0\leq \delta \leq 1$, then $%
I(T)\leq \delta $ as in Definition \ref{Definition:I} if and only if $%
\left\Vert Tx\right\Vert \geq \left\Vert x\right\Vert -\delta $ whenever $%
\left\Vert x\right\Vert \leq 2$, which in turn happens if and only if $T$ is
injective and $\left\Vert T^{-1}\right\Vert \leq 1+\delta $.

It follows from the geometric version of the Hahn-Banach theorem that if $%
\bar{a}$ is a tuple in $\mathrm{Ball}(X)$ then the substructure generated by 
$\bar{a}$ according to Definition \ref{Definition:substructure} is the unit
ball of the linear span of $\bar{a}$ inside $X$. We declare a tuple $\bar{a}$
to be a basic tuple if and only if it is linearly independent. A simple
calculation shows that such a notion of basic tuple satisfies the
requirements of Subsection \ref{Subsection:basic}.

Let $\mathcal{I}$ be the collection of Banach spaces $\ell _{n}^{\infty }$
for $n\in \mathbb{N}$, which are precisely the injective finite-dimensional
Banach spaces. It is easy to verify that Conditions (1) and (2) of
Subsection \ref{Subsection:injective} hold in this context. This shows that
the class of finite-dimensional Banach spaces is a Fra\"{\i}ss\'{e} class.
The corresponding limit is the Gurarij space first constructed by Gurarij 
\cite{gurarij_spaces_1966} and proved to be unique by Lusky \cite%
{lusky_gurarij_1976}; see also \cite{lusky_separable_1977}.

The following well known fact is a consequence of classical results of
Lindenstrauss \cite{lindenstrauss_extension_1964}, Lazar--Lindenstrauss \cite%
{lazar_banach_1966,lazar_banach_1971}, and Michael--Pe\l czy\'{n}ski \cite%
{michael_separable_1966}:

\begin{fact}
\label{Fact:Lindenstrauss}For a separable Banach space the following
conditions are equivalent:

\begin{enumerate}
\item $X$ is approximately injective according to Definition \ref%
{Definition:approx-inj};

\item $X$ is an $\mathcal{I}$-structure according to Definition \ref%
{Definition:I-structure};

\item $X$ is a rigid $\mathcal{I}$-structure according to Definition \ref%
{Definition:I-structure}

\item $X$ is an isometric predual of an $L^{1}$ space;

\item $X$ is linearly isometric to the limit of an inductive sequence of
finite-dimensional injective Banach spaces.
\end{enumerate}
\end{fact}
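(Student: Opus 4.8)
The plan is to close a cycle of implications in which the three abstract conditions (1), (2), (3) are linked to one another by the general results of this paper, and then tied to the classical conditions (4) and (5) by the structure theory of isometric $L^1$-preduals. First I would record that the implications (5)$\Rightarrow$(3)$\Rightarrow$(2)$\Rightarrow$(1) are already available: they hold for any class with enough injectives by the observations collected in Subsection \ref{Subsection:I-structure}, once one checks (as is immediate here) that $\mathcal{I}=\{\ell_n^\infty\}$ consists of finitely generated injective Banach spaces and that $\mathcal{A}$ has enough injectives from $\mathcal{I}$. Thus a direct limit of $\ell_n^\infty$'s is a rigid $\mathcal{I}$-structure, which is an $\mathcal{I}$-structure, which is approximately injective; it remains only to run the arrows backwards through (4).

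Next I would establish (4)$\Rightarrow$(5). This is the Lazar--Lindenstrauss representation theorem \cite{lazar_banach_1971}: a separable Banach space whose dual is isometric to an $L^1$-space is linearly isometric to the limit of an inductive sequence of spaces $\ell_{n_k}^\infty$ with isometric connecting maps. I would simply cite this, together with the separable $L^1$-predual theory of Michael--Pe\l czy\'nski \cite{michael_separable_1966}; no new argument is needed, since this is precisely the classical content that the Fact repackages.

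The substantive step is (1)$\Rightarrow$(4). Here I would first invoke Proposition \ref{Proposition:nuclear} to replace approximate injectivity by $\mathcal{I}$-nuclearity: the identity of $X$ is the pointwise limit of maps $\rho_i\circ\gamma_i$ with $\gamma_i:X\to\ell_{n_i}^\infty$ and $\rho_i:\ell_{n_i}^\infty\to X$ contractions, as in Definition \ref{Definition:nuclear}. This factorization property through finite-dimensional injective Banach spaces is exactly one of Lindenstrauss's characterizations of isometric $L^1$-preduals \cite{lindenstrauss_extension_1964}; equivalently, it is the nuclearity of $X$ as an operator space with its minimal quantization \cite[Proposition 8.6.5]{blecher_operator_2004}. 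Translating the factorization into the local language, it asserts that every finite-dimensional subspace of $X$ is $(1+\varepsilon)$-contained in a finite-dimensional subspace that is $(1+\varepsilon)$-isometric to some $\ell_m^\infty$, i.e. $X$ is an $\mathcal{L}_{\infty,1+}$-space, and this is equivalent to (4).

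Concatenating yields (1)$\Rightarrow$(4)$\Rightarrow$(5)$\Rightarrow$(3)$\Rightarrow$(2)$\Rightarrow$(1), so all five conditions are equivalent. The main obstacle is the step (1)$\Rightarrow$(4): everything else is either formal bookkeeping from the general framework or a direct citation, whereas here one must recognize that the paper's abstract approximate-injectivity condition coincides, after the reformulation of Proposition \ref{Proposition:nuclear}, with the classical contractive-factorization characterization of Lindenstrauss spaces. One subtlety to keep in mind is the passage from ``approximately $\ell_n^\infty$'' (the Gromov--Hausdorff closeness built into the $\mathcal{I}$-structure condition) to genuine isometric $\ell_n^\infty$ subspaces with dense union as in (5); this perturbation argument is supplied in general by Proposition \ref{Proposition:characterize-limit-I-structure}, and classically by the Michael--Pe\l czy\'nski and Lazar--Lindenstrauss results.
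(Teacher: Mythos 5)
Your proposal is correct and takes essentially the same approach as the paper, which offers no argument for this Fact beyond attributing it to the classical results of Lindenstrauss, Lazar--Lindenstrauss, and Michael--Pe{\l}czy\'{n}ski: you organize exactly those citations, together with the formal implications (5)$\Rightarrow$(3)$\Rightarrow$(2)$\Rightarrow$(1) already recorded in Subsection \ref{Subsection:I-structure}, into a closed cycle. The only slightly loose point is the one-line passage from the contractive factorization through the spaces $\ell_n^\infty$ to the local $\mathcal{L}_{\infty,1+}$ condition, but this is precisely the content of the cited classical equivalences (e.g.\ \cite{lindenstrauss_extension_1964} and \cite[Proposition 8.6.5]{blecher_operator_2004}), so there is no genuine gap.
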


When $X$ satisfies the equivalent conditions of Fact \ref{Fact:Lindenstrauss}%
, it is called a\emph{\ Lindenstrauss space}. It follows from Theorem \ref%
{Theorem:retracts} that a separable Banach space is a Lindenstrauss space if
and only if it is isometric to a $1$-complemented subspace of $\mathbb{G}$.
This recovers a classical result of Wojtaszczyk \cite%
{wojtaszczyk_remarks_1972}.

A Banach space $X$ is existentially closed (resp.\ positively existentially
close) if for any isometric inclusion $X\subset Y$ and quantifier-free
formula (resp.\ atomic formula) $\varphi \left( x,b\right) $ for $b\in 
\mathrm{Ball}(X)$ one has that $\inf_{a}\varphi \left( a,b\right) $ has the
same value when $a$ ranges in the unit ball of $X$ or the unit ball of $Y$;
see \cite[Subsection 4.4]{goldbring_model-theoretic_2015}. It is clear that
Condition (6) of Proposition \ref{Proposition:characterize-limit} can be
expressed by a first order formula in the language of Banach spaces.
Therefore Proposition \ref{Proposition:characterize-limit} shows that the
Gurarij Banach space is the unique separable model of its first order theory
as well as the only separable existentially closed Banach space, a fact
already proved in \cite{ben_yaacov_generic_2016}. Applying stable
homogeneity of $\mathbb{G}$ and \cite[Proposition 13.6]%
{ben_yaacov_model_2008} one can recover the following result from \cite%
{ben_yaacov_generic_2016}:\ the theory of $\mathbb{G}$ admits elimination of
quantifiers, and it is the model completion of the theory of Banach spaces.
Finally the characterization of Lindenstrauss spaces mentioned above shows
that a separable Banach space $X$ is Lindenstrauss if and only if it is
positively existentially closed.

\begin{definition}
A\emph{\ compact absolutely convex set} is a compact subset $K$ of a real
locally convex topological vector space with the property that $\lambda
x+\mu y\in K$ whenever $x,y\in K$ and $\lambda ,\mu \in \mathbb{R}$ are such
that $\left\vert \lambda \right\vert +\left\vert \mu \right\vert \leq 1$. If 
$K$ is a compact absolutely convex set and $F\subset K$, then the \emph{%
absolutely convex hull} of $F$ is the smallest absolutely convex subset of $%
K $ containing $F$.
\end{definition}

Let $\sigma :K\rightarrow K$ be the involution $p\mapsto -p$. A function $%
f:K\rightarrow \mathbb{R}$ is \emph{symmetric }if $f\circ \sigma =-f$. More
generally a function between compact absolutely convex sets is symmetric if
it commutes with the involution. Similarly, a signed\ Borel measure $\mu $
on $K$ is \emph{symmetric }if the pushforward $\sigma \mu $ of $\mu $ under $%
\sigma $ is equal to $-\mu $.

If $X$ is a Banach space, then the unit ball $\mathrm{Ball}(X^{\ast })$ of
the dual space of $X$ is a compact absolutely convex set. Suppose that $K$
is a compact absolutely convex set. We denote by $A_{\sigma }(K)$ the space
of continuous symmetric affine functions from $K$ to $\mathbb{R}$. The map
from $K$ to $\mathrm{Ball}\left( A_{\sigma }(K)^{\ast }\right) $ mapping $p$
to the evaluation functional at $p$ is an affine symmetric homeomorphism 
\cite[Lemma 1]{lazar_unit_1972}. Furthermore the assignment $K\mapsto
A_{\sigma }(K)$ is a contravariant equivalence of categories from the
category of Banach spaces and linear contractive maps to the category of
compact absolutely convex sets and continuous symmetric affine functions. In
the following we will assume all the Banach spaces to be separable, and all
the compact absolutely convex sets to be metrizable.

\begin{definition}
A \emph{Lazar simplex }is a compact absolutely convex set that is
symmetrically affinely homeomorphic to $\mathrm{Ball}(X^{\ast })$ for some
Lindenstrauss space $X$.
\end{definition}

Lazar provided in \cite{lazar_unit_1972}---see also \cite[Theorem 3.2]%
{effros_class_1971}---the following characterization of Lindenstrauss
simplices in terms of representing measures, similar in spirit to the
characterization of Choquet simplices in terms of representing probability
measures: a compact absolutely convex set $K$ is a Lazar simplex if and only
if given any two boundary Borel probability measures $\mu _{1},\mu _{2}$ on $%
K$ with the same barycenter one has that $\mu _{1}-\sigma \mu _{1}=\mu
_{2}-\sigma \mu _{2}$ or, equivalently, $\int fd\mu _{1}=\int fd\mu _{2}$
for any $f\in A_{\sigma }(K)$. We call the Lazar simplex $\mathrm{Ball}(%
\mathbb{G})$ associated with the Gurarij space the \emph{Lusky simplex}, and
denote it by $\mathbb{L}$.

Suppose that $X$ is a Banach space. Two elements $p,q\in X^{\ast }$ are
called \emph{codirectional }\cite{alfsen_structure_1972-1} (or without
cancellation \cite{effros_class_1971}) if $\left\Vert p+q\right\Vert
=\left\Vert p\right\Vert +\left\Vert q\right\Vert $. Several equivalent
characterization of codirectional functionals are provided in \cite[Lemma 2.3%
]{alfsen_structure_1972-1} and \cite[Lemma 4.1]{effros_class_1971}. An $L$-%
\emph{projection} is an idempotent map $P:X^{\ast }\rightarrow X^{\ast }$
such $\left\Vert x\right\Vert =\left\Vert P(x)\right\Vert +\left\Vert
x-P(x)\right\Vert $ for every $x\in X^{\ast }$. A subspace $J$ of $X^{\ast }$
is called an $L$-\emph{ideal} if it is the range of an $L$-projection. When
such an $L$-projection exists, it is necessarily unique \cite[Proposition 2.1%
]{harmand_banach_1984}. A subspace of a Banach space $X$ is an $M$-\emph{%
ideal} if its annihilator is an $L$-ideal of $X^{\ast }$. A complete survey
on the theory of $M$-ideals and $L$-ideals can be found in \cite%
{harmand_M-ideals_1993}.

\begin{definition}
\label{Definition:biface}Suppose that $K$ is a compact absolutely convex
set. A subset $H$ of $K$ is a \emph{biface }if it is convex and symmetric, $%
\left\Vert p\right\Vert ^{-1}p\in H$ whenever $p\in H$ is nonzero, and if $%
q_{0},q_{1}\in K$ are codirectional and $q_{0}+q_{1}\in H$ one has that $%
q_{0},q_{1}\in H$.
\end{definition}

A biface of $K$ is \emph{trivial} if $H=\left\{ 0\right\} $ and \emph{proper}
if $H\neq K$. When $X$ is a Lindenstrauss space, $K=\mathrm{Ball}(X^{\ast })$%
, and $H\subset K$ is a w*-closed absolutely convex subset, then $H$ is a
biface if and only if it is the absolutely convex hull of a face of $K$ \cite%
{gleit_note_1972}, if and only if the linear span of $H$ in $X^{\ast }$ is
an $L$-ideal \cite[\S 6]{alfsen_structure_1972-2}. Furthermore in this case
one has that $J\cap K=H$ \cite[Lemma 2.1]{lazar_banach_1971}.

Recall that a Banach space has the \emph{metric approximation property} if
its identity map is the pointwise limit of finite rank linear contractions.
Clearly any Lindenstrauss space has the metric approximation property. The
following proposition collects several equivalent characterizations of $M$%
-ideals and bifaces in Lindenstrauss spaces.

\begin{proposition}
\label{Proposition:characterize-biface}Assume that $Z,X$ are separable
Lindenstrauss spaces, and $P:Z\rightarrow X$ is a quotient mapping. Let $%
P^{\dagger }$ the corresponding dual map from $X^{\ast }$ to $Z^{\ast }$.
Let $K\ $be the Lazar simplex $\mathrm{Ball}\left( Z^{\ast }\right) $, and $%
H $ be the image of $\mathrm{Ball}(X^{\ast })$ under $P^{\dag }$. Let also $%
N $ be the kernel of $P$, and $N^{\bot }\subset Z^{\ast }$ be the
annihilator of $N$. Observe that $N^{\bot }$ coincides with the image of $%
X^{\ast }$ under $P^{\dag }$, as well as with the linear span of $H$ inside $%
Z^{\ast }$. The following statements are equivalent:

\begin{enumerate}
\item $N$ is an $M$-ideal of $X$;

\item whenever $\varepsilon >0$, $E\subset F$ are finite-dimensional Banach
spaces, $g:F\rightarrow X$ is a linear contraction and $f:E\rightarrow Z$ is
a linear isometry such that $P\circ f=g|_{E}$, then there exists a linear
contraction $\widehat{g}:F\rightarrow Z$ such that $P\circ \widehat{g}=g$
and $\left\Vert \widehat{g}|_{E}-f\right\Vert \leq \varepsilon $;

\item whenever $\varepsilon >0$, $A$ is a separable Banach space with the
metric approximation property, $E\subset A$ is a finite-dimensional
subspace, and $f:E\rightarrow Z$ and $g:A\rightarrow X$ are linear
contractions such that $\left\Vert P\circ f-g|_{E}\right\Vert <\varepsilon $%
, then there exists a linear contraction $\widehat{g}:A\rightarrow Z$ such
that $P\circ \widehat{g}=g$ and $\left\Vert \widehat{g}|_{E}-f\right\Vert
<6\varepsilon $;

\item for any subspace $E$ of $Z$, $\varepsilon \geq 0$, one has that $%
\left\Vert P(x)\right\Vert \geq \left( 1-\varepsilon \right) \left\Vert
x\right\Vert $ for any $x\in E$ if and only if there exists a linear
contraction $\eta :X\rightarrow Z$ such that $P\circ \eta $ is the identity
map of $X$ and $\left\Vert \eta \circ P|_{E}-id_{E}\right\Vert \leq
\varepsilon $

\item for any $\varepsilon >0$, $y\in Z$ and $u\in N$ such that $\left\Vert
y\right\Vert =\left\Vert u\right\Vert =1$, there exists $v\in Z$ such that $%
\left\Vert P\left( v\right) \right\Vert \leq \varepsilon $ and $\left\Vert
v-y\pm u\right\Vert \leq 1+\varepsilon $;

\item $H$ is a biface of $K$.
\end{enumerate}
\end{proposition}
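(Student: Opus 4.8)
The plan is to make the $M$-ideal property~(1) the hub and to play every other condition against it; the equivalence (1)$\Leftrightarrow$(6) is then essentially formal. By definition $N$ is an $M$-ideal (of $Z$, since $N=\mathrm{Ker}(P)\subset Z$) exactly when its annihilator $N^{\bot}$ is an $L$-ideal of $Z^{\ast}$. Now $H=P^{\dagger}(\mathrm{Ball}(X^{\ast}))$ is the w*-continuous linear image of a w*-compact absolutely convex set, hence is itself w*-closed and absolutely convex, and by hypothesis its linear span in $Z^{\ast}$ is exactly $N^{\bot}$ (with $H=N^{\bot}\cap K$ by \cite[Lemma 2.1]{lazar_banach_1971}). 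Thus the cited results of Alfsen--Effros \cite[\S 6]{alfsen_structure_1972-2} and Gleit \cite{gleit_note_1972}, which say that for a w*-closed absolutely convex $H$ its span is an $L$-ideal if and only if $H$ is a biface, give (1)$\Leftrightarrow$(6) with no further work.

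For the lifting conditions I would establish (1)$\Leftrightarrow$(2)$\Leftrightarrow$(3) and (1)$\Leftrightarrow$(4). The equivalence (1)$\Leftrightarrow$(2) is the restricted lifting characterization of $M$-ideals \cite{harmand_M-ideals_1993}: an $M$-ideal is exactly a kernel through which contractions on a finite-dimensional pair $E\subset F$ can be lifted while controlling the prescribed isometric lift on $E$, the injectivity of the spaces $\ell^{\infty}_{n}$ exhausting the finite-dimensional data being used to realize the finite-dimensional lifts with the right norms. The implication (3)$\Rightarrow$(2) is immediate since finite-dimensional spaces have the metric approximation property, and (2)$\Rightarrow$(3) is the standard upgrade: exhaust the separable MAP space $A$ by finite-dimensional subspaces, apply (2) to successive finite-rank approximations of $g$, and telescope the lifts so that the accumulated perturbation on $E$ is at most $6\varepsilon$. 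For (1)$\Leftrightarrow$(4), the backward half of the biconditional in~(4) is automatic, since $\|\eta\circ P|_{E}-\mathrm{id}_{E}\|\le\varepsilon$ and contractivity of $\eta$ force $\|Px\|\ge(1-\varepsilon)\|x\|$ on $E$; for the forward half one uses that $N^{\bot}$ is an $L$-summand, so the (w*-continuous) $L$-projection of $Z^{\ast}$ onto $N^{\bot}\cong X^{\ast}$ is the adjoint of a contractive section $\eta$ of $P$, which one perturbs, using the near-isometry of $P$ on $E$, so as to nearly fix $E$.

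It remains to tie in the ball condition~(5) and to close the equivalences. For (1)$\Rightarrow$(5), given unit vectors $y\in Z$ and $u\in N$ the two balls of radius $1$ centered at $y+u$ and $y-u$ both meet $N$ (since $u\in N$, their centers lie at distance $\|P(y)\|\le 1$ from $N$) and share the common point $y$; the Alfsen--Effros $n$-ball property of the $M$-ideal $N$ \cite{alfsen_structure_1972-1} then produces a point $v\in N$ in the $\varepsilon$-enlarged intersection, which satisfies $\|P(v)\|=0$ and $\|v-y\pm u\|\le 1+\varepsilon$, i.e.\ precisely~(5). The converse (5)$\Rightarrow$(1), together with the reverse passages (2)$\Rightarrow$(1) and (4)$\Rightarrow$(1), recovers the $L$-summand structure of $N^{\bot}$ from the approximate intersection data, completing the cycle around the hub~(1).

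The principal obstacle is exactly this local-to-global direction (5)$\Rightarrow$(1). Condition~(5) is only a restricted two-ball statement for the special centers $y\pm u$, whereas the $L$-summand property defining an $M$-ideal is an intersection property for arbitrary finite families of balls; the work is to show that the restricted family already suffices, by iterating~(5) along a dense sequence and passing to a limit, using completeness of $Z$ and the metric approximation property of the Lindenstrauss spaces so that the approximate intersections converge to a genuine $L$-projection. A parallel difficulty appears in the lifting directions: the correction of a lift on $E$ must be performed without spoiling contractivity on all of $F$, which is possible precisely because $N^{\bot}$ is an $L$-summand---only the $L$-projection is simultaneously norm-one and compatible with the additive norm decomposition---and this is where the $M$-ideal hypothesis, rather than mere complementation, is indispensable.
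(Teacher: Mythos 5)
Parts of your plan are sound: the equivalence (1)$\Leftrightarrow$(6) via the Alfsen--Effros/Gleit identification of bifaces with intersections of $L$-ideals with the ball is exactly the paper's route, and your derivation of (5) from (1) by applying the $2$-ball property of $M$-ideals to the two balls of radius $1$ centered at $y\pm u$ is correct (and more self-contained than the paper's appeal to \cite[Proposition 3]{lusky_construction_1979}). The first genuine gap is in (4): you claim that the $L$-projection of $Z^{\ast}$ onto $N^{\bot}$ is w*-continuous and therefore dualizes to a contractive section $\eta$ of $P$. That is false in general: w*-continuity of the dual $L$-projection would make $N$ an $M$-summand, whereas a typical $M$-ideal (e.g.\ $c_{0}$ in $\ell^{\infty}$) is not even complemented. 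The existence of a norm-one right inverse of $P$ that in addition almost fixes $E$ is the entire content of (4), and it genuinely uses the Lindenstrauss hypothesis: the paper proves (6)$\Rightarrow$(4) by checking that $k\mapsto\left\{ h\in H:\left\Vert \left( k-h\right) |_{E}\right\Vert \leq \varepsilon \right\}$ is a nonempty-valued map satisfying the hypotheses of Lazar's selection theorem \cite[Theorem 2.2]{lazar_banach_1971} (nonemptiness is exactly where $\left\Vert P(x)\right\Vert \geq (1-\varepsilon )\left\Vert x\right\Vert$ on $E$ enters), selecting a continuous affine symmetric retraction $Q:K\rightarrow H$, and transposing. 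Your ``perturb the section so as to nearly fix $E$'' names the difficulty without supplying a mechanism.

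The second gap is in closing the cycle. Your plan for (5)$\Rightarrow$(1)---iterate the restricted two-ball data and pass to a limit to manufacture an $L$-projection---would fail as stated: a $2$-ball condition does not characterize $M$-ideals even among arbitrary closed subspaces (one needs the $3$-ball property of \cite[Theorem 2.2]{harmand_M-ideals_1993}), so the Lindenstrauss structure must enter somewhere, and your sketch never says where. The paper sidesteps this entirely by proving (5)$\Rightarrow$(6) with a short duality computation: given codirectional $q_{0},q_{1}\in K$ whose convex combination lies in $H$, evaluate $q_{0}$ against the element $v$ supplied by (5) to conclude $q_{0}\in H$; then (6)$\Rightarrow$(1) is the cited Alfsen--Effros equivalence, valid for Lindenstrauss spaces. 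Likewise (2)$\Rightarrow$(1) is not an off-the-shelf ``restricted lifting characterization'' from \cite{harmand_M-ideals_1993}: the paper must construct, for given $y_{1},y_{2},y_{3}\in \mathrm{Ball}(N)$ and $x\in \mathrm{Ball}(Z)$, a one-dimensional extension $F=E\oplus \mathbb{R}$ of $E=\mathrm{span}\left\{ y_{1},y_{2},y_{3},x\right\}$ carrying a norm tailored so that the lift $\widehat{g}$ provided by (2) yields $y=\widehat{g}(0,1)$ witnessing the $3$-ball property. These two constructions---the Lazar selection and the $E\oplus \mathbb{R}$ extension---are the real substance of the proposition and are absent from your proposal.
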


\begin{proof}
In the proof we identify $Z$ with $A_{\sigma }\left( K\right) $ and $X$ with 
$A_{\sigma }(H)$. Under these identifications $P$ is just the restriction
mapping $A_{\sigma }\left( K\right) \rightarrow A_{\sigma }(H)$, $f\mapsto
f|_{H}$. The equivalence of (6) and (1) is proved in \cite[\S 6]%
{alfsen_structure_1972-2}. The equivalence of (6) and (5) is essentially 
\cite[Proposition 3]{lusky_construction_1979}. The implication (1)$%
\Rightarrow $(3) can be proved similarly as \cite[Theorem 2.6]%
{choi_lifting_1977} using \cite[Lemma 2.5]{choi_lifting_1977}. We prove the
other nontrivial implications below.

(2)$\Rightarrow $(1) Suppose that $y_{1},y_{2},y_{3}\in \mathrm{Ball}\left(
N\right) $ and $x\in \mathrm{Ball}(Z)$ and $\varepsilon >0$. In view of the
equivalence (i)$\Leftrightarrow $(iv) in \cite[Theorem 2.2]%
{harmand_M-ideals_1993}, it is enough to prove that there exists $y\in 
\mathrm{Ball}\left( N\right) $ such that $\left\Vert x+y^{(\ell
)}-y\right\Vert \leq 1+\varepsilon $ for $\ell \in \left\{ 1,2,3\right\} $.
Let $E=span\left\{ y_{1},y_{2},y_{3},x\right\} \subset Z$. Consider the
Banach space $F$ obtained from $E\oplus \mathbb{R}$ and the collection of
maps $\left( z,\lambda \right) \mapsto \varphi (z)+\lambda s$ where $\varphi
:E\rightarrow \mathbb{R}$ is a linear contraction and $s\in \left[ -1,1%
\right] $ is such that $\left\vert \varphi (x+y^{(\ell )})-s\right\vert \leq
1$ for $\ell \in \left\{ 1,2,3\right\} $. Define also the map $%
g:F\rightarrow X$ by $\left( z,\lambda \right) \mapsto P(z)$. Observe that
the canonical inclusion $E\subset F$ is isometric and the map $g$ is a
contraction such that $g|_{E}=P$. Hence by hypothesis there exists a linear
contraction $\widehat{g}:F\rightarrow Z$ such that $P\circ \widehat{g}=g$
and $\left\Vert \widehat{g}|_{E}-\iota _{E}\right\Vert \leq \varepsilon $,
where $\iota _{E}:E\rightarrow Z$ is the inclusion map. The element $y:=%
\widehat{g}\left( 0,1\right) $ is as desired.

(6)$\Rightarrow $(4): Suppose that $E\subset Z$ is a linear subspace such
that $\left\Vert P(x)\right\Vert \geq \left( 1-\varepsilon \right)
\left\Vert x\right\Vert $ for every $x\in E$. Let $k$ be an element of $K$.
We observe that there exists $h\in H$ such that $\left\Vert \left(
k-h\right) |_{E}\right\Vert \leq \varepsilon $. The assumption implies that $%
E\cap N=\left\{ 0\right\} $. Define $h\in \left( E+N\right) ^{\ast }$ by
setting $h\left( e+n\right) =\left( 1-\varepsilon \right) k\left( e\right) $%
. We have that $\left\vert k\left( e\right) \right\vert \leq \left\Vert
e\right\Vert \leq \left( 1-\varepsilon \right) ^{-1}\left\Vert P\left(
e\right) \right\Vert \leq \left( 1-\varepsilon \right) ^{-1}\left\Vert
e+n\right\Vert $. Thus $\left\Vert h\right\Vert \leq 1$ and hence it extends
to a linear functional on $X$ of norm at most $1$ that belongs to $H=\mathrm{%
Ball}(X^{\ast })\cap N^{\bot }$. It is clear from the definition that $%
\left\Vert \left( k-h\right) |_{E}\right\Vert \leq \varepsilon $. Define the
function defined by%
\begin{equation*}
\varphi :k\mapsto \left\{ h\in H:\left\Vert \left( k-h\right)
|_{E}\right\Vert \leq \varepsilon \right\}
\end{equation*}%
for $k\in K$. Observe that $\varphi $ satisfies the assumptions of \cite[%
Theorem 2.2]{lazar_banach_1971}. Hence there exists a continuous affine
symmetric function $Q:K\rightarrow H$ such that $Q|_{H}$ is the identity map
of $H$ and $\left\Vert \left( Q\left( k\right) -k\right) |_{E}\right\Vert
\leq \varepsilon $ for every $k\in K$. One can thus define $\eta :A_{\sigma
}(H)\rightarrow A_{\sigma }(K)$ by $\eta \mapsto \eta \circ Q$.

(5)$\Rightarrow $(6) Suppose that $q_{0},q_{1}\in K$ and $p\in H$ are such
that $\left\Vert q_{0}\right\Vert +\left\Vert q_{1}\right\Vert =\left\Vert
p\right\Vert $, $t\in \left( 0,1\right) $, and $tq_{0}+\left( 1-t\right)
q_{1}=p$. We want to prove that $q_{0}\in H$. Fix $u\in N$ of norm $1$ and $%
y\in Z$ of norm $1$ such that $p(y)=1$. Observe that $%
q_{0}(y)=q_{1}(y)=p(y)=1$. It is enough to prove that $q_{0}\left( u\right)
\leq 3\varepsilon $. By assumption there exists $v\in Z$ such that $%
\left\Vert P\left( v\right) \right\Vert \leq \varepsilon $ and $\left\Vert
v-y\pm u\right\Vert \leq 1+\varepsilon $. Then we have $\left\Vert
v-y\right\Vert \leq 1+\varepsilon $. Thus%
\begin{equation*}
q_{0}\left( v\right) \leq \varepsilon -q_{1}\left( v\right) \leq \varepsilon
+\left( 1+\varepsilon \right) -q_{0}(y)\leq 2\varepsilon
\end{equation*}%
and%
\begin{equation*}
q_{0}\left( u\right) =q_{0}\left( y+u-v\right) -q_{0}(y)-q_{0}\left(
v\right) \leq 3\varepsilon \text{.}
\end{equation*}%
This concludes the proof.
\end{proof}

\begin{remark}
\label{Remark:characterize-biface}The equivalence of (1)--(3) in\
Proposition \ref{Proposition:characterize-biface} holds even without the
assumption that $Z,X$ are Lindenstrauss spaces. Furthermore if $H$ is a
closed biface of a metrizable Lazar simplex $K$ then the restriction mapping 
$A_{\sigma }\left( K\right) \rightarrow A_{\sigma }(H)$, $f\mapsto f|_{H}$
is automatically a complete quotient mapping by \cite[Corollary 1]%
{lazar_banach_1971}.
\end{remark}

The equivalence of the conditions in Proposition \ref%
{Proposition:characterize-biface} justifies the following definition.

\begin{definition}
\label{Definition:facial}If $X,Z$ are Lindenstrauss spaces, and $%
P:Z\rightarrow X$ is a quotient linear mapping, then we say that $P$ is a 
\emph{facial quotient} if it satisfies any of the equivalent conditions of
Proposition \ref{Proposition:characterize-biface}. A facial quotient is 
\emph{trivial }if it is an isometric isomorphism.
\end{definition}

Let us now fix a separable Lindenstrauss space $X$ and consider the generic
operator $\Omega _{\mathbb{G}}^{X}:\mathbb{G}\rightarrow X$ constructed as
in Section \ref{Section:states}. It follows from the characterization of the
generic state from Subsection \ref{Subsection:universal-state} together with
Proposition \ref{Proposition:characterize-biface} that $\Omega _{\mathbb{G}%
}^{X}$ is a nontrivial facial quotient with kernel isometrically isomorphic
to $\mathbb{G}$. Therefore any Lazar simplex is symmetrically affinely
homeomorphic to a closed proper biface of $\mathbb{L}=\mathrm{Ball}(\mathbb{G%
})$ \cite[Corollary 4]{lusky_construction_1979}. In the rest of the section
we will prove that, conversely, any nontrivial facial quotient $P:\mathbb{G}%
\rightarrow X$ belongs to the $\mathrm{Aut}(\mathbb{G})$-orbit of $\Omega _{%
\mathbb{G}}^{X}$.

Let us consider initially the case $X=\mathbb{R}$. In this case we have that 
$\Omega _{\mathbb{G}}^{\mathbb{R}}$ is an extreme point of $\mathbb{L}:=%
\mathrm{Ball}(\mathbb{G}^{\ast })$. Hence the extreme boundary of $\mathbb{L}
$ is dense in $\mathbb{L}$. We now want to observe that, conversely, any
Lazar simplex with dense extreme boundary is symmetrically affinely
homeomorphic to $\mathbb{L}$.

\begin{proposition}[{\protect\cite[Theorem 6.1]{lindenstrauss_poulsen_1978}}]

\label{Proposition:dense-Lusky}Suppose that $L$ is a nontrivial metrizable
Lazar simplex with dense extreme boundary. Then $L$ is symmetrically
affinely homeomorphic to $\mathbb{L}$.
\end{proposition}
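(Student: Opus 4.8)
The plan is to combine the categorical duality with the intrinsic characterization of the Gurarij space among Lindenstrauss spaces. Put $X:=A_\sigma(L)$. Since $L$ is a Lazar simplex, $X$ is a separable Lindenstrauss space with $L=\mathrm{Ball}(X^\ast)$, and because $K\mapsto A_\sigma(K)$ is a contravariant equivalence of categories, it is enough to prove that $X$ is isometrically isomorphic to $\mathbb{G}$: an isometric isomorphism $X\to\mathbb{G}$ dualizes to a symmetric affine homeomorphism $\mathbb{L}=\mathrm{Ball}(\mathbb{G}^\ast)\to L$. By Fact~\ref{Fact:Lindenstrauss}, $X$ is a rigid $\mathcal{I}$-structure for $\mathcal{I}=\{\ell_k^\infty:k\in\mathbb{N}\}$, so by Proposition~\ref{Proposition:characterize-limit-I-structure}(3) it suffices to verify the lifting property: for all $A,\widehat{A}\in\mathcal{I}$, every pair of embeddings $\phi:A\to\widehat{A}$ and $f:A\to X$, and every $\varepsilon>0$, there is a morphism $\widehat{f}:\widehat{A}\to X$ with $I(\widehat{f})<\varepsilon$ and $d(\widehat{f}\circ\phi,f)<\varepsilon$. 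This is the only place where the hypothesis of dense extreme boundary will be used.

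Since $\ell_k^\infty$ spaces are injective, $\phi(A)$ is $1$-complemented in $\widehat{A}$, and a short induction reduces the lifting to the essential geometric case: given an isometric copy $y_1,\dots,y_m$ of $\ell_m^\infty$ inside $X$, adjoin a single vector $x\in\mathrm{Ball}(X)$ so that $y_1,\dots,y_m,x$ span, up to $\varepsilon$, an isometric copy of $\ell_{m+1}^\infty$. Evaluating norms against the extreme points of $L$, one checks that $z_1,\dots,z_k$ span an isometric $\ell_k^\infty$ exactly when (i) $\sum_i|p(z_i)|\le 1$ for every $p\in\partial_e L$, and (ii) the $z_i$ admit biorthogonal norming functionals. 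The functionals saturated by $y_1,\dots,y_m$ (those $p$ with $\sum_i|p(y_i)|=1$) form the biface of $L$ associated with $f$ in the sense of Definition~\ref{Definition:biface}, so condition (i) forces $x$ to vanish there, i.e.\ to be supported on the complementary biface. Here density enters: as $\partial_e L$ is dense, I can pick an extreme point $p_{m+1}\in\partial_e L$ lying up to $\varepsilon$ in this complementary region, take $x\in\mathrm{Ball}(X)$ with $p_{m+1}(x)\approx 1$ and $p(x)\approx 0$ on the saturated functionals, and recover the biorthogonal functionals for the $y_i$ near extreme points as well. Dualizing assembles these choices into the required $\widehat{f}$.

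The technical heart, and the step I expect to be the main obstacle, is securing condition (i) \emph{uniformly} in $p\in\partial_e L$ after adjoining $x$ — that is, producing an $x$ with $|p(x)|\le 1-\sum_{i\le m}|p(y_i)|$ for every extreme $p$, not merely at the witness $p_{m+1}$. This is precisely the assertion that the complementary biface is nontrivial and carries an element of norm close to $1$, and it is controlled by the $M$-ideal/biface calculus of Proposition~\ref{Proposition:characterize-biface} together with the defining measure condition of a Lazar simplex. Once the lifting property holds for all $A,\widehat{A}\in\mathcal{I}$, Proposition~\ref{Proposition:characterize-limit-I-structure} identifies $X$ with $\mathbb{G}$, and the duality of the first paragraph yields the symmetric affine homeomorphism $L\cong\mathbb{L}$.
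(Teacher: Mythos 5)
Your first paragraph matches the paper's strategy: reduce via the duality $K\mapsto A_\sigma(K)$ to showing $A_\sigma(L)\cong\mathbb{G}$, and verify a one-step extension property for embeddings $\ell_n^\infty\to\ell_{n+1}^\infty$ against a given isometry $f:\ell_n^\infty\to A_\sigma(L)$. But the key step is not carried out, and the route you sketch for it has concrete problems. First, the reduction to ``adjoin a single vector $x$ so that $y_1,\dots,y_m,x$ span an isometric $\ell_{m+1}^\infty$'' does not produce what is needed: for a general isometric embedding $\phi$ with $\phi(e_i^n)=e_i^{n+1}+a_ie_{n+1}^{n+1}$, setting $\widehat f(e_i^{n+1})=y_i=f(e_i^n)$ and $\widehat f(e_{n+1}^{n+1})=x$ gives $\widehat f(\phi(e_i^n))=y_i+a_ix$, which is not close to $f(e_i^n)$ unless $a_i\approx 0$; the new basis must consist (approximately) of the vectors $y_i-a_ix$ together with $x$, so the old generators themselves have to be perturbed. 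Second, the set $\left\{ p\in L:\sum_i\left\vert p(y_i)\right\vert =1\right\}$ is not a biface---it is not even convex, since $\sum_i\left\vert \tfrac12(p+q)(y_i)\right\vert$ drops below $1$ whenever the signs of $p(y_i)$ and $q(y_i)$ disagree (take $q=-p$)---so the object on which you want $x$ to ``vanish'' is not one to which Definition \ref{Definition:biface} or the biface calculus applies.

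Most importantly, the step you yourself flag as the main obstacle---the uniform bound $\left\vert p(x)\right\vert \le 1-\sum_i\left\vert p(y_i)\right\vert$ over all of $\partial_eL$---is the entire content of the proof, and appealing to Proposition \ref{Proposition:characterize-biface} cannot close it, because every lifting statement in that proposition presupposes that a facial quotient is already in hand. The paper supplies exactly that missing object by working dually: it picks extreme points $s_1,\dots,s_n$ norming the $f(e_i^n)$ (so that $s_i\circ f=e_i^{\ast}$), uses density of $\partial_eL$ together with nontriviality to pick an extreme point $s_{n+1}$ close to $\sum_ja_js_j$ but outside the absolutely convex hull of $\left\{ s_1,\dots,s_n\right\}$, and observes that $Q=(s_1,\dots,s_{n+1}):A_\sigma(L)\to\ell_{n+1}^\infty$ is a quotient mapping whose dual carries $\mathrm{Ball}(\ell_{n+1}^1)$ onto the absolutely convex hull of finitely many extreme points, which is automatically a closed biface by \cite[Proposition 4.6]{effros_class_1971}. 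Only then does Proposition \ref{Proposition:characterize-biface} (condition (4)) produce the isometry $\widehat f$ with $\Vert\widehat f\circ\phi-f\Vert<\varepsilon$. Your proposal never constructs this (or any) facial quotient, so the argument as written does not go through; once $Q$ is built as above, the uniform estimate you were worried about is absorbed into the fact that the absolutely convex hull of $\left\{ s_1,\dots,s_{n+1}\right\}$ is a biface, and no pointwise control of $x$ against all extreme points is ever needed.
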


\begin{proof}
Set $G=A\left( L\right) $. We want to prove that $G$ is isometrically
isomorphic to $\mathbb{G}$. Suppose that $\varepsilon >0$, and $n\in \mathbb{%
N}$. Let $\phi :\ell _{n}^{\infty }\rightarrow \ell _{n+1}^{\infty }$ and $%
f:\ell _{n}^{\infty }\rightarrow G$ be linear isometries. We want to prove
that there exists an isometric linear map $\widehat{f}:\ell _{n+1}^{\infty
}\rightarrow G$ such that $\left\Vert \widehat{f}\circ \phi -f\right\Vert
<\varepsilon $. This will suffice in view of the characterization of the
limit provided by Proposition \ref{Proposition:characterize-limit}. In view
of Proposition \ref{Proposition:characterize-biface}, it is enough to find a
facial quotient map $Q:G\rightarrow \ell _{n+1}^{\infty }$ such that $%
\left\Vert Q\circ f-\phi \right\Vert <\varepsilon $. Fix $\eta >0$. Choose
standard bases $e_{1}^{n},\ldots ,e_{n}^{n}$ of $\ell _{n}^{\infty }$ and $%
e_{1}^{n+1},\ldots ,e_{n+1}^{n+1}$ of $\ell _{n+1}^{\infty }$ and $%
a_{1},\ldots ,a_{n}\in \mathbb{R}$ such that such $\left\vert
a_{1}\right\vert +\cdots +\left\vert a_{n}\right\vert \leq 1$ and $\phi
\left( e_{i}^{n}\right) =e_{i}^{n+1}+a_{i}^{n+1}e_{n+1}^{n+1}$. For every $%
i=1,2,\ldots ,n$ pick $s_{i}\in \partial _{e}L$ such that $s_{i}\left(
f\left( e_{i}^{n}\right) \right) =1$. Since $L$ is a nontrivial metrizable
Lazar simplex with dense extreme boundary, one can find then $s_{n+1}\in
\partial _{e}L$ such that $s_{n+1}$ does not belong to the absolutely convex
hull of $\left\{ s_{1},\ldots ,s_{n}\right\} $, and%
\begin{equation*}
\left\vert s_{n+1}\left( f\left( e_{i}^{n}\right) \right)
-\sum_{j=1}^{n}a_{j}s_{j}\left( f\left( e_{i}^{n}\right) \right) \right\vert
<\eta 
\end{equation*}%
for $i=1,\ldots ,n$. Let $Q:G\rightarrow \ell _{n+1}^{\infty }$ be the map $%
x\mapsto \left( s_{1}(x),\ldots ,s_{n+1}(x)\right) $. By \cite[Proposition
2.3]{alfsen_structure_1972-2}, $Q$ is a quotient mapping. Observe that $%
\left\Vert Q\circ f-\phi \right\Vert <\varepsilon $ for $\eta $ small
enough. For $k=1,2,\ldots ,n+1$ define $H_{k}$ to be $\left\{ \lambda
s_{k}:\lambda \in \left[ -1,1\right] \right\} $, and observe that $H_{k}$ is
a closed biface since $s_{k}$ is an extreme point of $L$. Set now $\widehat{H%
}$ to be the convex hull of $H_{1},\ldots ,H_{n+1}$. By \cite[Proposition 4.6%
]{effros_class_1971}, $\widehat{H}$ is a closed biface of $L$. Since $%
Q^{\dagger }\mathrm{Ball}\left( \ell _{n}^{1}\right) =\widehat{H}$, we have
that $Q$ is a facial quotient mapping. This concludes the proof.
\end{proof}

\begin{proposition}
\label{Proposition:characterize-operator-G}Suppose that $X$ is a separable
Lindenstrauss space, and $P:\mathbb{G}\rightarrow X$ is a contractive linear
map. Then $P$ belongs to the $\mathrm{\mathrm{Aut}}(\mathbb{G})$-orbit of $%
\Omega _{\mathbb{G}}^{X}$ if and only if $P$ is a nontrivial facial quotient.
\end{proposition}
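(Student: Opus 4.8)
The plan is to recognize both $P$ and $\Omega_{\mathbb{G}}^{X}$ as $X$-states on $\mathbb{G}$ in the sense of Subsection \ref{Subsection:universal-state}, and to exploit that membership of $P$ in the $\mathrm{Aut}(\mathbb{G})$-orbit of $\Omega_{\mathbb{G}}^{X}$ is exactly the assertion that $P=\Omega_{\mathbb{G}}^{X}\circ\alpha$ for some $\alpha\in\mathrm{Aut}(\mathbb{G})$, i.e.\ that $P$ and $\Omega_{\mathbb{G}}^{X}$ are isomorphic as objects of $\mathcal{A}_{X}$. For the easy direction I would note that every element of the orbit has the form $\Omega_{\mathbb{G}}^{X}\circ\beta$ with $\beta\in\mathrm{Aut}(\mathbb{G})$; since it has already been observed (immediately before the statement) that $\Omega_{\mathbb{G}}^{X}$ is a nontrivial facial quotient with kernel isometric to $\mathbb{G}$, and since precomposition by a surjective linear isometry $\beta$ sends the kernel $N$ to the $M$-ideal $\beta^{-1}(N)$ and transports the finite-dimensional lifting property of Proposition \ref{Proposition:characterize-biface}(2) verbatim (a lift relative to $\Omega_{\mathbb{G}}^{X}\circ\beta$ is obtained from one relative to $\Omega_{\mathbb{G}}^{X}$ by applying $\beta^{-1}$), every member of the orbit is again a nontrivial facial quotient.

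For the converse I would verify that the $X$-state $P$ satisfies the characterizing property of the generic state and then read off $P=\Omega_{\mathbb{G}}^{X}\circ\alpha$. Concretely I would invoke Proposition \ref{Proposition:characterize-universal-state} with the rigid $\mathcal{I}$-structure $\mathbb{G}$ and the state $P$. This reduces the whole proposition to the following lifting statement: given $A\subset\widehat{A}$ in $\mathcal{I}$ (so $A=\ell_{m}^{\infty}$ and $\widehat{A}=\ell_{n}^{\infty}$), a linear isometry $f:A\rightarrow\mathbb{G}$, a contraction $t:\widehat{A}\rightarrow X$ with $t|_{A}=P\circ f$, and $\varepsilon>0$, one can produce a linear isometry $\widehat{f}:\widehat{A}\rightarrow\mathbb{G}$ with $\|\widehat{f}|_{A}-f\|<\varepsilon$ and $\|P\circ\widehat{f}-t\|<\varepsilon$.

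To build $\widehat{f}$ I would proceed in two steps. First, since $P$ is a facial quotient, Proposition \ref{Proposition:characterize-biface}(2) supplies a contraction $h:\widehat{A}\rightarrow\mathbb{G}$ with $P\circ h=t$ exactly and $\|h|_{A}-f\|<\varepsilon/2$; this $h$ is isometric on $A$ but may collapse directions of $\widehat{A}$ transverse to $A$. Second—and this is the crux—I would correct $h$ to an isometry by adding a perturbation $u:\widehat{A}\rightarrow N$ into the kernel $N=\ker P$ with $u|_{A}=0$, arranged so that $\widehat{f}:=h+u$ is an approximate isometry. The room to do this comes from $N\cong\mathbb{G}$: using the extension property of the Gurarij space inside $N$ together with the $M$-ideal $3$-ball inequalities recorded in Proposition \ref{Proposition:characterize-biface}(5), one places the transverse image $u(\widehat{A})$ in approximate $M$-position relative to $h(\widehat{A})$, so that $\|h(v)+u(v)\|$ is forced up to $\|v\|_{\widehat{A}}$ while $u$ leaves $A$ untouched. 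A final application of the small-perturbation property of basic tuples from Subsection \ref{Subsection:basic} turns the approximate isometry into an exact one at the cost of another $\varepsilon$, and because $u$ takes values in $N=\ker P$ one gets $P\circ\widehat{f}=t$ up to that same error.

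The main obstacle is exactly this second step. The contraction $h$ delivered by the facial-quotient lifting controls $P\circ h$ and the behaviour on $A$ but gives no control transverse to $A$, and isometries are genuinely not dense among contractions out of $\ell_{n}^{\infty}$ (a map collapsing $e_{1}-e_{2}$ cannot be perturbed to an isometry), so the boosting must be carried out by hand in the kernel. The $M$-ideal inequalities of Proposition \ref{Proposition:characterize-biface}(5) are what make the transverse boosting possible while preserving $P$, and the delicate bookkeeping is to keep $\widehat{f}|_{A}$ near $f$ and $P\circ\widehat{f}$ near $t$ simultaneously on mixed vectors $v=a+c$ with $a\in A$ and $c$ transverse. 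Once the lifting statement is in hand, Proposition \ref{Proposition:characterize-universal-state} furnishes $\alpha\in\mathrm{Aut}(\mathbb{G})$ with $\Omega_{\mathbb{G}}^{X}\circ\alpha=P$, so $P$ lies in the orbit; nontriviality is then automatic, the orbit consisting of facial quotients whose kernel is isometric to $\mathbb{G}$.
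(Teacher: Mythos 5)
Your reduction is the same as the paper's: the easy direction via transport of the lifting property along an automorphism, and the converse via the characterization of the generic state, which reduces everything to producing, for $\phi:\ell_{n}^{\infty}\rightarrow\ell_{n+1}^{\infty}$ and an isometry $f:\ell_{n}^{\infty}\rightarrow\mathbb{G}$ with $P\circ f=s\circ\phi$, an \emph{isometric} lift $\widehat{f}:\ell_{n+1}^{\infty}\rightarrow\mathbb{G}$ of $s$ through $P$ close to $f$ on $\phi(\ell_{n}^{\infty})$. The gap is in how you produce it. Your plan --- first take the contractive lift $h$ supplied by Proposition \ref{Proposition:characterize-biface}(2), then add a perturbation $u$ with values in $N=\ker P$ and $u|_{A}=0$ to boost $h$ to an isometry --- is only a statement of intent at the one point where the work lies: you never exhibit the mechanism that forces $\left\Vert h(v)+u(v)\right\Vert$ up to $\left\Vert v\right\Vert$ on mixed vectors while keeping $h+u$ contractive, and the ingredients you name (the $3$-ball inequalities of condition (5), the extension property ``inside $N$'') are not combined into an argument. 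Worse, the room you invoke comes from the assertion $N\cong\mathbb{G}$, which for an arbitrary nontrivial facial quotient $P$ is not available at this stage --- it is a \emph{consequence} of $P$ lying in the orbit of $\Omega_{\mathbb{G}}^{X}$ (cf.\ Theorem \ref{Theorem:universal-projection-G}), so using it here is circular. All that is known a priori is that $N$ is a nonzero $M$-ideal.

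The paper sidesteps the boosting problem entirely by enlarging the quotient rather than repairing the lift. Following the pattern of Proposition \ref{Proposition:dense-Lusky}, one chooses functionals $s_{1},\ldots,s_{n+1}$ norming the relevant vectors and, using density of $\partial_{e}\mathbb{L}$ and properness of the biface $H=P^{\dagger}\mathrm{Ball}(X^{\ast})$ (this is where nontriviality of $P$ enters), replaces them by extreme points $t_{1},\ldots,t_{n+1}\in\partial_{e}\mathbb{L}\setminus H$ close to them. Setting $Q=(t_{1},\ldots,t_{n+1}):\mathbb{G}\rightarrow\ell_{n+1}^{\infty}$, the convex hull of $H$ and the segments $[-t_{k},t_{k}]$ is a closed biface by \cite[Proposition 4.6]{effros_class_1971}, so $Q\oplus P:\mathbb{G}\rightarrow\ell_{n+1}^{\infty}\oplus^{\infty}X$ is a facial quotient with $\left\Vert Q\circ f-\phi\right\Vert<\varepsilon$. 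Lifting the map $y\mapsto(y,s(y))$ through $Q\oplus P$ via Proposition \ref{Proposition:characterize-biface} yields a contraction $\widehat{f}$ with $Q\circ\widehat{f}=\mathrm{id}_{\ell_{n+1}^{\infty}}$ and $P\circ\widehat{f}=s$; since $Q$ is contractive, the first identity forces $\widehat{f}$ to be an isometry for free. If you want to salvage your write-up, replace the kernel-perturbation step by this construction; as it stands, the converse direction is not proved.
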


\begin{proof}
We have already observe that $\Omega _{\mathbb{G}}^{X}$ is a nontrivial
facial quotient. We prove the converse implication. Let $H$ be the image of $%
\mathrm{Ball}(X^{\ast })$ under the dual map $P^{\dagger }$. Suppose that $%
\varepsilon >0$, and $n\in \mathbb{N}$. Let $\phi :\ell _{n}^{\infty
}\rightarrow \ell _{n+1}^{\infty }$ and $f:\ell _{n}^{\infty }\rightarrow 
\mathbb{G}$ be linear isometries, and $s:\ell _{n+1}^{\infty }\rightarrow X$
be a linear map such that $P\circ f=s\circ \phi $. In view of the
characterization of the generic state from Subsection \ref%
{Subsection:universal-state}, it is enough to prove that there exists a
linear isometry $\widehat{f}:\ell _{n+1}^{\infty }\rightarrow X$ such that $%
P\circ \widehat{f}=s$ and $\left\Vert \widehat{f}\circ \phi -f\right\Vert
<\varepsilon $. By Proposition \ref{Proposition:characterize-biface}, it is
enough to prove that there exists a linear map $Q:\mathbb{G}\rightarrow \ell
_{n+1}^{\infty }$ such that $Q\oplus P:\mathbb{G}\rightarrow \ell
_{n+1}^{\infty }\oplus ^{\infty }X$ is a facial quotient, and $\left\Vert
Q\circ f-\phi \right\Vert <\varepsilon $. For this purpose one can proceed
as in the proof of Proposition \ref{Proposition:dense-Lusky} and define $%
s_{1},\ldots ,s_{n+1}$. Let then for $k=1,2,\ldots ,n$, $t_{k}\in \partial
_{e}\mathbb{L}\backslash H$ such that $\left\vert t_{k}\left( e_{i}\right)
-s_{k}\left( e_{i}\right) \right\vert \leq \eta $ for every $i=1,2,\ldots ,n$%
. Define now $H_{k}$ to be $\left\{ \lambda t_{k}:\lambda \in \left[ -1,1%
\right] \right\} $ for $k=1,2,\ldots ,n+1$, and $\widehat{H}$ to be convex
hull of $H_{1},\ldots ,H_{n+1}$ and $H$. As in the proof of Proposition \ref%
{Proposition:dense-Lusky}, $\widehat{H}$ is a closed biface. Let $Q:\mathbb{G%
}\rightarrow \ell _{n+1}^{\infty }$ be the map $x\mapsto \left(
t_{1}(x),\ldots ,t_{n+1}(x)\right) $, and observe that it is a quotient
mapping. The image of $\mathrm{Ball}\left( \ell _{n}^{\infty }\oplus
^{\infty }X\right) $ under the dual map of $P\oplus Q$ is $\widehat{H}$.
This shows that $Q$ is a facial quotient. For $\eta >0$ small enough, one
has that $\left\Vert Q\circ f-\phi \right\Vert <\varepsilon $, concluding
the proof.
\end{proof}

The following corollary is an immediate consequence of Remark \ref%
{Remark:lift} and Proposition \ref{Proposition:characterize-operator-G}.

\begin{corollary}
\label{Corollary:homogeneity-L}Any symmetric affine homeomorphism between
closed proper bifaces of $\mathbb{L}$ extends to a symmetric affine
homeomorphism of $\mathbb{L}$.
\end{corollary}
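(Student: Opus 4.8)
The plan is to translate the statement into the dual language of facial quotients on $\mathbb{G}$ and then read off the extension from the orbit description in Proposition \ref{Proposition:characterize-operator-G}. Let $H_{0},H_{1}$ be closed proper bifaces of $\mathbb{L}=\mathrm{Ball}(\mathbb{G}^{\ast })$ and let $\Phi :H_{0}\rightarrow H_{1}$ be a symmetric affine homeomorphism. Writing $X_{i}:=A_{\sigma }(H_{i})$, I would first observe that the restriction maps $P_{i}:\mathbb{G}=A_{\sigma }(\mathbb{L})\rightarrow A_{\sigma }(H_{i})=X_{i}$, $f\mapsto f|_{H_{i}}$, are nontrivial facial quotients in the sense of Definition \ref{Definition:facial}: by Remark \ref{Remark:characterize-biface} each $P_{i}$ is automatically a complete quotient mapping, its kernel $N_{i}$ is an $M$-ideal by Proposition \ref{Proposition:characterize-biface}, and properness of $H_{i}$ prevents $P_{i}$ from being an isometric isomorphism. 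Under the identification $H_{i}=P_{i}^{\dagger }(\mathrm{Ball}(X_{i}^{\ast }))$, the inclusion $H_{i}\hookrightarrow \mathbb{L}$ is $P_{i}^{\dagger }$ restricted to the unit ball. Since $A_{\sigma }$ is a contravariant equivalence of categories, $\Phi $ dualizes to a surjective linear isometry $\Phi ^{\ast }:X_{1}\rightarrow X_{0}$, $g\mapsto g\circ \Phi $, and a direct pairing computation shows that $\Phi \circ P_{0}^{\dagger }=P_{1}^{\dagger }\circ (\Phi ^{\ast })^{\dagger }$; in other words $\Phi $ is exactly $(\Phi ^{\ast })^{\dagger }$ read through the identifications $H_{i}\cong \mathrm{Ball}(X_{i}^{\ast })$.

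The key step is to note that $P_{0}$ and $\Phi ^{\ast }\circ P_{1}$ are \emph{both} nontrivial facial quotients from $\mathbb{G}$ onto the \emph{same} space $X_{0}$, since postcomposition with a surjective linear isometry preserves every condition of Proposition \ref{Proposition:characterize-biface}. Hence by Proposition \ref{Proposition:characterize-operator-G} both belong to the $\mathrm{Aut}(\mathbb{G})$-orbit of the generic facial quotient $\Omega _{\mathbb{G}}^{X_{0}}$, and therefore to a common orbit. I would record this as the existence of $\beta \in \mathrm{Aut}(\mathbb{G})$ with $P_{0}=\Phi ^{\ast }\circ P_{1}\circ \beta $. (Conceptually this is the lifting phenomenon of Remark \ref{Remark:lift}: after transporting $H_{1}$ to the standard biface attached to $\Omega _{\mathbb{G}}^{X_{0}}$, the residual identification becomes an automorphism of the quotient that lifts to $\mathbb{G}$.)

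Next I would pass to adjoints. The relation $P_{0}=\Phi ^{\ast }\circ P_{1}\circ \beta $ gives $P_{0}^{\dagger }=\beta ^{\dagger }\circ P_{1}^{\dagger }\circ (\Phi ^{\ast })^{\dagger }$. Restricting to $\mathrm{Ball}(X_{0}^{\ast })$ and inserting the identification $\Phi \circ P_{0}^{\dagger }=P_{1}^{\dagger }\circ (\Phi ^{\ast })^{\dagger }$ from the first paragraph, this collapses to $p=\beta ^{\dagger }(\Phi (p))$ for every $p\in H_{0}$, that is $\Phi =(\beta ^{-1})^{\dagger }|_{H_{0}}$. Setting $\gamma :=\beta ^{-1}\in \mathrm{Aut}(\mathbb{G})$, the dual $\gamma ^{\dagger }$ is a w*-continuous surjective linear isometry of $\mathbb{G}^{\ast }$; being linear it commutes with the involution $p\mapsto -p$ and maps $\mathbb{L}$ onto $\mathbb{L}$, so $\gamma ^{\dagger }|_{\mathbb{L}}$ is a symmetric affine homeomorphism of $\mathbb{L}$, and by construction it restricts to $\Phi $ on $H_{0}$. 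This is the desired extension.

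I expect the main obstacle to be the bookkeeping of the several dualities: keeping the identifications $H_{i}\cong \mathrm{Ball}(X_{i}^{\ast })$ straight, tracking the directions of $\Phi ^{\ast }$ and $(\Phi ^{\ast })^{\dagger }$, and verifying that the adjoint of the orbit relation restricts to $\Phi $ itself rather than to a twisted version of it—this is precisely what forces the choice $\gamma =\beta ^{-1}$ instead of $\gamma =\beta $. A secondary point to confirm is that $\Phi ^{\ast }\circ P_{1}$ genuinely satisfies the facial-quotient conditions of Proposition \ref{Proposition:characterize-biface} so that Proposition \ref{Proposition:characterize-operator-G} applies; this is routine, since postcomposing a quotient mapping with a surjective isometry again yields a quotient mapping whose kernel is still an $M$-ideal.
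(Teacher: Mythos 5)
Your argument is correct and is essentially the paper's intended proof: the paper derives this corollary in one line from Proposition \ref{Proposition:characterize-operator-G} together with Remark \ref{Remark:lift}, and your version fills in exactly those details, with the minor cosmetic twist that you absorb the surjective isometry $\Phi^{\ast}$ into one of the two restriction maps so that both nontrivial facial quotients land in the same codomain $X_{0}$ and lie in a single $\mathrm{Aut}(\mathbb{G})$-orbit, rather than invoking the lifting of automorphisms from Remark \ref{Remark:lift} explicitly. The dualization bookkeeping, including the choice $\gamma=\beta^{-1}$, checks out.
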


If $L$ is a Lazar simplex, and $Z\subset \partial _{e}L$ is a compact
subset, then the absolutely convex hull $H$ of $Z$ is a closed biface of $L$
such that $\partial _{e}H=Z$ \cite[Theorem 5.8]{effros_class_1971}. By \cite[%
Lemma 3.1]{effros_class_1971} one can identify $A_{\sigma }(H)$ with the
space $C_{\sigma }(Z)$ of continuous real-valued symmetric functions on $Z$.

\begin{corollary}
A symmetric homeomorphism between proper compact subsets of $\partial _{e}%
\mathbb{L}$ extends to a symmetric affine homeomorphism of $\mathbb{L}$.
\end{corollary}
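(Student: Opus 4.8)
The plan is to reduce everything to the already-proved homogeneity statement for bifaces, Corollary \ref{Corollary:homogeneity-L}, by passing from a homeomorphism of extreme boundaries to a symmetric affine homeomorphism of the bifaces they generate. Let $Z_{0},Z_{1}\subset \partial _{e}\mathbb{L}$ be the given proper compact subsets and let $\theta :Z_{0}\rightarrow Z_{1}$ be the symmetric homeomorphism; since the involution $\sigma :p\mapsto -p$ preserves $\partial _{e}\mathbb{L}$ and $\theta $ commutes with it, both $Z_{0}$ and $Z_{1}$ are necessarily symmetric.

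First I would form the absolutely convex hulls $H_{0}$ and $H_{1}$ of $Z_{0}$ and $Z_{1}$. By \cite[Theorem 5.8]{effros_class_1971} each $H_{i}$ is a closed biface of $\mathbb{L}$ with $\partial _{e}H_{i}=Z_{i}$. Each $H_{i}$ is moreover a \emph{proper} biface: were $H_{i}=\mathbb{L}$, then $\partial _{e}H_{i}=\partial _{e}\mathbb{L}$, contradicting $\partial _{e}H_{i}=Z_{i}$ together with the hypothesis that $Z_{i}$ is a proper subset of $\partial _{e}\mathbb{L}$. Thus $H_{0}$ and $H_{1}$ are exactly the sort of objects to which Corollary \ref{Corollary:homogeneity-L} applies.

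Next I would promote $\theta $ to a symmetric affine homeomorphism $\Theta :H_{0}\rightarrow H_{1}$. By \cite[Lemma 3.1]{effros_class_1971} one identifies $A_{\sigma }(H_{i})$ with the space $C_{\sigma }(Z_{i})$ of continuous symmetric real-valued functions on $Z_{i}$. Pulling back along $\theta $ gives a surjective linear isometry $\theta ^{\ast }:C_{\sigma }(Z_{1})\rightarrow C_{\sigma }(Z_{0})$, $g\mapsto g\circ \theta $, hence a surjective linear isometry $A_{\sigma }(H_{1})\rightarrow A_{\sigma }(H_{0})$. Since $K\mapsto A_{\sigma }(K)$ is a contravariant equivalence of categories from compact absolutely convex sets to Banach spaces, this isometry corresponds to a symmetric affine homeomorphism $\Theta :H_{0}\rightarrow H_{1}$. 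Tracking the equivalence on extreme points---which for $\mathrm{Ball}(A_{\sigma }(H_{i})^{\ast })$ are the (signed) point evaluations at the elements of $Z_{i}$---shows that $\Theta $ carries $Z_{0}=\partial _{e}H_{0}$ onto $Z_{1}=\partial _{e}H_{1}$ precisely by $\theta $, so that $\Theta |_{Z_{0}}=\theta $. Finally I would apply Corollary \ref{Corollary:homogeneity-L} to $\Theta $, obtaining a symmetric affine homeomorphism $\widetilde{\Theta }$ of $\mathbb{L}$ with $\widetilde{\Theta }|_{H_{0}}=\Theta $; restricting to $Z_{0}$ yields $\widetilde{\Theta }|_{Z_{0}}=\theta $, the required extension.

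The hard part will be the middle step: checking that the abstract isometry $\theta ^{\ast }$, transported through the categorical duality, genuinely restricts to the original map $\theta $ on $Z_{0}$, rather than to some other affine homeomorphism $H_{0}\rightarrow H_{1}$. Concretely this amounts to confirming that the equivalence $K\mapsto A_{\sigma }(K)$ identifies the extreme points of $\mathrm{Ball}(A_{\sigma }(H_{i})^{\ast })$ with the point evaluations indexed by $Z_{i}$, and that the adjoint of $\theta ^{\ast }$ sends the evaluation at $z$ to the evaluation at $\theta (z)$. Symmetry must be maintained throughout, i.e.\ $\Theta \circ \sigma =\sigma \circ \Theta $, but this is automatic because we work at every stage with symmetric sets and symmetric functions, so that $\theta ^{\ast }$ preserves the symmetric subspaces and the induced map respects the involution.
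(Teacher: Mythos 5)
Your proposal is correct and follows essentially the same route as the paper: form the absolutely convex hulls $H_{i}$ of the $Z_{i}$ (closed bifaces with $\partial _{e}H_{i}=Z_{i}$ by \cite[Theorem 5.8]{effros_class_1971}), identify $A_{\sigma }(H_{i})$ with $C_{\sigma }(Z_{i})$ via \cite[Lemma 3.1]{effros_class_1971} so that the pullback isometry induces a symmetric affine homeomorphism $H_{0}\rightarrow H_{1}$ extending the given map, and then apply Corollary \ref{Corollary:homogeneity-L}. Your extra care about properness of the $H_{i}$ and about the induced map genuinely restricting to $\theta$ on $Z_{0}$ is welcome but does not change the argument.
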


\begin{proof}
Suppose that, for $i=0,1$, $Z_{i}\subset \partial _{e}L$ is a proper compact
subset, $H_{i}$ is the absolutely convex hull of $Z_{i}$, and $\varphi
:Z_{0}\rightarrow Z_{1}$ is a symmetric homeomorphism. Then $\varphi $
induces an isometric isomorphism $\alpha $ from $C_{\sigma }\left(
Z_{1}\right) $ to $C_{\sigma }\left( Z_{0}\right) $. Since as remarked above
one can identify $C_{\sigma }\left( Z_{i}\right) $ with $A_{\sigma }\left(
H_{i}\right) $, $\alpha $ in turn induces a symmetric affine homeomorphism $%
\widehat{\varphi }$ from $H_{0}$ to $H_{1}$ that extends $\varphi $.
Applying Corollary \ref{Corollary:homogeneity-L} one can deduce that $%
\widehat{\varphi }$ can be extended to a symmetric affine homeomorphism of $%
\mathbb{L}$.
\end{proof}

Suppose that $X$ is a Lindenstrauss space, $K=\mathrm{Ball}(X^{\ast })$ is
the associated Lazar simplex, and $H$ is a proper closed biface of $K$. Let $%
N$ be the linear span of $H$ inside $X^{\ast }$ and $e:X^{\ast }\rightarrow
X^{\ast }$ be the corresponding $L$-projection. Then the range of $I-e$ is
the complementary (convex) cone $N^{\prime }$ of $N$; see \cite[Proposition
3.1]{alfsen_structure_1972-1}. The quotient mapping $X^{\ast }\rightarrow
X^{\ast }/N$ induces a linear isometry from $N^{\prime }$ onto $X^{\ast }/N$ 
\cite[Proposition 1.14]{alfsen_structure_1972-2}. The complementary biface
of $H$ is the intersection of $N^{\prime }$ with $\mathrm{Ball}(X^{\ast })$.

\begin{corollary}
\label{Corollary:dual-biface}Suppose that $H$ is a proper closed biface of $%
\mathbb{L}$. Endow the complementary biface $H^{\prime }$ with the
w*-topology induced by $a\in \mathbb{G}$ such that $a|_{H}\equiv 0$. Then $%
H^{\prime }$ is affinely homeomorphic to $\mathbb{G}$.
\end{corollary}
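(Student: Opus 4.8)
The plan is to recognize the complementary biface $H'$, endowed with the prescribed topology, as the dual ball of the $M$-ideal of functions vanishing on $H$, and then to identify that $M$-ideal with a copy of $\mathbb{G}$ by means of the genericity of the universal projection. Concretely, I would establish that $A_{\sigma}(H')$ is isometric to $\mathbb{G}$, so that by duality $H'$ is symmetrically affinely homeomorphic to $\mathrm{Ball}(\mathbb{G}^{\ast})=\mathbb{L}$.

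First I would introduce the restriction map $P\colon \mathbb{G}\to A_{\sigma}(H)$, $f\mapsto f|_{H}$, and set $J=\ker P=\{a\in\mathbb{G}:a|_{H}\equiv 0\}$. Since $H$ is a closed biface of the metrizable Lazar simplex $\mathbb{L}$, Remark~\ref{Remark:characterize-biface} guarantees that $P$ is a quotient mapping, and Proposition~\ref{Proposition:characterize-biface} then shows that $P$ is a facial quotient; it is nontrivial exactly because $H$ is proper. Thus $J$ is a nonzero $M$-ideal of $\mathbb{G}$, and its linear span $N=J^{\perp}$ inside $\mathbb{G}^{\ast}$ is the $L$-ideal attached to $H$.

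Next I would carry out the duality bookkeeping identifying $H'$ with $\mathrm{Ball}(J^{\ast})$. As recalled just before the statement, the quotient map $q\colon\mathbb{G}^{\ast}\to\mathbb{G}^{\ast}/N$ restricts to a linear isometry of the complementary cone $N'$ onto $\mathbb{G}^{\ast}/N$. Because $N=J^{\perp}$, the target is canonically $J^{\ast}$, and for $p\in N'$ and $a\in J$ the pairing $\langle p,a\rangle$ is unchanged by $q$ since $a$ annihilates $N$. Hence $q$ maps $H'=N'\cap\mathrm{Ball}(\mathbb{G}^{\ast})$ isometrically onto $\mathrm{Ball}(J^{\ast})$ and intertwines the w*-topology on $H'$ induced by $J$ with the w*-topology of $\mathrm{Ball}(J^{\ast})$. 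By the duality between Banach spaces and compact absolutely convex sets this yields $A_{\sigma}(H')\cong J$.

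Finally, to see $J\cong\mathbb{G}$ I would invoke Proposition~\ref{Proposition:characterize-operator-G}: the nontrivial facial quotient $P$ lies in the $\mathrm{Aut}(\mathbb{G})$-orbit of the universal projection $\Omega_{\mathbb{G}}^{A_{\sigma}(H)}$, so $J=\ker P$ is the image under an isometric automorphism of $\mathbb{G}$ of $\ker\Omega_{\mathbb{G}}^{A_{\sigma}(H)}$, which is isometric to $\mathbb{G}$ by Theorem~\ref{Theorem:universal-projection-G}. Combining this with the previous step gives $A_{\sigma}(H')\cong\mathbb{G}$, and applying the contravariant equivalence of categories once more shows that $H'$ is symmetrically affinely homeomorphic to $\mathbb{L}$. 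The only genuinely delicate point is the middle paragraph---verifying that the prescribed topology on $H'$ is precisely the one making $q$ a homeomorphism onto $\mathrm{Ball}(J^{\ast})$---but this reduces to the single observation that every $a\in J$ factors through $q$.
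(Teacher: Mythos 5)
Your proposal is correct and follows essentially the same route as the paper's proof: both identify $J=\{a\in\mathbb{G}:a|_{H}\equiv 0\}$ with $\mathbb{G}$ via Proposition \ref{Proposition:characterize-operator-G} (hence the genericity of $\Omega_{\mathbb{G}}^{A_{\sigma}(H)}$ and Theorem \ref{Theorem:universal-projection-G}), and both use the Alfsen--Effros fact that the quotient $\mathbb{G}^{\ast}\to\mathbb{G}^{\ast}/N\cong J^{\ast}$ restricts to an isometry on the complementary cone to carry $H'$, with the prescribed topology, onto $\mathrm{Ball}(J^{\ast})=\mathbb{L}$. The only cosmetic difference is that you spell out the factorization of elements of $J$ through the quotient where the paper cites \cite[Lemma 3.4(b)]{alfsen_structure_1972-2}.
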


\begin{proof}
Suppose that $H$ is a closed proper biface of $\mathbb{L}$. Consider $%
J=\left\{ f\in A_{\sigma }\left( \mathbb{L}\right) :f|_{H}=0\right\} $ and
set $N:=J^{\bot }$. Observe that $N$ coincides with the linear span of $H$
inside $\mathbb{G}^{\ast }$. Let $N^{\prime }$ be the complementary cone of $%
N$, and $H^{\prime }=H\cap K$ be the complementary biface of $H$. By
Proposition \ref{Proposition:characterize-operator-G}, $J$ is isometrically
isomorphic to $\mathbb{G}$, and $\mathrm{Ball}\left( J^{\ast }\right) $ is
symmetrically affinely homeomorphic to $\mathbb{L}$. The inclusion $J\subset 
\mathbb{G}$ induces by duality w*-continuous linear map $\varphi :\mathbb{G}%
^{\ast }\rightarrow J^{\ast }$ We claim that the restriction of $\varphi $
to $N^{\prime }$ is 1:1 and, in fact, isometric. Indeed, as observed in the
proof of \cite[Lemma 3.4(b)]{alfsen_structure_1972-2}, one can identify $%
\varphi $ with the quotient mapping $\mathbb{G}^{\ast }\rightarrow \mathbb{G}%
^{\ast }/N$. Therefore we have that $H^{\prime }$ with the topology
described in the statement is symmetrically affinely homeomorphic to $%
\mathrm{Ball}(\mathbb{G})=\mathrm{Ball}\left( J^{\ast }\right) =\mathbb{L}$.
\end{proof}

Theorem \ref{Theorem:G} and Theorem \ref{Theorem:universal-projection-G} now
follow from the general results of Sections \ref{Section:injective}, \ref%
{Section:retracts}, \ref{Section:operators}, \ref{Section:states} together
with remarks above.

\subsection{Complex Banach spaces\label{Subsection:Banach-complex}}

One can regard complex Banach spaces as structures in a suitable language $%
\mathcal{L}$ similarly as real Banach spaces. In this section we will assume
all the Banach spaces to be complex and separable. The finite-dimensional
injective complex Banach spaces are precisely those of the form $\ell
_{n}^{\infty }$ (finite $\infty $-sum of $n$ copies of $\mathbb{C}$) for
some $n\in \mathbb{N}$. The Fra\"{\i}ss\'{e} limit of the class of
finite-dimensional complex Banach spaces is the complex Gurarij space $%
\mathbb{G}$. The analogue of Fact \ref{Fact:Lindenstrauss} for complex
Banach spaces holds due to results of Hustad \cite{hustad_intersection_1974}%
, Olsen \cite{olsen_edwards_1976}, and Nielsen-Olsen \cite%
{nielsen_complex_1977}. We call a complex Banach space satisfying the
(complex analogs of) any of the equivalent properties of Fact \ref%
{Fact:Lindenstrauss} a complex Lindenstrauss space.

\begin{definition}
A\emph{\ compact} \emph{circled convex set }is a compact subset $K$ of a
complex locally convex topological vector space such that $\lambda x+\mu
y\in K$ whenever $x,y\in K$ and $\lambda ,\mu \in \mathbb{C}$ are such that $%
\left\vert \lambda \right\vert +\left\vert \mu \right\vert \leq 1$.
\end{definition}

Let $K$ be a compact circled convex set and $\xi \in \mathbb{T}$. We denote
by $\sigma _{\xi }:K\rightarrow K$ the map $p\mapsto \xi p$. A
complex-valued function $f$ on $K$ is called $\mathbb{T}$-\emph{invariant}
if $f\circ \sigma _{\xi }=f$ for every $\xi \in \mathbb{T}$, and $\mathbb{T}$%
-\emph{homogeneous} if $f\circ \sigma _{\xi }=\xi f$ for every $\xi \in 
\mathbb{T}$. Similar definitions apply to complex Borel measures on $K$. The
map $f\mapsto \mathrm{inv}_{\mathbb{T}}f=\int \left( f\circ \sigma _{\xi
}\right) d\xi $ is a norm $1$ projection from $C(K)$ onto the space of
continuous $\mathbb{T}$-invariant functions, while the map $f\mapsto \mathrm{%
hom}_{\mathbb{T}}f=\int \xi \left( f\circ \sigma _{\xi }\right) d\xi $ is a
norm $1$ projection onto the space of continuous $\mathbb{T}$-homogeneous
functions. The adjoints of these projections give w*-continuous projections $%
\mu \mapsto \mathrm{inv}_{\mathbb{T}}\mu $ and $\mu \mapsto \mathrm{hom}_{%
\mathbb{T}}\mu $ of the space $M(K)$ of complex Borel probability measures
onto the spaces of $\mathbb{T}$-invariant and $\mathbb{T}$-homogeneous
measures, respectively. A continuous map $\phi :K_{0}\rightarrow K_{1}$ is $%
\mathbb{T}$-homogeneous if $\phi \circ \sigma _{\xi }=\sigma _{\xi }\circ
\phi $ for every $\xi \in \mathbb{T}$. Let $A_{\mathbb{T}}(K)\subset C(K)$
be the space of continuous $\mathbb{T}$-homogeneous complex-valued functions
on $K$. It follows from the geometric Hahn-Banach theorem for complex Banach
spaces that the map sending $p\in K$ to the corresponding evaluation
functional in $A_{\mathbb{T}}(K)^{\ast }$ is a $\mathbb{T}$-homogeneous
affine homeomorphism of $K$ onto the unit ball of $A_{\mathbb{T}}(K)^{\ast }$%
. Conversely, one can identify a complex Banach space $X$ with $A_{\mathbb{T}%
}(K)$ where $K$ is the unit ball of the dual space of $X$ endowed with the
w*-topology. The function $K\mapsto A_{\mathbb{T}}(K)$ is a contravariant
equivalence of categories from the category of compact circled convex sets
and $\mathbb{T}$-homogeneous affine continuous functions to the category of
complex Banach spaces and complex-linear maps of norm at most $1$. In the
following we assume all compact convex circled sets to be metrizable, and
all the Banach spaces to be separable.

\begin{definition}
An \emph{Effros simplex }is the unit ball of the dual space of a complex
Lindenstrauss space.
\end{definition}

Effros characterized in \cite{effros_class_1974} what we call Effros
simplices: a compact circled convex set $K$ is an Effros simplex if and only
if given boundary probability measures $\mu _{1},\mu _{2}$ of $K$ with the
same barycenter one has that $\mathrm{hom}_{\mathbb{T}}\mu _{1}=\mathrm{hom}%
_{\mathbb{T}}\mu _{2}$.

All the definitions about simplices carry over with no change from the real
to the complex setting, as well as the notions of collinear elements, $L$%
-ideals, and $M$-ideals. The notion of cone of a complex vector space is
defined as in the real case: a subset $C$ of a complex vector space is a
cone if $\lambda x\in C$ for any $x\in C$ and $\lambda \geq 0$. A cone $C$
in a dual Banach space $X^{\ast }$ is \emph{hereditary }if whenever $p,q\in
X^{\ast }$ are collinear and $p+q\in C$ then $p,q\in C$.

\begin{lemma}
\label{Lemma:complex-hereditary}Suppose that $X$ is a complex Lindenstrauss
space and that $J$ is a subspace of $X^{\ast }$. The following assertions
are equivalent.

\begin{enumerate}
\item $W$ is an $L$-ideal;

\item $\left\Vert x+y\right\Vert =\left\Vert x\right\Vert +\left\Vert
y\right\Vert $ for any $x\in J$ and $y\in J^{\prime }$ (the complementary
cone of $J$);

\item $W$ is hereditary.
\end{enumerate}
\end{lemma}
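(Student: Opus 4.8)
The plan is to dispatch the implications $(1)\Rightarrow(2)$ and $(1)\Rightarrow(3)$ by a direct computation with the $L$-projection, and to derive the two substantive converses $(2)\Rightarrow(1)$ and $(3)\Rightarrow(1)$ from the complex facial structure theory for Lindenstrauss spaces; throughout I abbreviate $W=J$. Suppose $(1)$ holds and let $e\colon X^{\ast}\to X^{\ast}$ be the $L$-projection with range $J$, so that the complementary cone is $J'=\mathrm{range}(I-e)$. For $(1)\Rightarrow(2)$, if $x\in J$ and $y\in J'$ then $e(x)=x$ and $e(y)=0$, hence $e(x+y)=x$ and $(x+y)-e(x+y)=y$, so $\|x+y\|=\|e(x+y)\|+\|(x+y)-e(x+y)\|=\|x\|+\|y\|$ by the defining property of an $L$-projection. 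For $(1)\Rightarrow(3)$, let $p,q$ be collinear with $p+q\in J$; then $e(p)+e(q)=p+q$, so $(I-e)(p)=-(I-e)(q)$, and adding $\|p\|=\|e(p)\|+\|(I-e)(p)\|$ to $\|q\|=\|e(q)\|+\|(I-e)(q)\|$ gives $\|p\|+\|q\|=\|e(p)\|+\|e(q)\|+2\|(I-e)(p)\|$. Since $p,q$ are collinear, $\|e(p)+e(q)\|=\|p+q\|=\|p\|+\|q\|$, while $\|e(p)+e(q)\|\le\|e(p)\|+\|e(q)\|$; comparing forces $(I-e)(p)=0$, whence $p=e(p)\in J$ and likewise $q\in J$. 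Neither computation uses the Lindenstrauss hypothesis.

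For the converses the real work is to manufacture an $L$-projection from the purely metric data in $(2)$ or $(3)$, and here I would use that $X$ is a complex Lindenstrauss space, so that $X=A_{\mathbb{T}}(K)$ for the Effros simplex $K=\mathrm{Ball}(X^{\ast})$. Setting $H=J\cap K$, conditions $(2)$ and $(3)$ are precisely the statements that $H$ is a w$^{\ast}$-closed circled face of $K$ in the sense of Definition \ref{Definition:circled-face}, and I would prove their equivalence with $(1)$ by following the complex analogue of the argument behind Proposition \ref{Proposition:characterize-biface}: the real theorems of Alfsen--Effros (\S 6 of \cite{alfsen_structure_1972-2} and \cite[Lemma 2.3]{alfsen_structure_1972-1}) are replaced by their complex counterparts due to Effros \cite{effros_class_1974}, Olsen \cite{olsen_edwards_1976}, and Ellis--Rao--Roy--Utterud \cite{ellis_facial_1981}, with codirectionality replaced by collinearity and real affine functions replaced by $\mathbb{T}$-homogeneous ones. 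Concretely, the $L$-projection onto $\mathrm{span}(H)=J$ is built by barycentric calculus, decomposing representing measures through the homogeneous projection $\mu\mapsto\mathrm{hom}_{\mathbb{T}}\mu$ and invoking Effros's uniqueness of representing measures modulo $\mathrm{hom}_{\mathbb{T}}$, which is exactly the defining property of Effros simplices recalled above.

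This construction produces a real-linear $L$-projection $e$ with range $J$, and the one genuinely new step in the complex setting is to upgrade it to a complex-linear one. Writing $\sigma_{\xi}$ also for the map $p\mapsto\xi p$ on $X^{\ast}$, the $\mathbb{T}$-invariance of $J$ gives that for each $\xi\in\mathbb{T}$ the conjugate $\sigma_{\xi}^{-1}e\,\sigma_{\xi}$ is again an $L$-projection with range $J$; by uniqueness of the $L$-projection onto an $L$-ideal \cite[Proposition 2.1]{harmand_banach_1984} it coincides with $e$, so $e$ commutes with the $\mathbb{T}$-action and is therefore complex-linear, giving $(1)$. I expect the converse---constructing the $L$-projection---to be the main obstacle, and it is essential that the argument stay inside the complex category: the implication ``hereditary $\Rightarrow$ $L$-ideal'' fails for general Banach spaces (in a Euclidean plane every line is closed under collinearity but is not an $L$-ideal), and one cannot reduce to the real Alfsen--Effros theory by restriction of scalars, since the real Banach space underlying a complex Lindenstrauss space need not be a real Lindenstrauss space (for instance $\mathbb{C}$ restricts to the non-Lindenstrauss space $\mathbb{R}^{2}$ with the Euclidean norm). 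The $\mathbb{T}$-invariance built into the hypothesis on $J$ is what excludes these pathologies, and exploiting it through the homogeneous barycentric calculus is the crux of the argument.
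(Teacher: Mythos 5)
Your computations for (1)$\Rightarrow$(2) and (1)$\Rightarrow$(3) are correct and correspond to what the paper dismisses as obvious, and your closing observations --- that ``hereditary $\Rightarrow$ $L$-ideal'' fails without a Lindenstrauss-type hypothesis, and that one cannot reduce to the real Alfsen--Effros theory by restriction of scalars --- correctly identify why the structural hypothesis is indispensable. For the converses, however, the paper does something much shorter than what you propose: it quotes \cite[Lemma 1]{lima_application_1976} for (3)$\Rightarrow$(2) (after identifying separable Lindenstrauss spaces with $E(3)$ spaces) and \cite[Theorem 5.5]{lima_intersection_1977} for (2)$\Rightarrow$(1). Your alternative route through circled faces of the Effros simplex is viable in outline for (3)$\Rightarrow$(1): heredity of $J$ does translate, by rescaling into the unit ball, into the face condition of Definition \ref{Definition:circled-face} for $H=J\cap K$, and your $\mathbb{T}$-conjugation argument via uniqueness of $L$-projections \cite[Proposition 2.1]{harmand_banach_1984} for upgrading a real-linear $L$-projection to a complex-linear one is sound (though likely redundant if one works with $\mathbb{T}$-homogeneous data throughout). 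Be aware, though, that in this paper the correspondence ``closed circled face $\Leftrightarrow$ $L$-ideal'' is \emph{deduced from} Lemma \ref{Lemma:complex-hereditary} together with \cite[Proposition 2.1]{ellis_facial_1981}, so to avoid circularity you must import that correspondence from the literature rather than from Proposition \ref{Proposition:characterize-circled-face}.

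The genuine gap is your treatment of condition (2). You assert that (2) is ``precisely the statement that $H$ is a w*-closed circled face of $K$,'' but (2) is a statement about the complementary cone $J'$, which is defined through the Alfsen--Effros domination order (cf.\ \cite[Proposition 3.1]{alfsen_structure_1972-1}) and not through $K$; it is not a reformulation of heredity, and relating it to (1) and (3) is exactly where Lima's theorems do their work. Concretely, two things are missing. First, $\ell^{1}$-orthogonality of $J$ against $J'$ does not by itself produce a projection: one must show that $J+J'=X^{\ast}$, and nothing in your sketch addresses this decomposition, which is the substance of \cite[Theorem 5.5]{lima_intersection_1977}. Second, you never pass between (2) and (3) except through (1), so once the unjustified identification of (2) with the face condition is withdrawn, your argument establishes only the equivalence of (1) and (3). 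To repair the proof you must either argue (2)$\Rightarrow$(3) directly, so that your circled-face machinery takes over, or give an independent construction of the $L$-projection from hypothesis (2); as written, condition (2) is simply not engaged.
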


\begin{proof}
The implications (1)$\Rightarrow $(2)$\Rightarrow $(3) are obvious. The
implication (3)$\Rightarrow $(2) is \cite[Lemma 1]{lima_application_1976}%
---after observing that a Banach space is an $E\left( 3\right) $ space if
and only if it is a Lindenstrauss space---while the implication (2)$%
\Rightarrow $(1) is \cite[Theorem 5.5]{lima_intersection_1977}.
\end{proof}

\begin{definition}
\label{Definition:circled-face}A subset $H$ of a compact convex circled set $%
K$ is a $\emph{circled}$ \emph{face} if it is circled convex, it contains $%
\left\Vert x\right\Vert ^{-1}x$ whenever $x\in H$ is nonzero, and if $x,y\in
K$ are codirectional and $x+y\in H$ then $x,y\in H$.
\end{definition}

It follows from Lemma \ref{Lemma:complex-hereditary} and \cite[Proposition
2.1]{ellis_facial_1981} that if $X$ is a Lindenstrauss space and $K$ is the
Effros simplex $\mathrm{Ball}(X^{\ast })$, then the closed circled faces of $%
K$ are precisely the sets of the form $J\cap K$ where $J$ is a w*-closed $L$%
-ideal of $X^{\ast }$. Indeed if $H$ is a closed circled face of $K$, then
the linear span $J$ of $K\ $is a w*-closed $L$-ideal of $X^{\ast }$ such
that $H=J\cap K$. Furthermore if $A$ is a compact subset of $\partial _{e}K$
then the closure of the circled convex hull of $A$ is a circled face of $%
X^{\ast }$. The following is the natural complex analog of Proposition \ref%
{Proposition:characterize-circled-face}, and can be proved with similar
methods by replacing \cite[Theorem 2.2]{lazar_banach_1971} with \cite[%
Theorem 4.2]{olsen_edwards_1976}.

\begin{proposition}
\label{Proposition:characterize-circled-face}Suppose that $P:Z\rightarrow X$
is a quotient mapping between complex Lindenstrauss spaces. If $H$ is the
image of $\mathrm{Ball}(X^{\ast })$ under $P^{\dag }$ and $N$ is the kernel
of $P$, then the following statements are equivalent:

\begin{enumerate}
\item $N$ is an $M$-ideal of $X$;

\item whenever $\varepsilon >0$, $E\subset F$ are finite-dimensional Banach
spaces, $g:F\rightarrow X$ is a linear contraction, and $f:E\rightarrow Z$
is a linear isometry such that $P\circ f=g|_{E}$, then there exists a linear
contraction $\widehat{g}:F\rightarrow Z$ such that $P\circ \widehat{g}=g$
and $\left\Vert \widehat{g}|_{E}-f\right\Vert \leq \varepsilon $;

\item whenever $\varepsilon >0$, $A$ is a separable Banach space with the
metric approximation property, $E\subset A$ is a finite-dimensional
subspace, and $f:E\rightarrow Z$ and $g:A\rightarrow X$ are linear
contractions such that $\left\Vert P\circ f-g|_{E}\right\Vert <\varepsilon $%
, then there exists a linear contraction $\widehat{g}:A\rightarrow Z$ such
that $P\circ \widehat{g}=g$ and $\left\Vert \widehat{g}|_{E}-f\right\Vert
<6\varepsilon $;

\item for any subspace $E$ of $Z$ and $\varepsilon \geq 0$ one has that $%
\left\Vert P(x)\right\Vert \geq \left( 1-\varepsilon \right) \left\Vert
x\right\Vert $ for any $x\in E$ if and only if there exists a linear
contraction $\eta :X\rightarrow Z$ such that $P\circ \eta $ is the identity
map of $X$ and $\left\Vert \eta \circ P|_{E}-id_{E}\right\Vert \leq
\varepsilon $.

\item for any $\varepsilon >0$, $y,u\in Z$ such that $\left\Vert
y\right\Vert =\left\Vert u\right\Vert =1$, and $u\in N$, there exists $v\in
Z $ such that $\left\Vert P\left( v\right) \right\Vert \leq \varepsilon $
and $\left\Vert v-y+\xi u\right\Vert \leq 1+\varepsilon $ for every $\xi \in 
\mathbb{T}$;

\item $H$ is a circled face of $K$.
\end{enumerate}
\end{proposition}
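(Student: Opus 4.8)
The plan is to mirror the proof of Proposition \ref{Proposition:characterize-biface} step by step, making only the substitutions forced by the passage from the real to the complex setting. Throughout I identify $Z$ with $A_{\mathbb{T}}(K)$ and $X$ with $A_{\mathbb{T}}(H)$, so that $P$ becomes the restriction map $f\mapsto f|_{H}$, while the subspace $N^{\bot}$ is simultaneously the image of $X^{\ast}$ under $P^{\dag}$ and the linear span of $H$ inside $Z^{\ast}$. The first task is the dictionary (1)$\Leftrightarrow$(6): by definition $N$ is an $M$-ideal of $X$ exactly when $N^{\bot}$ is an $L$-ideal of $Z^{\ast}$, and Lemma \ref{Lemma:complex-hereditary} shows that for a w*-closed subspace of $Z^{\ast}$ the properties of being an $L$-ideal, being additive across the complementary cone, and being hereditary all coincide. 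Combined with \cite[Proposition 2.1]{ellis_facial_1981}, which identifies the closed circled faces of $K$ with the sets $J\cap K$ for $J$ a w*-closed $L$-ideal, this yields that $H=N^{\bot}\cap K$ is a circled face precisely when $N$ is an $M$-ideal.

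The lifting conditions (1), (2), (3) are then tied together exactly as in the real case. The implication (1)$\Rightarrow$(3) follows the Choi--Effros scheme of \cite[Theorem 2.6]{choi_lifting_1977}, using only the metric approximation property of $A$ together with the $M$-ideal structure, both of which are insensitive to the scalar field; the implication (3)$\Rightarrow$(2) is immediate upon taking $A=F$, since a finite-dimensional space automatically has the metric approximation property, and relabelling $\varepsilon$. For (2)$\Rightarrow$(1) I would repeat the auxiliary-space construction verbatim: given $y_{1},y_{2},y_{3}\in\mathrm{Ball}(N)$ and $x\in\mathrm{Ball}(Z)$, set $E=\mathrm{span}\{y_{1},y_{2},y_{3},x\}$, form the one-dimensional extension $F$ and the contraction $g\colon F\to X$ as in Proposition \ref{Proposition:characterize-biface}, apply (2), and read the desired element $y=\widehat{g}(0,1)$ off the lift; the resulting approximate decomposition is precisely the characterization of $M$-ideals via \cite[Theorem 2.2]{harmand_M-ideals_1993}.

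The step I expect to be the genuine obstacle is (6)$\Rightarrow$(4), the production of a continuous $\mathbb{T}$-homogeneous affine cross-section. In the real setting this rests on Lazar's selection theorem \cite[Theorem 2.2]{lazar_banach_1971}; the complex version requires Olsen's selection theorem \cite[Theorem 4.2]{olsen_edwards_1976}, and one must verify that the set-valued assignment $k\mapsto\{h\in H:\|(k-h)|_{E}\|\le\varepsilon\}$ meets Olsen's hypotheses (nonempty closed convex values, together with the appropriate semicontinuity and homogeneity). Nonemptiness is arranged as in the real argument, by defining a functional on $E+N$ via $e+n\mapsto(1-\varepsilon)k(e)$ and using $H=\mathrm{Ball}(X^{\ast})\cap N^{\bot}$ together with the estimate $E\cap N=\{0\}$; the selection then defines the homogeneous affine retraction $Q\colon K\to H$ and hence $\eta\colon A_{\mathbb{T}}(H)\to A_{\mathbb{T}}(K)$, $\eta\mapsto\eta\circ Q$, which is the content of (4). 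This cross-section is exactly the ingredient that feeds back into the lifting conditions, closing (4) into the cycle as in the real case.

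It remains to handle (5). The implication (5)$\Rightarrow$(6) is a direct estimate: given codirectional $q_{0},q_{1}\in K$ and $p\in H$ with $tq_{0}+(1-t)q_{1}=p$, one fixes $u\in N$ and $y\in Z$ of norm $1$ with $p(y)=1$, so that $q_{0}(y)=q_{1}(y)=1$, and invokes (5) to obtain $v$ with $\|P(v)\|\le\varepsilon$. The one new feature is that the complex hypothesis supplies the bound $\|v-y+\xi u\|\le 1+\varepsilon$ for \emph{every} $\xi\in\mathbb{T}$; choosing $\xi$ so as to align the phase of $q_{0}(u)$ lets one bound the modulus $|q_{0}(u)|$ by $3\varepsilon$, replacing the two real estimates $q_{0}(\pm u)$ of the real argument, and forcing $q_{0}\in H$. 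The reverse direction (6)$\Rightarrow$(5) follows from the hereditary/additive description of $L$-ideals in Lemma \ref{Lemma:complex-hereditary} by the same perturbation reasoning. With (1)$\Leftrightarrow$(6), the lifting cycle among (1),(2),(3), the selection-theorem link to (4), and the equivalence (5)$\Leftrightarrow$(6) all in hand, the six conditions are equivalent.
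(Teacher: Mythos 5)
Your proposal is correct and follows exactly the route the paper intends: the paper's own ``proof'' of this proposition is the single remark that it is the complex analog of Proposition \ref{Proposition:characterize-biface} and is obtained by the same argument with Lazar's selection theorem \cite[Theorem 2.2]{lazar_banach_1971} replaced by Olsen's \cite[Theorem 4.2]{olsen_edwards_1976}, together with the identification of closed circled faces with intersections of w*-closed $L$-ideals with the ball via Lemma \ref{Lemma:complex-hereditary} and \cite[Proposition 2.1]{ellis_facial_1981}. You have correctly located both the one genuine substitution (Olsen for Lazar in (6)$\Rightarrow$(4)) and the one place where the complex structure changes the estimates (using the freedom of $\xi\in\mathbb{T}$ in (5) to control the modulus $|q_{0}(u)|$ rather than the two signed values $q_{0}(\pm u)$).
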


\begin{remark}
Similarly as for real Banach spaces, the equivalence of (1)--(3) in
Proposition \ref{Proposition:characterize-circled-face} holds without the
assumption that $Z,X$ are Lindenstrauss spaces; see Remark \ref%
{Remark:characterize-biface}. Furthermore if $H$ is a closed circled face of
a metrizable Effros simplex $K$, then the restriction mapping $A_{\mathbb{T}%
}\left( K\right) \rightarrow A_{\mathbb{T}}(H)$, $f\mapsto f|_{H}$ is
automatically a complete quotient mapping by \cite[Theorem 4.2]%
{olsen_edwards_1976}.
\end{remark}

As in the real case, we define a quotient mapping $P:Z\rightarrow X$ to be 
\emph{facial quotient }if it satisfies any of the equivalent conditions of
Proposition \ref{Proposition:characterize-circled-face}. As in the real
case, one can deduce the complex analog of Theorem \ref{Theorem:G} and
Theorem \ref{Theorem:universal-projection-G} from\ Proposition \ref%
{Proposition:characterize-circled-face} and the general results from
Sections \ref{Section:injective}, \ref{Section:retracts}, \ref%
{Section:operators}, \ref{Section:states}.

\subsection{Function systems\label{Subsection:function-system}}

A \emph{function system} is an ordered real vector space $V$ endowed with a
distinguished element $e$ that is an \emph{Archimedean order unit }\cite[%
Chapter 2]{alfsen_compact_1971}. This means that, for every $v\in V$,

\begin{itemize}
\item there exists $n\in \mathbb{N}$ such that $-ne\leq v\leq ne$, and

\item if, for every $k\in \mathbb{N}$, $kv\leq e$, then $v\leq 0$.
\end{itemize}

An function system $V$ is naturally endowed with a norm defined by%
\begin{equation*}
\left\Vert v\right\Vert =\inf \left\{ r\in \mathbb{R}_{+}:-re\leq v\leq
re\right\} \text{.}
\end{equation*}%
\emph{We will always assume such a norm to be complete}. A \emph{state }on $%
V $ is a linear function $s$ that is \emph{positive} and \emph{unital}. This
means that $s$ maps positive elements of $V$ to positive real numbers, and
maps the order unit of $V$ to $1$. The space $S(V)$ of states of $V$ is a
w*-compact convex subset of the dual $V^{\ast }$ of $V$. A unital linear
functional on $V$ is positive if and only if it is contractive, and $v\in V$
is positive if and only if $s\left( v\right) $ is positive for every state $%
s $ of $V$. Hence in a function system one can reconstruct the order from
the norm and the order unit. Two function system $V$ and $W$ are order
isomorphic if there exists a surjective unital linear isometry from $V$ to $%
W $.

If $K$ is a compact convex set, then the space $A(K)$ of real-valued
continuous affine functions on $K$ is a function system with its usual order
structure, maximum norm, and the function constantly equal to $1$ as order
unit. Kadison's representation theorem asserts that the map from $V$ to $%
A\left( S(V)\right) $ mapping $v$ to the evaluation function at $v$ is a
surjective unital linear isometry \cite[Theorem II.1.8]{alfsen_compact_1971}%
; see also \cite{kadison_representation_1951,kadison_transformations_1965}.
Furthermore the map $K\mapsto A(K)$ is a contravariant isomorphism from the
category of compact convex sets and continuous affine maps to the category
of function systems and unital contractive linear maps. Using this
observation one can reformulate statements about compact convex sets into
statements about function systems, and vice versa. Considering \emph{complex 
}function systems rather than real function systems does not yield any
substantial difference. Indeed, any complex function system is the
complexification of a real function system, and any complex-linear unital
map between complex function systems is the complexification of a
real-linear unital linear map of the same norm.

We regard function systems as structures in the language of Banach spaces
with an additional constant symbol for the order unit. Basic tuples in this
context are linearly independent tuples whose first element is the order
unit. We consider the collection $\mathcal{I}$ of injective objects
consisting of the spaces $\ell _{n}^{\infty }$ for $n\in \mathbb{N}$ with
the $n$-tuple constantly equal to $1$ as order unit. The following lemma can
be proved as \cite[Theorem 3.5]{effros_lifting_1985} using \cite[Proposition
II.1.14]{alfsen_compact_1971}; see also Lemma \ref{Lemma:perturb-ucp}.

\begin{lemma}
\label{Lemma:perturb-function-system}Suppose that $V,W$ are function
systems, and $f:V\rightarrow W$ is a unital linear map such that $\left\Vert
f\right\Vert \leq 1+\delta $. If $W$ is injective, then there exists a
unital positive linear map $g:V\rightarrow W$ such that $\left\Vert
f-g\right\Vert \leq 2\delta $. If $W$ is arbitrary and $V$ is $n$%
-dimensional, then there exists a unital positive linear map $g:V\rightarrow
W$ such that $\left\Vert f-g\right\Vert \leq 2n\delta $.
\end{lemma}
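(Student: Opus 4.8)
The plan is to pass to the dual (state-space) picture and split the argument into an easy \emph{pointwise} correction and the genuinely hard step of making that correction \emph{simultaneous}, i.e.\ linear. First I would record the dictionary for function systems: a unital linear map is positive iff it is contractive, and by Kadison's theorem $W=A(S(W))$, so a unital linear $f\colon V\to W$ is exactly the datum of the affine w*-continuous map $\Phi\colon S(W)\to V^{*}$, $\omega\mapsto\omega\circ f$, which takes values in the hyperplane $\{\phi:\phi(e)=1\}$. Here $f$ is positive precisely when $\Phi$ lands in the state space $K:=S(V)$, and $\left\Vert f\right\Vert=\sup_{\omega}\left\Vert\Phi(\omega)\right\Vert$, so $\left\Vert f\right\Vert\le 1+\delta$ says each $\Phi(\omega)$ is unital of norm $\le 1+\delta$. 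The first step is the pointwise estimate: since $V^{*}$ is the base-normed dual of an order-unit space, a unital $\phi$ with $\left\Vert\phi\right\Vert\le 1+\delta$ decomposes (at least approximately, then pass to a w*-limit) as $\phi=\phi_{+}-\phi_{-}$ with $\left\Vert\phi\right\Vert=\left\Vert\phi_{+}\right\Vert+\left\Vert\phi_{-}\right\Vert$ and $\left\Vert\phi_{\pm}\right\Vert=\phi_{\pm}(e)$; from $\phi(e)=1$ one gets $\left\Vert\phi_{-}\right\Vert\le\delta/2$, so the state $\psi:=\phi_{+}/\left\Vert\phi_{+}\right\Vert$ satisfies $\left\Vert\phi-\psi\right\Vert\le 2\left\Vert\phi_{-}\right\Vert\le\delta$. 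Equivalently $\operatorname{dist}(\Phi(\omega),K)\le\delta$ for every $\omega$; this is the function-system incarnation of Alfsen's Prop.~II.1.14.

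The remaining task is then: perturb the affine map $\Phi$, which lands in the $\delta$-neighborhood of $K$, to an affine map $G\colon S(W)\to K$ with $\sup_{\omega}\left\Vert\Phi(\omega)-G(\omega)\right\Vert$ small, since such a $G$ is exactly the dual datum of a positive unital $g\colon V\to W$ with $\left\Vert f-g\right\Vert=\sup_{\omega}\left\Vert\Phi-G\right\Vert$. For part (1), where $W$ is injective, injectivity forces $S(W)$ to be a Bauer simplex, so $\partial_{e}S(W)$ is compact metrizable and $A(S(W))\cong C(\partial_{e}S(W))$, with continuous functions on the extreme boundary extending uniquely and affinely. Over $\partial_{e}S(W)$ I would produce a continuous selection $\omega\mapsto\psi_{\omega}\in K$ with $\left\Vert\Phi(\omega)-\psi_{\omega}\right\Vert\le\delta$: the set-valued map $\omega\mapsto K\cap\overline{B}(\Phi(\omega),\delta)$ has nonempty closed convex values and is lower semicontinuous, so Michael's selection theorem applies (working, since $V$ is separable, with the metrizable w*-topology on the relevant bounded subsets of $V^{*}$). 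Extending this selection barycentrically to $S(W)$ keeps the values in the convex set $K$ and, by averaging against the unique representing measures on the simplex, keeps the uniform distance $\le\delta\le 2\delta$; the resulting $g$ is positive, unital, and $\left\Vert f-g\right\Vert\le 2\delta$.

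For part (2), where $W$ is arbitrary but $\dim V=n$, the extreme boundary $\partial_{e}S(W)$ need not be well-behaved, so I would instead exploit that $K=S(V)$ is a compact convex body of affine dimension $\le n-1$ inside the hyperplane $\{\phi(e)=1\}\subset V^{*}$. Fixing an interior point $z$ of $K$, I would push the compact convex image $\Phi(S(W))\subset (K)_{\delta}$ toward $z$ by the affine contraction $T_{\lambda}(p)=\lambda p+(1-\lambda)z$; choosing $\lambda$ so that $T_{\lambda}$ carries the $\delta$-neighborhood of $K$ back into $K$, the composite $G:=T_{\lambda}\circ\Phi$ is affine, continuous, $K$-valued, and still unital (the contraction fixes the hyperplane), hence defines the desired positive unital $g$. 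The displacement $\left\Vert\Phi-G\right\Vert=(1-\lambda)\left\Vert\Phi-z\right\Vert$ is governed by the ratio of circumradius to inradius of the $(n-1)$-dimensional body $K$; estimating this ratio by a dimension factor (roundness of an $n$-dimensional convex body) pins down the bound $2n\delta$.

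The main obstacle is precisely this simultaneity step: upgrading the trivial pointwise statement (each $\Phi(\omega)$ is within $\delta$ of a state) to one \emph{linear} positive unital map. The pointwise correction $\phi\mapsto\phi_{+}/\left\Vert\phi_{+}\right\Vert$ is neither affine nor continuous where the Jordan decomposition degenerates, so it cannot be applied fiberwise. Part (1) circumvents this with injectivity (the Bauer-simplex structure of $S(W)$ plus a continuous selection), and part (2) circumvents it with the finite dimension of $V$ (a single global affine contraction of the state-space body); these are exactly the two hypotheses in the statement, and each is what the cited Effros--Haagerup argument supplies in its setting. I expect the continuity bookkeeping in the selection step---reconciling the norm and w*-topologies on $V^{*}$---to be the most delicate point of part (1), and the extraction of the sharp dimension constant to be where the stated factor $2n$ is actually fixed in part (2).
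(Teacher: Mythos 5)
Your dual reformulation and the pointwise estimate are correct: in the base-normed dual of an order-unit space the Jordan decomposition of a unital $\phi$ with $\left\Vert \phi\right\Vert\le 1+\delta$ gives $\left\Vert \phi_{-}\right\Vert\le\delta/2$, hence $\mathrm{dist}(\omega\circ f,S(V))\le\delta$ for every $\omega$. Your part (2) is also correct, and it takes a genuinely different route from the one the paper indicates (the paper's template is the operator-system argument of Lemma \ref{Lemma:perturb-ucp}: dominate the negative part of $f$ by a positive \emph{functional} of norm at most $n\delta$ and then renormalize the unit). Your contraction argument does deliver the constant: the unit ball of the dual norm in the direction space of the hyperplane $\{\phi(e)=1\}$ is $\tfrac12(K-K)$ with $K=S(V)$, so the circumradius of $K$ about any of its points is at most $2$, while the Minkowski--Radon centroid estimate $-(K-z)\subset(n-1)(K-z)$ gives $K-K\subset n(K-z)$ and hence inradius at least $2/n$ at the centroid $z$; taking $1-\lambda=n\delta/(2+n\delta)$ yields $\left\Vert f-g\right\Vert\le\tfrac{n\delta}{2}(2+\delta)\le 2n\delta$ for $\delta\le 2$, and the case $\delta>2$ is trivial. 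This is an attractive, purely convex-geometric alternative to the functional-analytic domination argument.

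Part (1), however, has a genuine gap at exactly the point you flag and then wave through. Michael's theorem requires lower semicontinuity of $\omega\mapsto S(V)\cap\overline{B}(\Phi(\omega),\delta)$, and you assert it without proof. The obstruction is structural, not bookkeeping: $\Phi(\omega)=\omega\circ f$ is only w*-continuous in $\omega\in\partial_{e}S(W)$, while the constraint set is a \emph{norm} ball. Given $\psi_{0}\in S(V)\cap\overline{B}(\Phi(\omega_{0}),\delta)$ and $\omega\to\omega_{0}$, the translate $\psi_{0}+\Phi(\omega)-\Phi(\omega_{0})$ keeps the norm constraint and converges w* to $\psi_{0}$ but need not be a state, whereas a genuine element of $S(V)\cap\overline{B}(\Phi(\omega),\delta)$ is only known to exist, with no control on its w*-distance to $\psi_{0}$; since $\left\Vert\Phi(\omega)-\Phi(\omega_{0})\right\Vert$ need not tend to $0$, neither of the standard mechanisms for intersecting lower semicontinuous maps applies, and this is precisely the infinite-dimensional case that part (1) must cover (note also that the lemma does not assume $V$ separable, which your metrizability reduction needs). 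The intended argument avoids selections altogether: it is the function-system analogue of the Wittstock decomposition step in the proof of Lemma \ref{Lemma:perturb-ucp}, namely a \emph{global} decomposition $f=f^{+}-f^{-}$ into positive maps with $\left\Vert f^{+}+f^{-}\right\Vert\le\left\Vert f\right\Vert$, available because the injective target is a monotone complete $C(X)$ (this is what \cite[Proposition II.1.14]{alfsen_compact_1971} and \cite[Theorem 3.5]{effros_lifting_1985} supply). Unitality then forces $\left\Vert f^{-}\right\Vert=\left\Vert f^{-}(e)\right\Vert\le\delta/2$, and renormalizing $f^{+}=f+f^{-}$ by $f^{+}(e)$ gives a unital positive $g$ with $\left\Vert g-f\right\Vert\le 2\delta$. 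Either prove the lower semicontinuity you need or replace the selection step by this decomposition.
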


It follows from Lemma \ref{Lemma:perturb-function-system} and the discussion
above that the all the conditions of Section \ref{Section:injective} are met
with $\varpi (\delta )=2\delta $. The following statement collects together
classical results from the theory of compact convex sets; see \cite%
{alfsen_compact_1971,jellett_homomorphisms_1968}.

\begin{fact}
\label{Fact:Lindenstrauss-system}Suppose that $V$ is a separable function
system. The following conditions are equivalent:

\begin{enumerate}
\item $V$ is a Lindenstrauss space;

\item $V$ is approximately injective as in Definition \ref%
{Definition:approx-inj},

\item $V$ is an $\mathcal{I}$-structure as in Definition \ref%
{Definition:I-structure},

\item $V$ is a rigid $\mathcal{I}$-structure as in Definition \ref%
{Definition:I-structure},

\item $V$ is the direct limit of copies of $\ell _{n}^{\infty }$ for $n\in 
\mathbb{N}$ with unital linear isometries as connective maps,

\item the state space of $V$ is a Choquet simplex.
\end{enumerate}
\end{fact}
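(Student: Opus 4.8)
The plan is to establish the cycle of implications (5)$\Rightarrow$(4)$\Rightarrow$(3)$\Rightarrow$(2)$\Rightarrow$(1)$\Rightarrow$(6)$\Rightarrow$(5), letting the general framework dispose of the "framework" conditions (2)--(5) and bringing in classical convexity theory only to connect them with the Banach-space condition (1) and the geometric condition (6). First I would observe that the implications (5)$\Rightarrow$(4)$\Rightarrow$(3)$\Rightarrow$(2) are immediate from the general results of Subsection~\ref{Subsection:I-structure}, which apply verbatim once one has checked---as was done above via Lemma~\ref{Lemma:perturb-function-system}, with $\varpi(\delta)=2\delta$---that function systems with $\mathcal{I}=\{\ell_n^\infty\}$ satisfy all the hypotheses of Section~\ref{Section:injective}. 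Indeed, a direct limit of elements of $\mathcal{I}$ with unital isometric embeddings as connective maps is a rigid $\mathcal{I}$-structure, every rigid $\mathcal{I}$-structure is an $\mathcal{I}$-structure, and every $\mathcal{I}$-structure is approximately injective. Thus only the passages to and from the classical conditions (1) and (6) remain.

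For (2)$\Rightarrow$(1) the plan is to invoke Proposition~\ref{Proposition:nuclear}, by which approximate injectivity is equivalent to $\mathcal{I}$-nuclearity: the identity of $V$ is the pointwise limit of unital positive maps factoring through the spaces $\ell_n^\infty$. This is exactly the unital positive approximation property characterizing nuclearity of the minimal operator system $A(S(V))$, which in turn is equivalent to $V$ being a Lindenstrauss space by the classical characterization recalled in the introduction \cite[\S 8.6.4]{blecher_operator_2004}. The equivalence (1)$\Leftrightarrow$(6) is then the classical identification of simplex spaces with $L^1$-preduals among spaces of continuous affine functions, namely that $A(K)$ is a Lindenstrauss space if and only if $K$ is a Choquet simplex \cite{alfsen_compact_1971,lindenstrauss_poulsen_1978}.

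Finally, to close the cycle I would prove (6)$\Rightarrow$(5) (equivalently (1)$\Rightarrow$(5)) by appealing to the Lazar--Lindenstrauss representation of a separable simplex space as the direct limit of spaces $\ell_{n_k}^\infty$ with unital isometric connective maps \cite{lazar_banach_1971}. The hard part will be the bookkeeping with the distinguished order unit throughout: the objects of $\mathcal{I}$ carry their canonical unit and all morphisms are required to be unital positive maps, so one cannot simply transport the purely Banach-space representation of Fact~\ref{Fact:Lindenstrauss}. The tool that bridges the gap is again Lemma~\ref{Lemma:perturb-function-system}, which upgrades an approximately unital near-isometry into a genuine unital positive map with controlled error; applying it dimension by dimension in the separable inductive construction produces the required unital isometric connective maps and completes the chain of equivalences.
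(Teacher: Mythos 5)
Your outline is correct and is essentially the argument the paper intends: the statement is given as a \emph{Fact} with no written proof, only citations to the classical literature, and your decomposition --- (5)$\Rightarrow$(4)$\Rightarrow$(3)$\Rightarrow$(2) from the general observations of Subsection \ref{Subsection:I-structure}, (2)$\Rightarrow$(1) via $\mathcal{I}$-nuclearity (Proposition \ref{Proposition:nuclear}) and the identification of the resulting approximation property with the Lindenstrauss property, (1)$\Leftrightarrow$(6) from the classical simplex/$L^{1}$-predual correspondence, and (6)$\Rightarrow$(5) from the Lazar--Lindenstrauss representation --- supplies exactly the missing assembly. Your identification of Lemma \ref{Lemma:perturb-function-system} as the tool that keeps the connective maps unital in the inductive representation is the one genuinely non-routine point, and it is handled correctly.
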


We call a function system satisfying the equivalent conditions of Fact \ref%
{Fact:Lindenstrauss-system} above a \emph{simplex space}. All the function
systems below are assumed to be separable, and all the compact convex sets
are assumed to be metrizable. One can conclude from the general results of
Section \ref{Section:general} that finite-dimensional function systems form
a Fra\"{\i}ss\'{e} class. Let us denote by $A(\mathbb{P})$ the corresponding
limit, and by $\mathbb{P}$ the state space of $A(\mathbb{P})$. We will show
below that $\mathbb{P}$ is the unique metrizable Choquet simplex with dense
extreme boundary.

Suppose that $A(K)$ and $A(F)$ are function systems with state spaces $K$
and $F$ respectively. We let $S\left( A(K),A(F)\right) $ be the space of
unital positive linear maps $\phi :A(K)\rightarrow A(F)$ endowed with the
topology of pointwise convergence. Let also $A\left( F,K\right) $ be the
space of continuous affine functions $f:F\rightarrow K$ endowed with the
compact open topology and its natural convex structure. The assignment $\phi
\mapsto \phi ^{\dagger }$ where $\phi ^{\dagger }s=s\circ \phi $ is a
homeomorphism from $S\left( A(K),A(F)\right) $ onto $A\left( F,K\right) $.

We say that a function system $A$ has the metric approximation property if
it has such a property as a Banach space. In view of Lemma \ref%
{Lemma:perturb-function-system} this is equivalent to the assertion that the
identity map of $A$ is the pointwise limit of finite rank unital positive
linear maps.

\begin{proposition}
\label{Proposition:characterize-face}Suppose that $V,W$ are separable
simplex spaces, and $P:V\rightarrow W$ is unital quotient mapping. Let $K$
be the state space of $V$, $H$ be the image under $P^{\dagger }$ of the
state space of $W$, and $N$ be the kernel of $P$. The following statements
are equivalent:

\begin{enumerate}
\item $N$ is an $M$-ideal of $X$

\item whenever $\varepsilon >0$, $E_{0}\subset E_{1}$ are finite-dimensional
function systems, $g:E_{1}\rightarrow W$ is a unital positive linear map,
and $f:E_{0}\rightarrow V$ is a unital linear isometry such that $P\circ
f=g|_{E_{0}}$, then there exists a unital positive linear map $\widehat{g}%
:E_{1}\rightarrow V$ such that $P\circ \widehat{g}=g$ and $\left\Vert 
\widehat{g}|_{E_{0}}-f\right\Vert \leq \varepsilon $;

\item whenever $\varepsilon >0$, $A$ is a separable function system with the
metric approximation property, $E$ is a finite-dimensional subspace of $A$, $%
f:E\rightarrow V$ and $g:A\rightarrow W$ are unital positive linear maps
with $\left\Vert P\circ f-g|_{E}\right\Vert <\varepsilon $, then there
exists a unital positive linear map $\widehat{g}:A\rightarrow V$ such that $%
P\circ \widehat{g}=g$ and $\left\Vert \widehat{g}|_{E}-f\right\Vert
<3\varepsilon $;

\item if $\varepsilon \geq 0$ and $E\subset V$ is a subsystem such that $%
\left\Vert x\right\Vert \leq \left( 1+\varepsilon \right) \left\Vert
P(x)\right\Vert $ for every $x\in E$, then there exists a linear contraction 
$\eta :W\rightarrow V$ such that $P\circ \eta $ is the identity map of $W$
and $\left\Vert \eta \circ P|_{E}-id_{E}\right\Vert \leq 2\varepsilon $.

\item for any $\varepsilon >0$, $u\in N$ such that $\left\Vert u\right\Vert
=1$, there exists an $v$ of $V$ such that $0\leq v\leq 1$, $\left\Vert
P\left( v\right) \right\Vert \leq \varepsilon $, and $v\geq u-\varepsilon $;

\item $H$ is closed face of $K$.
\end{enumerate}
\end{proposition}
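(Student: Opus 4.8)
The plan is to prove Proposition \ref{Proposition:characterize-face} by reducing it to the already-established real Banach space version, Proposition \ref{Proposition:characterize-biface}, exploiting the tight relationship between function systems and the Banach spaces underlying them. The key structural observation is that a unital quotient mapping $P:V\rightarrow W$ between simplex spaces is in particular a quotient mapping of Lindenstrauss spaces, so Proposition \ref{Proposition:characterize-biface} applies verbatim to give the equivalence of the Banach-space conditions. What must be checked is that the \emph{function system} refinements---unitality of the maps involved, positivity, and the passage between faces of $K$ and bifaces of $\mathrm{Ball}(V^{\ast})$---are compatible with those equivalences.

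\emph{First} I would record the dictionary. Since $V,W$ are simplex spaces (hence Lindenstrauss spaces by Fact \ref{Fact:Lindenstrauss-system}), the state space $K=S(V)$ sits inside the absolutely convex dual ball $\mathrm{Ball}(V^{\ast})$, and a subset $H$ of $K$ is a closed face if and only if its absolutely convex hull $\widehat{H}$ is a closed biface of $\mathrm{Ball}(V^{\ast})$; this is the content of the references to Gleit \cite{gleit_note_1972} and Alfsen--Effros \cite{alfsen_structure_1972-2} already cited before Proposition \ref{Proposition:characterize-biface}. Under the identification $N^{\bot}=\mathrm{span}(\widehat{H})$, the condition that $N$ is an $M$-ideal of $V$ (condition (1)) translates simultaneously into ``$H$ is a closed face of $K$'' (condition (6)) via the commutative case of the biface characterization. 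This gives $(1)\Leftrightarrow(6)$ directly.

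\emph{Next}, for the equivalences $(1)\Leftrightarrow(2)\Leftrightarrow(3)\Leftrightarrow(4)\Leftrightarrow(5)$ I would run the proof of Proposition \ref{Proposition:characterize-biface} in parallel, inserting unitality and positivity where the bare Banach statement has only contractions and isometries. The crucial input is Lemma \ref{Lemma:perturb-function-system}: any \emph{unital} linear map that is nearly contractive can be corrected to a \emph{unital positive} one at controlled cost (factor $2$ for injective codomain, factor $2n$ in general), and similarly a near-isometry that is unital is a near unital isometry. This lemma is exactly what lets the finite-dimensional lifting arguments (the approximate extension of $\widehat{g}$ in (2), (3) and the local section $\eta$ in (4)) stay inside the category of unital positive maps rather than escaping to mere contractions, and it accounts for the mildly different numerical constants ($3\varepsilon$ in (3) versus $6\varepsilon$, $2\varepsilon$ in (4)). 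By Remark \ref{Remark:characterize-biface}, the equivalence $(1)\Leftrightarrow(2)\Leftrightarrow(3)$ of the Banach version holds without the Lindenstrauss hypothesis, so those implications transfer with only cosmetic bookkeeping; the implications involving (4), (5), (6) genuinely use that $V,W$ are simplex spaces, precisely as in the biface proof.

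\emph{The main obstacle} I anticipate is the careful tracking of the order structure through the local lifting step---specifically, verifying that when one perturbs a unital near-contraction produced by the $M$-ideal lifting property into an honest unital positive map via Lemma \ref{Lemma:perturb-function-system}, the perturbation respects the constraint $P\circ\widehat{g}=g$ exactly (not just approximately) while keeping the restriction close to $f$. In the Banach setting one argues entirely with norms; here one must simultaneously control the norm estimate and preserve the quotient relation on the nose, which requires choosing the correction inside $\ker(P)=N$ and checking that positivity survives. This is the analog of the ``$(2)\Rightarrow(1)$'' and ``$(5)\Rightarrow(6)$'' arguments in Proposition \ref{Proposition:characterize-biface}, where one builds an auxiliary finite-dimensional function system $F$ from $E\oplus\mathbb{R}$ and applies the lifting hypothesis; replicating that construction with the order unit of $F$ chosen so that $g$ remains unital is the technically delicate point, but it is routine once one has Lemma \ref{Lemma:perturb-function-system} in hand.
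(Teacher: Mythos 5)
Your overall strategy---run the proof of Proposition \ref{Proposition:characterize-biface} in parallel in the ordered category, with Lemma \ref{Lemma:perturb-function-system} mediating between contractions and unital positive maps, the $E\oplus\mathbb{R}$ construction for $(2)\Rightarrow(1)$, and the face/biface dictionary for $(1)\Leftrightarrow(6)$---is the right one and matches the paper in spirit. But the specific mechanism you propose for the lifting conditions has a genuine gap, and it is exactly at the point you flag as ``routine.'' If you produce a contractive lifting $\widehat{g}$ with $P\circ\widehat{g}=g$ from the Banach-space statement and then try to correct it to a unital positive map via Lemma \ref{Lemma:perturb-function-system}, the correction has the form $x\mapsto\widehat{g}(x)+\theta(x)\,1_V+\tau(x)(\,\cdot\,)$, i.e.\ it adds multiples of the order unit; since $P$ is unital, $1_V\notin N$, so the correction \emph{cannot} be chosen inside $\ker(P)$ and the exact relation $P\circ\widehat{g}=g$ is destroyed. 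Two further symptoms that the transfer cannot work as stated: the constant in condition (3) is $3\varepsilon$, \emph{better} than the $6\varepsilon$ of the Banach version, so no perturbation of the Banach lifting can yield it; and for infinite-dimensional $A$ in (3) the general-codomain bound $2n\delta$ of Lemma \ref{Lemma:perturb-function-system} is useless. The paper avoids all of this by never perturbing the lifting into $V$: for $(6)\Rightarrow(3)$ it reruns the Choi--Effros lifting argument natively in the ordered category (using \cite[Proposition 2.2]{choi_completely_1976} as the base case, which is where the better constant comes from), and for $(6)\Rightarrow(4)$ it perturbs functionals $h_0$ on $W$ into states $h_1$ of $W$ and only then pulls back along $P^{\dagger}$, so membership in $H=P^{\dagger}S(W)$ is automatic and no correction in $N$ is ever required.

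A second, smaller gap: condition (5) here ($0\le v\le 1$, $\|P(v)\|\le\varepsilon$, $v\ge u-\varepsilon$) is a genuinely order-theoretic statement with no counterpart in Proposition \ref{Proposition:characterize-biface} (whose (5) reads $\|v-y\pm u\|\le 1+\varepsilon$), so there is no ``parallel'' Banach argument to adapt; the paper proves $(5)\Leftrightarrow(6)$ by a direct computation with states in one direction and Lazar's selection theorem \cite[Corollary 3.4]{lazar_spaces_1968} in the other, and your plan is silent on this. Finally, your route to $(1)\Leftrightarrow(6)$ via bifaces is workable but needs two auxiliary facts you do not state: that the absolutely convex hull of $P^{\dagger}S(W)$ equals $P^{\dagger}\mathrm{Ball}(W^{\ast})$ (the Jordan decomposition $\|\phi\|=\|\phi^{+}\|+\|\phi^{-}\|$ in the dual of a simplex space), and that a closed subset of $S(V)$ is a face of $S(V)$ iff its absolutely convex hull is a biface of $\mathrm{Ball}(V^{\ast})$; the paper instead invokes the split-face results \cite[Corollary 5.9, Proposition 5.10]{alfsen_structure_1972-2} together with the fact that closed faces of Choquet simplices are split.
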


\begin{proof}
In the proof we identify $V$ with $A(K)$ and $W$ with $A(H)$. Under these
identifications $P$ is just the restriction mapping $A\left( K\right)
\rightarrow A(H)$, $f\mapsto f|_{H}$. Observe that in a Choquet simplex
every closed face is a split face \cite[Theorem II.6.22]{alfsen_compact_1971}%
. The equivalence of (6) and (1) thus follows from \cite[Corollary 5.9,
Proposition 5.10]{alfsen_structure_1972-2}. The implication (6)$\Rightarrow $%
(3) can be proved as \cite[Theorem 2.6]{choi_lifting_1977} using \cite[%
Proposition 2.2]{choi_completely_1976} instead of \cite[Lemma 2.1]%
{choi_lifting_1977}. The implication (2)$\Rightarrow $(5) can be proved as
(2)$\Rightarrow $(1). We prove the other nontrivial implications below.

(5)$\Rightarrow $(6) Suppose that $q_{0},q_{1}\in K$ and $p\in H$ are such
that $\left( q_{0}+q_{1}\right) /2=p$. We want to prove that $q_{0}\in H$.
Suppose that $u$ is an element of $N$ of norm $1$ and $\varepsilon >0$. By
assumption there exists an element $v$ of $V$ such that $0\leq v\leq 1$, $%
\left\Vert P\left( v\right) \right\Vert \leq \varepsilon /2$, and $v\geq
u-\varepsilon /2$. Then we have%
\begin{equation*}
q_{0}\left( v\right) \leq \varepsilon -q_{1}\left( v\right) \leq \varepsilon
/2
\end{equation*}%
and%
\begin{equation*}
q_{0}\left( u\right) =q_{0}\left( v\right) +q_{0}\left( u-v\right) \leq
\varepsilon \text{.}
\end{equation*}

(6)$\Rightarrow $(5) Consider the function%
\begin{equation*}
\varphi :k\mapsto \left\{ t\in \mathbb{R}:\max \left\{ k\left( u\right)
,0\right\} \leq t\leq 1\right\}
\end{equation*}%
and observe that it satisfies the hypothesis of \cite[Corollary 3.4]%
{lazar_spaces_1968}, and $0\in \varphi \left( k\right) $ for every $k\in H$.
It follows that there exists $v\in N$ such that $k\left( v\right) \in
\varphi \left( k\right) $ for every $k\in K$.

(6)$\Rightarrow $(4): Suppose that $E\subset V$ is a subsystem such that $%
\left\Vert P(x)\right\Vert \geq \left( 1-\varepsilon \right) \left\Vert
x\right\Vert $ for every $x\in E$. Let $k$ be an element of $K$. We observe
that there exists $h\in H$ such that $\left\Vert \left( k-h\right)
|_{E}\right\Vert \leq 2\varepsilon $. Define the map $h_{0}:P\left[ E\right]
\rightarrow \mathbb{R}$ by $h_{0}\left( P\left( e\right) \right) =k\left(
e\right) $. Observe that by assumption $h_{0}$ is a well-defined unital
linear functional such that $\left\Vert h_{0}\right\Vert \leq 1+\varepsilon $%
. Therefore by Lemma \ref{Lemma:perturb-function-system} there exists a
state $h_{1}$ of $W$ such that $\left\Vert h_{0}-h_{1}\right\Vert \leq
2\varepsilon $. One can then define $h:=h_{1}\circ P\in H$ and observe that $%
\left\Vert \left( h-k\right) |_{E}\right\Vert \leq 2\varepsilon $. Consider
the function defined by%
\begin{equation*}
\varphi :k\mapsto \left\{ h\in H:\left\Vert \left( k-h\right)
|_{E}\right\Vert \leq 2\varepsilon \right\} \text{.}
\end{equation*}%
Observe that $\varphi $ satisfies the assumptions of \cite[Corollary 3.4]%
{lazar_spaces_1968}. Hence there exists a continuous affine function $%
Q:K\rightarrow H$ such that $Q|_{H}$ is the identity map of $H$ and $%
\left\Vert \left( Q\left( k\right) -k\right) |_{E}\right\Vert \leq
\varepsilon $ for every $k\in K$. One can thus define $\eta :A(H)\rightarrow
A(K)$ by $\eta \mapsto \eta \circ Q$.

(2)$\Rightarrow $(1) Fix $y^{(1)},y^{(2)},y^{(3)}\in \mathrm{Ball}\left(
N\right) $, $x\in \mathrm{Ball}(X)$, and $\varepsilon >0$. By the
equivalence (i)$\Leftrightarrow $(iv) in \cite[Theorem 2.2]%
{harmand_M-ideals_1993} it is enough to prove that there exists $y\in 
\mathrm{Ball}\left( N\right) $ such that $\left\Vert x+y^{(\ell
)}-y\right\Vert \leq 1+\varepsilon $ for every $\ell \in \left\{
1,2,3\right\} $. Let $E$ be the linear span of $\left\{
y^{(1)},y^{(2)},y^{(3)},x,1\right\} $ inside $Z$. Consider the function
system obtained from $F\oplus \mathbb{R}$ and the collection of linear
functions $\left( z,\alpha \right) \mapsto s(z)+t\alpha $ where $t\in \left[
-1,1\right] $ and $s$ is a state of $F$ such that $\left\vert s\left(
x+y^{(\ell )}\right) -t\right\vert \leq 1$ for every $\ell \in \left\{
1,2,3\right\} $. Define also the linear map $g:F\rightarrow X$ by $\left(
z,\alpha \right) \mapsto P(z)$. Observe that the inclusion $E\subset F$ is a
unital linear isometry while $g$ is a unital positive linear map such that $%
g|_{E}=P$. By assumption there exists a unital positive linear map $\widehat{%
g}:F\rightarrow Z$ such that $P\circ \widehat{g}=g$ and $\left\Vert \widehat{%
g}|_{E}-\iota _{E}\right\Vert \leq \varepsilon $, where $\iota
_{E}:E\rightarrow Z$ is the inclusion map. Setting $y:=\widehat{g}\left(
0,1\right) $ concludes the proof.
\end{proof}

\begin{remark}
As in Proposition \ref{Proposition:characterize-face} and Proposition \ref%
{Proposition:characterize-biface}, the equivalence of (1)--(3) in
Proposition \ref{Proposition:characterize-face} holds for arbitrary
separable function systems $V,W$. If $F$ is a closed face of a metrizable
Choquet simplex $K$, then the function $A\left( K\right) \rightarrow A\left(
F\right) $, $f\mapsto f|_{F}$ is automatically a quotient mapping by \cite%
{lazar_spaces_1968}.
\end{remark}

We call a unital quotient mapping $P:A(K)\rightarrow A(F)$ between simplex
spaces satisfying the equivalent conditions of Proposition \ref%
{Proposition:characterize-face} $\emph{unital}$ \emph{facial quotient}. A
unital facial quotient $P:A(K)\rightarrow A(F)$ is \emph{nontrivial }if it
not an order isomorphism or, equivalently, $P^{\dag }F$ is a proper face of $%
K$.

Suppose that $F$ is a Choquet simplex. It follows from Proposition \ref%
{Proposition:characterize-face} that the universal positive linear map $%
\Omega _{A(\mathbb{P})}^{A\left( F\right) }:A(\mathbb{P})\rightarrow A\left(
F\right) $ from Subsection \ref{Subsection:universal-state} is a unital
facial quotient mapping. In particular when $K$ is the trivial simplex one
obtains an extreme point $\Omega _{A(\mathbb{P})}^{\mathbb{R}}:A(\mathbb{P}%
)\rightarrow \mathbb{R}$ of the state space $\mathbb{P}$ of $A(\mathbb{P})$.
Since $\Omega _{A(\mathbb{P})}^{\mathbb{R}}$ has a dense $G_{\delta }$ orbit
in $\mathbb{P}$, we conclude that $\mathbb{P}$ has dense extreme boundary.
Conversely assuming that $S$ is a metrizable Choquet simplex with dense
extreme boundary, one can prove that $A\left( S\right) $ is unitally
isometrically isomorphic to $A(\mathbb{P})$ arguing as in Proposition \ref%
{Proposition:dense-Lusky}. Thus $\mathbb{P}$ is the unique metrizable
Choquet simplex with dense extreme boundary.

Suppose now that $F$ is a Choquet simplex, and $\Omega _{A(\mathbb{P}%
)}^{A(F)}:A(\mathbb{P})\rightarrow A(F)$ is the generic unital positive
linear map obtained from the general results of Subsection \ref%
{Subsection:universal-state}. The proof of Proposition \ref%
{Proposition:characterize-operator-G} can be adapted in a straightforward
way to show that a unital quotient mapping $P:A(\mathbb{P})\rightarrow A(F)$
is a unital facial quotient if and only if it belongs to the $\mathrm{Aut}%
\left( A(\mathbb{P})\right) $-orbit of $\Omega _{A(\mathbb{P})}^{A(F)}$, if
and only if the image of $F$ under $P^{\dagger }$ is a closed proper face of 
$\mathbb{P}$. It follows that any metrizable Choquet simplex is affinely
homeomorphic to a closed proper face of $\mathbb{P}$ \cite[Theorem 2.5]%
{lindenstrauss_poulsen_1978}, and any affine homeomorphism between proper
closed faces of $\mathbb{P}$ extends to an affine homeomorphism of $\mathbb{P%
}$. Furthermore if $F$ is a closed proper face of $\mathbb{P}$, then $%
\left\{ f\in A(\mathbb{P}):f\text{ is constant on }F\right\} $ is a function
system order isomorphic to $A(\mathbb{P})$.

Suppose that $K,K_{0}$ are Choquet simplices, $\varphi :K\rightarrow K_{0}$
is a surjective continuous affine map, $F$ is a proper closed face of $K$,
and $F^{\prime }$ is the complementary face of $F$; see \cite[Section 6]%
{alfsen_compact_1971}. It follows from Edwards' separation theorem \cite[%
Theorem II.3.10]{alfsen_compact_1971} and \cite[Proposition II.6.5]%
{alfsen_compact_1971} that $F^{\prime }$ is the set of points $s\in K$ such
that for any $\varepsilon >0$ there exists $h\in A(K)$ such that $0\leq
h\leq 1$, $h|_{F}$ is constantly equal to $1$, and $h\left( s\right)
<\varepsilon $. It follows that the image of $F$ under $\varphi $ is a
closed face $F_{0}$ of $K_{0}$, and the image of $F^{\prime }$ under $%
\varphi $ is the complementary face of $F_{0}$.

\begin{proposition}[{\protect\cite[Corollary 2.4]{lindenstrauss_poulsen_1978}%
}]
Suppose that $F_{0},F_{1}$ are proper closed faces of $\mathbb{P}$. Endow
the complementary face $F_{0}^{\prime }$ of $F_{0}$ with the w*-topology
induced by the elements $a$ of $A(\mathbb{P})$ such that $a|_{F_{0}}$ is
constant, and similarly for $F_{1}^{\prime }$. Then $F_{0}^{\prime }$ and $%
F_{1}^{\prime }$ are affinely homeomorphic.
\end{proposition}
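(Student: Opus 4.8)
The plan is to prove that each complementary face $F_{i}^{\prime}$, with the indicated topology, is a nontrivial metrizable Choquet simplex with dense extreme boundary; since $\mathbb{P}$ is the unique such simplex (Theorem \ref{Theorem:A(P)}(2)), this yields $F_{0}^{\prime}\cong\mathbb{P}\cong F_{1}^{\prime}$ and hence the claim. Fix therefore a proper closed face $F$ of $\mathbb{P}$ and write $A_{F}=\{a\in A(\mathbb{P}):a|_{F}\text{ is constant}\}$, so that the topology on $F^{\prime}$ is the one induced by $A_{F}$. Recall that $F$ is automatically a split face \cite[Theorem II.6.22]{alfsen_compact_1971}, and that by the remark following Proposition \ref{Proposition:characterize-face} the function system $A_{F}$ is unitally order isomorphic to $A(\mathbb{P})$, so that $S(A_{F})$ is a metrizable Choquet simplex affinely homeomorphic to $\mathbb{P}$. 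The restriction map $e\colon\mathbb{P}\to S(A_{F})$, $s\mapsto s|_{A_{F}}$, is a continuous affine surjection (surjective because every state of $A_{F}$ extends to a state of $A(\mathbb{P})$); it collapses $F$ to the single functional $\phi_{0}$ with $\phi_{0}(a)=a(p)$ for $p\in F$, and the discussion preceding the statement (applied to $\varphi=e$) shows that $e(F)=\{\phi_{0}\}$ is a closed face of $S(A_{F})$ while $e(F^{\prime})$ is its complementary face. Moreover the $A_{F}$-topology on $F^{\prime}$ is exactly the pullback under $e$ of the w*-topology of $S(A_{F})$. Granting from the classical theory of split faces \cite[Section II.6]{alfsen_compact_1971}, \cite{lindenstrauss_poulsen_1978} that $F^{\prime}$ so topologized is a metrizable Choquet simplex whose extreme points are precisely $\partial_{e}\mathbb{P}\cap F^{\prime}$, it remains only to verify density of the extreme boundary.

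This is the conceptual heart of the argument, and it is where the defining property of $\mathbb{P}$ enters. Fix $z\in F^{\prime}$ and a basic $A_{F}$-neighborhood determined by $a_{1},\dots,a_{k}\in A_{F}$ and $\varepsilon\in(0,1)$. Since $z\in F^{\prime}$, Edwards' separation theorem in the form recalled before the statement \cite[Theorem II.3.10, Proposition II.6.5]{alfsen_compact_1971} provides $h\in A(\mathbb{P})$ with $0\le h\le 1$, $h|_{F}\equiv 1$, and $h(z)<\varepsilon/2$; note that $h\in A_{F}$. Because $\partial_{e}\mathbb{P}$ is dense in $\mathbb{P}$, we may choose $w\in\partial_{e}\mathbb{P}$ with $|a_{j}(w)-a_{j}(z)|<\varepsilon$ for $j=1,\dots,k$ and $|h(w)-h(z)|<\varepsilon/2$, so that $h(w)<\varepsilon<1$. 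Since $h|_{F}\equiv 1$, this forces $w\notin F$; and as $F$ is split we have $\partial_{e}\mathbb{P}=\partial_{e}F\sqcup\partial_{e}F^{\prime}$ with $\partial_{e}F=\partial_{e}\mathbb{P}\cap F$, whence $w\in\partial_{e}F^{\prime}=\partial_{e}\mathbb{P}\cap F^{\prime}$. Thus $w$ is an extreme point of $F^{\prime}$ lying in the prescribed neighborhood, and density follows.

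Finally $F^{\prime}$ is nontrivial: if $F^{\prime}$ were a single extreme point $z_{0}$, its complementary face would be $\overline{\mathrm{conv}}\left(\partial_{e}\mathbb{P}\setminus\{z_{0}\}\right)=\mathbb{P}$ by density of $\partial_{e}\mathbb{P}$, forcing $F=\mathbb{P}$ and contradicting properness. Hence $F^{\prime}$ is a nontrivial metrizable Choquet simplex with dense extreme boundary, so $F^{\prime}\cong\mathbb{P}$ by Theorem \ref{Theorem:A(P)}(2), and applying this to $F_{0}$ and $F_{1}$ gives the proposition. I expect the main obstacle to be the italicized input to the first paragraph, namely that $F^{\prime}$ with the $A_{F}$-topology is genuinely a \emph{compact} metrizable Choquet simplex with extreme boundary $\partial_{e}\mathbb{P}\cap F^{\prime}$ (equivalently, that $e$ restricts to an affine homeomorphism of $F^{\prime}$ onto the complementary face of $\phi_{0}$, so in particular that the functions constant on $F$ separate the points of $F^{\prime}$). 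This rests on the split-face machinery of Alfsen together with the identification $A_{F}\cong A(\mathbb{P})$, and must be invoked with care; by contrast, once this framework is in place, the density step above is comparatively soft.
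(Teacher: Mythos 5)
There is a genuine gap, and it is located exactly at the point you flag as ``the main obstacle'': the claim that $F^{\prime}$, with the $A_{F}$-topology, is a \emph{compact} metrizable Choquet simplex, so that uniqueness of the Poulsen simplex yields $F^{\prime}\cong\mathbb{P}$. This is false. Your own setup shows why: the restriction map $e\colon\mathbb{P}\to S(A_{F})$ is injective on $F^{\prime}$ and the $A_{F}$-topology is precisely the initial topology induced by $e$, so $F^{\prime}$ with this topology is homeomorphic to $e(F^{\prime})$, which is the complementary face of the single extreme point $\phi_{0}=e(F)$ inside $S(A_{F})\cong\mathbb{P}$. That complementary face contains every extreme point of $S(A_{F})$ other than $\phi_{0}$, hence is dense, and it does not contain $\phi_{0}$, hence is a proper dense (in particular non-closed) subset of a compact metrizable space. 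A continuous injection from a compact space onto it would force it to be closed, so $F^{\prime}$ cannot be compact in the $A_{F}$-topology, and in particular is not affinely homeomorphic to $\mathbb{P}$. (This is consistent with the statement itself, which only asserts $F_{0}^{\prime}\cong F_{1}^{\prime}$, not $F_{i}^{\prime}\cong\mathbb{P}$; contrast this with the biface situation for $\mathbb{L}$, where the functions \emph{vanishing} on the biface form a subspace $J\cong\mathbb{G}$ and the complementary biface maps onto the full compact ball of $J^{\ast}$.) Your density argument for $\partial_{e}\mathbb{P}\cap F^{\prime}$ is fine as far as it goes, but it cannot rescue the strategy, since the object is not a compact simplex to which the uniqueness theorem applies.

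The repair is to stop one step earlier and change the final ingredient: having identified $F_{i}^{\prime}$ (with its topology) with the complementary face of an extreme point of $S(A_{F_{i}})\cong\mathbb{P}$, one concludes not from uniqueness of $\mathbb{P}$ but from the transitivity of $\mathrm{Aut}(\mathbb{P})$ on $\partial_{e}\mathbb{P}$ (which in this paper comes from the fact that all extreme points of $\mathbb{P}$ lie in a single orbit of the generic state $\Omega_{A(\mathbb{P})}^{\mathbb{R}}$). An automorphism carrying one extreme point to another carries the complementary face of the first onto that of the second, and composing with the identifications $S(A_{F_{0}})\cong\mathbb{P}\cong S(A_{F_{1}})$ gives the desired affine homeomorphism $F_{0}^{\prime}\to F_{1}^{\prime}$. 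This is exactly the route the paper takes; your first paragraph already contains all of its ingredients except this last step.
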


\begin{proof}
Suppose that $F$ is a proper closed face of $\mathbb{P}$ and $F^{\prime }$
is the complementary face of $F$. Consider $W=\left\{ f\in A(\mathbb{P}):f%
\text{ is constant on }F\right\} $ and observe that $W$ is a function
system. Let $K$ be the state space of $W$. As observed before, $K$ is
affinely homeomorphic to $\mathbb{P}$. Denote by $\varphi :\mathbb{P}%
\rightarrow K$ the surjective continuous affine map obtained from the
inclusion $A(K)\subset A(\mathbb{P})$ by duality. The image of $F$ is a
single point $s_{F}$ of $K$. Since $F$ is a face, $s_{F}$ is an extreme
point of $K$. The map $\varphi $ is 1:1 on the complementary face $F^{\prime
}$ of $F$ by \cite[Corollary II.6.17]{alfsen_compact_1971}. It follows from
the remarks above that the image of $F^{\prime }$ under $\varphi $ is the
complementary face of $\left\{ s_{F}\right\} $ in $K$. The w*-topology on $%
F^{\prime }$ induced by the elements of $W$ makes the restriction of $%
\varphi $ to $F^{\prime }$ a homeomorphism. The conclusion now follows from
the fact that $K$ is affinely homeomorphic to $\mathbb{P}$ and that $\mathrm{%
Aut}(\mathbb{P})$ acts transitively on the extreme points of $\mathbb{P}$.
\end{proof}

Applying the criterion from Corollary \ref{Corollary:minimal} one can see
that the canonical continuous action of $\mathrm{Aut}(\mathbb{P})$ on $%
\mathbb{P}$ is minimal, recovering a result of Glasner from \cite[Theorem 5.2%
]{glasner_distal_1987}.

\begin{proposition}
The canonical action $\mathrm{Aut}(\mathbb{P})\curvearrowright \mathbb{P}$
is minimal.
\end{proposition}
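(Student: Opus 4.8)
The plan is to verify the sufficient condition for minimality isolated in Corollary~\ref{Corollary:minimal}, applied to the class of finite-dimensional function systems with $R=\mathbb{R}$. Here $\mathbb{R}$ is the injective object $\ell_1^{\infty}\in\mathcal{I}$, so it is (approximately) injective, and the space $S(A(\mathbb{P}),\mathbb{R})$ of $\mathbb{R}$-states of $A(\mathbb{P})$ is exactly the state space $\mathbb{P}$, with the $\mathrm{Aut}(A(\mathbb{P}))=\mathrm{Aut}(\mathbb{P})$-action $(\alpha,s)\mapsto s\circ\alpha^{-1}$. Thus it suffices to check: for every $A=\ell_n^{\infty}\in\mathcal{I}$, every state $s$ on $A$, and every $\varepsilon>0$, there is $B=\ell_m^{\infty}\in\mathcal{I}$ such that for every state $t$ on $B$ one can find a unital positive linear map $\phi\colon A\to B$ with $I(\phi)<\varepsilon$ and $d(t\circ\phi,s)<\varepsilon$.

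First I would translate this into elementary terms. A state on $\ell_k^{\infty}$ is a probability vector, so write $s\leftrightarrow p=(p_1,\dots,p_n)$ and $t\leftrightarrow q=(q_1,\dots,q_m)$; a unital positive map $\phi\colon\ell_n^{\infty}\to\ell_m^{\infty}$ is a row-stochastic matrix $C=(c_{ji})$, each row a probability vector on $[n]$. A direct computation shows that $t\circ\phi$ corresponds to the probability vector $q^{\top}C$, whose $i$-th entry is $\sum_j q_j c_{ji}$, and that $d(t\circ\phi,s)=\|q^{\top}C-p\|_1$. So the task becomes: fix $m$ in advance, and then for an arbitrary weight vector $q$ produce a row-stochastic $C$ that is approximately isometric (small $I(\phi)$) and whose $q$-weighted row-average approximates $p$.

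The construction I have in mind is adaptive in $q$. Choose $m$ large, say $m\ge n+\lceil 1/\varepsilon'\rceil$ for a small auxiliary parameter $\varepsilon'$. Given $q$, at most $1/\varepsilon'$ coordinates carry weight $q_j\ge\varepsilon'$, so I can select $n$ distinct ``isometry'' coordinates $j_1,\dots,j_n$ with $q_{j_i}<\varepsilon'$ and set row $j_i$ equal to the coordinate evaluation $e_i$. Since these rows reproduce the coordinates of $\ell_n^{\infty}$ exactly, $\phi$ is then a genuine unital isometric embedding, whence $I(\phi)=0$. These rows contribute the vector $a=(q_{j_1},\dots,q_{j_n})$, of total mass $\mu=\sum_i q_{j_i}<n\varepsilon'$, to $q^{\top}C$, and I set every remaining row equal to the single probability vector proportional to the nonnegative part of $p-a$. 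Because $q^{\top}C$ is automatically a probability vector, its residual mass $1-\mu$ realizes this target, and one checks $\|q^{\top}C-p\|_1\le n\varepsilon'$. Taking $\varepsilon'$ small enough completes the verification, and Corollary~\ref{Corollary:minimal} then yields minimality of $\mathrm{Aut}(\mathbb{P})\curvearrowright\mathbb{P}$.

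The point that needs the most care is that the map $\phi$ is allowed to depend on $t$, and indeed must: no single $C$ can work for all $q$, because the rows forced to be coordinate evaluations (needed for isometry) push their $q$-weight into fixed coordinates, which may be incompatible with $p$. The resolution, and the only genuinely quantitative step, is that for $m$ large the weight $q$ is necessarily spread thinly, so the isometry rows can be chosen among coordinates of negligible $q$-mass; this decouples the isometry requirement from the state-matching requirement up to an error controlled by $\varepsilon'$. I would finally double-check the two boundary situations, namely where some $p_i=0$ (so $q_{j_i}$ must be taken $<\varepsilon'$ rather than $<p_i$) and where almost all the mass of $q$ sits on the isometry coordinates (excluded, since $\mu<n\varepsilon'<1$); both are absorbed by the same smallness of $\varepsilon'$.
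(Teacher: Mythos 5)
Your proposal is correct and follows essentially the same route as the paper: both reduce to the minimality criterion of Corollary \ref{Corollary:minimal}, and both exploit the fact that for $m$ large the state $t$ on $\ell_m^\infty$ can place mass at least $\varepsilon'$ on only $1/\varepsilon'$ coordinates, so the $n$ coordinate-evaluation rows (forcing $\phi$ to be a unital isometry) can be put where $t$ is negligible while the remaining rows are filled with a vector close to $s$ (the paper uses $s$ itself; you use the normalized positive part of $p-a$, which is an immaterial variant). The only quibble is that your final estimate is $\|q^{\top}C-p\|_1\le 2\|(a-p)_+\|_1\le 2n\varepsilon'$ rather than $n\varepsilon'$, which of course changes nothing.
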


\begin{proof}
In view of Proposition \ref{Proposition:minimal} it is enough to prove that
for any $\varepsilon >0$ and $d\in \mathbb{N}$ there exists $m\in \mathbb{N}$
such that for any $s\in S\left( \ell _{d}^{\infty }\right) $ and $t\in
S\left( \ell _{m}^{\infty }\right) $ there exists a unital linear isometry $%
\phi :\ell _{d}^{\infty }\rightarrow \ell _{m}^{\infty }$ such that $%
\left\Vert t\circ \phi -s\right\Vert <\varepsilon $. Set $\eta =\frac{%
\varepsilon }{2d}$ and $m\in \mathbb{N}$ be such that $m\geq 1/\eta +d$.
Suppose that $s\in S\left( \ell _{d}^{\infty }\right) $ and $t\in S\left(
\ell _{m}^{\infty }\right) $. Then $s=\left( s_{1},\ldots ,s_{d}\right) $
can be seen as a stochastic vector of length $d$, and $t=\left( t_{1},\ldots
,t_{m}\right) $ can be seen as a stochastic vector of length $m$. Let $%
A\subset \left\{ 1,2,\ldots ,m\right\} $ be the set of $k$ such that $%
t_{k}\geq \eta $. Observe that $\left\vert A\right\vert \leq 1/\eta $. We
can assume that $A=\left\{ 1,2,\ldots ,\ell \right\} $ for some $\ell \leq
1/\eta $. Define the map $\phi :\ell _{d}^{\infty }\rightarrow \ell
_{m}^{\infty }$ by $x=\left( x_{1},\ldots ,x_{d}\right) \mapsto \left(
s(x),\ldots ,s(x),x_{1},\ldots ,x_{d}\right) $. Observe that $\phi $ is
indeed a unital linear isometry. Furthermore we have that, for $x\in \ell
_{d}^{\infty }$ such that $\left\Vert x\right\Vert \leq 1$, 
\begin{eqnarray*}
\left\vert \left( t\circ \phi \right) (x)-s(x)\right\vert &=&\left\vert
s(x)\left( t_{1}+\cdots +t_{m-d}\right) +x_{1}t_{m-d+1}+\cdots
+x_{d}t_{m}-s(x)\right\vert \\
&=&\left\vert x_{1}t_{m-d+1}+\cdots +x_{d}t_{m}-s(x)\left( t_{m-d+1}+\cdots
+t_{m}\right) \right\vert \leq 2d\eta \leq \varepsilon \text{.}
\end{eqnarray*}%
This concludes the proof.
\end{proof}

As for Banach spaces, one can conclude from uniqueness of the Fra\"{\i}ss%
\'{e} limit, the characterization of the Fra\"{\i}ss\'{e} limit, and \cite[%
Proposition 13.6]{ben_yaacov_model_2008} that the following facts, already
proved implicitly in \cite{goldbring_model-theoretic_2015}, hold: the first
order theory of $A(\mathbb{P})$ has a unique separable model and it admits
elimination of quantifiers; the group $\mathrm{Aut}(\mathbb{P})$ of affine
homeomorphisms of $\mathbb{P}$ is Roelcke precompact; $A(\mathbb{P})$ is the
unique existentially closed separable function system; the theory of $A(%
\mathbb{P})$ is the model completion of the theory of function systems; a
compact convex set $K$ is a Choquet simplex if and only if $A(K)$ is a
positively existentially closed function system.

\subsection{$p$-multinormed spaces\label{Subsection:p-multinormed}}

Fix $p\in \left[ 1,+\infty \right] $. Consider the space $B\left( \ell
^{p}\right) $ of bounded linear operators on $\ell ^{p}$, and let $\mathcal{K%
}^{p}\subset B\left( \ell ^{p}\right) $ be the space of compact operators.
Observe that if $X$ is a complex vector space, then the algebraic tensor
product $\ell ^{p}\otimes X$ has a natural left $\mathcal{K}^{p}$-module
structure. A $p$-\emph{multinormed space} is a complex vector space such
that $\ell ^{p}\otimes X$ is endowed with a norm such that $\left\Vert
\alpha x\right\Vert \leq \left\Vert \alpha \right\Vert \left\Vert
x\right\Vert $ for $\alpha \in \mathcal{K}^{p}$ and $x\in \ell ^{p}\otimes X$%
, where $\left\Vert \alpha \right\Vert $ denotes the norm of $\alpha $
regarded as an element of $B\left( \ell ^{p}\right) $. A linear map $\phi
:X\rightarrow Y$ between $p$-multinormed spaces is multicontractive if $id_{%
\mathcal{K}^{p}}\otimes \phi $ is contractive, and multi-isometric if $id_{%
\mathcal{K}^{p}}\otimes \phi $ is isometric. If $X,Y$ are $p$-multinormed
spaces, then the $\infty $-sum $X\oplus ^{\infty }Y$ is defined by
identifying isometrically $\ell ^{p}\otimes \left( X\oplus Y\right) $ with
the $\infty $-sum of $\ell ^{p}\otimes X$ and $\ell ^{p}\otimes Y$. One can
similarly define the $\infty $-sum of an arbitrary collection of $p$%
-multinormed spaces.

Multinormed spaces have been introduced and studied in \cite%
{dales_multi-norms_2012,dales_multi-normed_2012,dales_equivalence_2014}. The
generalization to $p$-multinormed spaces for arbitrary $p\in \lbrack
1,+\infty ]$ has been studied in the recent work of Dales, Laustsen,
Oikhberg, and Troitsky \cite{dales_multinormed_2015,dales_multi-norms_2016}.
Multinormed spaces correspond to the case $p=+\infty $. If $E$ is a Banach
space, we denote by $\max^{p}(E)$ the largest compatible $p$-multinorm
structure on $E$. In the following we will always assume $p\in \left(
1,+\infty \right) $. It has been recently shown by Oikhberg \cite%
{oikhberg_personal} that, if $q$ is the conjugate exponent of $p$ and $%
\left( X,\mu \right) $ is a measure space, then $\max^{p}(L^{q}\left( X,\mu
\right) )$ is an injective $p$-multinormed space. Furthermore any $p$%
-multinormed space embeds multi-isometrically into the $\infty $-sum of $p$%
-multinormed spaces of the form $\max^{p}\left( \ell _{n}^{q}\right) $.

Let $\mathcal{L}$ be the language containing binary function symbols $%
f_{\alpha ,\beta }$ for any $\alpha ,\beta \in \mathcal{K}_{0}^{p}(\mathbb{Q}%
(i))$ such that $\left\Vert \alpha \right\Vert +\left\Vert \beta \right\Vert
\leq 1$.\ Here $\mathbb{Q}(i)$ is the field of Gauss rationals, while $%
\mathcal{K}_{0}^{p}(\mathbb{Q}(i))$ denotes the space of operators whose
representative matrices with respect to the canonical basis of $\ell ^{p}$
have coefficients that belong to $\mathbb{Q}(i)$, and are all zero but
finitely many. A $p$-multinormed space $X$ can be regarded as an $\mathcal{L}
$-structure supported by the unit ball of $\ell ^{p}\otimes X$, where $%
f_{\alpha ,\beta }$ is interpreted as the function $\left( x,y\right)
\mapsto \alpha x+\beta y$. A morphism in this context is a linear
multicontraction, and an embedding is a linear multi-isometry.

We can then consider the category $\mathcal{A}$ of $p$-multinormed spaces
and multicontractive maps, and the collection $\mathcal{I}\subset \mathcal{A}
$ of $p$-multinormed spaces that are a finite $\infty $-sum of copies of $%
\max^{p}\left( \ell _{n}^{q}\right) $. If $f:X\rightarrow Y$ is a linear
multicontraction, then $I(f)\leq \delta <1$ as in Definition \ref%
{Definition:I} if and only if $\left\Vert (id_{\mathcal{K}^{p}}\otimes
f)(x)\right\Vert \geq \left\Vert x\right\Vert -\delta $ for any $x\in 
\mathcal{K}^{p}\otimes X$ of norm at most $2$, which happens if and only if $%
T$ is injective and $\left\Vert id_{\mathcal{K}^{p}}\otimes
T^{-1}\right\Vert \leq 1+\delta $. We stipulate that a finite tuple $\bar{a}$
in a $p$-multinormed space is a basic tuple if it is linearly independent.
An argument similar to the small perturbations lemma \cite[Lemma 2.13.2]%
{pisier_introduction_2003} shows that the conditions from Subsection \ref%
{Subsection:basic} are satisfied.

We can conclude that finite-dimensional $p$-multinormed spaces form a Fra%
\"{\i}ss\'{e} class. We call the corresponding limit $\mathbb{GM}^{p}$ the 
\emph{Gurarij }$p$\emph{-multinormed space}. It has been proved by Oikhberg 
\cite{oikhberg_personal} that for every $\varepsilon >0$ and $n\in \mathbb{N}
$ there exists $k\in \mathbb{N}$ (depending only on $n$ and $\varepsilon $)
with the following property: for any $n$-dimensional $p$-multinormed spaces $%
E,F$ and linear map $\phi :E\rightarrow F$, one has that $||id_{\mathcal{K}%
^{p}}\otimes \phi ||{}\leq \left( 1+\varepsilon \right)
||id_{M_{k}^{p}}\otimes \phi ||$. Here we regard $M_{k}^{p}\subset \mathcal{K%
}^{p}$ as the subspace of $x\in \mathcal{K}^{p}$ such that $p_{k}x=xp_{k}=x$%
, where $p_{k}$ is the projection onto the span of the first $k$ vectors of
the canonical basis of $\ell ^{p}$. Therefore the characterizing property of 
$\mathbb{\mathbb{\mathbb{\mathbb{G}}}M}^{p}$ given by Proposition \ref%
{Proposition:characterize-limit} is elementary, that is, it can be expressed
by formulas in the logic for metric structures. Hence $\mathbb{\mathbb{%
\mathbb{\mathbb{G}}}M}^{p}$ is the unique separable model of its first order
theory. As for the Gurarij space, one can also observe that by \cite[%
Proposition 13.6]{ben_yaacov_model_2008} the theory of $\mathbb{\mathbb{%
\mathbb{\mathbb{G}}}M}^{p}$ admits elimination of quantifiers. It follows
from this and \cite[Theorem 2.4]{ben_yaacov_weakly_2013} that the Polish
group of surjective linear multi-isometries of $\mathbb{GM}^{p}$ endowed
with the topology of pointwise convergence is Roelcke precompact; see \cite[%
Definition 2.2]{ben_yaacov_weakly_2013}. Since every $p$-multinormed space
embeds into a model of the theory of $\mathbb{GM}^{p}$, one can conclude
that the theory of $\mathbb{GM}^{p}$ is the model completion of the theory
of $p$-multinormed spaces. Finally one can observe that for a separable $p$%
-multinormed space $X$, the following assertions are equivalent:

\begin{itemize}
\item $X$ is approximately injective as in Definition \ref%
{Definition:approx-inj};

\item $X$ is multi-isometric to the range of $p$-multicontractive linear
projections on $\mathbb{GM}^{p}$;

\item the identity map of $X$ is the pointwise limit of multicontractive
maps that factor through finite $\infty $-sums of copes of $\max^{p}(\ell
_{n}^{p})$ for $n\in \mathbb{N}$;

\item $X$ is positively existentially closed in the class of $p$-multinormed
spaces.
\end{itemize}

\subsection{Operator sequence spaces\label{Subsection:sequence}}

An \emph{operator sequence space }is a $2$-multinormed space which is
moreover $2$-convex, in the sense that it satisfies 
\begin{equation*}
\left\Vert \sum_{i=1}^{n}e_{i}\otimes x_{i}+\sum_{i=n+1}^{m}e_{i}\otimes
x_{i}\right\Vert ^{2}\leq \left\Vert \sum_{i=1}^{n}e_{i}\otimes
x_{i}\right\Vert ^{2}+\left\Vert \sum_{i=n+1}^{m}e_{i}\otimes
x_{i}\right\Vert ^{2}
\end{equation*}%
for $n,m\in \mathbb{N}$, where $\left( e_{i}\right) $ is the canonical
orthonormal basis of $\ell ^{2}$. It is easy to see that such a definition
is equivalent to \cite[Definition 2.1]{lambert_operator_2004}. Let us denote
by $\mathcal{K}$ the algebra of compact operators on $B(\ell ^{2})$. A
linear map $\phi :X\rightarrow Y$ between operator sequence spaces is \emph{%
sequentially contractive }if $id_{\mathcal{K}}\otimes \phi $ is a
contraction, and a\emph{\ sequential isometry} if $id_{\mathcal{K}}\otimes
\phi $ is an isometry. Operator sequence spaces have been introduced and
studied in \cite{lambert_operatorfolgenraume_2002,lambert_operator_2004}.
They have been used in \cite{lambert_operator_2004} to shed light on the
properties of Fig\'{a}-Talamanca--Hertz algebras. A systematic study of
operator sequence spaces is presented in \cite%
{lambert_operatorfolgenraume_2002}. Every operator sequence space is
canonically endowed with a \emph{minimal operator space structure }\cite[%
Definition 3.1]{lambert_operator_2004}. We can regard operator sequence
spaces as operator spaces endowed with their minimal operator space
structure. It is proved in \cite{lambert_operatorfolgenraume_2002} that the
operator spaces arising in this way are precisely the subspaces of $\infty $%
-sums of column operator Hilbert spaces \cite[Subsection 3.4]%
{effros_operator_2000}.

We can therefore regard the category $\mathcal{A}$ of operator sequence
spaces and sequential contractions as a full subcategory of the category of
operator spaces and completely contractive maps. The collection $\mathcal{I}$
of finite $\infty $-sum of finite-dimensional column operator Hilbert spaces
is a collection of injective objects of $\mathcal{A}$ that satisfies
Conditions (1) and (2) of Subsection \ref{Subsection:injective}. Again the
notion of basic tuple is provided by independent tuples. We can therefore
conclude that finite-dimensional operator sequence spaces form a Fra\"{\i}ss%
\'{e} class. We call the corresponding limit $\mathbb{CG}$ the \emph{column
Gurarij space}. The same argument as for $p$-multinormed spaces shows that
the first order theory of $\mathbb{CG}$ has a unique separable model and it
admits elimination of quantifiers. As a consequence the Polish group of
surjective complete isometries of $\mathbb{CG}$ endowed with the topology of
pointwise convergence is Roelcke precompact. As before, $\mathbb{CG}$ is the
unique existentially closed operator sequence space, and the theory of $%
\mathbb{CG}$ is the model completion of the theory of operator sequence
spaces. For a separable operator sequence space $X$, the following
assertions are equivalent:

\begin{itemize}
\item $X$ is approximately injective in the sense of Definition \ref%
{Definition:approx-inj};

\item $X$ is sequentially isometric to the range of a sequentially
contractive projection on $\mathbb{CG}$;

\item the identity map of $X$ is the pointwise limit of multicontractive
maps that factor through finite $\infty $-sums of finite-dimensional column
operator Hilbert spaces;

\item $X$ is a nuclear operator space with its canonical minimal operator
space structure;

\item $X$ is positively existentially closed in the class of operator
sequence spaces.
\end{itemize}

\subsection{$M_{q}$-spaces\label{Subsection:Mq}}

Fix $q\in \mathbb{N}$ and let $M_{q}(\mathbb{C)}$ be the space of complex $%
q\times q$ matrices. If $X$ is a complex vector space, then the algebraic
tensor product $M_{q}(\mathbb{C)}\otimes X$ can be canonically identified
with the space $M_{q}(X)$ of $q\times q$ matrices with entries in $X$. There
is a natural bimodule action of $M_{q}(\mathbb{C)}$ on $M_{q}(\mathbb{C)}%
\otimes X$. An $M_{q}$-spaces is a complex vector space such that $M_{q}(%
\mathbb{C)}\otimes X$ is endowed with a norm satisfying%
\begin{equation*}
\left\Vert \sum_{i=1}^{n}\alpha _{i}^{\ast }x_{i}\beta _{i}\right\Vert \leq
\left\Vert \sum_{i=1}^{n}\alpha _{i}^{\ast }\alpha _{i}\right\Vert
\max_{1\leq i\leq n}\left\Vert x_{i}\right\Vert \left\Vert
\sum_{i=1}^{n}\beta _{i}^{\ast }\beta _{i}\right\Vert
\end{equation*}%
for $\alpha _{i}\in M_{q}(\mathbb{C)}$ and $x_{i}\in M_{q}(\mathbb{C)}%
\otimes X$, where the norms of complex $q\times q$ matrices are the operator
norms. Such spaces have been introduced and studied in \cite%
{lehner_mn-espaces_1997}, and subsequently used in \cite%
{oikhberg_operator_2004,oikhberg_non-commutative_2006,lupini_uniqueness_2016}%
. For $q=1$ one obtains the class of complex Banach spaces. The language for 
$M_{q}$-spaces contains function symbols for the functions $\left(
x_{1},\ldots ,x_{n}\right) \mapsto \alpha _{1}x_{1}\beta _{1}+\cdots +\alpha
_{n}x_{n}\beta _{n}$ for every $n\in \mathbb{N}$ and $\alpha _{1},\ldots
,\alpha _{n},\beta _{1},\ldots ,\beta _{n}\in M_{q}(\mathbb{Q}(i))$ such
that $\left\Vert \sum_{i=1}^{n}\alpha _{i}^{\ast }\alpha _{i}\right\Vert
\leq 1$ and $\left\Vert \sum_{i=1}^{n}\beta _{i}^{\ast }\beta
_{i}\right\Vert \leq 1$.

A linear map $\phi :X\rightarrow Y$ between $M_{q}$-spaces is $q$%
-contractive if $id_{M_{q}}\otimes \phi $ is contractive, and $q$-isometric
if $id_{M_{q}}\otimes \phi $ is isometric. The category of $M_{q}$-spaces
also has products, given by $\infty $-sums \cite[Subsection I.2.2]%
{lehner_mn-espaces_1997}. Any $M_{q}$-space is $q$-isometric to a subspace
of $C\left( K,M_{q}(\mathbb{C)}\right) =M_{q}\left( C\left( K\right) \right) 
$ for some compact Hausdorff space $K$ \cite[Th\'{e}or\`{e}me I.1.9]%
{lehner_mn-espaces_1997}.\ Proposition I.1.16 of \cite%
{lehner_mn-espaces_1997} shows that $M_{q}$ is an injective object in the
category of $M_{q}$-spaces and $q$-contractions. Using these facts, one can
easily show that the category $\mathcal{A}$ of $M_{q}$-spaces and $q$%
-contractive maps, together with the collection $\mathcal{I}\subset \mathcal{%
A}$ of finite $\infty $-sums of copies of $M_{q}(\mathbb{C)}$, satisfy the
assumptions of Section \ref{Section:injective} with $\varpi (\delta )=\delta 
$. Hence all the results from Sections \ref{Section:injective}, \ref%
{Section:retracts}, \ref{Section:operators}, and \ref{Section:states} apply
in this setting. The corresponding limit $\mathbb{G}_{q}$ has analogous
property as $\mathbb{G}$; see \cite[\S 3]{goldbring_model-theoretic_2015}.

\subsection{$M_{q}$-system\label{Subsection:Mqu}}

An $M_{q}$-\emph{system} is an $M_{q}$-space $X$ with a distinguished
element $1$ (the \emph{unit}) such that there exists a compact Hausdorff
space $K$ and a completely isometric linear map $\phi :X\rightarrow C\left(
K,M_{q}\right) $ which is \emph{unital }in the sense that maps $1$ to the
function constantly equal to the identity matrix of $M_{q}$. Any $M_{q}$%
-system is endowed with an involution $x\mapsto x^{\ast }$ coming from the
inclusion into $C\left( K,M_{q}\right) $. Any unital linear contraction is
automatically \emph{self-adjoint}, that is commutes with the involution.
These spaces have been introduced and studied in \cite{xhabli_super_2012}
under the name of $q$-minimal operator systems. For $q=1$ one obtains the
notion of \emph{complex function system\ }\cite{paulsen_vector_2009}, which
is the complex analog of the notion of (real) function systems as in
Subsection \ref{Subsection:function-system}.

As in the case of function systems, $M_{q}$-systems are in functorial 1:1
correspondence with natural geometric objects that we call $M_{q}$-convex
sets. Suppose that $V$ is a locally convex topological vector space $V$, and 
$K_{n}\subset M_{n}\left( V\right) $ are compact convex sets for $%
n=1,2,\ldots ,q$. An $M_{q}$-convex combination is an expression of the form 
$\alpha _{1}^{\ast }v_{1}\alpha _{1}+\cdots +\alpha _{\ell }^{\ast }v_{\ell
}\alpha _{\ell }$ where $\alpha _{i}\in M_{n_{i},q}$ and $v_{i}\in K_{n_{i}}$
for $1\leq n_{i}\leq q$ and $1\leq i\leq \ell $. We say that $\left(
K_{1},\ldots ,K_{q}\right) $ is an $M_{q}$-convex set if it is closed under $%
M_{q}$-convex combinations. The notion of $M_{q}$-affine function between $%
M_{q}$-convex sets is defined in the obvious way by considering $M_{q}$%
-convex combinations rather than usual matrix convex combinations. To any $%
M_{q}$-convex set one can associate the $M_{q}$-system $A\left( K_{1},\ldots
,K_{q}\right) $ of real-valued $M_{q}$-convex affine functions, endowed with
its canonical $M_{q}$-system structure. Conversely, any $M_{q}$-system $X$
arises in this way from the $M_{q}$-convex set $\left( S_{1}(X),\ldots
,S_{q}(X)\right) $, where $S_{n}(X)$ is the space of $q$-contractive linear
maps from $X$ to $M_{n}$. Indeed such a correspondence is a particular case
of the correspondence between operator systems and matrix convex sets
established in \cite{webster_krein-milman_1999}.

We regard $M_{q}$-systems as structures in the language of $M_{q}$-spaces
with the addition of a constant symbol for the unit and a unary function
symbol for the involution. Again one can show that the category $\mathcal{A}$
of $M_{q}$-systems and unital $q$-contractive maps, and the collection $%
\mathcal{I}\subset \mathcal{A}$ of finite $\infty $-sums of copies of $M_{q}$
satisfy the assumptions of Section \ref{Section:injective}. To see this one
can use the small perturbation lemma \cite[Lemma 2.13.2]%
{pisier_introduction_2003} together with the fact that approximately unital
approximately $q$-isometric maps are close to unital $q$-contractive maps.%
\textrm{\ }This follows from the more general Lemma \ref{Lemma:perturb-ucp}.
In this context a basic tuple is a tuple $\bar{a}$ of linearly independent
elements such that the first element of the tuple $\bar{a}$ is the unit. One
then can infer that the conclusions of Section \ref{Section:injective}, \ref%
{Section:retracts}, \ref{Section:operators}, and \ref{Section:states} hold
in the setting of $M_{q}$-systems.

The limit of the class of finite-dimensional $M_{q}$-systems is an $M_{q}$%
-system $A(\mathbb{P}_{1}^{(q)},\ldots ,\mathbb{P}_{q}^{(q)})$. It follows
from the general results of this paper that---as shown in \cite[Section 3]%
{goldbring_model-theoretic_2015}---the first order theory of $A(\mathbb{P}%
_{1}^{(q)},\ldots ,\mathbb{P}_{q}^{(q)})$ has a unique separable model and
admits quantifier elimination. Applying Corollary \ref{Corollary:minimal}
one can conclude that the action of $\mathrm{Aut}(A(\mathbb{P}%
_{1}^{(q)},\ldots ,\mathbb{P}_{q}^{(q)}))$ on $\mathbb{P}_{1}^{(q)}$ is
minimal using the following lemma, which can be proved similarly as Lemma %
\ref{Lemma:minimal}.

\begin{lemma}
Suppose that $q,d\in \mathbb{N}$ and $\varepsilon >0$.\ There exists $n\in 
\mathbb{N}$ such that for any states $s$ on $\ell _{d}^{\infty }\left(
M_{q}\right) $ and $t$ on $\ell _{n}^{\infty }\left( M_{q}\right) $ there
exists an embedding $\phi :\ell _{d}^{\infty }\left( M_{q}\right)
\rightarrow \ell _{n}^{\infty }\left( M_{q}\right) $ such that $\left\Vert
t\circ \phi -s\right\Vert <\varepsilon $.
\end{lemma}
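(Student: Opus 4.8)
The plan is to imitate the proof of the scalar case (Lemma~\ref{Lemma:minimal}), the only genuinely new point being that states are now mixtures of matrix states over the summands rather than mere stochastic vectors. First I would recall that $\ell_d^\infty(M_q)$ is a finite-dimensional C*-algebra, so every state (unital $q$-positive functional, which is in particular positive) decomposes as $s=\sum_{k=1}^d\lambda_k\omega_k$, where $(\lambda_k)_{k=1}^d$ is a stochastic vector and each $\omega_k$ is a state on the $k$-th copy of $M_q$; likewise any state $t$ on $\ell_n^\infty(M_q)$ is of the form $t=\sum_{j=1}^n\mu_j\tau_j$ with $(\mu_j)_{j=1}^n$ stochastic and $\tau_j$ states on $M_q$. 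Writing $\phi=(\phi_1,\dots,\phi_n)$ coordinatewise with each $\phi_j\colon\ell_d^\infty(M_q)\to M_q$, one then has $t\circ\phi(x)=\sum_j\mu_j\,\tau_j(\phi_j(x))$.

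Set $\eta=\varepsilon/(2d)$ and choose $n\in\mathbb{N}$ with $n\ge 1/\eta+d$. Given $t=\sum_j\mu_j\tau_j$, the set of \emph{heavy} indices $\{j:\mu_j\ge\eta\}$ has at most $1/\eta$ elements, so at least $d$ indices $j_1,\dots,j_d$ are \emph{light}, i.e.\ $\mu_{j_k}<\eta$. I would then define $\phi$ by letting its $j_k$-th coordinate be the projection $x\mapsto x_k$ and every remaining coordinate be the unital scalar map $x\mapsto s(x)\,1_{M_q}$ (that is, $\iota\circ s$, where $\iota\colon\mathbb{C}\to M_q$ sends $\lambda$ to $\lambda 1_{M_q}$). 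The crucial observation is that $\iota\circ s$ is a unital $q$-contraction whose composition with any unital state $\tau_j$ returns $s$ exactly, since $\tau_j(s(x)1_{M_q})=s(x)$. Thus the adversarially given matrix states $\tau_j$ on the heavy coordinates are harmless: they are unital, and unitality is all that is needed, so the matrix dimension $q$ never really interferes.

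It remains to check that $\phi$ is an embedding and to estimate $t\circ\phi-s$. Since all coordinates of $\phi$ are unital $q$-contractions and $\ell_n^\infty(M_q)$ carries the $\infty$-sum norm (a maximum over coordinates), $\phi$ is a unital $q$-contraction; moreover the coordinates $j_1,\dots,j_d$ already realize the identity of $\ell_d^\infty(M_q)$, which is a $q$-isometry, so $\phi$ itself is a $q$-isometry, hence an embedding. For the estimate, a direct computation using $\sum_{j\notin\{j_k\}}\mu_j=1-\sum_k\mu_{j_k}$ gives, for every $x$ with $\|x\|\le 1$,
\[
t(\phi(x))-s(x)=\sum_{k=1}^d\mu_{j_k}\bigl(\tau_{j_k}(x_k)-s(x)\bigr),
\]
whence $|t(\phi(x))-s(x)|\le 2\sum_{k=1}^d\mu_{j_k}<2d\eta=\varepsilon$, using $|\tau_{j_k}(x_k)|\le\|x_k\|\le 1$ and $|s(x)|\le 1$. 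Therefore $\|t\circ\phi-s\|<\varepsilon$, as required. I do not expect any serious obstacle: once the decomposition of states on $\ell_d^\infty(M_q)$ into mixtures of matrix states is recorded, the argument is a routine quantitative adaptation of the scalar proof, with the map $\iota$ absorbing the only matricial complication.
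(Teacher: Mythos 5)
Your proof is correct and is exactly the argument the paper intends: the paper only says the lemma ``can be proved similarly as Lemma~\ref{Lemma:minimal}'', and your write-up is the natural such adaptation --- decompose the states over the $\infty$-sum as convex combinations of matrix states, place the identity coordinates where $t$ carries mass less than $\eta=\varepsilon/(2d)$, and fill the remaining coordinates with $x\mapsto s(x)1_{M_q}$, precisely mirroring the scalar computation for $\mathbb{P}$ and the block argument of Lemma~\ref{Lemma:minimal}. The direct-sum structure even lets you bypass the $\eta$-dense net of positive contractions and the pigeonhole over $\mathcal{P}$ needed in the full matrix case, so no further detail is required.
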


When $q=1$ one recovers the Poulsen simplex $\mathbb{P}=\mathbb{P}_{1}^{(1)}$%
. The sequence of spaces $(\mathbb{P}_{1}^{(q)},\ldots ,\mathbb{P}%
_{q}^{(q)}) $ for $q\in \mathbb{N}$ can be seen as a sequence interpolating
between the Poulsen simplex $\mathbb{P}$ and the noncommutative Poulsen
simplex $\mathbb{\mathbb{NP}}$; see \ref{Subsection:osystems}.

\section{More general Fra\"{\i}ss\'{e} classes\label{Section:general}}

\subsection{Stratified Fra\"{\i}ss\'{e} classes\label{Subsection:more
general}}

In this section we discuss how the framework of Section \ref%
{Section:injective} can be generalized to apply to other classes of
structures from functional analysis, such as the classes of exact operator
spaces and exact operator systems. We still assume that $\mathcal{L}$ is a
countable language in the logic for metric structures, and keep the same
notation and terminology as in Subsection \ref{Subsection:morphism}.

Suppose that $\mathcal{A}$ is a category of $\mathcal{L}$-structures with
morphisms defined as in Definition \ref{Definition:morphism}. Let also $%
\mathcal{A}_{q}\subset \mathcal{A}$ for $q\in \mathbb{N}\cup \left\{ \infty
\right\} $ be a full subcategory such that $\mathcal{A}_{q}\subset \mathcal{A%
}_{q+1}$, and $\mathcal{I}_{q}\subset \mathcal{A}_{q}$ be a countable
collection of separable injective structures closed under finite products
such that $\mathcal{I}_{q}\subset \mathcal{I}_{q+1}$ and $\mathcal{I}%
_{\infty }=\bigcup_{q}\mathcal{I}_{q}$. Let $\mathcal{I}\subset \mathcal{I}%
_{\infty }$ be a cofinal collection, that is such that any structure in $%
\mathcal{I}_{\infty }$ admits an embedding into a structure of $\mathcal{I}$%
. We will assume that

\begin{itemize}
\item $\mathcal{A}$ satisfies Conditions (1)--(4) of Subsection \ref%
{Subsection:basic} where the assumption (3) that $\mathcal{A}$ has arbitrary
products is replaced by the hypothesis that $\mathcal{A}$ has finite
products;

\item $\mathcal{A}_{q}$ satisfies all the conditions of Subsection \ref%
{Subsection:basic};

\item $\mathcal{A}_{q}$ has enough injectives from $\mathcal{I}_{q}$ with
modulus $\varpi $ as in Definition \ref{Definition:approximate-inverses};

\item $\mathcal{A}$ has enough injectives from $\mathcal{I}_{\infty }$ with
modulus $\varpi $ as in Definition \ref{Definition:approximate-inverses}.
\end{itemize}

The arguments of Section \ref{Section:injective} apply in this more general
situation to show the following.

\begin{theorem}
\label{Theorem:more-general-limit}The class of finitely generated structures
in $\mathcal{A}$ is a Fra\"{\i}ss\'{e} class. The corresponding limit $M$
can be realized as limit of a direct sequence of structures in $\mathcal{I}$
with embeddings as connective maps. Furthermore the limit admits the same
characterization as in Proposition \ref{Proposition:characterize-limit}.
\end{theorem}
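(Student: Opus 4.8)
The plan is to reproduce the development of Section \ref{Section:injective} almost verbatim, the only genuine change being to replace the two places where \emph{arbitrary} products of $\mathcal{A}$ were used---the amalgamation Lemma \ref{Lemma:NAP} and the approximate pushout Lemma \ref{Lemma:pushout}---by arguments that descend to a single stratum $\mathcal{A}_{q}$, where by hypothesis all the assumptions of Subsection \ref{Subsection:basic} (in particular the existence of arbitrary products) hold and so the lemmas of Section \ref{Section:injective} apply word for word. The device that makes this possible is the following preliminary observation: since $\mathcal{A}$ has enough injectives from $\mathcal{I}_{\infty}$, every finitely generated structure of $\mathcal{A}$ is the Gromov--Hausdorff limit of finitely generated substructures of members of $\mathcal{I}_{\infty}$, hence, by cofinality of $\mathcal{I}$, of members of $\mathcal{I}$. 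Any such substructure sits inside some $A\in\mathcal{I}\subset\mathcal{I}_{\infty}=\bigcup_{q}\mathcal{I}_{q}$, so it lies in a stratum $\mathcal{A}_{q}$ (using Condition (1) for $\mathcal{A}_{q}$); and since $\mathcal{I}_{q}\subset\mathcal{I}_{q+1}$, any \emph{finite} family of such building blocks, together with any finite family of members of $\mathcal{I}$, can be placed in a common $\mathcal{A}_{q}$. Everything that does not refer to products---the hereditary property (Condition (1) for $\mathcal{A}$), the joint embedding property (finite products), the stable homogeneity back-and-forth of Subsection \ref{Subsection:stable}, and the equivalences of Proposition \ref{Proposition:characterize-limit}---uses only the approximate extension property of $M$ and the hypotheses on basic sequences, both still available, and carries over unchanged.

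First I would run the existence construction of Subsection \ref{Subsection:existence}, now choosing the Gromov--Hausdorff dense sequence $(X_{m})$ to consist of finitely generated substructures of members of $\mathcal{I}$, so that each $X_{m}$ lies in some $\mathcal{A}_{q(m)}$. At stage $k$ only $X_{1},\dots,X_{k}$ and finitely many members of $\mathcal{I}$ appear, hence all live in a common $\mathcal{A}_{q}$; there Lemma \ref{Lemma:NAP} applies (its proof uses only binary products and the enough-injectives property, both available in $\mathcal{A}_{q}$) to produce $A_{d_{k+1}}\in\mathcal{I}_{q}$ together with the embedding $j_{k}$, and cofinality lets us keep $A_{d_{k+1}}\in\mathcal{I}$. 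Thus $M$ is realized as the inductive limit of a sequence in $\mathcal{I}$ with embeddings as connective maps. That $M$ satisfies the approximate extension property with respect to \emph{all} finitely generated structures of $\mathcal{A}$ is then checked exactly as before, invoking enough injectives from $\mathcal{I}_{\infty}$ (and realizing the targets inside $\mathcal{I}$ by cofinality) in place of enough injectives from $\mathcal{I}$.

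The main work, and the step I expect to be the principal obstacle, is to reconstruct the approximate pushout of Lemma \ref{Lemma:pushout} for arbitrary finitely generated $\widehat{X},Y$ in $\mathcal{A}$ without arbitrary products; this underlies the near amalgamation property, the inequality \eqref{Equation:deinde}, and the completeness of $(\mathcal{C}_{n},d_{\mathcal{C}})$. My plan is to approximate $\widehat{X}$ and $Y$, via Gromov--Hausdorff density and cofinality, by finitely generated substructures of members of $\mathcal{I}$ placed in a common $\mathcal{A}_{q}$; to transport $\phi$ and $f$ along the resulting near-isomorphisms, at the cost of a controlled increase of $I$; to form the genuine approximate pushout \emph{inside} $\mathcal{A}_{q}$ by Lemma \ref{Lemma:pushout}; and to transport the canonical morphism $\widehat{f}$ and embedding $j$ back to $\widehat{X},Y$. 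The delicate point is that transporting back along a near-isomorphism degrades $j$ into a mere near-embedding, so one must invoke the small-perturbation property of basic tuples---the function $\rho_{\bar{a}}$ of Subsection \ref{Subsection:basic}---to straighten it into an honest embedding, and one must bookkeep the several tolerances so that the final defect $d(\widehat{f}\circ\phi,j\circ f)$ stays within $\varpi(\delta)$ up to an arbitrarily small error. Here it is essential that the downstream uses of Lemma \ref{Lemma:pushout} in Subsection \ref{Subsection:class} require only the \emph{existence} of such an approximate pushout (with $j$ an embedding and the square commuting within tolerance), not its universal property, so this weaker approximate version suffices.

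Granting the approximate pushout, the near amalgamation property is immediate, separability of $(\mathcal{C}_{n},d_{\mathcal{C}})$ follows from the countability of $\mathcal{I}$ and the density argument of Subsection \ref{Subsection:class}, and completeness follows as there from the approximate pushout together with closure of $\mathcal{A}$ under inductive limits with embeddings (Condition (2)). Hence the class $\mathcal{C}$ of finitely generated structures in $\mathcal{A}$ is a Fra\"{\i}ss\'{e} class, with limit the structure $M$ built above. Finally, the stable-homogeneity back-and-forth of Subsection \ref{Subsection:stable} and the chain of equivalences of Proposition \ref{Proposition:characterize-limit} are formally identical, as they never refer to products, which yields the stated characterization and completes all three assertions of the theorem.
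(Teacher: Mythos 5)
Your overall architecture is sound and is essentially what the paper intends (the paper only asserts that the arguments of Section \ref{Section:injective} carry over; the detailed stratified technique is visible in the proof of Theorem \ref{Theorem:general-retracts}): Lemma \ref{Lemma:NAP} survives because any finite family of injectives lies in a common $\mathcal{I}_{q}$, the existence construction and the verification of the approximate extension property go through via Gromov--Hausdorff approximation and enough injectives from $\mathcal{I}_{\infty}$, and the back-and-forth of Subsection \ref{Subsection:stable} is untouched. You also correctly isolate the one step that genuinely breaks --- the approximate pushout for arbitrary finitely generated $\widehat{X},Y$ in $\mathcal{A}$ --- and correctly note that its uses in Subsection \ref{Subsection:class} need only existence, not the universal property.

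The gap is in your repair of that step. After forming the pushout inside $\mathcal{A}_{q}$ and transporting back, you have a map $Y\rightarrow \widehat{Y}'$ with $I\leq\delta$, and you propose to straighten it into an embedding using the small-perturbation function $\rho_{\bar{a}}$. That property cannot do this: it converts approximate agreement of two morphisms on a basic tuple into an approximate factorization, but it never decreases the quantity $I$ of a map. More fundamentally, no one-shot correction landing in $\bigcup_{q}\mathcal{A}_{q}$ can exist: in the motivating example, an exact operator space that is not $q$-minimal for any $q$ admits no completely isometric embedding into any $q$-minimal operator space, since $q$-minimality passes to subspaces. The cheap fix $(j,\mathrm{id}_{Y}):Y\rightarrow\widehat{Y}'\times Y$, an embedding by Condition (5) of Subsection \ref{Subsection:basic}, does not help either, because the other leg $\widehat{f}:\widehat{X}\rightarrow\widehat{Y}'$ cannot be lifted to $\widehat{Y}'\times Y$ without a morphism $\widehat{X}\rightarrow Y$, which you do not have. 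Since the near amalgamation property, the inequality \eqref{Equation:deinde}, and the completeness of $\left(\mathcal{C}_{n},d_{\mathcal{C}}\right)$ all require honest embeddings of arbitrary members of $\mathcal{C}$ into a common member of $\mathcal{C}$, this step is genuinely missing. The correct device is an infinite telescoping rather than a one-shot perturbation, and your own first step already supplies it: having built $M$ and verified its approximate extension property, a one-sided version of the argument of Subsection \ref{Subsection:stable} (with $\delta_{n}\rightarrow 0$ and $\sum_{n}\varpi(\delta_{n})<\infty$) shows that every finitely generated structure of $\mathcal{A}$ embeds into $M$ and that every morphism $F\rightarrow M$ with $I<\delta$ lies within roughly $\varpi(\delta)$ of an embedding. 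The near amalgamation property then follows by embedding $B_{0},B_{1}$ into $M$, aligning them on $\bar{a}$ via the extension property, and taking $C$ to be the finitely generated substructure of $M$ generated by the two images (which belongs to $\mathcal{A}$ by Condition (1)); the inequality \eqref{Equation:deinde} and completeness are obtained the same way with $X=M$. This is exactly the pattern of the proof of Theorem \ref{Theorem:general-retracts}, where honest embeddings are produced only in the limit of a sequence of stratum-level pushouts with improving constants.
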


Also the arguments of Section \ref{Section:operators} and Section \ref%
{Section:states} go through in this more general setting. This yields a
generic morphism $\Omega _{M}^{R}:M\rightarrow R$ for any approximately
injective separable structure $R$ in $\mathcal{A}$, and a generic operator $%
\Omega _{M}:M\rightarrow M$, with the same characterization and properties
as in Section \ref{Section:operators}.

\subsection{Approximately injective objects and retracts of the limit\label%
{Subsection:general-retracts}}

The class of approximately injective structures in $\mathcal{A}$ can be
defined similarly as in Definition \ref{Definition:approx-inj}. Again, since
the elements of $\mathcal{I}$ are injective, and the Fra\"{\i}ss\'{e} limit $%
M$ of the class of finitely generated structures in $\mathcal{A}$ is the
limit of an inductive sequence of elements of $\mathcal{I}$ with embeddings
as connective maps, it follows that $M$ is approximately injective. As a
consequence the retracts of $M$ are approximately injective as well. The
following theorem shows that, conversely, any separable approximately
injective structure in $\mathcal{A}$ is isomorphic to a retract of $M$.

\begin{theorem}
\label{Theorem:general-retracts}Let $M$ denote the Fra\"{\i}ss\'{e} limit of
the class of finitely generated structures in $\mathcal{A}$. A separable
structure $X$ in $\mathcal{A}$ is approximately injective if and only if
there exist an embedding $\eta :X\rightarrow M$ and an idempotent morphism $%
\pi :M\rightarrow M$ such that the range of $\eta $ equals the range of $\pi 
$.
\end{theorem}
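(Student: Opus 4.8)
The plan is to deduce the statement in the stratified setting exactly as the sketch of Theorem~\ref{Theorem:retracts} indicates, using the easy inheritance of approximate injectivity by retracts together with the generic $X$-state. For the easy implication, suppose an embedding $\eta\colon X\to M$ and an idempotent morphism $\pi\colon M\to M$ are given with $\eta[X]=\pi[M]$. Since $M$ is realized as the limit of an inductive sequence of structures from $\mathcal{I}$ with embeddings as connective maps, and every structure in $\mathcal{I}$ is injective, $M$ is approximately injective. I would then verify that approximate injectivity passes to the retract $\pi[M]$: writing $r\colon M\to\pi[M]$ for the corestriction of $\pi$ and $\iota\colon\pi[M]\to M$ for the inclusion, so that $r\circ\iota=\mathrm{id}$ and $\iota\circ r=\pi$, any morphism $f$ from a finitely generated substructure of some $A\in\mathcal{I}$ into $\pi[M]$ is approximately extended by first approximately extending $\iota\circ f$ through the approximately injective $M$ and then composing with $r$; because $r$ is a morphism, hence nonexpansive, the approximation error is not increased. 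As $X\cong\pi[M]$, it is approximately injective.

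For the converse, assume $X$ is an approximately injective separable structure in $\mathcal{A}$. As recorded in Subsection~\ref{Subsection:more general}, the arguments of Section~\ref{Section:states} transfer to the present framework and yield a generic $X$-state $\Omega_M^X\colon M\to X$ enjoying the universality of the Fra\"{\i}ss\'{e} limit: for every separable structure $Y$ in $\mathcal{A}$ and every morphism $s\colon Y\to X$ there is an embedding $\phi\colon Y\to M$ with $\Omega_M^X\circ\phi=s$. I would apply this with $Y=X$ and $s=\mathrm{id}_X$ (an $X$-state, since the identity map is an embedding), producing an embedding $\eta\colon X\to M$ with $\Omega_M^X\circ\eta=\mathrm{id}_X$.

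Then I would set $\pi:=\eta\circ\Omega_M^X\colon M\to M$, which is a morphism as a composite of morphisms. Idempotency is immediate: $\pi\circ\pi=\eta\circ(\Omega_M^X\circ\eta)\circ\Omega_M^X=\eta\circ\mathrm{id}_X\circ\Omega_M^X=\pi$, so $\pi$ is a retraction. Moreover $\Omega_M^X\circ\eta=\mathrm{id}_X$ forces $\Omega_M^X$ to be surjective, since each $x\in X$ equals $\Omega_M^X(\eta(x))$; hence $\pi[M]=\eta[\Omega_M^X[M]]=\eta[X]$. Thus the embedding $\eta$ and the idempotent morphism $\pi$ satisfy $\mathrm{range}(\eta)=\mathrm{range}(\pi)$, as required.

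The substantive point, and the only real obstacle, is the transfer of the generic-morphism machinery of Section~\ref{Section:states} to the stratified situation: one must check that the approximate-pushout construction (Lemma~\ref{Lemma:pushout} and its variant over a tuple, Lemma~\ref{Lemma:pushout2}) and the back-and-forth argument establishing the genericity and universality of $\Omega_M^X$ remain valid when $\mathcal{A}$ is only assumed to have \emph{finite} products and the injectives come from the cofinal family $\mathcal{I}\subset\mathcal{I}_\infty$. Granting that transfer, which is asserted in Subsection~\ref{Subsection:more general}, everything else above is formal. A direct alternative, bypassing the state formalism, would construct $\eta$ and $\pi$ by a back-and-forth recursion interleaving the approximate injectivity of $X$ with the extension property of $M$, in the manner of the proof of Proposition~\ref{Proposition:characterize-limit-I-structure}; the recursive bookkeeping there is exactly what the universal-state route is designed to avoid.
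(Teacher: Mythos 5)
Your argument is correct, but it is not the route the paper takes for this theorem. The paper proves Theorem \ref{Theorem:general-retracts} by a direct recursive construction: it builds by hand an inductive sequence $(Z_n)$ of finitely generated structures, obtained as approximate pushouts formed \emph{within} the subcategories $\mathcal{A}_{q_n}$, together with compatible maps $\eta_n:\widehat{X}_n\rightarrow Z_n$ and $\pi_n:Z_n\rightarrow \widehat{X}_n$ satisfying $\pi_n\circ\eta_n=\mathrm{id}$, where the $\widehat{X}_n\in\mathcal{A}_{q_n}$ approximate an exhaustion of $X$ (this is where approximate injectivity of $X$ enters); density conditions on the morphisms handled at each stage force the limit $Z$ to be the Fra\"{\i}ss\'{e} limit $M$, and the limits of $\eta_n\circ\varphi_n$ and $\psi_n\circ\pi_n$ furnish the embedding and the retraction. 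Your proof instead packages that entire recursion into the existence and universality of the generic $X$-state $\Omega_M^X$, takes a section $\eta$ with $\Omega_M^X\circ\eta=\mathrm{id}_X$, and sets $\pi=\eta\circ\Omega_M^X$; this is precisely the ``alternative proof'' the paper sketches in the closing paragraph of Subsection \ref{Subsection:general-retracts}, and your formal steps (idempotency, surjectivity of $\Omega_M^X$, equality of ranges, and the easy direction via retracts of the approximately injective $M$, using that morphisms are nonexpansive) are all sound. What your route buys is brevity and a stronger output for free ($\Omega_M^X$ is moreover generic); what it costs is that the whole burden is shifted onto the assertion of Subsection \ref{Subsection:more general} that the state machinery of Section \ref{Section:states} transfers to the stratified setting, where $\mathcal{A}$ has only finite products and pushouts must be taken inside a suitable $\mathcal{A}_q$. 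You correctly flag this as the only substantive obstacle, but be aware that the paper's direct proof of this very theorem is essentially where that transfer gets carried out, so a self-contained write-up would still have to perform the recursion you defer --- the ``direct alternative'' you mention in your last sentence is, in fact, the paper's proof.
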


\begin{proof}
Suppose that $X$ is a separable approximately injective structure in $%
\mathcal{A}$. Our aim is to construct a separable structure $Z$ in $\mathcal{%
A}$, an embedding $\eta :X\rightarrow Z$, and a morphism $\pi :Z\rightarrow
X $ such that $\pi \circ \eta $ is the identity map of $X$ and $Z$ is the Fra%
\"{\i}ss\'{e} limit of the class of finitely generated elements of $\mathcal{%
A}$.

Recall that if $f,g:A\rightarrow B$ are morphisms between structures in $A$, 
$\bar{a}$ is a tuple in $A$, and $\varepsilon >0$, then we write $f\approx _{%
\bar{a},\varepsilon }g$ to indicate that $d(f(\bar{a}),g(\bar{b}%
))<\varepsilon $. Fix an enumeration $\left\{ A_{q,m}:m\in \mathbb{N}%
\right\} $ of the elements of $\mathcal{I}_{q}$. Fix for every $m\in \mathbb{%
N}$ a countable fundamental subset $D_{q,m}$ of $A_{q,m}$ as in Definition %
\ref{Definition:fundamental} and an enumeration $\bar{a}_{q,m,i}$ of the
basic tuples in $D_{q,m}$. Let $E_{q,m,i}=\left\langle \bar{a}%
_{q,m,i}\right\rangle $. One can build by recursion on $n$

\begin{itemize}
\item an increasing sequence $\left( q_{n}\right) $ in $\mathbb{N}$ such
that $q_{n}\geq n$,

\item an increasing sequence $\left( X_{n}\right) $ of substructures of $X$
with dense union,

\item basic generating tuples $\bar{b}_{n}$ of $X_{n}$,

\item a sequence $\left( \varepsilon _{n}\right) $ of strictly positive real
numbers such that $\varpi \left( \varepsilon _{n+1}\right) \leq \varepsilon
_{n}\leq 2^{-2n}$,

\item a sequence $(\widehat{X}_{n})$ of finitely generated structures in $%
\mathcal{A}$ such that $\widehat{X}_{n}\in \mathcal{A}_{q_{n}}$,

\item morphisms $\varphi _{n}:X_{n}\rightarrow \widehat{X}_{n}$, $\psi _{n}:%
\widehat{X}_{n}\rightarrow X_{n}$, and $\widehat{i}_{n}:\widehat{X}%
_{n}\rightarrow \widehat{X}_{n+1}$ such that, if $i_{n}:X_{n}\rightarrow
X_{n+1}$ denotes the inclusion map, $I\left( \varphi _{n}\right)
<\varepsilon _{n}$, $I\left( \psi _{n}\right) <\varepsilon _{n}$, $d\left(
\psi _{n}\circ \varphi _{n},id_{X_{n}}\right) <\varepsilon _{n}$, $\widehat{i%
}_{n+1}=\varphi _{n+1}\circ i_{n}\circ \psi _{n}$, and hence $d(\widehat{i}%
_{n+1}\circ \varphi _{n},\varphi _{n+1}\circ i_{n})<\varepsilon _{n}$,

\item a direct sequence $(Z_{n})$ of finitely generated structures in $%
\mathcal{A}$ with embeddings $j_{n}:Z_{n}\rightarrow Z_{n+1}$ as connective
maps,

\item embeddings $\eta _{n}:\widehat{X}_{n}\rightarrow Z_{n}$ such that $%
\eta _{n+1}\circ \widehat{i}_{n}\circ \varphi _{n}\approx _{\bar{b}%
_{n},\varepsilon _{n}}j_{n}\circ \eta _{n}\circ \varphi _{n}$,

\item morphisms $\pi _{n}:Z_{n}\rightarrow \widehat{X}_{n}$ such that $\pi
_{n+1}\circ j_{n}=\widehat{i}_{n}\circ \pi _{n}$ and $\pi _{n+1}\circ \eta
_{n+1}=id_{\widehat{X}_{n+1}}$

\item a finite $\varepsilon _{n}$-dense set $\mathcal{G}_{n}$ of morphisms $%
g:E_{q,m,i}\rightarrow Z_{n}$ for $m,i\leq n$ and $q\leq q_{n}$ such that $%
j_{n}\circ g\in \mathcal{G}_{n}$ for every $g\in \mathcal{G}_{n-1}$,

\item a finite $\varepsilon _{n}$-dense set $\mathcal{F}_{n}$ of morphisms $%
f:E_{q,m,i}\rightarrow X_{n}$ for $m,i\leq n$ and $q\leq q_{n}$ such that $%
i_{n}\circ f\in \mathcal{F}_{n}$ for every $f\in \mathcal{F}_{n-1}$ and $%
\psi _{n}\circ \pi _{n}\circ g\in \mathcal{F}_{n}$ for every $g\in \mathcal{G%
}_{n}$,
\end{itemize}

such that

\begin{enumerate}
\item for any $i,m\leq n$, $q\leq q_{n}$, and morphism $f:E_{q,m,i}%
\rightarrow X_{n}$ in $\mathcal{F}_{n}$ there exists a morphisms $\widehat{f}%
:A_{q,m}\rightarrow X_{n+1}$ such that $\widehat{f}\approx _{\bar{a}%
_{q,m,i},\varepsilon _{n}}f$;

\item for any $m,i\leq n$, $q\leq q_{n}$, and morphism $g:E_{q,m,i}%
\rightarrow Z_{n}$ in $\mathcal{G}_{n}$, there exists an embedding $\widehat{%
g}:A_{q,m}\rightarrow Z_{n+1}$ such that $d(\widehat{g}\left( \bar{a}%
_{q,m,i}\right) ,g\left( \bar{a}_{q,m,i}\right) )\leq \varpi \left(
I(g)+\varepsilon _{n}\right) $.
\end{enumerate}



The construction proceeds as follows. Fix an enumeration $\left\{ w_{n}:n\in 
\mathbb{N}\right\} $ of a dense subset of $X$. Set $X_{1}=\left\langle
w_{1}\right\rangle $. Using Condition (1) of Subsection \ref%
{Subsection:injective} for the classes $\mathcal{A}$ and $\mathcal{I}%
_{\infty }$ one can find $q_{1}\in \mathbb{N}$, a finitely generated
structure $\widehat{X}_{1}$ in $\mathcal{A}_{q_{1}}$, and morphisms $\varphi
_{1}:X_{1}\rightarrow \widehat{X}_{1}$ and $\psi _{1}:\widehat{X}%
_{1}\rightarrow X_{1}$ such that $I\left( \varphi _{1}\right) <\varepsilon
_{1}$, $I\left( \psi _{1}\right) <\varepsilon _{1}$, $I\left( \varphi
_{1}\right) <\varepsilon _{1}$, $d\left( \psi _{1}\circ \varphi
_{1},id_{X_{1}}\right) <\varepsilon _{1}$, and $d(\varphi _{1}\circ \psi
_{1},id_{\widehat{X}_{1}})<\varepsilon _{1}$.

Suppose now that, by induction hypothesis, $X_{k},\bar{b}_{k},q_{k},\widehat{%
X}_{k},\varphi _{k},Z_{k},\eta _{k},\pi _{k},\varepsilon _{k},\mathcal{F}%
_{k},\mathcal{G}_{k},j_{k-1}$ have been defined for $k\leq n$. Since
Condition (1) only involves finitely many morphisms $f$, one can find $\bar{b%
}_{n+1}\supset \bar{b}_{n}\cup \left\{ w_{n+1}\right\} $ such that $%
X_{n+1}=\left\langle \bar{b}_{n+1}\right\rangle $ satisfies (1) by
repeatedly applying the assumption that $X$ is approximately injective. One
can then find $\widehat{X}_{n+1},\varphi _{n},\psi _{n},q_{n+1}$ reasoning
as for $\widehat{X}_{1},\varphi _{1},\psi _{1}$. Let now $Z_{n+1}$ be the
approximate pushout within the class $\mathcal{A}_{q_{n+1}}$ constructed as
in Lemma \ref{Lemma:pushout2} of the maps $\eta _{n}\circ \varphi
_{n}:X_{n}\rightarrow Z_{n}$ and $\varphi _{n+1}\circ i_{n}:X_{n}\rightarrow 
\widehat{X}_{n+1}$ over $\bar{b}_{n}$ with tolerance $\varepsilon _{n}$, and
of the maps $f:E_{q,m,i}\rightarrow Z_{n}$ and $E_{q,m,i}\hookrightarrow
A_{q,m}$ (inclusion map) over $\bar{a}_{q,m,i}$ with tolerance $\varpi
\left( I(f)+\varepsilon _{n}\right) $, where $m,i\leq n$, $q\leq q_{n}$ are
such that $E_{q,m,i}\subset A_{q,m}$ and $f:E_{q,m,i}\rightarrow Z_{n}$ is a
morphism in $\mathcal{F}_{n}$. Let also $j_{n}:Z_{n}\rightarrow Z_{n+1}$ and 
$\eta _{n+1}:\widehat{X}_{n+1}\rightarrow Z_{n+1}$ be the canonical
morphisms of the approximate pushout, and observe that by definition $\eta
_{n+1}\circ \varphi _{n+1}\circ i_{n}\approx _{\bar{b}_{n},\varepsilon
_{n}}j_{n}\circ \eta _{n}\circ \varphi _{n}$. We want to define a morphism $%
\pi _{n+1}:Z_{n+1}\rightarrow \widehat{X}_{n+1}$. By inductive hypothesis we
have that $\widehat{i}_{n}\circ \pi _{n}:Z_{n}\rightarrow \widehat{X}_{n+1}$
and $\varphi _{n+1}:X_{n+1}\rightarrow \widehat{X}_{n+1}$ are morphisms such
that%
\begin{equation*}
\widehat{i}_{n}\circ \pi _{n}\circ \eta _{n}\circ \varphi _{n}=\widehat{i}%
_{n}\circ \varphi _{n}\approx _{\bar{b}_{n},\varepsilon _{n}}\varphi
_{n+1}\circ i_{n}\text{.}
\end{equation*}%
Furthermore if $m,i\leq n$ and $q\leq q_{n+1}$ are such that $%
E_{q,m,i}\subset A_{q,m}$ and $g:E_{q,m,i}\rightarrow Z_{n}$ is a morphism
in $\mathcal{G}_{n}$, then $f:=\psi _{n}\circ \pi _{n}\circ
g:E_{q,m,i}\rightarrow X_{n}$ is a morphism in $\mathcal{F}_{n}$. Therefore
by inductive hypothesis there exists a morphism $\widehat{f}%
:A_{q,m}\rightarrow X_{n+1}$ such that $\widehat{f}\approx _{\bar{a}%
_{q,m,i},\varepsilon _{n}}f$. Hence $\varphi _{n+1}\circ \widehat{f}%
:A_{q,m}\rightarrow \widehat{X}_{n+1}$ is a morphism such that%
\begin{equation*}
\varphi _{n+1}\circ \widehat{f}\approx _{\bar{a}_{q,m,i},\varepsilon
_{n}}\varphi _{n+1}\circ f=\varphi _{n+1}\circ \psi _{n}\circ \pi _{n}\circ
g=\widehat{i}_{n}\circ \pi _{n}\circ g\text{.}
\end{equation*}%
Therefore by the universal property of the approximate pushout there exists
a morphism $\pi _{n+1}:Z_{n+1}\rightarrow \widehat{X}_{n+1}$ such that $\pi
_{n+1}\circ j_{n}=\widehat{i}_{n}\circ \pi _{n}$ and $\pi _{n+1}\circ \eta
_{n+1}=id_{\widehat{X}_{n+1}}$. This concludes the recursive construction.
Granted the construction one can then define $Z$ to be the limit in $%
\mathcal{A}$ of the inductive sequence $\left( Z_{n}\right) $ with
connecting maps $j_{n}$. Let $\eta $ be the embedding of $X$ into $Z$
obtained as the limit of the sequence $\eta _{n}\circ \varphi
_{n}:X_{n}\rightarrow Z_{n}$. Finally let $\pi :Z\rightarrow X$ be the
morphism obtained as the limit of the sequence $\psi _{n}\circ \pi
_{n}:Z_{n}\rightarrow X_{n}$. It follows from the properties of the maps $%
\eta _{n},\varphi _{n},\psi _{n},\pi _{n}$ listed above that $\eta $ and $%
\pi $ are well defined and satisfy $\pi \circ \eta =id_{X}$. Furthermore the
assumption (2) in the construction guarantees that $Z$ is the Fra\"{\i}ss%
\'{e} limit of the class of finite dimensional structures in $\mathcal{A}$.
\end{proof}

Using Theorem \ref{Theorem:general-retracts} one can also prove that the
approximately injective structures in $\mathcal{A}$ are precisely the $%
\mathcal{I}$-nuclear structures as in Definition \ref{Definition:nuclear};
see the proof of Proposition \ref{Proposition:nuclear}.

One can alternatively prove Theorem \ref{Theorem:general-retracts} using the
construction from Subsection \ref{Subsection:universal-state} generalized to
the setting of stratified Fra\"{\i}ss\'{e} classes generated by injective
objects. Indeed if $X$ is a separable approximately injective structure in $%
\mathcal{A}$, there exist a morphism $\Omega _{M}^{X}:M\rightarrow X$ and an
embedding $\eta _{M}^{X}:X\rightarrow M$ such that $\Omega _{M}^{X}\circ
\eta _{M}^{X}$ is the identity of $X$. Thus $\eta _{M}^{X}\circ \Omega
_{M}^{X}:X\rightarrow X$ is a retraction of $X$ onto a substructure
isomorphic to $M$.

\section{More examples\label{Section:more-examples}}

\subsection{Exact operator spaces\label{Subsection:ospaces}}

Let $\mathcal{K}$ be the space of compact linear operators on $\ell ^{2}$.
If $X$ is a complex vector space, then the space $\mathcal{K}\otimes X$ is
naturally endowed with a $\mathcal{K}$-bimodule structure. An \emph{operator
space }is a complex vector space $X$ such that $\mathcal{K}\otimes X$ is
endowed with a norm satisfying%
\begin{equation*}
\left\Vert \sum_{i=1}^{n}\alpha _{i}^{\ast }x_{i}\beta _{i}\right\Vert \leq
\left\Vert \sum_{i=1}^{n}\alpha _{i}^{\ast }\alpha _{i}\right\Vert
\max_{1\leq i\leq n}\left\Vert x_{i}\right\Vert \left\Vert
\sum_{i=1}^{n}\beta _{i}^{\ast }\beta _{i}\right\Vert
\end{equation*}%
where $n\in \mathbb{N}$, $\alpha _{i},\beta _{i}\in \mathcal{K}$, and $%
x_{i}\in \mathcal{K}\otimes X$. A linear map $\phi :X\rightarrow Y$ between
operator spaces is \emph{completely contractive }if $id_{\mathcal{K}}\otimes
\phi $ is contractive, and completely isometric if $id_{\mathcal{K}}\otimes
\phi $ is isometric.

As before, we let $\mathcal{K}_{0}(\mathbb{Q}(i))$ be the space of finite
rank operators whose coefficients with respect to the canonical basis of $%
\ell ^{2}$ belong to the field of Gauss rationals $\mathbb{Q}(i)$. Let $%
\mathcal{L}$ be the language containing an $n$-ary function symbol $f_{%
\overline{\alpha },\overline{\beta }}$ for every $n\in \mathbb{N}$ and $n$%
-tuples $\overline{\alpha }$ and $\overline{\beta }$ in $\mathcal{K}_{0}(%
\mathbb{Q}(i\mathbb{))}$ such that $\left\Vert \sum_{i=1}^{n}\alpha
_{i}^{\ast }\alpha _{i}\right\Vert \leq 1$ and $\left\Vert
\sum_{i=1}^{n}\beta _{i}^{\ast }\beta _{i}\right\Vert \leq 1$. If $X$ is an
operator space, then one can regard $X$ as an $\mathcal{L}$-structure with
support the unit ball of $\mathcal{K}\otimes X$, where the interpretation of 
$f_{\overline{\alpha },\overline{\beta }}$ is the function%
\begin{equation*}
\left( x_{1},\ldots ,x_{n}\right) \mapsto \alpha _{1}x_{1}\beta _{1}+\cdots
+\alpha _{n}x_{n}\beta _{n}\text{.}
\end{equation*}%
It is clear that under this identification a morphism in the sense of
Subsection \ref{Subsection:morphism} is (the restriction to the unit ball
of) a completely contractive linear map, and an embedding is (the
restriction to the unit ball of) a completely isometric linear map. It is
not hard to verify that if $f:X\rightarrow Y$ is a completely contractive
linear map between operator spaces and $0\leq \delta \leq 1$, then $I(f)\leq
\delta $ if and only if $\left\Vert id_{\mathcal{K}}\otimes
f^{-1}\right\Vert \leq 1+\delta $.

Suppose that $H$ is a Hilbert space. Denote by $B(H)$ the algebra of bounded
linear operators on $H$ endowed with the operator norm. If $X$ is a linear
subspace of $B(H)$, then $X$ has a natural operator space structure obtained
by identifying $M_{n}(X)$ with a subspace of the algebra $B(H^{\oplus n})$
of bounded linear operators on the $n$-fold Hilbertian direct sum of $H$ by
itself. Conversely an operator space is linearly completely isometric to a
space of this form \cite{ruan_subspaces_1988}. We denote by $M_{d,k}(\mathbb{%
C})$ the operator space of $d\times k$ matrices, identified with the space $%
B\left( H,K\right) $ of bounded linear operators from a $k$-dimensional
Hilbert space $H$ to a $d$-dimensional Hilbert space $K$. By the
Arveson-Wittstock-Paulsen extension theorem \cite[Theorem 8.2]%
{paulsen_completely_2002} and the main result of \cite{smith_finite_2000},
the finite-dimensional \emph{injective }operator spaces are precisely the
finite $\infty $-sums of copies of $M_{d,k}(\mathbb{C})$ for $d,k\in \mathbb{%
N}$. These are also precisely the finite-dimensional \emph{ternary rings of
operators}; see \cite{kaur_local_2002}. When $k=d$ we simply write $M_{d}(%
\mathbb{C})$.

An operator space $X$ is called \emph{exact }if for any $\delta >0$ and for
any finite-dimensional subspace $E$ of $X$ there exists $n\in \mathbb{N}$
and a completely contractive linear map $f:X\rightarrow M_{n}(\mathbb{C})$
such that $\left\Vert id_{\mathcal{K}}\otimes f^{-1}\right\Vert \leq
1+\delta $. If $X$ is an $M_{q}$-space as in Subsection \ref{Subsection:Mq}
then one can canonically endow $X$ with an (exact) operator space structure $%
\mathrm{MIN}_{q}(X)$ defined by setting%
\begin{equation*}
\left\Vert x\right\Vert =\sup_{\phi }\left\Vert \left( id_{\mathcal{K}%
}\otimes \phi \right) (x)\right\Vert
\end{equation*}%
for $x\in \mathcal{K}\otimes X$, where $\phi $ ranges among all the $q$%
-contractions from $X$ to $M_{q}(\mathbb{C})$.

Let now $\mathcal{A}$ be the class of operator spaces, $\mathcal{A}_{q}$ be
the class of operator spaces of the form $\mathrm{MIN}_{q}(X)$ for some $%
M_{q}$-space $X$, $\mathcal{I}_{q}$ be the class of finite $\infty $-sums of
copies of $M_{d,k}(\mathbb{C})$ for $d,k\leq q$ (these are precisely the
finite-dimensional $q$-minimal injective operator spaces), $\mathcal{I}%
_{\infty }$ be the union of $\mathcal{I}_{q}$ for $q\in \mathbb{N}$, and $%
\mathcal{I}\subset \mathcal{I}_{\infty }$ be the class of operator spaces of
the form $M_{n}(\mathbb{C})$ for $n\in \mathbb{N}$. The small perturbation
lemma\emph{\ } shows that, by declaring a tuple in an operator space basic
if it is linearly independent, one obtains a notion of basic tuples that
satisfies the assumptions of Subsection \ref{Subsection:basic}. The
definition of exact operator spaces implies that the classes $\mathcal{A}$
and $\mathcal{I}$ satisfy Condition\ (1) of Subsection \ref%
{Subsection:injective}. Condition (2) of Subsection \ref%
{Subsection:injective} with $\varpi (\delta )=\delta $ is easily verified by
considering the composition of $f$ with the inverse map of $\phi $ (when $%
\phi $ is injective) and then normalizing. The operator spaces that are
approximately injective according to Definition \ref{Definition:approx-inj}
are precisely the nuclear operator spaces; see \cite[\S 14.6]%
{effros_operator_2000}. Similarly the operator spaces that are rigid $%
\mathcal{I}_{\infty }$-structures as in Definition \ref%
{Definition:I-structure} are precisely the rigid rectangular $\mathcal{OL}%
_{\infty ,1+}$ spaces \cite[\S 2]{junge_OL_2003}. It follows from \cite[%
Proposition 5.15]{lupini_uniqueness_2016} that the separable rigid
rectangular $\mathcal{OL}_{\infty ,1+}$ spaces are precisely the operator
spaces that can be written as limits of inductive sequences of
finite-dimensional injective operator spaces with completely isometric
connective maps. Not every nuclear operator space is rigid rectangular $%
\mathcal{OL}_{\infty ,1+}$ space. An example is the Cuntz C*-algebra $%
\mathcal{O}_{2}$ \cite[\S V.4]{davidson_c*-algebras_1996}.

One can then apply the conclusions of Section \ref{Section:general} to prove
that the class of finite-dimensional exact operator spaces form a Fra\"{\i}ss%
\'{e} class, recovering a result from \cite{lupini_uniqueness_2016}. The
corresponding limit is the Gurarij operator space $\mathbb{NG}$ introduced
in \cite{oikhberg_non-commutative_2006} and proved to be unique in \cite%
{lupini_uniqueness_2016}. Theorem \ref{Theorem:retracts} implies that a
separable exact operator space is nuclear if and only if it is completely
isometric to the range of a completely contractive projection of $\mathbb{NG}
$. This recover a result from \cite{lupini_operator_2015}. The existence of
the universal operator on $\mathbb{NG}$ described in Theorem \ref%
{Theorem:universal-cc} follows from considering the class of completely
contractive linear maps between finite-dimensional exact operator spaces, as
discussed in Subsection \ref{Subsection:universal} and Subsection \ref%
{Subsection:general-retracts}. The model-theoretic properties of $\mathbb{NG}
$ have been considered in \cite[Section 5.8]{lupini_uniqueness_2016},
building on \cite{goldbring_model-theoretic_2015}, where it is shown among
other things that $\mathbb{NG}$ is the unique separable exact existentially
closed operator space and the prime model of its first order theory. An
operator space is nuclear if and only if it is positively existentially
closed.

The noncommutative analogs of $M$-ideals in Banach spaces are the complete $%
M $-ideals in operator spaces introduced in \cite{effros_mapping_1994}. It
is proved in \cite[Proposition 4.4]{effros_mapping_1994} that a subspace $N$
of an operator space $Z$ is a complete $M$-ideal if and only if $M_{n}\left(
N\right) $ is an $M$-ideal of $M_{n}(Z)$ for every $n\in \mathbb{N}$. The
following is the natural noncommutative analog of the notion of facial
quotient from Definition \ref{Definition:facial}.

\begin{definition}
\label{Definition:complete-facial}A \emph{complete facial quotient }mapping $%
P:Z\rightarrow X$ between operator spaces is a complete quotient mapping
whose kernel if a complete $M$-ideal.
\end{definition}

It is clear that when $Z,X$ are Banach spaces endowed with the canonical
minimal operator space structure, then $P:Z\rightarrow X$ is a complete
facial quotient if and only if it is a facial quotient.

If $\boldsymbol{K}$ is a compact rectangular matrix convex set as in the
introduction---see also \cite[Section 3]{fuller_boundary_2016}---then one
can define the notion of closed rectangular matrix face of $\boldsymbol{K}$
in terms of complete facial quotients. By definition, a closed rectangular
convex subset $\boldsymbol{F}$ of $\boldsymbol{K}$ is a closed rectangular
matrix face whenever the associated restriction mapping $A_{\sigma }(%
\boldsymbol{K})\rightarrow A_{\sigma }(\boldsymbol{F})$ is a complete facial
quotient.

Recall that an operator space $X$ satisfies the operator metric
approximation property if the identity map of $X$ is the pointwise limit of
finite rank completely contractive linear maps \cite%
{effros_approximation_1990}. The following characterization of complete
facial quotients is the natural noncommutative analog of Proposition \ref%
{Proposition:characterize-face}.

\begin{proposition}
\label{Proposition:characterize-operator-face}Suppose that $X,Y$ are
operator spaces, and $P:Z\rightarrow X$ is a complete quotient map. The
following statements are equivalent:

\begin{enumerate}
\item $P$ is a complete facial quotient;

\item whenever $\varepsilon >0$, $E\subset F$ are finite-dimensional
operator spaces, $g:F\rightarrow X$ is a linear complete contraction, and $%
f:E\rightarrow Z$ is a linear complete isometry such that $P\circ f=g|_{E}$,
then there exists a linear complete contraction $\widehat{g}:F\rightarrow Z$
such that $P\circ \widehat{g}=g$ and $\left\Vert \widehat{g}%
|_{E}-f\right\Vert _{cb}\leq \varepsilon $;

\item whenever $\varepsilon >0$, $A$ is a separable operator space with the
operator metric approximation property, $E\subset A$ is a finite-dimensional
subspace, and $f:E\rightarrow Z$ and $g:A\rightarrow X$ are linear complete
contractions such that $\left\Vert P\circ f-g|_{E}\right\Vert
_{cb}<\varepsilon $, then there exists a linear complete contraction $%
\widehat{g}:A\rightarrow Z$ such that $P\circ \widehat{g}=g$ and $\left\Vert 
\widehat{g}|_{E}-f\right\Vert _{cb}<6\varepsilon $;
\end{enumerate}

If furthermore $Z$ is exact and $X$ is nuclear, then these are also
equivalent to:

\begin{enumerate}
\item[(4)] for any $\varepsilon >0$, $q\in \mathbb{N}$, finite-dimensional $%
q $-minimal operator spaces $E\subset F$, linear complete contractions $%
f:E\rightarrow Z$ and $g:F\rightarrow X$ such that $P\circ f=g|_{E}$, there
exists a linear complete contraction $\widehat{g}:F\rightarrow Z$ such that $%
P\circ \widehat{g}=g$ and $\left\Vert \widehat{g}|_{E}-f\right\Vert
_{cb}\leq \varepsilon $.
\end{enumerate}
\end{proposition}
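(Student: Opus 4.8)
The plan is to prove the equivalence of (1), (2), (3) for arbitrary operator spaces, and then the equivalence with (4) under the exactness/nuclearity hypothesis. The overall strategy mirrors the proof of Proposition \ref{Proposition:characterize-face}, replacing the function-system tools (Kadison duality, $M$-ideal theory for Lindenstrauss spaces) with their operator-space analogs from Effros--Ruan \cite{effros_mapping_1994}. The implication $(2)\Rightarrow(1)$ will be the conceptual core, proved via the matricial intersection-property characterization of complete $M$-ideals: by \cite[Proposition 4.4]{effros_mapping_1994}, $N$ is a complete $M$-ideal of $Z$ iff $M_n(N)$ is an $M$-ideal of $M_n(Z)$ for every $n$, and each such $M$-ideal property can be detected by an appropriate three-ball condition as in \cite[Theorem 2.2]{harmand_M-ideals_1993}. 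The lifting hypothesis (2), applied at each matrix level, will produce the elements needed to verify that intersection property.

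First I would establish $(2)\Rightarrow(1)$. Fix $n\in\mathbb{N}$ and elements $y^{(1)},y^{(2)},y^{(3)}\in\mathrm{Ball}(M_n(N))$ together with $x\in\mathrm{Ball}(M_n(Z))$, aiming to produce $y\in\mathrm{Ball}(M_n(N))$ with $\|x+y^{(\ell)}-y\|\le 1+\varepsilon$ for $\ell\in\{1,2,3\}$. Following the construction in the proof of Proposition \ref{Proposition:characterize-biface}, I would build a finite-dimensional operator space $F$ from $E\oplus M_n(\mathbb{C})$ together with a suitable family of completely contractive maps encoding the required inequalities, define the obvious complete contraction $g:F\to X$ factoring through $P$, and invoke hypothesis (2) to lift it to a complete contraction $\widehat{g}:F\to Z$ with $P\circ\widehat{g}=g$ and $\|\widehat{g}|_E-\iota_E\|_{cb}\le\varepsilon$. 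The witness $y$ is then read off from the image of a distinguished generator. The subtlety here, compared to the scalar case, is that one must carry out this argument uniformly across all matrix levels and organize the finite-dimensional data so that completely contractive (rather than merely contractive) liftings suffice; this is where the operator-space small perturbation lemma \cite[Lemma 2.13.2]{pisier_introduction_2003} enters.

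Next I would handle the remaining equivalences among (1)--(3). The implication $(1)\Rightarrow(3)$ follows the pattern of \cite[Theorem 2.6]{choi_lifting_1977}, using a completely contractive lifting lemma in place of \cite[Lemma 2.5]{choi_lifting_1977}; the operator metric approximation property of $A$ provides finite-rank approximants that allow the perturbation estimate to be pushed through with constant $6\varepsilon$. The implication $(3)\Rightarrow(2)$ is immediate, since a finite-dimensional domain trivially has the operator metric approximation property and the hypothesis $P\circ f=g|_E$ is the exact ($\varepsilon=0$) case of (3)'s near-commutation. This closes the cycle $(1)\Rightarrow(3)\Rightarrow(2)\Rightarrow(1)$.

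Finally, under the assumption that $Z$ is exact and $X$ is nuclear, I would prove $(2)\Leftrightarrow(4)$. The direction $(2)\Rightarrow(4)$ is trivial, as $q$-minimal operator spaces are a special case of finite-dimensional operator spaces. For $(4)\Rightarrow(2)$ the plan is to approximate: given finite-dimensional $E\subset F$ and the lifting problem in (2), exactness of $Z$ lets me find, for each $\delta>0$, a $q$ and completely contractive factorizations of the relevant maps through $q$-minimal operator spaces $\mathrm{MIN}_q(\cdot)$ (the spaces constituting $\mathcal{A}_q$), while nuclearity of $X$ provides completely contractive approximate factorizations of the identity of $X$ through finite-dimensional injectives. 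Composing these with the $q$-minimal lifting furnished by (4) and controlling the accumulated error via the small perturbation lemma yields the desired complete contraction $\widehat{g}$ up to $\varepsilon$. \textbf{The hard part} will be this last approximation step: one must interleave the exactness factorization of $Z$, the nuclearity factorization of $X$, and the $q$-minimal hypothesis of (4) so that all three are compatible over a common $q$ and the completely-bounded error estimates compose additively rather than multiplicatively blowing up. Careful bookkeeping of the tolerances, exactly as in the stratified framework of Section \ref{Section:general}, should make this manageable.
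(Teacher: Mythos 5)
Your treatment of the equivalence of (1)--(3) follows the paper's route: reduce (1) to the statement that $M_{n}(N)$ is an $M$-ideal of $M_{n}(Z)$ for each $n$ via \cite[Proposition 4.4]{effros_mapping_1994}, verify the three-ball criterion of \cite[Theorem 2.2]{harmand_M-ideals_1993} at a fixed level $n$ by building an auxiliary operator space $F$ on $E\oplus M_{n}(\mathbb{C})$ and lifting, prove (1)$\Rightarrow$(3) by the Choi--Effros lifting pattern, and note (3)$\Rightarrow$(2) is trivial. Two small corrections there: no uniformity across matrix levels is needed (each $n$ is handled separately), and the small perturbation lemma plays no role in (2)$\Rightarrow$(1); the actual work, which you leave implicit, is in choosing the family of pairs $(\varphi,\psi)$ defining the matrix norms on $F$ so that the inclusion $E\subset F$ is completely isometric and the map $(z,\alpha)\mapsto P(z)$ is completely contractive.

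The genuine gap is in your implication (4)$\Rightarrow$(2). Conclusion (2) demands an \emph{exact} intertwining $P\circ\widehat{g}=g$ with $\widehat{g}$ completely contractive, but your approximation scheme cannot deliver it. To apply (4) you must pass to $\mathrm{MIN}_{q}(E)\subset\mathrm{MIN}_{q}(F)$, and since the formal identity $\mathrm{MIN}_{q}(E)\rightarrow E$ has cb-norm $1+\delta>1$ (exactness of $Z$ only makes $\delta$ small, not zero), the map $f$ must be rescaled by $\frac{1}{1+\delta}$ to become completely contractive on $\mathrm{MIN}_{q}(E)$; the hypothesis $P\circ f=g|_{E}$ of (4) then forces the same rescaling of $g$, and the lifting produced by (4) satisfies $P\circ\widehat{g}=\frac{1}{1+\delta}g$ rather than $P\circ\widehat{g}=g$. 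Undoing the rescaling gives $\Vert\widehat{g}\Vert_{cb}\leq 1+\delta$ instead of $\leq 1$, and since $F$ maps into the infinite-dimensional space $Z$ there is no compactness available to pass to a limit as $\delta\rightarrow 0$ inside $Z$. The paper avoids this entirely by proving (4)$\Rightarrow$(1) directly: the target there is the three-ball inequality $\Vert x+y^{(\ell)}-y\Vert\leq 1+\varepsilon$, which is itself an approximate condition, so the $\frac{1}{1+\delta}$ scaling is simply absorbed into $\varepsilon$. You should replace your (4)$\Rightarrow$(2) with (4)$\Rightarrow$(1) (closing the cycle via the already-established (1)$\Rightarrow$(3)$\Rightarrow$(2)), carrying out the same $E\oplus M_{n}(\mathbb{C})$ construction but with $F$ given a $q$-minimal structure, the maps scaled by $\frac{1}{1+\delta}$, and $\delta\leq\varepsilon/4$.
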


\begin{proof}
The implication (1)$\Rightarrow $(3) can be proved as \cite[Theorem 5.2]%
{effros_mapping_1994}. The implications (3)$\Rightarrow $(2) is obvious.

(2)$\Rightarrow $(1) We denote by $N$ the kernel of $P$. Fix $n\in \mathbb{N}
$. It is enough to prove that $M_{n}\left( N\right) $ is an $M$-ideal of $%
M_{n}\left( Z\right) $. Fix $\varepsilon >0$, $%
y^{(1)}=[y_{ij}^{(1)}],y^{(2)}=[y_{ij}^{(2)}],y^{(3)}=[y_{ij}^{(3)}]\in
M_{n}\left( N\right) $ and $x=[x_{ij}]\in M_{n}\left( Z\right) $ such that $%
\max \left\{ \left\Vert y^{(1)}\right\Vert ,\left\Vert y^{(2)}\right\Vert
,\left\Vert y^{(3)}\right\Vert ,\left\Vert x\right\Vert \right\} \leq 1$. In
view of the implication (iv)$\Rightarrow $(i) in \cite[Theorem 2.2]%
{harmand_M-ideals_1993}, it is enough to prove that there exists $y\in
M_{n}\left( N\right) $ such that $\left\Vert y\right\Vert \leq 1$ and $%
\left\Vert x+y^{(\ell )}-y\right\Vert \leq 1+\varepsilon $ for $\ell \in
\left\{ 1,2,3\right\} $. Consider 
\begin{equation*}
E=span\left\{ y_{ij}^{(k)},x_{ij}:i,j\leq n\right\} \subset Z\text{.}
\end{equation*}

We denote by $e_{ij}$ the matrix units of $M_{n}(\mathbb{C})$ and by $e$ the
element $\left[ e_{ij}\right] $ of $M_{n}(M_{n}(\mathbb{C}))$. Let $F$ be
the operator space obtained from $E\oplus M_{n}(\mathbb{C})$ and the
collection of linear maps $E\oplus M_{n}(\mathbb{C})\rightarrow B(H)$ of the
form $\left( z,\alpha \right) \mapsto \varphi \left( z\right) +\psi \left(
\alpha \right) $ where $\varphi :E\rightarrow B(H)$ is completely
contractive, $\psi :M_{n}(\mathbb{C})\rightarrow B(H)$ is such that $%
\left\Vert \psi ^{(n)}\left( e\right) \right\Vert \leq 1$, and $\left\Vert
\varphi ^{(n)}\left( x+y^{(\ell )}\right) -\psi ^{(n)}\left( e\right)
\right\Vert \leq 1$. By definition we have that the norm of $\left(
x+y^{(\ell )},-e\right) $ evaluated in $M_{n}(F)$ is at most $1$. We observe
that the canonical inclusion $E\subset F$ is completely isometric. Indeed if 
$k\in \mathbb{N}$ and $z\in M_{k}\left( E\right) $ is such that $\left\Vert
z\right\Vert =1$ then there exists a completely contractive map $\varphi
:E\rightarrow B(H)$ such that $\left\Vert \varphi ^{(k)}\left( z\right)
\right\Vert =1$. Define $\psi :M_{n}(\mathbb{C})\rightarrow B(H)$, $%
e_{ij}\mapsto \varphi \left( x_{ij}\right) $. The maps $\varphi $ and $\psi $
witness that the image of $z$ inside $M_{k}\left( F\right) $ has norm $1$.
This concludes the proof that the inclusion $E\subset F$ is completely
isometric. Define now the map $g:F\rightarrow X$ by mapping $\left( z,\alpha
\right) $ to $P\left( z\right) $. Observe that $g$ is completely
contractive. Indeed if $k\in \mathbb{N}$, $z\in M_{k}(X)$, and $\alpha \in
M_{k}(\mathbb{C})$, pick a completely contractive map $\rho :X\rightarrow
B(H)$ such that $\left\Vert \left( \rho \circ P\right) \left( z\right)
\right\Vert =\left\Vert P\left( z\right) \right\Vert $. Then the maps $%
\varphi :=\left( \rho \circ P\right) |_{E}$ and $\psi =0$ witness that $%
\left\Vert P\left( z\right) \right\Vert $ is smaller than or equal to the
norm of $\left( z,\alpha \right) $ evaluated in $F$. This shows that the map 
$g$ is completely contractive. Applying our assumption to the map $g$ and
the inclusion map $f:E\rightarrow Z$ one obtains a completely contractive
map $\widehat{g}:F\rightarrow Z$ such that $P\circ \widehat{g}=g$ and $%
\left\Vert \widehat{g}|_{E}-f\right\Vert _{cb}\leq \varepsilon $. Set now $%
y_{ij}=\widehat{g}(e_{ij})$ for $i,j\leq n$ and $y=[y_{ij}]\in M_{n}\left(
Z\right) $. We have that for $\ell \in \left\{ 1,2,3\right\} $,%
\begin{eqnarray*}
\left\Vert x+y^{(\ell )}-y\right\Vert _{M_{n}(Z)} &=&\left\Vert x+y^{(\ell
)}-\widehat{g}^{(n)}(e)\right\Vert _{M_{n}(Z)} \\
&\leq &\left\Vert \widehat{g}^{(n)}(x+y^{(\ell )}-e)\right\Vert
_{M_{n}(Z)}+\varepsilon \leq \left\Vert (x+y^{(\ell )},-e)\right\Vert
_{M_{n}(F)}+\varepsilon \leq 1+\varepsilon \text{.}
\end{eqnarray*}%
This concludes the proof.

Suppose now that $X,Z$ are rigid rectangular $\mathcal{OL}_{\infty ,1+}$
spaces.

(4)$\Rightarrow $(1) As in the proof of (2)$\Rightarrow $(1), we fix $%
\varepsilon \in (0,1]$, $%
y^{(1)}=[y_{ij}^{(1)}],y^{(2)}=[y_{ij}^{(2)}],y^{(3)}=[y_{ij}^{(3)}]\in
M_{n}\left( N\right) $ and $x=[x_{ij}]\in M_{n}\left( Z\right) $ such that $%
\max \left\{ \left\Vert y^{(1)}\right\Vert ,\left\Vert y^{(2)}\right\Vert
,\left\Vert y^{(3)}\right\Vert ,\left\Vert x\right\Vert \right\} \leq 1$. We
want to prove that there exists $y\in M_{n}\left( N\right) $ such that $%
\left\Vert y\right\Vert \leq 1$ and $\left\Vert x+y^{(\ell )}-y\right\Vert
\leq 1+\varepsilon $ for $\ell \in \left\{ 1,2,3\right\} $. Define $E\subset
Z$ and $e\in M_{n}(E)$ as in the proof of (2)$\Rightarrow $(1). Fix $\delta
\in (0,\varepsilon /4]$. For $q\in \mathbb{N}$, we denote by $\mathrm{MIN}%
_{q}\left( E\right) $ the space $E$ endowed with its canonical $q$-minimal
operator space structure; see \cite[Section 2]{oikhberg_operator_2004}.
Define $B$ to be the image of $E$ under $P$, and $\iota _{B}:B\rightarrow X$
the inclusion map. Since $Z$ is exact and $X$ is nuclear, there exist $q\geq
n$ and completely contractive maps $\gamma :B\rightarrow M_{q}(\mathbb{C})$
and $\rho :M_{q}(\mathbb{C})\rightarrow Z$ such that $\left\Vert \rho \circ
\gamma -\iota _{B}\right\Vert _{cb}\leq \delta /2$ and the inclusion map $%
\iota _{E}:\mathrm{\mathrm{MIN}}_{q}\left( E\right) \rightarrow Z$ has
completely bounded norm at most $1+\delta $. Let $F$ be the $q$-minimal
operator space obtained from $E\oplus M_{n}(\mathbb{C})$ and the collection
of linear maps $E\oplus M_{n}(\mathbb{C})\rightarrow M_{q}(\mathbb{C})$, $%
\left( z,\alpha \right) \mapsto \varphi \left( z\right) +\psi \left( \alpha
\right) $ such that $\varphi :E\rightarrow M_{q}(\mathbb{C})$ is completely
contractive, $\left\Vert \psi ^{(n)}(e)\right\Vert \leq 1$, and $\left\Vert
\varphi ^{(n)}\left( x+y^{(\ell )}\right) -\psi ^{(n)}\left( e\right)
\right\Vert \leq 1+\varepsilon $. Define also $g:F\rightarrow X$ by $g\left(
z,\alpha \right) =\frac{1}{1+\delta }P\left( z\right) $ and $f:\mathrm{MIN}%
_{q}\left( E\right) \rightarrow Z$ by $f:=\frac{1}{1+\delta }\iota _{E}$.
Observe that the inclusion $\mathrm{MIN}_{q}\left( E\right) \subset F$ is
completely isometric, the maps $g:F\rightarrow X$ and $f:\mathrm{MIN}%
_{q}\left( E\right) \rightarrow Z$ are completely contractive, and $P\circ
f=g|_{\mathrm{MIN}_{q}\left( E\right) }$. Therefore by assumption there
exists a completely contractive map $\widehat{g}:F\rightarrow Z$ such that $%
P\circ \widehat{g}=g$ and $\left\Vert \widehat{g}|_{\mathrm{MIN}_{q}\left(
E\right) }-f\right\Vert _{cb}\leq \delta $. Set $y:=\widehat{g}^{(n)}\left(
e\right) $. Hence we have for $\ell \in \left\{ 1,2,3\right\} $,%
\begin{eqnarray*}
\left\Vert x+y^{(\ell )}-y\right\Vert _{M_{n}(Z)} &\leq &\left\Vert \frac{1}{%
1+\delta }(x+y^{(\ell )})-\widehat{g}^{(n)}\left( e\right) \right\Vert
_{M_{n}(Z)}+2\delta =\left\Vert f^{(n)}(x+y^{(\ell )})-\widehat{g}%
^{(n)}\left( e\right) \right\Vert _{M_{n}(Z)}+2\delta \\
&\leq &\left\Vert \widehat{g}(x+y^{(\ell )}-e)\right\Vert
_{M_{n}(Z)}+4\delta \leq \left\Vert (x+y^{(\ell )},-e)\right\Vert
_{M_{n}(F)}+4\delta \leq 1+\varepsilon \text{.}
\end{eqnarray*}%
This concludes the proof.
\end{proof}

Suppose that $H,K$ are Hilbert spaces, and $X\subset B\left( H,K\right) $
and $Z$ are operator spaces. A \emph{rectangular operator convex combination 
}as defined in \cite{fuller_boundary_2016} is an expression $\alpha
_{1}^{\ast }\phi _{1}\beta _{1}+\cdots +\alpha _{n}^{\ast }\phi _{n}\beta
_{n}$ where $\phi _{i}:Z\rightarrow B(H_{i},K_{i})$ are completely
contractive maps for some Hilbert spaces $H_{i},K_{i}$, and $\beta
_{i}:H\rightarrow H_{i}$ and $\alpha _{i}:K\rightarrow K_{i}$ are linear
maps of norm at most $1$. We say that $\alpha _{1}^{\ast }\phi _{1}\beta
_{1}+\cdots +\alpha _{n}^{\ast }\phi _{n}\beta _{n}$ is a \emph{proper }%
rectangular operator convex combination if $\alpha _{i},\beta _{i}$ are
surjective, $\alpha _{1}^{\ast }\alpha _{1}+\cdots +\alpha _{n}^{\ast
}\alpha _{n}=1$, and $\beta _{1}^{\ast }\beta _{1}+\cdots +\beta _{n}^{\ast
}\beta _{n}=1$. A proper rectangular operator convex combination $\phi
=\alpha _{1}^{\ast }\phi _{1}\beta _{1}+\cdots +\alpha _{n}^{\ast }\phi
_{n}\alpha _{n}$ is \emph{trivial }if $\alpha _{i}^{\ast }\alpha
_{i}=\lambda _{i}1$, $\beta _{i}^{\ast }\beta _{i}=\lambda _{i}$, and $%
\alpha _{i}^{\ast }\phi _{i}\beta _{i}=\lambda _{i}\phi $ for some $\lambda
_{i}\in \left[ 0,1\right] $. A completely contractive map $\phi
:Z\rightarrow X$ such that $\left\Vert \phi \right\Vert _{cb}=1$ is a \emph{%
rectangular operator extreme point} if any proper rectangular operator
convex combination $\phi =\alpha _{1}^{\ast }\phi _{1}\beta _{1}+\cdots
+\alpha _{n}^{\ast }\phi _{n}\beta _{n}$ is trivial. We observe that, if $V$
is a finite-dimensional injective operator space, then the identity map $%
V\rightarrow V$ is a rectangular operator extreme point. Indeed in this case 
$V$ is a ternary ring of operators \label{smith_injective_2000}. The
conclusion follows by passing to the linking algebra \cite{kaur_local_2002}
and then applying \cite[Corollary 1.4.3]{arveson_subalgebras_1969}.

\begin{proposition}
\label{Proposition:rectangular-extreme}Suppose that $Z$ and $X$ are rigid
rectangular $\mathcal{OL}_{\infty ,1+}$ spaces and $\phi :Z\rightarrow X$ is
a complete facial quotient. Then $\phi $ is a rectangular operator extreme
point.
\end{proposition}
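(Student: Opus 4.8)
The plan is to represent the given proper rectangular operator convex combination as a single compression and then show that this compression is reducing, using the facial structure to strip off the rectangular degrees of freedom and the $M$-ideal structure of the kernel to pin down the remaining ordinary convexity. First I would put the decomposition $\phi = \sum_{i=1}^n \alpha_i^\ast \phi_i \beta_i$ into compression form. Writing $X \subseteq B(H,K)$ and letting $R = \mathrm{col}(\alpha_i) : K \to \bigoplus_i K_i$ and $S = \mathrm{col}(\beta_i) : H \to \bigoplus_i H_i$, properness (namely $\sum_i \alpha_i^\ast\alpha_i = 1_K$, $\sum_i\beta_i^\ast\beta_i = 1_H$, and surjectivity of the $\alpha_i,\beta_i$) says exactly that $R$ and $S$ are isometries, while $\Phi := \bigoplus_i \phi_i$ is a complete contraction into $B(\bigoplus_i H_i, \bigoplus_i K_i)$. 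Thus $\phi = R^\ast \Phi S$, and the goal becomes to show $\alpha_i^\ast\alpha_i = \lambda_i 1_K$, $\beta_i^\ast\beta_i = \lambda_i 1_H$ and $\alpha_i^\ast\phi_i\beta_i = \lambda_i\phi$ for suitable $\lambda_i \in [0,1]$.

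Next I would use the facial structure to produce a completely isometric section. Since $X$ is a rigid rectangular $\mathcal{OL}_{\infty,1+}$ space it is nuclear, and since $\phi$ is a complete facial quotient, Proposition \ref{Proposition:characterize-operator-face} (together with a back-and-forth over the finite-dimensional injective subspaces exhausting $X$, as in Theorem \ref{Theorem:general-retracts}) yields a complete contraction $\eta: X \to Z$ with $\phi\circ\eta = \mathrm{id}_X$; being a contractive section of a complete contraction, $\eta$ is automatically a complete isometry. Composing on the right gives $\mathrm{id}_X = R^\ast(\Phi\eta)S$, a proper rectangular operator convex combination of $\mathrm{id}_X$. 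Now $X = \overline{\bigcup_m X_m}$ with each $X_m$ a finite-dimensional injective operator space, hence a ternary ring of operators; restricting to $X_m$ exhibits the completely isometric inclusion $X_m \hookrightarrow B(H,K)$ as a proper rectangular operator convex combination, and the linking-algebra argument recalled before the statement---passing to the linking C*-algebra via \cite{kaur_local_2002} and invoking Arveson's boundary theorem \cite[Corollary 1.4.3]{arveson_subalgebras_1969}, now applied to the inclusion rather than the identity---forces that combination to be trivial. The scalars so produced satisfy $\alpha_i^\ast\alpha_i = \lambda_i^{(m)} 1_K$; but $\alpha_i^\ast\alpha_i$ does not depend on $m$, so $\lambda_i^{(m)} = \lambda_i$, and letting $m \to \infty$ delivers the scalar relations $\alpha_i^\ast\alpha_i = \lambda_i 1_K$ and $\beta_i^\ast\beta_i = \lambda_i 1_H$ with $\sum_i\lambda_i = 1$.

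Finally, the scalar relations let me rewrite the original combination as an \emph{ordinary} convex combination: setting $\psi_i := \lambda_i^{-1}\alpha_i^\ast\phi_i\beta_i$ (each $\lambda_i > 0$ by properness), each $\psi_i$ is a complete contraction, being a compression of $\phi_i$ by the isometries $\lambda_i^{-1/2}\alpha_i$ and $\lambda_i^{-1/2}\beta_i$, and $\phi = \sum_i \lambda_i\psi_i$. It then suffices to show that a complete facial quotient is an extreme point of the unit ball of $CB(Z,B(H,K))$, for then $\psi_i = \phi$ and hence $\alpha_i^\ast\phi_i\beta_i = \lambda_i\phi$, which is the asserted triviality. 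This last extremality is exactly where the hypothesis that $N = \ker\phi$ is a complete $M$-ideal does its work: I would deduce it from the $M$-ideal ball characterization \cite[Theorem 2.2]{harmand_M-ideals_1993} already used in the proof of Proposition \ref{Proposition:characterize-operator-face}, which prevents $\phi$ from being averaged by complete contractions differing from it on $N$.

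The hard part will be the two passages through the infinite-dimensional limit space: making the completely isometric section $\eta$ exist globally rather than merely locally on each $X_m$, and extracting exact (not approximate) extremality of $\phi$ on the kernel from the complete $M$-ideal property. Both steps reduce matters to the finite-dimensional injective, ternary case where Arveson's boundary theorem and the $M$-ideal ball property apply cleanly, so the delicate point is controlling the direct limit so that the scalars $\lambda_i$ and the identities $\alpha_i^\ast\phi_i\beta_i = \lambda_i\phi$ hold exactly rather than up to an error that must be shown to vanish.
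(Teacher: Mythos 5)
Your first half is sound, and in places it streamlines the paper's argument: condition (3) of Proposition \ref{Proposition:characterize-operator-face}, applied with $A=X$ (separable and nuclear, hence with the operator metric approximation property) and $E=\{0\}$, does produce a global completely contractive section $\eta$ of $\phi$, which is automatically completely isometric; composing with the given decomposition and restricting to the finite-dimensional injective (hence ternary) subspaces exhausting $X$ then yields, via the linking-algebra/Arveson argument, the exact relations $\alpha_i^{\ast}\alpha_i=\lambda_i 1$, $\beta_i^{\ast}\beta_i=\lambda_i 1$ and $\alpha_i^{\ast}(\phi_i\circ\eta)\beta_i=\lambda_i\,\mathrm{id}_X$. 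The paper instead lifts only locally, over $V\oplus^{\infty}W$ for finite-dimensional injective $V\subset Z$, and lets $\varepsilon\to 0$; up to this point your global version is a legitimate alternative.

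The gap is in the last step. What you actually have there is $\psi_i\circ\eta=\phi\circ\eta$ for $\psi_i:=\lambda_i^{-1}\alpha_i^{\ast}\phi_i\beta_i$, i.e.\ agreement of $\psi_i$ with $\phi$ on the section $\eta[X]$, together with the ordinary convex identity $\phi=\sum_i\lambda_i\psi_i$; you still must show $\psi_i=\phi$ on the kernel $N$, where $\phi$ vanishes, and $\sum_i\lambda_i\psi_i|_N=0$ alone does not force $\psi_i|_N=0$. The statement you invoke to finish --- that a complete facial quotient is an extreme point of $\mathrm{Ball}\left(CB(Z,B(H,K))\right)$ --- is nowhere established, is not a formal consequence of the three-ball characterization of $M$-ideals in \cite[Theorem 2.2]{harmand_M-ideals_1993} (which constrains balls in $Z$ around elements of $N$, not the unit ball of $CB(Z,B(H,K))$), and is essentially as strong as the proposition being proved. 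The paper's proof is arranged precisely to avoid this point: it applies the rectangular extreme point property to the projection $g:V\oplus^{\infty}W\rightarrow W$ for an \emph{arbitrary} finite-dimensional injective $V\subset Z$, with a lifting $\widehat{g}$ whose restriction to $\eta[V]$ is $6\varepsilon$-close to the inclusion $V\hookrightarrow Z$; the triviality identity therefore holds up to $12\varepsilon$ on $V$ itself, and such $V$ exhaust all of $Z$, kernel included. To repair your argument you would have to run the lifting over finite-dimensional injective subspaces of $Z$ rather than merely over a complement of $N$, which brings you back to the paper's local scheme.
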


\begin{proof}
Consider $X\subset B\left( H,K\right) $. Suppose that $\phi =\alpha
_{1}^{\ast }\phi _{1}\beta _{1}+\cdots +\alpha _{n}^{\ast }\phi _{n}\alpha
_{n}$ is a proper rectangular matrix convex combination as above. Fix $%
\varepsilon >0$ and a finite-dimensional injective operator space $V\subset
Z $. Since $X$ is a rigid $\mathcal{OL}_{\infty ,1+}$ space, we can find a
finite-dimensional injective operator space $W\subset X$ and a completely
contractive map $\psi :V\rightarrow W$ such that $\left\Vert \psi -\phi
\right\Vert _{cb}<\varepsilon $. Consider now the complete isometry $\eta
:V\rightarrow V\oplus ^{\infty }W$, $x\mapsto \left( x,\psi (x)\right) $ and
the completely contractive map $g:V\oplus ^{\infty }W\rightarrow X$, $\left(
z,y\right) \mapsto y$. Observe that $g\circ \eta =\psi $. Since $\phi $ is a
complete facial quotient, there exists a completely contractive map $%
\widehat{g}:V\oplus ^{\infty }W\rightarrow Z$ such that $\phi \circ \widehat{%
g}=g$ and $\left\Vert \widehat{g}\circ \eta -\iota \right\Vert
_{cb}<6\varepsilon $, where $\iota :V\rightarrow Z$ is the inclusion map. We
have that 
\begin{equation*}
g=\phi \circ \widehat{g}=\alpha _{1}^{\ast }\left( \phi _{1}\circ \widehat{g}%
\right) \beta _{1}+\cdots +\alpha _{n}^{\ast }\left( \phi _{n}\circ \widehat{%
g}\right) \alpha _{n}\text{.}
\end{equation*}%
Since $g$ is a rectangular operator extreme point, we can conclude that
there exist $\lambda _{1},\ldots ,\lambda _{n}\in \left[ 0,1\right] $ such
that $\alpha _{i}^{\ast }\alpha _{i}=\lambda _{i}1$, $\beta _{i}^{\ast
}\beta _{i}=\lambda _{i}$, and $\alpha _{i}^{\ast }\left( \phi _{i}\circ 
\widehat{g}\right) \beta _{i}=\lambda _{i}\left( \phi \circ \widehat{g}%
\right) $ for $i=1,2,\ldots ,n$. Since $\left\Vert \widehat{g}\circ \eta
-\iota \right\Vert _{cb}<\varepsilon $ we conclude that%
\begin{equation*}
\left\Vert \alpha _{i}^{\ast }\phi _{i}|_{V}\beta _{i}-\lambda _{i}\phi
|_{V}\right\Vert <12\varepsilon
\end{equation*}%
for $i=1,2,\ldots ,n$. Since this holds for any $\varepsilon >0$ and any
finite-dimensional injective operator space $V\subset Z$, it follows by
compactness and the fact that $Z$ is a rigid rectangular $\mathcal{OL}%
_{\infty ,1+}$ space that the proper rectangular operator convex combination 
$\phi =\alpha _{1}^{\ast }\phi _{1}\beta _{1}+\cdots +\alpha _{n}^{\ast
}\phi _{n}\alpha _{n}$ is trivial. This concludes the proof that $\phi $ is
a rectangular operator extreme point.
\end{proof}

Fix now a separable nuclear operator space $X$, and consider the generic
completely contractive map $\Omega _{\mathbb{\mathbb{NG}}}^{X}:\mathbb{%
\mathbb{N}G}\rightarrow X$ as in Subsection \ref{Subsection:universal}. Then
the characterization of such a map from Subsection \ref{Subsection:universal}
together with Proposition \ref{Proposition:characterize-operator-face} shows
that $\Omega _{\mathbb{NG}}^{X}$ is a complete facial quotient in the sense
of Definition \ref{Definition:complete-facial}. Furthermore if $X$ is a
rigid rectangular $\mathcal{OL}_{\infty ,1+}$ space (and particularly when $%
X=M_{n,m}(\mathbb{C})$ for some $n,m\in \mathbb{C}$) one has that $\Omega _{%
\mathbb{NG}}^{X}$ is a rectangular operator extreme point. These
observations together with the general results on universal morphisms from
Section \ref{Section:operators} and Section \ref{Section:states} conclude
the proof of Theorem \ref{Theorem:NG}, Theorem \ref{Theorem:universal-cc},
and Theorem \ref{Theorem:universal-projection-NG}.

\subsection{Exact operator systems\label{Subsection:osystems}}

An operator system can be defined as an operator space $X$ with a
distinguished element $1$ (its \emph{unit}) such that there exists a
completely isometric linear map from $X$ to the space $B(H)$ of bounded
linear operators on a Hilbert space that moreover maps the distinguished
element $1$ of $X$ to the identity operator of $H$. Any operator system is
endowed with an involution $x\mapsto x^{\ast }$ coming from the inclusion $%
X\subset B(H)$. A linear map between operator systems is \emph{unital} if it
maps the unit to the unit. A unital linear completely contractive maps
between operator systems is automatically \emph{self-adjoint}, that is it
commutes with taking adjoints.

An operator system can be regarded as a structure in the language of
operator spaces with the addition of a constant symbol for the unit and a
unary function symbol for the involution. The results from \cite%
{blecher_metric_2011} show that operator systems form an axiomatizable class
in this language. An earlier characterization of operator systems due to
Choi and Effros involves the unit and the matrix positive cones \cite%
{choi_injectivity_1977}. In this setting morphisms\emph{\ }will be unital
completely contractive linear maps. Similarly embeddings will be unital
completely isometric linear maps.

An operator system $X$ is called \emph{exact} if it is exact as an operator
space or, equivalently, for every $\delta >0$ and finite-dimensional
subspace $E$ of $X$, there exists $n\in \mathbb{N}$ and a \emph{unital}
completely contractive map $f:X\rightarrow M_{n}$ such that $\left\Vert id_{%
\mathcal{K}}\otimes f^{-1}\right\Vert \leq 1+\delta $; see \cite[Section 5]%
{kavruk_quotients_2013}. Any $M_{q}$-system $X$ has a canonical (exact)
operator system structure $\mathrm{OM\mathrm{IN}}_{q}(X)$ obtained by setting%
\begin{equation*}
\left\Vert x\right\Vert =\sup_{\phi }\left\Vert \left( id_{\mathcal{K}%
}\otimes \phi \right) (x)\right\Vert
\end{equation*}%
for $x\in \mathcal{K}\otimes X$, where $\phi $ ranges among all the unital $%
q $-contractive linear maps from $X$ to $M_{q}$; see \cite{xhabli_super_2012}%
. The operator systems of the form $\mathrm{OMIN}_{q}(X)$ are called $q$%
-minimal in \cite{xhabli_super_2012}. By the Arveson extension theorem \cite[%
Theorem 7.5]{paulsen_completely_2002} the finite-dimensional injective
operator systems are the finite $\infty $-sums of copies of $M_{n}(\mathbb{C}%
)$ for $n\in \mathbb{N}$. These are also precisely the finite-dimensional
C*-algebras.

Let now $\mathcal{A}$ be the class of exact operator systems and, for every $%
q\in \mathbb{N}$, $\mathcal{A}_{q}$ be the class of $q$-minimal operator
systems and $\mathcal{I}_{q}$ the class of finite $\infty $-sums of copies
of $M_{d}(\mathbb{C})$ for $d\leq q$ (these are precisely the
finite-dimensional $q$-minimal injective operator systems). The class $%
\mathcal{I}_{\infty }$ is the union of $\mathcal{I}_{q}$ for $q\in \mathbb{N}
$. Finally we let $\mathcal{I}$ be the class of operator systems of the form 
$M_{n}(\mathbb{C})$ for some $n\in \mathbb{N}$. One can verify as for
operator spaces that the assumptions of Section \ref{Section:general} apply.
The main difference lies in verifying Condition (2) of Subsection \ref%
{Subsection:injective} As for $M_{q}$-systems, here one needs to approximate
an approximately completely contractive self-adjoint unital linear map by a
completely contractive unital linear map. This can be done using the
following lemma. The\emph{\ completely bounded norm }$\left\Vert
f\right\Vert _{cb}$ of a linear map between operator spaces $f:X\rightarrow
Y $ is the norm of $id_{\mathcal{K}}\otimes f:\mathcal{K}\otimes
X\rightarrow \mathcal{K}\otimes Y$. Recall that a unital linear map between
operator systems is completely contractive if and only if it is completely
positive, i.e.\ for every $n\in \mathbb{N}$ and positive element $x$ of $%
\mathcal{K}\otimes X$ the image $\left( id_{\mathcal{K}}\otimes f\right) (x)$
is positive.

\begin{lemma}
\label{Lemma:perturb-ucp}Suppose that $V,W$ are operator systems, and $%
f:V\rightarrow W$ is a self-adjoint linear map such that $\left\Vert
f\right\Vert _{cb}\leq 1+\delta $. If $W$ is injective and $f$ is unital,
then there exists a unital completely positive linear map $g:V\rightarrow W$
such that $\left\Vert g-f\right\Vert _{cb}\leq 2\delta $. If $W$ is an
arbitrary operator system, $V$ has finite dimension $n$, and $f$ is either
unital or completely contractive, then there exists a unital completely
positive linear map $g:V\rightarrow W$ such that $\left\Vert g-f\right\Vert
_{cb}\leq 2n\delta $.
\end{lemma}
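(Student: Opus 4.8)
The plan is to prove the two assertions separately, in each case producing the desired unital completely positive map by first splitting off the obstruction to positivity and then repairing the unit. Throughout I use the two standard facts recalled before the statement: a unital self-adjoint map is completely contractive if and only if it is completely positive, and for a completely positive map $\phi$ one has $\left\Vert \phi\right\Vert _{cb}=\left\Vert \phi(1)\right\Vert $. The argument is the matricial counterpart of the proof of Lemma~\ref{Lemma:perturb-function-system}, that is, of the lifting technique of \cite{effros_lifting_1985}; the only genuinely new ingredient needed for the injective case is Wittstock's decomposition of self-adjoint completely bounded maps.

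For the injective case I would first reduce to $W=B(H)$. Fix a unital complete embedding $W\subseteq B(H)$; by injectivity there is a unital completely positive projection $P\colon B(H)\to W$ with $P|_{W}=\mathrm{id}_{W}$. If $g_{0}\colon V\to B(H)$ is unital completely positive with $\left\Vert g_{0}-f\right\Vert _{cb}\leq 2\delta$, then $g:=P\circ g_{0}$ is unital completely positive into $W$, and since $P$ is a complete contraction with $P\circ f=f$ one gets $\left\Vert g-f\right\Vert _{cb}\leq\left\Vert g_{0}-f\right\Vert _{cb}\leq 2\delta$. So it suffices to construct $g_{0}$ for the target $B(H)$. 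Here I apply Wittstock's decomposition \cite{wittstock_matrix_1984} to the self-adjoint map $f$, writing $f=\phi_{1}-\phi_{2}$ with $\phi_{1},\phi_{2}$ completely positive and $\left\Vert \phi_{1}+\phi_{2}\right\Vert _{cb}\leq\left\Vert f\right\Vert _{cb}\leq 1+\delta$. Unitality gives $\phi_{1}(1)-\phi_{2}(1)=1$, and setting $b:=\phi_{2}(1)\geq 0$, $\phi_{1}(1)=1+b$, the identity $\left\Vert \phi_{1}(1)+\phi_{2}(1)\right\Vert =\left\Vert 1+2b\right\Vert =1+2\left\Vert b\right\Vert \leq 1+\delta$ forces $\left\Vert b\right\Vert \leq\delta/2$. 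Putting $c:=(1+b)^{-1/2}$ (so $\left\Vert c\right\Vert \leq 1$ and $\left\Vert c-1\right\Vert \leq\left\Vert b\right\Vert /2\leq\delta/4$) and $g_{0}(x):=c\,\phi_{1}(x)\,c$, the map $g_{0}$ is completely positive and $g_{0}(1)=c(1+b)c=1$, hence unital. A routine triangle estimate, using $\left\Vert f-\phi_{1}\right\Vert _{cb}=\left\Vert \phi_{2}\right\Vert _{cb}=\left\Vert b\right\Vert $ and $\left\Vert g_{0}-\phi_{1}\right\Vert _{cb}\leq(\left\Vert c\right\Vert +1)\left\Vert c-1\right\Vert \left\Vert \phi_{1}\right\Vert _{cb}$, then bounds $\left\Vert g_{0}-f\right\Vert _{cb}$ by at most $2\delta$.

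For arbitrary $W$ with $\dim V=n$ the reduction above fails, because a non-injective $W$ carries no unital completely positive projection from its ambient space, so a map built inside $B(H)$ cannot be pushed back into $W$. Instead I would run the matricial version of the argument proving Lemma~\ref{Lemma:perturb-function-system}: represent $W$ completely isometrically in a suitable $C(K,\mathcal{K})$-type ambient system, then correct $f$ so that it lands in $W$ while repairing complete positivity and preserving unitality. The completely contractive subcase is first brought to the unital one by the rescaling that fixes the unit (at cost $O(\delta)$), after which one argues exactly as in the unital subcase. The factor $n$ enters because the correction is performed over a basis of $V$ (equivalently, over the finitely many linear constraints cutting out the $n$-dimensional image of $f$ inside $W$): each of the $n$ coordinate corrections costs at most $2\delta$, and they accumulate to the stated bound $2n\delta$.

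I expect the main obstacle to be exactly this last step: producing, for a non-injective target $W$, a genuinely completely positive and unital map into $W$ itself—rather than into an injective enlargement—without a projection at one's disposal. This is where finite-dimensionality of $V$ is essential and where the bookkeeping of the Effros-type lifting argument, together with the linear-algebraic control yielding the dimension-dependent constant $2n\delta$, must be carried out with care; the injective case, by contrast, is soft once Wittstock's decomposition is invoked.
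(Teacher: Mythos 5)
Your treatment of the injective case is correct. After reducing to $W=B(H)$ via a unital completely positive projection, the Wittstock decomposition $f=\phi_{1}-\phi_{2}$, the estimate $\left\Vert \phi_{2}(1)\right\Vert \leq \delta/2$, and the conjugation $g_{0}=\phi_{1}(1)^{-1/2}\phi_{1}(\cdot)\phi_{1}(1)^{-1/2}$ give a complete (and slightly sharper) argument. The paper simply quotes \cite[Corollary B.9]{brown_c*-algebras_2008} at this point, so here you are in effect supplying a proof of the cited result rather than diverging from the paper.

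The second case, however, has a genuine gap, and you have correctly located where it is without closing it. The paper also starts from Wittstock's decomposition $f=\phi_{1}-\phi_{2}$ with $\phi_{1},\phi_{2}\colon V\rightarrow B(H)\supseteq W$ completely positive and $\left\Vert \phi_{2}\right\Vert_{cb}=\left\Vert \phi_{2}(1)\right\Vert \leq\delta$. The missing ingredient is \cite[Lemma 2.4]{effros_lifting_1985}: since $V$ is $n$-dimensional, there is a positive linear \emph{functional} $\theta$ on $V$ with $\left\Vert \theta\right\Vert \leq n\delta$ such that $\theta(\cdot)1-\phi_{2}$ is completely positive. This is precisely the device that converts the bad part $\phi_{2}$, whose values lie outside $W$, into a correction landing in $W$: the map $g_{0}=f+\theta(\cdot)1=\phi_{1}+\bigl(\theta(\cdot)1-\phi_{2}\bigr)$ is completely positive, takes values in $W$ because $f$ does and $\theta(\cdot)1$ takes values in $\mathbb{C}1\subseteq W$, and satisfies $\left\Vert g_{0}-f\right\Vert_{cb}\leq n\delta$; since $g_{0}(1)=(1+\theta(1))1$ is a scalar, a further normalization restores unitality at a cost of at most another $n\delta$. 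Your proposed substitute---correcting $f$ coordinate by coordinate over a basis of $V$, each correction costing $2\delta$---does not work as described: complete positivity is not a coordinate-wise condition and cannot be restored by independent corrections along a basis, and without the domination of $\phi_{2}$ by $\theta(\cdot)1$ there is no mechanism forcing the corrected map back into $W$. The factor $n$ enters through the Effros--Haagerup lemma, not through an accumulation of basis-vector corrections, and this is the step your proposal leaves unproved.
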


\begin{proof}
If $W$ is injective, then we can assume without loss of generality that $%
W=B(H)$. In this case the first assertion follows from \cite[Corollary B.9]%
{brown_c*-algebras_2008}. Suppose now that $W\subset B(H)$ is an arbitrary
operator system and $f$ is unital. The proof in the case when $f$ is
completely contractive is analogous. By Wittstock's decomposition theorem 
\cite[Theorem 8.5]{paulsen_completely_2002} there exist completely positive
maps $\phi _{1},\phi _{2}:V\rightarrow B(H)$ such that $f=\phi _{1}-\phi
_{2} $ and $\left\Vert \phi _{1}+\phi _{2}\right\Vert _{cb}\leq \left\Vert
f\right\Vert _{cb}\leq 1+\delta $. In particular by \cite[Proposition 3.2]%
{paulsen_completely_2002} we have that%
\begin{equation*}
\left\Vert \phi _{1}(1)\right\Vert \leq \left\Vert \phi _{1}(1)+\phi
_{2}(1)\right\Vert \leq \left\Vert \phi _{1}+\phi _{2}\right\Vert _{cb}\leq
1+\delta \text{.}
\end{equation*}%
Since $\phi _{1}(1)-\phi _{2}(1)$ is the identity operator on $H$, this
implies that $\left\Vert \phi _{2}\right\Vert _{cb}=\left\Vert \phi
_{2}(1)\right\Vert \leq \delta $.

By \cite[Lemma 2.4]{effros_lifting_1985} there exists a positive linear
functional $\theta $ on $V$, which can we regard as a function $\theta
:V\rightarrow W$, such that $\theta -\phi _{2}$ is completely positive and $%
\left\Vert \theta \right\Vert \leq n\delta $. Consider now the completely
positive map $g_{0}=f+\theta =\phi _{1}+\left( \theta -\phi _{2}\right) $
and observe that $\left\Vert g_{0}-f\right\Vert \leq n\delta $. Set $%
g(x):=g_{0}(x)+\tau (x)(g_{0}(1)-1)$, where $\tau $ is a state on $V_{0}$.
Then $g$ is a unital completely positive map such that $\left\Vert
g-f\right\Vert _{cb}\leq 2n\delta $.
\end{proof}

Lemma \ref{Lemma:perturb-ucp} shows that Condition (2) of Subsection \ref%
{Subsection:injective} holds for operator systems with $\varpi (\delta
)=2\delta $. As basic tuples one can consider in this context linearly
independent tuples whose first element is the unit. To verify that the
assumptions of Subsection \ref{Subsection:basic} are satisfied one can use
Lemma \ref{Lemma:perturb-ucp} together with the small perturbation argument 
\cite[Lemma 2.13.2]{pisier_introduction_2003}. An operator system is
approximately injective according to Definition \ref{Definition:approx-inj}
if and only if it is nuclear. A (rigid) $\mathcal{I}_{\infty }$-structure as
in Definition \ref{Definition:I-structure} is an operator system which is a
(rigid) $\mathcal{OL}_{\infty ,1+}$ space in the sense of \cite%
{junge_OL_2003}. This follows from Lemma \ref{Lemma:perturb-ucp} together
with the following lemma, which can be proved as \cite[Lemma 2.6]%
{eckhardt_perturbations_2010}.

\begin{lemma}
\label{Lemma:unitize}Suppose that $X,Y$ are operator systems and $\phi
:X\rightarrow Y$ a completely positive map such that $\left\Vert \phi
\right\Vert _{cb}\leq 1+\delta <2$. Consider a state $\tau $ of $X$. If $%
\psi :X\rightarrow Y$ is defined by $\psi (x)=\phi (x)+\tau (x)\left( 1-\phi
(1)\right) $, then $\psi $ is an injective unital completely positive map
such that $\left\Vert \psi ^{-1}\right\Vert \leq \left( 1+\delta \right)
\left( 1-\delta \right) ^{-1}$.
\end{lemma}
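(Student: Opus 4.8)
The plan is to verify in turn that $\psi$ is unital, that it is completely positive, and that it is injective with $\left\Vert \psi^{-1}\right\Vert \le (1+\delta)(1-\delta)^{-1}$, following the scheme of \cite[Lemma 2.6]{eckhardt_perturbations_2010} and reusing the tools already employed in the proof of Lemma \ref{Lemma:perturb-ucp}. First I would record that $\psi$ is unital: since $\tau$ is a state, $\tau(1)=1$, so $\psi(1)=\phi(1)+1\cdot(1-\phi(1))=1$. I would also note, via \cite[Proposition 3.2]{paulsen_completely_2002}, that for the completely positive map $\phi$ one has $\left\Vert \phi(1)\right\Vert =\left\Vert \phi\right\Vert _{cb}\le 1+\delta$; since $\phi(1)$ is positive, this yields the two-sided bound $-\delta 1\le 1-\phi(1)\le 1$. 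In particular the selfadjoint element $1-\phi(1)$ fails to be positive only by at most $\delta$, and its negative part has norm at most $\delta$.

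The heart of the argument, and the main obstacle, is complete positivity: when $\delta>0$ the element $1-\phi(1)$ need not be positive, so $\psi$ is not visibly a sum of completely positive maps, and a naive Stinespring dilation of $\phi$ (which would require $(1-\phi(1))^{1/2}$) is unavailable. To handle this I would follow Eckhardt. Writing $c=\phi(1)$, I would split the defect map $x\mapsto \tau(x)(1-c)$ according to the positive and negative parts of $1-c$, and absorb the negative part—whose norm is dominated by $\delta$ by the bound above—by invoking the Effros lifting lemma \cite[Lemma 2.4]{effros_lifting_1985}, exactly as in the proof of Lemma \ref{Lemma:perturb-ucp}, to produce a positive functional that, added to $\phi$, restores complete positivity of the combination. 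Equivalently, one dilates $\phi$ by Stinespring after an Arveson extension to a containing C*-algebra, and enlarges the dilation by the GNS representation of $\tau$; the smallness of the negative part of $1-c$ is precisely what makes the resulting map completely positive.

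Finally, for injectivity and the inverse bound, $\psi$ is a unital completely positive map between operator systems and is therefore completely contractive, so $\left\Vert \psi\right\Vert _{cb}\le 1$. For the lower estimate I would compare $\psi$ with $\phi$—the map being unitized—and control the correction term $\tau(x)(1-c)$ against $\phi(x)$ using the quantitative bounds on $1-\phi(1)$, by the same computation as in \cite[Lemma 2.6]{eckhardt_perturbations_2010}. This gives a lower bound of the form $\left\Vert \psi(x)\right\Vert \ge (1-\delta)(1+\delta)^{-1}\left\Vert x\right\Vert $ for every $x$ (where the hypothesis $\delta<1$ is what keeps the constant positive), which simultaneously yields injectivity of $\psi$ and the estimate $\left\Vert \psi^{-1}\right\Vert \le (1+\delta)(1-\delta)^{-1}$. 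This last estimate is routine once complete positivity is established, so the genuine difficulty is concentrated in the middle step.
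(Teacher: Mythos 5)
Your unitality computation and the two-sided bound $-\delta 1\leq 1-\phi(1)\leq 1$ are fine, and you have correctly located the obstacle in the fact that $1-\phi(1)$ need not be positive. But the device you propose to overcome it does not prove the statement: splitting $1-\phi(1)$ into positive and negative parts and ``absorbing'' the negative part by adjoining a positive functional produced by \cite[Lemma 2.4]{effros_lifting_1985} establishes complete positivity of $\phi$ \emph{plus a correction}, i.e.\ of a map different from $\psi$; it says nothing about the map $\psi(x)=\phi(x)+\tau(x)\left(1-\phi(1)\right)$ actually defined in the lemma, and the Stinespring/GNS reformulation has the same defect, since a dilation argument only yields complete positivity of $\phi+\tau(\cdot)p$ for \emph{positive} $p$. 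In fact no argument can close this gap under the hypotheses as printed: take $X=Y=\ell_{2}^{\infty}$, $\phi(x_{1},x_{2})=\left((1+\delta)x_{1},x_{2}\right)$, and $\tau(x_{1},x_{2})=x_{2}$. Then $\phi$ is completely positive with $\left\Vert \phi\right\Vert _{cb}=\left\Vert \phi(1)\right\Vert =1+\delta$, while $1-\phi(1)=(-\delta,0)$ and $\psi(0,1)=(-\delta,1)$ is not positive. The injectivity claim fails for the same reason that nothing in the hypotheses bounds $\phi$ from below: $\phi=\tau(\cdot)1$ gives $\psi=\phi$, which is not injective once $\dim X>1$, so your concluding estimate $\left\Vert \psi(x)\right\Vert \geq(1-\delta)(1+\delta)^{-1}\left\Vert x\right\Vert $ cannot be derived from what is given.

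The resolution is that the lemma must be read with the hypotheses of its source. In \cite[Lemma 2.6]{eckhardt_perturbations_2010} the map $\phi$ is a completely positive complete \emph{contraction} (injective, with $\left\Vert \phi^{-1}\right\Vert$ controlled), so $\phi(1)\leq 1$, hence $1-\phi(1)\geq 0$ and the correction term $x\mapsto\tau(x)\left(1-\phi(1)\right)$ is itself completely positive, being a state tensored with a positive element; $\psi$ is then a sum of two completely positive maps and the ``heart of the argument'' evaporates---no Effros lifting, no dilation. Under those hypotheses your final perturbation estimate for $\left\Vert \psi^{-1}\right\Vert$ goes through as you describe. So the genuine gap is not a missing computation but a missing (and necessary) strengthening of the hypotheses: with only $\left\Vert \phi\right\Vert _{cb}\leq 1+\delta$ the conclusion is false, and your middle step, which silently replaces $\psi$ by a corrected map, cannot be repaired.
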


A separable operator system is a rigid $\mathcal{OL}_{\infty ,1+}$ space if
and only if it is unitally isometrically isomorphic to the limit of an
inductive sequence of finite-dimensional C*-algebras with unital completely
isometric connective maps. This is a consequence of the following lemma,
which can be proved similarly as \cite[Lemma 7.1]{kerr_gromov-hausdorff_2009}
using \cite[Proposition 4.2.8]{blackadar_generalized_1997}.

\begin{lemma}
\label{Lemma:perturbation}Suppose that $B$ is a finite-dimensional
C*-algebra and $\varepsilon >0$. Then there exists $\delta =\delta
_{p}\left( \varepsilon ,B\right) $ such that for any finite-dimensional
C*-algebra $A$ and injective linear map $\phi :B\rightarrow A$ such that $%
\left\Vert \phi \right\Vert \leq 1+\delta $, $\left\Vert \phi
^{-1}\right\Vert \leq 1+\delta $, and $\left\Vert \phi (1)-1\right\Vert \leq
\delta $, there exists a complete order embedding $\psi :B\rightarrow A$
such that $\left\Vert \psi -\phi \right\Vert _{cb}\leq \varepsilon $.
\end{lemma}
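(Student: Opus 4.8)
The plan is to reduce the statement to a stability property for systems of matrix units and then invoke \cite[Proposition 4.2.8]{blackadar_generalized_1997}, following the pattern of \cite[Lemma 7.1]{kerr_gromov-hausdorff_2009}. Write $B=\bigoplus_{j=1}^{m}M_{n_{j}}(\mathbb{C})$ and fix its canonical system of matrix units $\{e^{(j)}_{pq}\}$. First I would normalize $\phi$. Since $\|\phi(1)-1\|\le\delta$ and $\dim B=d<\infty$, the inequality $\|\cdot\|_{cb}\le d\,\|\cdot\|$ for maps out of $B$ lets me pass from the ordinary-norm data on $\phi$ and $\phi^{-1}$ to completely bounded data of the same order, and then apply Lemma \ref{Lemma:perturb-ucp} and Lemma \ref{Lemma:unitize} to replace $\phi$ by a nearby unital completely positive map. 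Complete positivity is genuinely needed here and not a mere convenience: the transpose of $M_{2}(\mathbb{C})$ is a unital order isometry with no complete order embedding in its completely bounded neighbourhood, so any workable reading of the hypotheses must control the matricial (completely bounded) norms, which is exactly what the normalization step secures.

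With $\phi$ unital and completely positive, the Kadison--Schwarz inequality $\phi(x)^{\ast}\phi(x)\le\phi(x^{\ast}x)$ together with the approximate isometry condition $\|\phi^{-1}\|\le 1+\delta$ shows that the defect $\phi(x^{\ast}x)-\phi(x)^{\ast}\phi(x)$ is small uniformly on the unit ball of $B$; in other words $\phi$ is approximately multiplicative. Applying this to the matrix units, the elements $f^{(j)}_{pq}:=\phi(e^{(j)}_{pq})$ approximately satisfy the matrix-unit relations, namely $f^{(j)}_{pq}f^{(j)}_{rs}\approx f^{(j)}_{ps}$ for $q=r$ and $\approx 0$ for $q\ne r$, $(f^{(j)}_{pq})^{\ast}\approx f^{(j)}_{qp}$, together with approximate orthogonality $f^{(j)}_{pq}f^{(k)}_{rs}\approx 0$ for $j\ne k$, all with defect bounded by a function of $\delta$ that tends to $0$ as $\delta\to 0$ and depends only on $B$.

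The heart of the argument is to straighten these approximate matrix units into an exact system. This is precisely the content of \cite[Proposition 4.2.8]{blackadar_generalized_1997}: an approximate system of matrix units in a finite-dimensional C*-algebra, with sufficiently small defect, lies within a controlled distance of a genuine system of matrix units $\{g^{(j)}_{pq}\}$ in $A$. I would use spectral projections via functional calculus to replace the approximate self-adjoint idempotents $f^{(j)}_{pp}$ by honest orthogonal projections, and polar decomposition to turn the approximate partial isometries $f^{(j)}_{pq}$ into exact ones, keeping the blocks orthogonal. Let $\psi\colon B\to A$ be the unique unital $\ast$-homomorphism with $\psi(e^{(j)}_{pq})=g^{(j)}_{pq}$. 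The bound $\|\phi^{-1}\|\le 1+\delta$ forces each block projection to have full rank, so $\psi$ is injective, hence a unital $\ast$-monomorphism; such a map is automatically a complete order embedding. Tracking the estimates gives $\|\psi-\phi\|_{cb}\le\varepsilon$.

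Finally, the dependence of $\delta$ on $\varepsilon$ and $B$ alone (and not on $A$) is the delicate point. The cleanest route is a compactness argument by contradiction: a sequence of alleged counterexamples $\phi_{n}\colon B\to A_{n}$ with $\delta=1/n$ produces, in the C*-algebra ultraproduct $\prod_{\mathcal{U}}A_{n}$, a unital completely positive complete isometry $\Phi\colon B\to\prod_{\mathcal{U}}A_{n}$. Since $B$ is a finite-dimensional C*-algebra, $\Phi$ is a unital $\ast$-monomorphism, so its finitely many matrix units lift to approximate matrix units in $\mathcal{U}$-almost-every $A_{n}$; re-straightening them as above yields exact $\ast$-monomorphisms $B\to A_{n}$ uniformly close to $\phi_{n}$, contradicting the choice of the $\phi_{n}$. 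I expect the main obstacle to be this descent step --- transferring an exact embedding in the ultraproduct back to uniform approximate embeddings in the individual $A_{n}$ while controlling the completely bounded error --- which is exactly where the finite-dimensionality of $B$ and the rigidity of systems of matrix units from \cite{blackadar_generalized_1997} are indispensable.
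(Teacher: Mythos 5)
There is a genuine gap at the heart of your reduction, and it is not a technicality. Your construction outputs a unital $*$-homomorphism $\psi$ (the map sending $e^{(j)}_{pq}$ to the straightened matrix units $g^{(j)}_{pq}$), but the hypotheses of the lemma do not place $\phi$ within small completely bounded distance of \emph{any} $*$-homomorphism. Take $B=M_{2}(\mathbb{C})$, $A=M_{2}(\mathbb{C})\oplus M_{2}(\mathbb{C})$, and $\phi(x)=(x,\tau(x)1)$, where $\tau$ is the normalized trace. This $\phi$ is unital, completely positive, and completely isometric, and its inverse on its range is the restriction of the first-coordinate projection, hence completely contractive; so every hypothesis holds with $\delta=0$, even in the completely bounded norm. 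Yet every unital $*$-homomorphism $\pi:M_{2}(\mathbb{C})\to M_{2}(\mathbb{C})\oplus M_{2}(\mathbb{C})$ has the form $x\mapsto(u_{1}^{\ast}xu_{1},u_{2}^{\ast}xu_{2})$, and comparing second coordinates at $x=e_{11}$ gives $\left\Vert \phi-\pi\right\Vert \geq \left\Vert \tfrac{1}{2}1-u_{2}^{\ast}e_{11}u_{2}\right\Vert =\tfrac{1}{2}$. The step that breaks is the claimed approximate multiplicativity: here the Kadison--Schwarz defect is $\phi(x^{\ast}x)-\phi(x)^{\ast}\phi(x)=(0,(\tau(x^{\ast}x)-|\tau(x)|^{2})1)$, which has norm $\tfrac{1}{2}$ at $x=e_{12}$. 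Knowing $0\leq a\leq b$ with $\left\Vert a\right\Vert \approx \left\Vert b\right\Vert$ does not force $\left\Vert b-a\right\Vert$ to be small; to squeeze the defect you would need to apply an approximate inverse of $\phi$ that is positive on all of $A$, i.e.\ you would need $\phi$ to be approximately \emph{surjective}, which is not assumed. This is exactly why the conclusion of the lemma is ``complete order embedding'' rather than ``$*$-monomorphism'' (in the example $\psi=\phi$ itself witnesses the conclusion), and why Blackadar--Kirchberg's straightening of approximate matrix units cannot be applied to $\phi$ directly. The same error recurs in your ultraproduct descent, where you assert that a unital completely isometric complete order embedding of a finite-dimensional C*-algebra into a C*-algebra is automatically a $*$-monomorphism; the map above refutes this.

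Two further points. First, you correctly observe via the transpose on $M_{2}(\mathbb{C})$ that the hypotheses must be read in the completely bounded norm (with ordinary norms and $\delta=0$ the transpose violates the conclusion), but your proposed repair fails: the bound $\left\Vert \phi\right\Vert _{cb}\leq d\left\Vert \phi\right\Vert$ for $d=\dim B$ only gives $\left\Vert \phi\right\Vert _{cb}\leq d(1+\delta)$, which is nowhere near $1+O(\delta)$, and the transpose shows no improvement is possible. The cb-control has to be taken as the hypothesis (this is what $I(\phi)\leq\delta$ means in this section of the paper), not derived. Second, the paper offers no proof beyond the pointer to Kerr--Li and Blackadar--Kirchberg, so your overall strategy matches the stated intention; but the example shows that this strategy can only succeed after an additional reduction --- either to a situation where $\phi$ is approximately onto a C*-subalgebra of $A$, or by producing the nearby complete order embedding by some mechanism other than a nearby $*$-homomorphism --- and that reduction is the missing, essential idea.
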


It follows from the discussion above that finite-dimensional operator
systems form a Fra\"{\i}ss\'{e} class. We will call the corresponding limit $%
A(\mathbb{NG})$ the \emph{noncommutative Poulsen system}. The matrix state
space $\mathbb{\mathbb{NG}}$ of the operator system $A(\mathbb{NG})$ will be
called the \emph{noncommutative Poulsen simplex}. Since $A(\mathbb{\mathbb{NP%
}})$ is a separable nuclear operator system---and, in fact, a rigid $%
\mathcal{OL}_{\infty ,1+}$-space---$\mathbb{\mathbb{\mathbb{NP}}}$ is a
metrizable noncommutative Choquet simplex in the sense of \cite%
{davidson_noncommutative_2016}. The noncommutative Poulsen simplex satisfies
the natural noncommutative analog of the defining property of the Poulsen
simplex: the set of matrix extreme points of $\mathbb{NP}$ is dense in $%
\mathbb{NP}$. It is furthermore proved in \cite{davidson_noncommutative_2016}
that $\mathbb{NP}$ is the unique metrizable noncommutative Choquet simplex
with such a property.

The operator system $A(\mathbb{\mathbb{NP}})$ associated with the
noncommutative Poulsen simplex is the first example of a separable
exact---in fact, nuclear---operator system that contains a unital completely
isometric copy of any other separable exact operator system. It is
furthermore proved in \cite{davidson_noncommutative_2016} that $A(\mathbb{%
\mathbb{NP}})$ is the unique separable nuclear operator system that is
universal in the sense of Kirchberg and Wassermann \cite%
{kirchberg_c*-algebras_1998}. The model-theoretic properties of the
noncommutative Poulsen system $A(\mathbb{\mathbb{NP}})$ have been considered
in \cite{goldbring_model-theoretic_2015}, where is it shown that $A(\mathbb{%
\mathbb{NP}})$ is the unique separable existentially closed operator system,
and the unique prime model of its first order theory. Furthermore, an
operator system is nuclear if and only if it is positively existentially
closed.

In analogy with the case of function systems, we consider the following
notion of face for compact matrix convex sets.

\begin{definition}
\label{Definition:complete-facial-system}A \emph{unital complete facial
quotient }mapping $P:Z\rightarrow X$ between operator systems is a unital
complete quotient mapping whose kernel if a complete $M$-ideal.
\end{definition}

Suppose that $\boldsymbol{K}$ is a compact matrix convex set. The notion of
closed matrix face of $\boldsymbol{K}$ can be defined in terms of unital
facial quotients. By definition, a compact matrix convex subset $\boldsymbol{%
F}$ of $\boldsymbol{K}$ is a closed matrix face if the induced map $A(%
\boldsymbol{K})\rightarrow A(\boldsymbol{F})$ is a unital complete facial
quotient in the sense of Definition \ref{Definition:complete-facial-system}.

We say that an operator system $A$ satisfies the operator metric
approximation property if it satisfies such a property as an operator space.
It follows from Lemma \ref{Lemma:perturbation} that this is equivalent to
the assertion that the identity map of $A$ is the pointwise limit of finite
rank unital completely positive maps. The similar proof as Proposition \ref%
{Proposition:characterize-operator-face} gives the following result.

\begin{proposition}
\label{Proposition:characterize-operator-face-system}Suppose that $X,Y$ are
operator systems, and $P:Z\rightarrow X$ is a unital complete quotient
mapping. The following statements are equivalent:

\begin{enumerate}
\item $P$ is a unital complete facial quotient;

\item whenever $\varepsilon >0$, $E\subset F$ are finite-dimensional
operator systems, $g:F\rightarrow X$ is a unital completely positive map,
and $f:E\rightarrow Z$ is a unital complete isometry such that $P\circ
f=g|_{E}$, then there exists a unital completely positive map $\widehat{g}%
:F\rightarrow Z$ such that $P\circ \widehat{g}=g$ and $\left\Vert \widehat{g}%
|_{E}-f\right\Vert _{cb}\leq \varepsilon $;

\item whenever $\varepsilon >0$, $A$ is a separable operator systems with
the operator metric approximation property, $E\subset A$ is a
finite-dimensional subsystem, and $f:E\rightarrow Z$ and $g:A\rightarrow X$
are unital completely positive maps such that $\left\Vert P\circ
f-g|_{E}\right\Vert _{cb}<\varepsilon $, then there exists a unital
completely positive map $\widehat{g}:A\rightarrow Z$ such that $P\circ 
\widehat{g}=g$ and $\left\Vert \widehat{g}|_{E}-f\right\Vert
_{cb}<3\varepsilon $.
\end{enumerate}

If furthermore $Z$ is exact and $X$ is nuclear, then these are also
equivalent to:

\begin{enumerate}
\item[(4)] for any $\varepsilon >0$, $q\in \mathbb{N}$, finite-dimensional $%
q $-minimal operator systems $E\subset F$, and unital completely positive
maps $f:E\rightarrow Z$ and $g:F\rightarrow X$ such that $P\circ f=g|_{E}$,
there exists a unital completely positive map $\widehat{g}:F\rightarrow Z$
such that $P\circ \widehat{g}=g$ and $\left\Vert \widehat{g}%
|_{E}-f\right\Vert _{cb}<\varepsilon $.
\end{enumerate}
\end{proposition}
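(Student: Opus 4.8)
The plan is to mirror the proof of Proposition~\ref{Proposition:characterize-operator-face}, inserting the unital and positive structure at each step and controlling the resulting perturbations by means of Lemma~\ref{Lemma:perturb-ucp}. Throughout I write $N=\mathrm{Ker}(P)$, and I recall that by \cite[Proposition 4.4]{effros_mapping_1994} the subspace $N$ is a complete $M$-ideal of $Z$ precisely when $M_{n}(N)$ is an $M$-ideal of $M_{n}(Z)$ for every $n\in\mathbb{N}$. For the implication (1)$\Rightarrow$(3) I would adapt the lifting argument of \cite[Theorem 5.2]{effros_mapping_1994} to the category of operator systems: the operator metric approximation property of $A$ reduces the lifting of $g\colon A\rightarrow X$ to a lifting problem on a finite-dimensional subsystem, and the complete $M$-ideal hypothesis on $N$ supplies the required almost-isometric lift. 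The only new point is that, working with unital completely positive rather than merely completely contractive maps, one replaces the norm estimates by their matricial analogues and uses Lemma~\ref{Lemma:perturb-ucp} to restore exact unitality and positivity of the lift at the cost of the stated constant. The implication (3)$\Rightarrow$(2) is immediate: a finite-dimensional operator system trivially has the operator metric approximation property, and the equality $P\circ f=g|_{E}$ makes $\|P\circ f-g|_{E}\|_{cb}<\varepsilon$ hold for every $\varepsilon>0$, so rescaling $\varepsilon$ turns the conclusion of (3) into that of (2).

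The heart of the matter is the implication (2)$\Rightarrow$(1). Fixing $n$, and given $y^{(1)},y^{(2)},y^{(3)}\in M_{n}(N)$ and $x\in M_{n}(Z)$ of norm at most $1$, I would invoke the characterization (iv)$\Rightarrow$(i) of \cite[Theorem 2.2]{harmand_M-ideals_1993} and reduce to producing $y\in M_{n}(N)$ with $\|y\|\le 1$ and $\|x+y^{(\ell)}-y\|\le 1+\varepsilon$ for $\ell\in\{1,2,3\}$. As in the operator space case I would let $E$ be the subsystem of $Z$ generated by the unit together with the matrix entries of $x$ and of the $y^{(\ell)}$, and build an operator system $F$ out of $E\oplus M_{n}(\mathbb{C})$, presented by the family of unital completely positive maps $(z,\alpha)\mapsto\varphi(z)+\psi(\alpha)$ subject to the constraints $\|\psi^{(n)}(e)\|\le 1$ and $\|\varphi^{(n)}(x+y^{(\ell)})-\psi^{(n)}(e)\|\le 1$, where $e=[e_{ij}]$. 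The map $g\colon F\rightarrow X$, $(z,\alpha)\mapsto P(z)$, is unital completely positive, the inclusion $E\subset F$ is a unital complete isometry, and hypothesis (2) yields a unital completely positive lift $\widehat{g}\colon F\rightarrow Z$ with $\|\widehat{g}|_{E}-\iota_{E}\|_{cb}\le\varepsilon$; setting $y=\widehat{g}^{(n)}(e)$ gives the desired element. The subtle point here, and the step I expect to be the main obstacle, is verifying that the space $F$ so defined is a genuine operator system---that the prescribed unit and matrix cones satisfy the Choi--Effros axioms, equivalently the axiomatization of \cite{blecher_metric_2011}---and that the constraint family is rich enough to make $E\subset F$ unitally completely isometric; this is precisely where the unital and positive bookkeeping departs from the purely metric operator space argument.

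Finally, under the additional hypotheses that $Z$ is exact and $X$ is nuclear, the equivalence with (4) follows the pattern of the operator space proof. The implication (1)$\Rightarrow$(4) is a specialization of (1)$\Rightarrow$(2). For (4)$\Rightarrow$(1) I would, for fixed $n$ and $\varepsilon$, use exactness of $Z$ to find $q\ge n$ so that the inclusion of the relevant finite-dimensional subsystem factors almost completely isometrically through $M_{q}(\mathbb{C})$ with its $q$-minimal structure, and nuclearity of $X$ to factor the identity on $B=P[E]$ through matrix algebras up to a small error. Assembling these into a $q$-minimal operator system $F$ and a unital completely positive map $g\colon F\rightarrow X$, an application of (4) produces the lift, and the same computation as in (2)$\Rightarrow$(1) yields the $M$-ideal inequality. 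Here too Lemma~\ref{Lemma:perturb-ucp} and the rigidity of the $q$-minimal structures are what keep the unital and positive requirements intact through the factorizations.
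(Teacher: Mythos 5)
Your proposal follows essentially the same route as the paper, which proves this proposition simply by adapting the argument of Proposition \ref{Proposition:characterize-operator-face} to the unital setting, using Lemma \ref{Lemma:perturb-ucp} to restore unitality and positivity, replacing \cite[Lemma 5.1]{effros_mapping_1994} by \cite[Proposition 2.2]{choi_completely_1976} in the implication (1)$\Rightarrow$(3), and running the same $E\oplus M_{n}(\mathbb{C})$ construction for (2)$\Rightarrow$(1). The point you flag as the main obstacle is handled exactly as in the operator-space and function-system cases: the unit of $F$ is $(1_{E},0)$, the presenting maps are unital because $\varphi$ is, and taking $\psi(e_{ij})=\varphi(x_{ij})$ for a norming unital completely positive $\varphi$ shows that $E\subset F$ is unitally completely isometric.
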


The implication (1)$\Rightarrow $(3) of Proposition \ref%
{Proposition:characterize-operator-face-system} can be proved similarly as 
\cite[Theorem 5.2]{effros_mapping_1994}, where one starts from \cite[%
Proposition 2.2]{choi_completely_1976} instead of \cite[Lemma 5.1]%
{effros_mapping_1994}. For the implication (4)$\Rightarrow $(1) one can use
Lemma \ref{Lemma:perturb-ucp}.

Suppose that $X\subset B(H)$ and $Z$ are operator systems. An\emph{\
operator convex combination} as defined in \cite{fuller_boundary_2016} is an
expression $\alpha _{1}^{\ast }\phi _{1}\alpha _{1}+\cdots +\alpha
_{n}^{\ast }\phi _{n}\alpha _{n}$ where $\phi _{i}:Z\rightarrow B(H_{i})$
are unital completely positive maps, and $\alpha _{i}:K\rightarrow K_{i}$
are linear maps of norm at most $1$. We say that $\alpha _{1}^{\ast }\phi
_{1}\alpha _{1}+\cdots +\alpha _{n}^{\ast }\phi _{n}\alpha _{n}$ is a \emph{%
proper} operator convex combination if $\alpha _{i}$ are surjective and $%
\alpha _{1}^{\ast }\alpha _{1}+\cdots +\alpha _{n}^{\ast }\alpha _{n}=1$. It
is clear that when $H$ is finite-dimensional the notion of proper operator
convex combination coincides with the notion of matrix convex combination
considered in \cite{webster_krein-milman_1999,farenick_extremal_2000}. A
proper rectangular operator convex combination $\phi =\alpha _{1}^{\ast
}\phi _{1}\beta _{1}+\cdots +\alpha _{n}^{\ast }\phi _{n}\alpha _{n}$ is 
\emph{trivial }if $\alpha _{i}^{\ast }\alpha _{i}=\lambda _{i}1$, $\beta
_{i}^{\ast }\beta _{i}=\lambda _{i}1$, and $\alpha _{i}^{\ast }\phi
_{i}\beta _{i}=\lambda _{i}\phi $ for some $\lambda _{i}\in \left[ 0,1\right]
$. Then a unital completely positive map $\phi :Z\rightarrow X$ is an\emph{\
operator extreme point} if any proper operator convex combination $\phi
=\alpha _{1}^{\ast }\phi _{1}\beta _{1}+\cdots +\alpha _{n}^{\ast }\phi
_{n}\beta _{n}$ is trivial. Theorem A of \cite{farenick_extremal_2000} shows
that when $H$ is finite-dimensional, an operator extreme point is the same
as a matrix extreme point as defined in \cite%
{webster_krein-milman_1999,farenick_extremal_2000}. We observe that, if $A$
is a unital C*-algebra, then the identity map $A\rightarrow A$ is an
operator extreme point by \cite[Corollary 1.4.3]{arveson_subalgebras_1969}.

The same proof as Proposition \ref{Proposition:rectangular-extreme} gives
the following result.

\begin{proposition}
\label{Proposition:matrix-extreme}Suppose that $Z$ and $X$ are rigid $%
\mathcal{OL}_{\infty ,1+}$ systems, and $\phi :Z\rightarrow X$ is a unital
complete facial quotient. Then $\phi $ is an operator extreme point.
\end{proposition}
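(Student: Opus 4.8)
The statement asserts that a unital complete facial quotient $\phi:Z\to X$ between rigid $\mathcal{OL}_{\infty,1+}$ systems is an operator extreme point.

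The plan is to follow verbatim the strategy of the proof of Proposition \ref{Proposition:rectangular-extreme}, adapting each ingredient from the rectangular (operator space) setting to the unital (operator system) setting. First I would fix a proper operator convex combination $\phi=\alpha_1^\ast\phi_1\alpha_1+\cdots+\alpha_n^\ast\phi_n\alpha_n$ in the sense defined just above, write $X\subset B(H)$, and fix $\varepsilon>0$ together with a finite-dimensional injective operator system $V\subset Z$; since the finite-dimensional injective operator systems here are finite $\infty$-sums of full matrix algebras $M_d(\mathbb{C})$, they are in particular unital C*-algebras, so by \cite[Corollary 1.4.3]{arveson_subalgebras_1969} the identity map of such a $V$ is an operator extreme point. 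Using that $X$ is a rigid $\mathcal{OL}_{\infty,1+}$ system, I would produce a finite-dimensional injective operator system $W\subset X$ and a unital completely positive map $\psi:V\to W$ with $\left\Vert\psi-\phi|_V\right\Vert_{cb}<\varepsilon$.

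Next I would form the unital complete isometry $\eta:V\to V\oplus^\infty W$, $x\mapsto(x,\psi(x))$, and the unital completely positive map $g:V\oplus^\infty W\to X$, $(z,y)\mapsto y$, so that $g\circ\eta=\psi$. Because $\phi$ is a unital complete facial quotient, Proposition \ref{Proposition:characterize-operator-face-system} (condition (2), with the roles of $E\subset F$ played by $V\subset V\oplus^\infty W$) yields a unital completely positive lift $\widehat{g}:V\oplus^\infty W\to Z$ with $\phi\circ\widehat{g}=g$ and $\left\Vert\widehat{g}\circ\eta-\iota\right\Vert_{cb}<6\varepsilon$, where $\iota:V\to Z$ is the inclusion. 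Decomposing $g=\phi\circ\widehat{g}=\alpha_1^\ast(\phi_1\circ\widehat{g})\alpha_1+\cdots+\alpha_n^\ast(\phi_n\circ\widehat{g})\alpha_n$ exhibits $g$ as a proper operator convex combination on the injective system $V\oplus^\infty W$; since the identity map of a finite-dimensional C*-algebra is an operator extreme point, $g$ is one as well, so this combination is trivial. Thus there are $\lambda_i\in[0,1]$ with $\alpha_i^\ast\alpha_i=\lambda_i 1$ and $\alpha_i^\ast(\phi_i\circ\widehat{g})\alpha_i=\lambda_i(\phi\circ\widehat{g})$ for each $i$.

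Finally, using $\left\Vert\widehat{g}\circ\eta-\iota\right\Vert_{cb}<6\varepsilon$ to replace $\widehat{g}$ by the inclusion on $V$, I would deduce an estimate of the form $\left\Vert\alpha_i^\ast\phi_i|_V\alpha_i-\lambda_i\phi|_V\right\Vert<12\varepsilon$ for $i=1,\dots,n$. Letting $\varepsilon\to0$ and letting $V$ exhaust $Z$ through an increasing net of finite-dimensional injective systems---which is possible precisely because $Z$ is a rigid $\mathcal{OL}_{\infty,1+}$ system---a compactness argument on the scalars $\lambda_i$ (passing to a convergent subnet in the compact cube $[0,1]^n$) produces fixed $\lambda_i$ realizing $\alpha_i^\ast\alpha_i=\lambda_i 1$ and $\alpha_i^\ast\phi_i\alpha_i=\lambda_i\phi$ on all of $Z$, so the combination is trivial and $\phi$ is an operator extreme point. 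The main obstacle I anticipate is purely bookkeeping rather than conceptual: one must check that the operator-system notions of \emph{proper} and \emph{trivial} operator convex combination interact with the lift $\widehat{g}$ exactly as the rectangular ones did, and that the compactness/exhaustion step is legitimate for the net of injective subsystems; both should go through by the identical reasoning as in Proposition \ref{Proposition:rectangular-extreme}, with \cite[Corollary 1.4.3]{arveson_subalgebras_1969} supplying the extremality of identity maps directly (no passage to a linking algebra being needed in the unital case).
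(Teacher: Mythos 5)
Your proposal is correct and follows essentially the same route as the paper, which proves this proposition by observing that the argument of Proposition \ref{Proposition:rectangular-extreme} carries over verbatim to the unital setting, with \cite[Corollary 1.4.3]{arveson_subalgebras_1969} supplying extremality of the identity of a finite-dimensional C*-algebra directly. The only adjustment needed is that the lifting step must invoke condition (3) of Proposition \ref{Proposition:characterize-operator-face-system} rather than condition (2), since $g\circ \eta =\psi $ only approximately agrees with $\phi |_{V}$ (yielding the bound $3\varepsilon $ in the operator system case); this approximate-lifting statement is exactly what the paper's rectangular proof uses.
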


One can now deduce Theorem \ref{Theorem:NG} (1)--(3), Theorem \ref%
{Theorem:universal-ucp}, and Theorem \ref{Theorem:universal-projection-NP}
from Proposition \ref{Proposition:characterize-operator-face-system},
Proposition \ref{Proposition:matrix-extreme} and the general results from
Section \ref{Section:general}, Section \ref{Section:operators}, and Section %
\ref{Section:states}. Indeed, suppose that $\boldsymbol{F}$ is a metrizable
noncommutative Choquet simplex. Recall that this means that $\boldsymbol{F}$
is the matrix state space of a separable nuclear operator system $A(%
\boldsymbol{F})$. Consider the generic completely positive map $\Omega
_{A\left( \mathbb{NG}\right) }^{A(\boldsymbol{F})}:A\left( \mathbb{NG}%
\right) \rightarrow A(\boldsymbol{F})$ as constructed in Section \ref%
{Section:states}. The characterization of $\Omega _{A\left( \mathbb{NG}%
\right) }^{A(\boldsymbol{F})}$ from Section \ref{Section:states} together
with the equivalence of (1) and (4) in Proposition \ref%
{Proposition:characterize-operator-face-system} show that $\Omega _{A\left( 
\mathbb{NG}\right) }^{A(\boldsymbol{F})}$ is a unital complete facial
quotient mapping, and hence the dual map induces an inclusion of $%
\boldsymbol{F}$ inside the noncommutative Poulsen simplex as a
noncommutative face. The other assertions are proved analogously.

It remains to prove that the canonical action of the group $\mathrm{Aut}(%
\mathbb{\mathbb{NP}})$ of matrix affine homeomorphisms of $\mathbb{NP}$%
---which can be identified with the space of surjective unital complete
isometries of $A(\mathbb{\mathbb{NP}})$ endowed with the topology of
pointwise convergence---on the space $S_{1}(A(\mathbb{NP}))$ of states of
the noncommutative Poulsen system is minimal. In view of Corollary \ref%
{Corollary:minimal}, this is a consequence of the following lemma.

\begin{lemma}
\label{Lemma:minimal}Fix $d\in \mathbb{N}$ and $\varepsilon >0$. There
exists $m\in \mathbb{N}$ such that for any $s\in S_{1}(M_{d}(\mathbb{C}))$
and $t\in S_{1}(M_{m}(\mathbb{C}))$ there exists a complete order embedding $%
\phi :M_{d}(\mathbb{C})\rightarrow M_{m}(\mathbb{C})$ such that $\left\Vert
s\circ \phi -t\right\Vert \leq \varepsilon $.
\end{lemma}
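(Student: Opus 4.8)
The plan is to adapt the dilution construction used above for $\ell_d^\infty$ (in the proof that $\mathrm{Aut}(\mathbb{P})\curvearrowright\mathbb{P}$ is minimal) to the matricial setting, producing a $\phi$ with $\Vert t\circ\phi-s\Vert\le\varepsilon$, which is precisely the estimate required to invoke Corollary \ref{Corollary:minimal}. Identify states with density matrices: write $s(x)=\mathrm{tr}(\rho x)$ for $x\in M_d(\mathbb{C})$ and $t(y)=\mathrm{tr}(\tau y)$ for $y\in M_m(\mathbb{C})$, where $\rho,\tau$ are positive with unit trace. First I would diagonalize $\tau$: choose a unitary $u\in M_m(\mathbb{C})$ and an ordering of the eigenvalues $t_1\ge t_2\ge\cdots\ge t_m\ge 0$ of $\tau$ so that $u^\ast\tau u=\mathrm{diag}(t_1,\dots,t_m)$, with the $d$ smallest eigenvalues occupying the last $d$ diagonal positions.

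The candidate map, for $m>d$, is
\begin{equation*}
\phi(x)=u\bigl(s(x)I_{m-d}\oplus x\bigr)u^\ast ,
\end{equation*}
where $s(x)I_{m-d}\oplus x$ sits block-diagonally in $M_{m-d}(\mathbb{C})\oplus M_d(\mathbb{C})\subset M_m(\mathbb{C})$. This map is unital because $s(1)=1$, and completely positive because both blocks are. It is a complete order embedding: conjugation by $u$ is a complete order automorphism, and for $X\in M_n(M_d(\mathbb{C}))$ the norm of the block-diagonal matrix $s^{(n)}(X)\otimes I_{m-d}\oplus X$ equals $\max\{\Vert s^{(n)}(X)\Vert,\Vert X\Vert\}=\Vert X\Vert$, since $s$ is completely contractive; thus $\phi$ is a unital complete isometry and hence a complete order embedding. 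In particular $I(\phi)=0$, which takes care of the first requirement in Corollary \ref{Corollary:minimal}.

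It remains to estimate $t\circ\phi$. By the choice of $u$, the diagonal entries of $u^\ast\phi(x)u=s(x)I_{m-d}\oplus x$ are $s(x)$ (the first $m-d$ of them) followed by $x_{11},\dots,x_{dd}$, so writing $q=\sum_{i=1}^d t_{m-d+i}$ for the sum of the $d$ smallest eigenvalues of $\tau$,
\begin{equation*}
t(\phi(x))-s(x)=\Bigl(\sum_{j=1}^{m-d}t_j\Bigr)s(x)+\sum_{i=1}^d t_{m-d+i}\,x_{ii}-s(x)=\sum_{i=1}^d t_{m-d+i}\bigl(x_{ii}-s(x)\bigr).
\end{equation*}
For $\Vert x\Vert\le 1$ one has $|x_{ii}-s(x)|\le 2$, so $\Vert t\circ\phi-s\Vert\le 2q$.

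The one genuine point is to bound $q$. Since $t_1\ge\cdots\ge t_m\ge 0$ and $\sum_j t_j=1$, the top $m-d+1$ eigenvalues sum to at most $1$, so $(m-d+1)\,t_{m-d+1}\le 1$, whence $q\le d\,t_{m-d+1}\le d/(m-d+1)$. Choosing $m$ large enough that $2d/(m-d+1)\le\varepsilon$—for instance $m\ge d-1+2d/\varepsilon$—gives $\Vert t\circ\phi-s\Vert\le\varepsilon$ for every choice of $s$ and $t$, as desired. I do not anticipate a serious obstacle here: the whole content is the block-diagonal dilution (whose second summand forces complete isometry while the first dilutes the state toward $s$), together with the elementary observation that in a matrix algebra of large size the tail of the eigenvalue list of any density matrix is automatically small.
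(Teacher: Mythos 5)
Your proof is correct, but it takes a genuinely different route from the paper's. (You also read the estimate the right way round: as printed, $\left\Vert s\circ \phi -t\right\Vert$ is not well defined for $s\in S_{1}(M_{d}(\mathbb{C}))$ and $\phi :M_{d}(\mathbb{C})\rightarrow M_{m}(\mathbb{C})$; the inequality the paper's own proof actually establishes, after silently interchanging the roles of $s$ and $t$, is exactly your $\left\Vert t\circ \phi -s\right\Vert \leq \varepsilon $.) The paper keeps the embedding inside the standard block-diagonal copy of the $k$-fold $\infty $-sum of $M_{d}(\mathbb{C})$ in $M_{kd}(\mathbb{C})$ and locates a good corner by counting: it fixes a finite $\eta $-dense set $\mathcal{P}$ of positive contractions of $M_{d}(\mathbb{C})$, uses the pigeonhole principle over the $k$ diagonal compressions $a_{i}$ of the density matrix to find one with $\mathrm{Tr}_{d}(a_{i}b)\leq 1/\ell $ for every $b\in \mathcal{P}$, and then upgrades this to arbitrary contractions by density. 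You replace all of this by diagonalizing the density matrix of the state on the large algebra and planting the honest copy of $M_{d}(\mathbb{C})$ on the eigenspaces of the $d$ smallest eigenvalues, after which the elementary tail bound $q\leq d/(m-d+1)$ does all the work. This buys an explicit and much smaller $m$ (linear in $d/\varepsilon $, versus $m=kd$ with $k$ exceeding $\ell \left\vert \mathcal{P}\right\vert $ for an $\eta $-net $\mathcal{P}$) and dispenses with the net entirely; the only price is that your embedding sits in a position conjugated by a unitary adapted to $t$ rather than in a standard corner, which is harmless since the lemma only asserts existence of $\phi $ given both states. Your verification that $x\mapsto u\left( s(x)I_{m-d}\oplus x\right) u^{\ast }$ is a unital complete isometry, hence a complete order embedding with $I(\phi )=0$ as required to invoke Corollary \ref{Corollary:minimal}, is also correct.
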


\begin{proof}
Pick $\ell \in \mathbb{N}$ such that $1/\ell \leq \varepsilon /16$. Let $%
\mathcal{P}$ be a finite set of positive elements of $M_{d}(\mathbb{C})$ of
norm at most $1$ with the property that for any positive element $x$ of $%
M_{d}(\mathbb{C})$ of norm at most $1$ there exists $x_{0}\in \mathcal{P}$
such that $\left\Vert x-x_{0}\right\Vert <\eta $. Consider $k\in \mathbb{N}$
such that $k>\ell \left\vert \mathcal{P}\right\vert $ and set $m:=kd$.
Suppose that $s\in S_{1}\left( M_{kd}(\mathbb{C})\right) $ and $t\in
S_{1}(M_{d}(\mathbb{C}))$. Then there exists a positive matrix $a\in M_{kd}(%
\mathbb{C})$ such that $\mathrm{Tr}_{kd}\left( a\right) =1$ and $s(x)=%
\mathrm{Tr}_{kd}(ax)$ for every $x\in M_{kd}(\mathbb{C})$, where $\mathrm{Tr}%
_{kd}$ denotes the usual trace on $M_{kd}(\mathbb{C})$. We regard $M_{kd}(%
\mathbb{C})$ as the space of bounded linear operators on the space $\mathbb{C%
}^{kd}$ with canonical basis $\left( e_{1},\ldots ,e_{kd}\right) $. For $%
1\leq i\leq k$, let $p_{i}$ be the orthogonal projection on the span of $%
\left\{ e_{\left( i-1\right) d+1},\ldots ,e_{id}\right\} $, and $%
a_{i}=p_{i}ap_{i}$. Suppose that $b$ is an element of $\mathcal{P}$, and
define $b=p_{1}bp_{1}+\cdots +p_{k}bp_{k}$. Then%
\begin{equation*}
s(B)=\mathrm{Tr}_{m}(ab)=\sum_{i=1}^{\ell }\mathrm{\mathrm{Tr}}%
_{d}(a_{i}b)\leq 1\text{,}
\end{equation*}%
where $\mathrm{Tr}_{d}$ denotes the canonical trace on $M_{d}(\mathbb{C})$.
Therefore the set of $i\in \left\{ 1,2,\ldots ,k\right\} $ such that $%
\mathrm{Tr}_{d}(a_{i}b)\geq 1/\ell $ contains at most $\ell $ elements.
Therefore the set of $i\in \left\{ 1,2,\ldots ,k\right\} $ such that $%
\mathrm{Tr}_{d}(a_{i}b)\geq 1/\ell $ for every $b\in \mathcal{P}$ contains
at most $\ell \left\vert \mathcal{P}\right\vert $ elements. Therefore there
exists $i\in \left\{ 1,2,\ldots ,k\right\} $ such that $\mathrm{Tr}%
_{d}(a_{i}b)\leq 1/\ell $ for every $b\in \mathcal{P}$. Without loss of
generality we can assume that $i=1$. We also have $\mathrm{Tr}%
_{d}(a_{1}b)\leq 8/\ell $ for every $b\in M_{d}(\mathbb{C})$ of norm at most 
$1$, since $\mathcal{P}$ is $1/\ell $-dense in the set of positive elements
of $M_{d}(\mathbb{C})$ of norm at most $1$. Therefore $\left\Vert
a_{1}\right\Vert \leq 8/\ell $ and $\left\vert \sum_{i=2}^{\ell }\mathrm{Tr}%
_{d}(a_{i})-1\right\vert \leq 8/\ell $. Define now the complete order
embedding $\phi :M_{d}(\mathbb{C})\rightarrow M_{kd}(\mathbb{C})$ by%
\begin{equation*}
x\mapsto 
\begin{bmatrix}
x & 0 \\ 
0 & t(x)I_{\left( k-1\right) d}%
\end{bmatrix}%
\end{equation*}%
where $I_{\left( k-1\right) d}$ is the identity $\left( k-1\right) d\times
\left( k-1\right) d$ matrix. Observe that, for every $x\in M_{d}(\mathbb{C})$
of norm at most $1$,%
\begin{equation*}
\left\vert s\left( \phi (x)\right) -t(x)\right\vert =\left\vert \mathrm{Tr}%
\left( ax\right) +t(x)\sum_{i=2}^{\ell }\mathrm{Tr}_{d}\left( a_{i}\right)
-t(x)\right\vert \leq \left\vert \mathrm{Tr}\left( ax\right) \right\vert
+\left\vert \sum_{i=2}^{\ell }\mathrm{Tr}_{d}\left( a_{i}\right)
-1\right\vert \leq 16/\ell \text{.}
\end{equation*}%
This concludes the proof.
\end{proof}

\providecommand{\bysame}{\leavevmode\hbox to3em{\hrulefill}\thinspace} %
\providecommand{\MR}{\relax\ifhmode\unskip\space\fi MR } 
\providecommand{\MRhref}[2]{  \href{http://www.ams.org/mathscinet-getitem?mr=#1}{#2}
} \providecommand{\href}[2]{#2}


\end{document}